\newcommand{\ass}{\quad\mbox{as}\quad}
\newcommand{\bphi}{ \tilde{\phi}  }
\newcommand{\EE}{{\mathcal E}  }
\newcommand{\inn}{{\quad\hbox{in } }}
\newcommand{\onn}{{\quad\hbox{on } }}
\newcommand{\ttt}{\tilde }
\newcommand{\TT}{{\mathcal T}  }
\newcommand{\nn}{ {\nabla}  }
\newcommand{\pp}{ {\partial} }
\newcommand{\vp}{\varphi}
\newcommand{\OO}{{\mathcal O}}
\newcommand{\RR}{{{\mathcal R}}}
\newcommand{\N}{\mathbb{N}}
\newcommand{\R} {\mathbb R}
\newcommand{\Z} {\mathbb Z}
\newcommand{\cuad}{{\sqcap\kern-.68em\sqcup}}
\newcommand{\foral}{\quad\mbox{for all}\quad}
\newcommand{\ve}{\varepsilon}
\newcommand{\be}{\begin{equation}}
	\newcommand{\ee}{\end{equation}}
\newcommand{\la}{\lambda}
\newcommand{\equ}[1]{(\ref{#1})}
\renewcommand{\div}{\mathop{\rm div}}
\newcommand{\curl}{\mathop{\rm curl}}
\newtheorem{lemma}{Lemma}[section]
\newtheorem{prop}{Proposition}[section]
\newtheorem{theorem}{Theorem}
\newtheorem{corollary}{Corollary}[section]
\newtheorem{remark}{Remark}[section]
\newcommand{\bremark}{\begin{remark} \em}
	\newcommand{\eremark}{\end{remark} }
\long\def\hide#1{}
\numberwithin{equation}{section}
\begin{document}
	
	\title{Leapfrogging vortex rings for the 3-dimensional incompressible Euler equations}
	
	\author[J. D\'avila]{Juan D\'avila}
	
	\address{Department of Mathematical Sciences, University of Bath, Bath, Ba2 7AY, UK.}
	
	\email{jddb22@bath.ac.uk}
	
	\author[M. del Pino]{Manuel del Pino}
	
	\address{Department of Mathematical Sciences, University of Bath, Bath, Ba2 7AY, UK.}
	
	\email{mdp59@bath.ac.uk}
	
	\author[M.Musso]{Monica Musso}
	
	\address{Department of Mathematical Sciences, University of Bath, Bath, Ba2 7AY, UK. }
	
	\email{m.musso@bath.ac.uk}
	
	\author[J. Wei]{Juncheng Wei}
	
	\address{Department of Mathematics, University of British Columbia, Vancouver, B.C., Canada, V6T 1Z2. }
	
	\email{jcwei@math.ubc.ca}

	\thanks{}

	\keywords{}

	\maketitle
	
	\begin{abstract}
		A classical problem in fluid dynamics concerns the interaction of multiple
		vortex rings sharing a common axis of symmetry 
		in an incompressible,
		inviscid $3$-dimensional fluid. Helmholtz
		(1858) observed that a pair of similar thin, coaxial vortex rings may pass through each other repeatedly due to 
		the induced flow of the rings acting on each other. This celebrated configuration, known as {\it leapfrogging}, has not yet been rigorously established.  
		We provide a mathematical justification for this phenomenon by constructing a smooth solution of the 3d Euler equations 
		exhibiting this motion pattern.

	\end{abstract}

	\section{Introduction}

	We consider the 3-dimensional Euler equation for an ideal incompressible  fluid  given by
	\be \label{euler0}
	\left \{ 
	\begin{aligned}
	&\ 	{\bf u}_t  + ({\bf u}\cdot \nn ){\bf u} = -\nn p,   \\
	&\ 	{\rm div}\, {\bf u} = 0 ,\quad  x\in \R^3, \ t\ge 0.   \\
	\end{aligned}\right.
	\ee
	For a solution  ${\bf u } (x,t)$ of  \equ{euler0},
	its vorticity is defined as
	$ \vec\omega = \curl {\bf u}  $. Then $\vec\omega$ 
	solves the Euler system \equ{euler0} in its vorticity form,  	
	\be \label{euler}
	\left \{ 
	\begin{aligned}
	&\ 	{\vec \omega}_t  + ({\bf u}\cdot \nn ){\vec \omega} =
	( \vec \omega \cdot \nn ) {\bf u}, \quad {\bf u} = \curl \vec \psi, \\ 
	&\vec \psi(x,t) = \left( 	-\Delta  \right)^{-1} \vec \omega \, =\,  {1\over 4 \pi} \int_{\R^3} { 1\over |x-y|}  \vec \omega (y,t)\,  dy ,  \quad x\in \R^3, \ t\ge 0  .   \\
	\end{aligned}\right.
	\ee


\medskip
A vortex ring is an 
 axially symmetric solution of \equ{euler} which does not change shape in time, whose
vorticity is mostly concentrated inside a solid torus which moves with constant speed along
the symmetry axis. 
The vortex lines form large circles that fill the torus, whereas fluid particles
spin around the vortex core within perpendicular cross sections  characterized with a  thin torus‐shaped region in which
the vorticity of the fluid is concentrated. 
These objects were first described by Helmholtz 
in his celebrated work \cite{H,H1}. He  considered with great
attention the situation where the vorticity field  is concentrated in a circular vortex-filament
of very small section, a thin vortex ring. 
Helmholtz also analyzed the interaction between two or more similar coaxial vortex rings with thin sections 
and similar translation speeds.
As pointed out by Jerrard and Smets \cite{js}, his description reads:

\smallskip
{\it We can now see generally how two ring-formed vortex-filaments having the same
axis would mutually affect each other, since each, in addition to its proper motion,
has that of its elements of 
fluid as produced by the other. If they have the same
direction of rotation, they travel in the same direction; the foremost widens and
travels more slowly, the pursuer shrinks and travels faster till finally, if their
velocities are not too different, it overtakes the first and penetrates it. Then the
same game goes on in the opposite order, so that the rings pass through each other
alternately.}

		\medskip
		
		The motion pattern described by Helmholtz is often termed
{\em leapfrogging} in 
fluid mechanics.
Leapfrogging vortex rings are solutions of the Euler equations where   several interacting
	vortex rings sharing a common axis of symmetry  move in the same direction along the symmetry axis and 
	pass through each other repeatedly due to the induced flow of the rings
	acting on each other as depicted in Figure \ref{1}.

\begin{figure}
\includegraphics[scale=0.6]{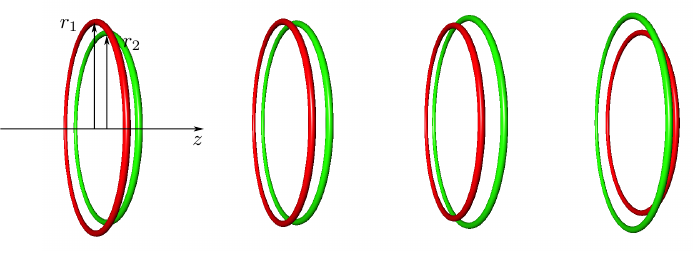}
\caption{Leapfrogging vortex rings.}
\label{1}
\end{figure}

	\medskip
	Even though this phenomenon
	has been widely studied since
Helmholtz, as far as we know it has not yet been mathematically justified.  	In this paper we present what seems to be  the first rigorous construction  of a solution to the Euler equations with 
a leapfrogging motion pattern. 

\medskip
The solution we construct belongs to the axisymmetric, no-swirl class, namely with a velocity field 
 ${\bf u} (x,t)$ which in standard cylindrical coordinates $(r, \theta, z)$ takes the form 
$$
	{\bf u} ( x ,t) = u^r(r, z, t) \, {\bf e}_r  + u^z(r, z,t) \, {\bf e}_z , \quad  x = (r \cos \theta , r \sin \theta , z) \in \R^3, 
	$$
 where
	${\bf e}_r = (\cos \theta , \sin \theta , 0 ),   \quad {\bf e}_\theta = (-\sin \theta, \cos \theta  , 0), \quad {\bf e}_z = (0,0,1),
	$ and 
		\begin{equation}\label{defSigma}
		(r,z) \, \in \, \Sigma =\{ (r,z) \, / \, r>0 , z \in \R\}.
	\end{equation}
	The corresponding vorticity $\vec \omega = \curl {\bf u}$ takes the form
	$$
	\vec \omega (x,t) =\omega^\theta (r,z,t) \,  {\bf e}_\theta, \quad {\mbox {where }} \omega^\theta =  \pp_z u^r - \pp_r u^z  .
	$$
	The divergence free condition $\div {\bf u} =0$ becomes
	$$
	\pp_r (r u^r ) + \pp_z (r u^z) =0 \inn \Sigma, \quad t>0.
	$$
	This implies the existence of a scalar function $\psi^\theta (r,z,t)$ such that
	$$
	u^r = - \pp_z \psi^\theta , \quad u^z ={1\over r}  \pp_r (r \psi^\theta ) \inn \Sigma,   \quad t>0.
	$$
The relation $\vec \omega = \curl {\bf u}$ yields
	$$
	-\left( \pp_z^2  + \pp_r^2 +{1\over r} \pp_r  -{1\over r^2} \right) \psi^\theta = \omega^\theta.
	$$
	We notice that the vector-valued function
	$
	\vec \psi ( x,t) = \psi^\theta (r,z,t) \, {\bf e}_\theta
	$
satisfies $- \Delta \vec \psi=\vec \omega$. 
	The  Euler equations \eqref{euler} thus become
	\begin{equation} \label{eu1}
\begin{array}{l}
			\pp_t \omega^\theta + u^r \pp_r \omega^\theta + u^z \pp_z \omega^\theta=  \frac{1}{r}  u^r \omega^\theta , \quad 
			- [ \pp_z^2 +\pp_r^2 +{1\over r} \pp_r  -\frac{1}{r^2}] \psi^\theta= \omega^\theta \quad (r,z) \in  \Sigma , \quad t>0.
		\end{array}
	\end{equation}
The axisymmetric Euler equation without swirl \eqref{eu1} has a formal singularity at $r=0$, which sometimes is inconvenient to work with. To remove the artificial singularity the following change of variable is usually used
$$
	 \omega= \frac{\omega^\theta}{r},  \quad \psi = \frac{\psi^\theta}{r}.
$$
The functions $\omega$, $\psi$ are respectively known as the {\it relative} vorticity and stream function. In terms of these new variables equation \eqref{eu1}  becomes
	\begin{equation}\label{eu2}
		\begin{array}{l}
		r\, 	\pp_t \omega + \nabla^\perp ( r^2 \psi) \cdot \nabla \omega =0, \quad 
			- \Delta_5 \psi= \omega \inn \Sigma  , \quad t >0,
		\end{array}
	\end{equation}
where
$
\nabla^\perp = (-\pp_z , \pp_r )
$
and 
\begin{equation}\label{D5}
\Delta_5 \psi = \partial_{rr} \psi + \frac{3}{r} \partial_{r} \psi 
	+ \partial_{zz} \psi , \quad x = (r,z),
	\end{equation}
supplemented with the conditions
$$
\pp_r \psi (0,z,t) = 0 , \quad \lim_{|(r,z)| \to \infty} \psi(r,z,t) = 0. 
$$
It is well-known that the initial value problem for \equ{eu2} is globally well-posed in 
$L^1 (rdrdz)\cap L^\infty (\Sigma)$, see \cite{uy,majda,choi}.   
 A bounded solution $\omega$ to \eqref{eu2} with sufficient space 
 decay actually satisfies  
\be\label{time}
t\mapsto  \int_{\R^2} r \omega(r,z,t) \,drdz = constant, \quad   t\mapsto \|w(\cdot, t)\|_{L^\infty (\R^2)}  =   constant.
\ee
 The same is true if the integral is taken in the time section of connected components of the support of $\omega$. We will use these facts in the formulation of a suitable first approximation 
 for a solution exhibiting the leapfrogging dynamics.


\medskip 
A {\bf vortex ring} moving with constant speed $ c$ along the $z$-axis  is a travelling wave solution of \eqref{eu2} with the form  
$$\omega(r,z,t) = W_0 (r, z- c\,  t), \quad \psi(r,z,t) =  \Psi_0 (r, z-c\,  t),$$
where $W_0$ and $\Psi_0$ solve
\begin{equation}\label{ring0}
\nabla^\perp \left( r^2 (\Psi_0 - {c\over 2} )   \right) \cdot \nabla W_0 =0, \quad 
			- [\partial_r^2+\frac{3}{r} \partial_r + \partial_z^2] \Psi_0= W_0 \quad \inn \Sigma .
\end{equation}
It is worth mentioning that if $\Psi_0(r,z)$ satisfies a semilinear equation of the form
$$
	-[\partial_r^2+\frac{3}{r} \partial_r + \partial_z^2] \Psi_0 =  f \left( r^2 (\Psi_0 -  {c \over 2}  ) \right) \inn \Sigma,
	$$
	for an arbitrary nonlinearity $f$, then $\Psi_0,\ W_0=f \left( r^2 (\Psi_0 -  {c \over 2}  ) \right)$ solves 
	\equ{ring0}. A first example of a solution with a compactly supported vorticity and  positive relative vorticity was exhibited by Hill \cite{lamb}. 
\hide{
 In 1970, Fraenkel \cite{f} rigorously found a solution supported inside a torus with a tiny section
	of radius $\ve>0$ with a center located  at a fixed distance $r_0 >0$ of the $z$-axis. See also \cite{mar}.  

 To formally derive the correct propagation speed $c$, let us 
	imagine that we have a family of solutions $(W_\ve, \Psi_\ve)$ of \equ{ring0} with vorticity  depending  on a small 
	concentration parameter $\ve>0$ in the form 
 \begin{equation}\label{11} r \, W_\ve (r,z) =  \frac 1 {\ve^{2}}U \left ( \frac {x-(r_0,0) }\ve   \right) (1+ o(1)) , \quad x=(r,z) \end{equation}
with $o(1)\to 0$ as $\ve \to 0$, for a positive, rapidly decaying radial 
profile $U(y) $ with a mass that we normalize as  $\int_{\R^2} U(y)dy = 8\pi $.  To fix ideas, we consider the Kaufmann-Scully vortex 
\be\label{ks}
U(y) \, =\, \frac 8{(1+|y|^2)^2}, \quad y\in \R^2. 
\ee
Then we have 
$
r \, W_\ve \rightharpoonup 8\pi \delta_{(r_0, 0)} 
$ as $\ve \to 0$,
where $\delta_{(r_0, 0)} $ designates a Dirac mass in $\Sigma$ at the point $(r_0,0)$. Since 
$$
	-[\partial_r^2+\frac{3}{r} \partial_r + \partial_z^2] \Psi_\ve \approx  \frac 1 {r_0\ve^{2}}U \left ( \frac {x-(r_0,0)}\ve   \right)    \inn \Sigma,
	$$
we have that  $r_0 \Psi_\ve(r,z)$
approaches  Green's function $G(r,z)$ where 
$$
-[\partial_r^2+\frac{3}{r} \partial_r + \partial_z^2] G =   8\pi  \delta_{(r_0, 0)}  \quad \inn \Sigma .
$$
It is not hard to show that near $(r_0,0)$ and up to an additive constant and lower order terms  we have  
$$
G(r,z) =  - 2 \log ( |x-(r_0,0)|^2 ) \, \Bigl( 1 - \frac 3{2r_0} (r-r_0) \Bigr) .
$$
Actually, at main order we  also have  
$$ r \Psi_\ve(r,z)  =  -  2  \log( |x-(r_0,0)|^2  + \ve^2) ( 1 - \frac 3{2r_0} (r-r_0))  . $$ 
Replacing these approximate expressions into  the equation 
$
\nabla^\perp \left( r^2 (\Psi_\ve - {c\over 2} )   \right) \cdot \nabla W_\ve =0$ 
near $(r_0,0)$ yields 
$$
[2r_0  ( {4|\log \ve| \over r_0}    -{c\over 2} )  -     \frac 3{2}  4 |\log \ve |          ]\,  {\bf e_2} \cdot \nn U \approx 0.   
$$
Hence, at main order 	the propagation speed $c$  should satisfy
	\be \label{massi}
	c = \frac 2{r_0} |\log \ve|(1+ o(1)) \ass \ve \to 0 .
	\ee
	This is indeed the speed rigorously derived by Fraenkel \cite{f}. 
}
In 1970, Fraenkel \cite{f} (see also \cite{mar}) rigorously obtained a solution supported inside a torus with a tiny section of radius $\ve>0$ with a center located at a fixed distance $r_0 >0$ of the $z$-axis, and found the asymptotic expression
	\be \label{massi}
	c = \frac 2{r_0} |\log \ve|(1+ o(1)) \ass \ve \to 0 .
	\ee
		Vortex rings have been analyzed in larger generality in  \cite{fb,norbury,ambrosetti-struwe,dv,buffoni}.
	Expression 
	\equ{massi} corresponds to a special case of formal asymptotics 
	obtained by Da Rios in 1906 \cite{darios,darios1916},
	of thin vortex tubes around curves evolving by their binormal flow.
	See \cite{ricca,jseis,jerrard-smets}.  Helicoidal travelling vortex tubes have been recently found in \cite{ddmw1}.

\subsection*{Interacting vortex rings}  
We shall formally derive the dynamics 
of $k\ge 2$ thin, coaxial 
vortex rings with comparable speeds given at main order by \equ{massi} in \equ{eu2}.
It is convenient to introduce the new unknown $W(r,z,\tau)$ where 
\be
\nonumber
\omega (r,z,t)\,=\, W(r, z-  2{r_0^{-1}} \,  |\log\ve| \,  t,  |\log\ve|\, t\, ), \ee 
so that Problem  \eqref{eu2} in terms of $W(r,z,\tau)$, $ \tau = |\log \varepsilon| \, t$,   takes the form
	\begin{equation}
	\label{leap00}
		|\log\ve| \, r \, \pp_\tau W  + \nn^\perp (r^2\, ( \Psi - r_0^{-1} |\log \ve|) \, ) \cdot \nn W \,= \, 0  , \quad  -\Delta_5 \Psi = W.
\end{equation}	
Let us assume the presence of a solution $W(r,z,\tau)$ of 
\equ{leap00} which  has the approximate form
\begin{equation}
\label{Wansatz}
rW(r,z,\tau) 
 \,  = \, 
\sum_{j=1}^k  \frac 1 { \ve_j^{2}}U \left (
 \frac {x  -  P_j(\tau)}{\ve_j} \right),  \quad           x= (r,z) ,
 \quad P_j(\tau) = (  P_j^1(\tau) ,  P_j^2(\tau) ) , 
 \end{equation}
for small numbers $\ve_j\to 0$ and  centers $P_j(\tau)$ close to $(r_0,0)$ to be determined, where $U$ is the Kaufmann-Scully vortex 
\be\label{ks}
U(y) \, =\, \frac 8{(1+|y|^2)^2}, \quad y\in \R^2. 
\ee
This ansatz is consistent with the fact that $\int rW\, drdz $ should be preserved in time  
on components of the support, see \equ{time}.   On the other hand, the fact that the $L^\infty$-norm of $W$  should be preserved in time on each component suggests that 
$  P_j^1(\tau )\ve_j^2  $ should be constant in time, say $= r_0\ve^2$. Thus, we choose 
$$
\ve_j  =   \ve \frac{r_0}{\sqrt{P_j^1(\tau)}},$$
so that at main order, near $(r_0,0)$ we have 
$$\begin{aligned}
W(r,z,\tau) 
 \, & = \, 
\sum_{j=1}^k  \frac 1 { r_0\ve^2} U \left (
 \frac {x  -  P_j(\tau)}{\ve_j} \right),  \quad           x= (r,z) .
 \end{aligned}
$$ 
Since the Green function $G(r,z)$ solving 
\[
-[\partial_r^2+\frac{3}{r} \partial_r + \partial_z^2] G =   8\pi  \delta_{(r_0, 0)} 
\quad \text{in }\Sigma
\]
satisfies $ G(r,z) =  - 2 \log ( |x-(r_0,0)|^2 ) \, ( 1 - \frac 3{2r_0} (r-r_0) )$ 
near $(r_0,0)$, up to an additive constant and lower order terms, at main order we have
$$\begin{aligned}
\Psi _\ve (r,z,\tau) 
 \, & = \, 
\sum_{j=1}^k  \frac 2{P_j^1 } \log \Big  ( \frac 1 {|  
x-P_j (\tau ) |^2+ \ve_j^2}\Big )\, ( 1 - \frac 3{2 P_j^1 } (r-P_j^1)) ,  \quad           x= (r,z) ,
 \end{aligned}
$$ 
Substituting this expression into equation \equ{leap00} we obtain 
that at main order and for each $j$, 
\begin{equation}\label{12}
\Big [ - r |\log\ve| \frac{d P_j}{d\tau}    +
 \nn^\perp_x  \sum_{i\ne j} 4 r  \log \frac 1 {|x-P_i|}   \,   - 2 {r-r_0  \over r_0} |\log \ve | \, {\bf e}_2 
 \Big ]\cdot
\nn U \left (
 \frac {x     -  P_j(\tau)}{\ve_j} \right)\approx 0.
\end{equation}
It is convenient to use the  ansatz
$$ 
P_j(\tau)  =  (r_0,0)  +  \frac 1{\sqrt{|\log\ve|}} q_j(\tau) , \quad q_j(\tau) = (q_j^1(\tau), q_j^2(\tau)).  
$$
Imposing vanishing of the left factor in  \equ{12} at $x= P_j$,   neglecting lower order terms in a fixed interval $\tau\in [0,T]$, we arrive at the limiting system  
 \begin{equation}\label{car1}
\frac {dq_j}{d\tau}  =\begin{color}{blue} -  \end{color}4 \sum_{\ell \not= j} \,  \,   \frac{(q_j-q_\ell)^\perp }{ |q_j-q_\ell|^2}  -2 \, { q_j^1  \over r_0^2 } \,   \, \left( \begin{matrix} 0\\ 1 \end{matrix} \right), \quad \tau\in [0,T].  
\end{equation}
This is a Hamiltonian system for the energy  
$$
H_k(q_1,\ldots, q_k) = \begin{color}{blue} -\end{color} 2\sum_{i\ne j}  \log|q_i-q_j|  - \frac 1{r_0^2} \sum_{j=1}^k |q_j^1|^2.
$$
For instance, for $k=2$ and restricting ourselves to $q_1=-q_2 =q = (q^1 , q^2)$ 
we arrive at the system 
\begin{equation}\label{car2}
\frac {dq}{d\tau} \, =\, \begin{color}{blue} -\end{color}    2 \frac{q^\perp }{ |q|^2}  -2 \, { q^1  \over r_0^2 } \,   \, \left( \begin{matrix} 0\\ 1 \end{matrix} \right), \quad \tau\in [0,T].  
\end{equation}
The level curves for its Hamiltonian $q\mapsto H_2(q,-q)$ are depicted in Figure~\ref{fig2}. 
Any solution of this system  is periodic and lies on a closed level curve around the origin. 

\medskip
In short, the vorticity of  $k$ interacting 
similar thin vortex rings looks like the superposition of vorticities of $k$ individual rings with small cross section $\ve$, whose centers evolve in time approximately following the  {\em reduced leapfrogging dynamics} \equ{car1}  being located at mutual distances of order 
${1\over \sqrt{|\log \ve |}}$.

\begin{figure}
\includegraphics[scale=0.8]{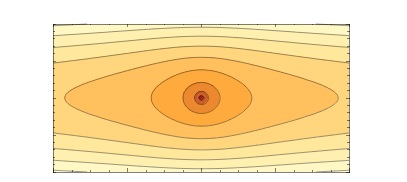}
\caption{ Level curves for $q \to H_2 (q, -q)$.}
\label{fig2}
\end{figure}

\medskip
Several authors investigated the reduced dynamics given by system \equ{car1},  see for instance \cite{dyson,dyson1, da, BCM, hicks,klein,lamb,marchioronegrini,aiki,bori}. Numerical simulations of the leapfrogging, such as in \cite{riley,naga,lim, chen}, provided theoretical evidence that this phenomenon should actually occur in the Euler equations. The leapfrogging phenomenon was experimentally confirmed in 1978  by  Yamada and Matsui \cite{yamada}. They used vortex rings made of air and used smoke for visualization and successfully created a leapfrogging pair of rings.

\medskip
Our main result states the existence of a true smooth solution $\omega(r,z,t)$ of Problem \equ{eu2} asymptotically 
obeying the dynamic law described above, for each given collisionless solution of System \equ{car1} on the time interval $[0,T].$

It is more convenient to express, for a small $\ve >0$, the problem in terms of the equivalent formulation \equ{leap00}

	\begin{equation}
	\label{leap01}
		|\log\ve| \, r \, \frac {\pp W}{\pp \tau}   + \nn^\perp (r^2\, ( \Psi - r_0^{-1} |\log \ve|) \, ) \cdot \nn W \,= \, 0  , \quad  -\Delta_5 \Psi = W,  \quad x \in \Sigma, \quad \tau >0  , 
\end{equation}
 $$
	 {\partial \over \partial r} \Psi (x,\tau ) = 0  \quad {\mbox {on}} \quad x\in \partial  \Sigma  , \quad  |\Psi(x,\tau )| \to 0 \quad \text{as } |x|\to \infty, \, \tau>0.
	 $$
We recall that, if $W(r,z,\tau)$ solves this problem, then 
$$\omega (r,z,t)\,=\, W(r, z-  2{r_0^{-1}} \,  |\log\ve| \,  t,\,   |\log\ve|\, t\, ) \quad 
$$
solves \equ{eu2}.

\medskip

\begin{theorem} \label{teo}

Let 
$q(\tau) = (q_1(\tau), \ldots q_k(\tau))$ be a solution of System $\equ{car1}$ without collisions in $[0,T]$ in the  sense that 
$ \inf \{ |q_i(\tau)-q_j(\tau)|\ / \tau\in [0,T], \ i\ne j\} \, >\, 0. $ Then there exists a smooth solution $W_\ve (r,z,\tau) $ of Problem $\equ{leap01}$ 
such that for certain points $P_j^\ve(\tau) $ with  the form 
$$
P_j^\ve  (\tau)  =  
(r_0,0) +  \frac 1{\sqrt{|\log\ve|}} q_j(\tau)  +  O \Big ( \frac { \log (|\log\ve|)}{|\log\ve|} \Big )   , 
\quad  
$$
we have 
\be\label{mmx}\begin{aligned} 
W_\ve (r,z,\tau )  &=\, \frac 1{\ve^2r_0 } \, \sum_{j=1}^k U \left (\frac{x-  P_j^\ve(\tau)}{\ve_j} \right)  +\varphi(r,z,\tau)  
, \quad  x= (r,z) 
\end{aligned} 
\ee
where $U(y) = \frac 8{(1+|y|^2)^2}$,  $\ve_j^2  =  \ve^2   \frac {r_0}{ P_j^1(\tau) }$. For 
some number $C>0$
we can estimate 
$$
|\varphi(r,z,\tau) |\,\le\,    C\sum_{j=1}^ k\frac {\ve^{2-\sigma}} { \ve^{3-3\sigma} +  
\left |x-  P_j^\ve(\tau) \right|^3} \foral x\in \Sigma, \ \tau \in [0,T],
$$
for some $\sigma >0$ small.
\end{theorem}

Theorem \ref{teo} is the first mathematically rigorous justification of the leapfrogging motion of vortex rings for the Euler equations since Helmholtz proposed it in 1858. 
Next we make various comments on our result and connections with the literature.

\medskip

$\cdot$ 
It has long been observed that vortex dynamics in the Euler equations exhibit strong analogies with the Ginzburg-Landau vortex evolution in two and three dimensions for the Gross-Pitaievskii equation. Jerrard and Smets \cite{js} constructed axisymmetric solutions to the problem
$$
iu_t  =  \Delta u + \frac 1{\ve^2}  (1-|u|^2) u \inn \R^3  \quad
$$
with several vortex rings precisely within the regime we are considering here,  exhibiting the leapfrogging dynamics.
 They state as a major open problem the corresponding result for the $3d$ incompressible Euler equations. See also \cite{js2021} for almost parallel GP vortex filaments, and \cite{jseis} for a conditional result for the so-called vortex filament conjecture.  
The interesting approach in \cite{js,js2021}, based on limiting Ginzburg-Landau energy configurations, does not appear to be suitable for the Euler setting.

\medskip 
$\cdot$   Marchioro and Pulvirenti 
\cite{MP93} analyzed smooth solutions with compactly supported, highly concentrated regular vorticities around a finite number of well-separated points for the $2d$ Euler equations establishing the connection with the Kirchhoff-Routh vortex dynamics.
 In \cite{MP83} they previously did so for tiny vortex patches.  
Extensions of those results to  the evolution of thin axisymmetric vortex rings have been considered in \cite{BCM,marchioronegrini,ButtaCM}. In the recent work  
 \cite{ButtaCM} evolution of compactly supported vortex rings under large separations has been recovered. However, the regime there considered does not allow to detect the leapfrogging dynamics. 
 The reason for this is that the method in those works does not provide a fine control on the evolution of the core of the vorticity. 
In particular, their initially tiny local supports could expand in time, not allowing a fine control on the form of the solutions near the concentration rings which 
in the leapfrogging dynamics are are at small order $|\log\ve|^{-\frac 12}$ distant to each other.

\medskip $\cdot$  
We follow here an entirely different strategy to overcome that difficulty, using a scheme based on our previous work \cite{ddmw} on vortex desigularization in the $2d$-Euler case.     
Problem \eqref{eu2}, \eqref{D5} can be regarded as an anisotropic version of the $2d$ Euler equation.  We first construct an accurate approximate solution, and then solve the full problem by linearization,
adjusting a small correction.  
We need to substantially refine the procedure in \cite{ddmw}. The anisotropy of the equation creates error terms which can not just be treated as perturbations. That makes the improvement of approximation be much more delicate. 
The approximate solution starts with the ansatz \eqref{Wansatz} where $U$ is the the Kaufman-Scully vortex \equ{ks}.
This profile  is a convenient local choice for the building blocks of the rings, while not at all the only one possible. Profiles with a sufficiently fast decay could also be employed. 
While a slight simplification in the approximation procedure could be obtained, that would be at the expense of complicating the a priori estimates.  
The basic blocks \equ{ks}  have been used to construct solutions with concentrated vorticities in $\R^2$ in \cite{ddmw} 
and helicoidal travelling waves in \cite{ddmw1}. It is worth mentioning that the nonlinear stability of fast-decay radial steady vortices is still an open problem. See the works 
 \cite{bedrossian,IJ, IJ2022} for recent linear stability results. The spectrum stability of vortex filament  in $3d$-Euler 
is a delicate matter, see
\cite{fs,gs,gs1,gsv}. Nonlinear stability of vortex ring solutions in $3d$-Euler case is a challenging problem. See the related remarkable  paper by Bedrossian and Masmoudi \cite{BM2015} on nonlinear asymptotic stability for plane Couette flows. 


\medskip


\medskip 
$\cdot$  While the solution built is global-in time (as it is classical for the axi-symmetric no swirl setting) the phenomenon described is not expected to persist as $t\to +\infty$. More precisely, for each $T>0$ we build a solution that approaches the leapfrogging dynamics in a suitable uniform sense as $\ve \to 0$ for $\tau\in [0,T]$ in the scaled time variable $\tau = t/|\log\ve|$. Numerical simulations \cite{yamada}, \cite{AN} show that the leapfrogging dynamics does not persist in time beyond a few crossings of the vortex rings involved. In fact, even in the two-dimensional case the typical long-time behavior is mixing or turbulence, so that long-time uniform localized vorticity is not generic.  

\medskip 
$\cdot$  We point out that the setting $k = 2$, $q_1 = - q_2$ does not simplify the situation: the Euler equation 
cannot be reduced to finding rotating or travelling wave type solutions. The existence of time-periodic solutions following the associated reduced periodic dynamics is a very difficult question. In case it exists it is probably extremely unstable. Global-in-time dynamics for other type of vortex configurations in the $2$-dimensional Euler equations has been constructed in \cite{zbarsky}.

\medskip
$\cdot$ 
We point out that the dynamics of a single vortex ring for the Navier-Stokes system has been considered in \cite{ButtaCM1}, 
and very recently in \cite{gsv2} with an approach closely connected with the one here, using the Lamb-Oseen self-similar vortex (which has a Gaussian decay). 


\medskip
$\cdot$  
In Theorem \ref{teo}, all individual rings 
in the vorticity expression  \equ{mmx} have similar positive circulations, which is what leads to the leapfrogging phenomenon. Another interesting situation is the case of rings with opposite sign circulations, namely dipole dynamics. This is related to various scenarios for potential singularities in axi-symmetric Euler flows, see \cite{child2008, child2018, choi2017, elgindi, hou2014}.   


\medskip
In the next section we explain in detail the scheme of the proof of Theorem \ref{teo}  which is carried out in the subsequent sections.

			\section{Scheme of the proof and organization of the paper.} 
\label{sec2}

The proposed strategy for our construction falls into two steps. First we derive an approximate solution, then we solve the full problem setting up an {\it inner-outer gluing scheme}.

\medskip

The  {\it inner-outer gluing scheme} has been a very 
powerful tool in singularity formation problems for nonlinear elliptic and parabolic equations, see for instance \cite{DKW2011, CDM2020, DDW2020}. In those applications, the use of maximum principle is essential. In \cite{ddmw} we  extend this scheme to the  Euler flow in $2$-dimensions and we find 
 regular
solutions with highly concentrated vorticities around a given number of moving points in the plane, the so-called {\it desingularized vortex problem}. This paper represents the first attempt to extend this scheme to the three-dimensional Euler flow and find regular solutions with highly concentrated vorticities around a given number of vortex rings interacting in accordance with the Leapfrogging dynamics. 

\medskip{}{}
We take advantage of the axi-symmetry and  no-swirl assumption to recast the $3$-dimensional problem in the $2$-dimensional setting \eqref{leap01}, which allows us to adapt some ideas from \cite{ddmw}. In the construction of solutions with  vorticity highly concentrated  around points for the $2$-dimensional Euler flow in \cite{ddmw}, each vortex is well described by a smooth radially symmetric function with fast decay at infinity, the Kaufman-Scully vortex $U$. The core of each vortex is of size $\ve$ and the interaction with other vortices can be easily controlled, as the distance of the centers of two different vortices is of order $O(1)$ as $\ve \to 0$. On the contrary,  radially symmetric profiles need an important non radial correction in order to well approximate vortex rings,  the reason being the anisotropy in equation \eqref{ring0}. The core of each ring is still of order $\ve$ but the relative distance between the centers of two rings is of size $|\log \ve |^{-{1\over 2}}$, which makes their interaction much stronger, and hence delicate to control. These are the features that  make the steps in our argument quite involved: the construction of the approximation requires several consecutive adjustments and the inner-outer scheme to find the remainder of the solution has to be designed to properly describe the transition of the problem in different-scaled regions.

\medskip

	\subsection*{Construction of an approximate solution.} \ \ 	The basic building block for the construction of the leapfrogging is a single approximate travelling vortex ring  with 
	highly $\ve$-concentrated vorticity near a point  $P_0= (r_0, z_0)$,  $r_0>0$. Here $r_0$ represents the radius of the ring, and $z_0$ its vertical displacement. This is achieved finding 
	a constant  $\alpha$ and a stream function  $ \Psi_\ve $ {\it almost} solving  the  equations for the travelling vortex ring
$$	\nn^\perp (r^2( \Psi_\ve  - \alpha |\log \ve|) ) \cdot \nn W_\ve   \sim 0,\quad
			{\mbox {where} } \quad 	  W_\ve =-\Delta_5 \Psi_\ve,
	$$
in a neighbourhood of $P_0$,	with the expectation that  the vorticity 
$$
		r \, W_\ve \rightharpoonup  8\pi  \delta_{P_0} , \quad \ass \ve \to 0.
$$
	In accordance with \eqref{ring0}, \eqref{massi} and the discussion in the introduction, the constant $\alpha$ is expected to satisfy $\alpha \sim {1\over r_0}$ as $\ve \to 0$ and the vorticity $r \, W_\ve$ to have the form ${1\over \ve^2 \, r_0 } U\left({x-P_0 \over \ve} \right)$.
More precisely,  we find
		\begin{align*}
		\alpha &= {1\over r_0} + O(|\log \ve |^{-1} )\\
		\Psi_\ve (x) &= {1\over r_0} \psi_\ve \left({x-P_0 \over \ve}\right), \quad \psi_\ve (y)= \left( \Gamma_0 \left( y \right) - 4 \log \ve  \right) \left( 1- {3\ve \over 2r_0} y_1 \right)+ O(1)\\
			W_\ve  (x)&= {1\over \ve^2  r_0} w_\ve \left({x-P_0 \over \ve}\right),  \quad 
			w_\ve (y )= U(y) \left( 1+{\ve y_1 \over 2  r_0} \Gamma_0 (y) \right) + U(y) O(\ve y_1)
		\end{align*}
		in the region around $P_0$ given by $|x-P_0|<\delta$, for some fixed $\delta >0$, uniformly as $\ve \to 0$.
The function   
$\Gamma_0(y) = \log U(y) $ satisfies the Liouville equation 
\be \label{defU} 
-\Delta \Gamma_0 = e^{\Gamma_0} = U(y) =\frac 8{(1+|y|^2)^2} \inn \R^2.
\ee
	 The precise derivation is carried out in Section \S \ref{sect}, Proposition \ref{f1}.

	 \medskip{}{}
The approximate leapfrogging solution for \eqref{leap01} will look at main order as the sum of $k$ approximate vortex rings. For simplicity we rename the time-variable $\tau$ in \eqref{leap01} to be $t$, so that $t \in [0,T]$.	 Let $k \geq 2$, and consider $k$ points
	 $$
	 P_1 (t ), \quad P_2 (t ), \quad \ldots \quad P_k (t ), \quad P_j = (P_j^1, P_j^2) \in \Sigma
	 $$
	 and $k$ scaling parameters $\ve_1 (t), \ldots , \ve_k (t)$. 
	 We allow these points and these parameters to evolve in time, for $t \in [0,T]$ and  we assume they have the form
	 $$
	 P_j = (r_0 , 0) + {1\over \sqrt{|\log \ve |}} q_j + \tilde P_j + {\bf a}_j, \quad P_j^1 (t) \, \ve_j^2 (t) = r_0 \, \ve^2 
	 $$
	 with $q_j$ satisfying the {\it reduced leapfrogging dynamics} \eqref{car1} and 
	 $$
	 \sqrt{|\log \ve |} \, |\tilde P_j | \to 0, \quad |{\bf a}_j| \lesssim \ve^3 \quad \quad \ass \ve \to 0,
	 $$
	 uniformly for $t \in [0,T]$. We refer to Subsection \S\ref{subsec41} for the detailed description of these points and the scaling parameters.
	 
	 \medskip
	 To each point $P_j$, we associate the approximate vortex ring  described before, namely a pair of stream function and vorticity $(\Psi_j, W_j)$, that then we  add up together. This gives a good description of the approximate leapfrogging close to the points $P_j$, but it is far from satisfying the required boundary conditions
	 $$
	 {\partial \over \partial r} \Psi (x,t) = 0  \quad {\mbox {on}} \quad \partial  \Sigma  \times [0,T), \quad  |\Psi(x,t)| \to 0 \quad \text{as } |x|\to \infty.
	 $$
	 We use a cut-off function and  correct the sum of stream functions to have at the same time the boundary conditions and the relation $-\Delta_5 \Psi = W$ satisfied.
	 
	 	\medskip
	For $N \in \N$, we set
	\begin{equation}\label{cutoff}
		\eta_N (s) = 1, \quad {\mbox {for}} \quad s<N, \quad = 0 \quad {\mbox {for}} \quad s>2N
	\end{equation}
	for a smooth cut-off function.
	Define
	$$
	\eta (x) = \eta_1 \left({4|x-(r_0, 0) |\over \delta } \right),
	$$
	with $\eta_1$as in \eqref{cutoff} and $\delta <r_0$. In Section \S \ref{sec3}, Subsection \S \ref{subsec42}, we prove the existence of  a smooth function $H^0 (x,t)$, which is uniformly bounded, as $\ve \to 0$, for $(x,t) \in \Sigma \times [0,T]$ so that the pair
	$$
	 \Psi^0 (x,t) = \eta (x) \, \sum_{j=1}^k  \Psi_j +H^0 (x,t), \quad {\mbox {and}}  \quad W^0 (x,t) = \eta (x) \, \sum_{j=1}^k W_j,
	$$
is defined for $(x,t) \in \Sigma \times [0,T]$ and satisfies
 $$
	 {\partial \over \partial r} \Psi^0 (x,t) = 0  \quad {\mbox {on}} \quad \partial  \Sigma  \times [0,T), \quad  |\Psi^0(x,t)| \to 0 \quad \text{as } |x|\to \infty, \quad -\Delta_5 \Psi^0 = W^0  \quad (x,t) \in \Sigma \times [0,T].
	 $$	
	 Let us define  the Euler operators
	\begin{equation}\label{defS}
		\begin{aligned}
			S_1(W,\Psi) &= |\log\ve| r \partial_t W + \nn^\perp (r^2( \Psi - r_0^{-1}  |\log \ve|) )\cdot \nn W , \quad \\
			S_2 (W , \Psi )&= \Delta_5 \Psi + W.
		\end{aligned}
	\end{equation}
	Hence
	$$
	S_2 (W^0,\Psi^0 ) = 0, \quad \quad (x,t) \in \Sigma \times [0,T].
	$$
	Besides $(\Psi^0, W^0)$ is a good approximate solution to \eqref{leap01} around each point $P_j$ in the following sense. 
	 For any $j=1, \ldots , k$, consider the small ball around $P_j$ of radius $|\log \ve |^{-1}$, $B(P_j, |\log \ve |^{-1} )$. Under our assumptions on the points $P_j$ and the scaling parameters $\ve_j$, one has that 
	 $$B(P_i,|\log \ve |^{-1} )  \cap B(P_j,|\log \ve |^{-1} ) = \emptyset ,\quad {\mbox {for}} \quad  i \not= j,
	 $$
	 for all $\ve $ small enough, at any $t\in [0,T]$.
	 In  Subsections \S \ref{subsec43} and \S \ref{subsec44} we derive 
	 \begin{align*}
	 \left|\, S_1 (W^0,\Psi^0)  \, (x,t) \, \right| \,& = \, O\left( {\ve^{-2} |\log \ve | \over 1+ |y|^4} \right), \quad x=(r,z)\\
	 {\mbox {where}} \quad & y={x-P_j \over \ve}, \quad {\mbox {for}} \quad x \in B(P_j, |\log \ve |^{-1} ), \, t \in [0,T].
	 	\end{align*}
	 This estimate can be achieved thanks to the choice of the points $P_j$ at their main order, namely $P_j = P_0 + {1\over \sqrt{|\log \ve |} } q_j$, and the fact that $q_j$ solve \eqref{car1}. Observe that on the boundary of the region $B(P_j , |\log \ve|^{-1} )$ one has $|S_1 (W^0, \Psi^0) | = O(\ve^2 |\log \ve |^5 )$, while it is of order $O(\ve^{-2} |\log \ve |)$ close around each $P_j$.

	 \medskip
		Next we modify $( W^0, \Psi^0)$ in order to produce a better approximate solution  $(W^*,\Psi^*)$.
		\medskip
	
	\medskip{}{} In Section \S \ref{sec4} and \S \ref{sec5} we prove the existence of points $\tilde P_j$, $j=1, \ldots , k$ in the decomposition of $P_j = (r_0 , 0) + {1\over \sqrt{|\log \ve |}} q_j + \tilde P_j + {\bf a}_j$, and functions
		$$
		\psi_j^*, \, \phi_j^*, \quad j=1, \ldots , k, \, \quad \psi^{*,out} , \, \phi^{*,out}
		$$ 
		so that
	 $(W^*,\Psi^*)$ given by
	 \begin{equation}\label{app1}
	\begin{aligned}
		\Psi^* (x,t) &= \Psi^0 + \sum_{j=1}^k   {\eta_{j2}\over r_j} \psi_j^* ({x-P_j \over \ve_j },t) + \psi^{*,out} (x,t) \\
		W^*  (x,t) &= W^0  + \sum_{j=1}^k    {\eta_{j1} \over r_j \ve_j^2 } \phi_j^* ({x-P_j \over \ve_j },t) + \phi^{*,out} (x,t) 
	\end{aligned}
	\end{equation}
	where
$$ \eta_{jN} (x,t) =\eta_N  \left(|\log \ve|^3 \, |x-P_j |\right),
$$
is a better approximate leapfrogging. The function $\eta_N$ is defined in \eqref{cutoff}.

In fact, the boundary conditions are satisfied and we get that
	$$
		\begin{aligned}
			| S_1(W^*, \Psi^*)(x,t)| \ \le &\     C\ve^{1-\sigma_*}\sum_{j=1}^k \frac 1 {1+ |y_j|^3} + C{\ve^{4-{2\sigma_* }} \over 1+ |x|^4} , \quad y_j = \frac {x-P_j(t)} {\ve}, \\
			|S_2(W^*, \Psi^*)(x,t)|\ \le &\   C\ve^{4-\sigma_*} 
			\eta_1 \left({4|x-(r_0, 0) |\over r_0} \right),
		\end{aligned}
		$$
		for all $x \in \Sigma$ and $t \in [0,T]$, and $|y_j|<\ve^{-1} |\log \ve |^{-3}$. Here
	$\eta_1$ is given by \eqref{cutoff} and $\sigma_*$ is a fixed positive number. The functions $(W^*, \Psi^*)$ depend on the points ${\bf a}_j$ which are left as parameters to adjust later. Notice that on the boundary of the region $B(P_j , |\log \ve|^{-1} )$ one has $|S_1 (W^*, \Psi^*) | = O(\ve^{4-{2\sigma_*}}   )$, while it is of order $O(\ve^{1-\sigma^*} )$ close to each $P_j$.
	
	\medskip{}{}
	Proposition \ref{Approximation} contains a precise description of the error term
	$
	S_1 (W^*, \Psi^*)
	$
	in the inner regions close to the points $P_j$, at distance $|x-P_j | \lesssim |\log \ve |^{-3}$, for each $j=1, \ldots , k$, as well as in the complementary outer region. Observe that the inner regions  $|x-P_j | \lesssim |\log \ve |^{-3}$ well separate the points $P_j$, whose relative distance is of order $|\log \ve |^{-{1\over 2}}$. The modification to the original stream function $\Psi^0$ is given by
	$$
	\sum_{j=1}^k    {\eta_{j2}\over r_j} \psi_j^* ({x-P_j \over \ve_j },t) + \psi^{*,out} (x,t).
	$$
	The terms $\psi_j^*$ are functions of the expanded variables $y={x-P_j \over \ve_j}$ and encode the local correction needed to improve the approximation near the points $P_j$. The cut-off functions $\eta_{j2}$ (as well as $\eta_{j1}$ for the vorticity) are designed in such a way to guarantee that $\Psi^0$ is still the main term in the decomposition of $\Psi^*$ in the region where the corresponding cut-off functions are non-zero.
	The function $\psi^{*,out}$ is a more regular function, expressed in terms of the original variable $x$ and it is responsible of the improvement of the size of the error far from the points $P_j$. Similar decomposition describes the modification of the initial vorticity $W^0$.
	
\medskip{}{}
In Section \S \ref{sec4}  we describe how we find the 
 improvement $(W^* , \Psi^*)$ and we  describe  the error of approximation. We make this statement precise in Proposition \ref{Approximation}. The proof of Proposition \ref{Approximation} is contained in Section \S \ref{sec5}.	The construction of the approximation  requires  several refinements  in regions close to the centers, which we call {\it inner improvements} (to get $\psi_j^*$ and $\phi_j^*$), and one    global adjustment in the region far from the centers, the {\em outer improvement} (to get $\psi^{*,out}$ and $\phi^{*, out}$). At the beginning of  Section \S \ref{sec5}, in Subsection \S \ref{strategy},  we describe the general strategy for improvement  and then we proceed with the detailed description of each step in Subsections \S \ref{prima} to \S \ref{ultima}.
For the sixth inner improvement and for the outer improvement we solve two linear transport equations. We study them respectively in Sections \S \ref{secProofTransportInner} and \S  \ref{secProofTransportOuter}.

\subsection*{Solving the full problem.} \ \ 
We look for a leapfrogging solution of  equations \eqref{leap01} of the form
\begin{equation}\label{finalform}
\begin{aligned}
    W&= W^* + \varphi\\
    \Psi &= \Psi^* + \psi
\end{aligned}
\end{equation}
where $\varphi$ and $\psi$ are small corrections of the previously found approximation. It is in order to find $\varphi$ and $\psi$ that we
 set up  an inner-outer gluing scheme. 
	The solution $(W, \, \Psi)$ will have the   form  \eqref{finalform} with
		\begin{align*}
			\varphi &= \sum_{j=1}^k \bar \eta_{j1}  {1\over r_j \ve_j^2 } \phi_j ({x-P_j \over \ve_j },t) + \phi^{out} (x,t) \\
				\psi &=  \sum_{j=1}^k \bar \eta_{j2} {1\over r_j} \psi_j ({x-P_j \over \ve_j },t) + \psi^{out} (x,t) 
		\end{align*}
	where
	$$\bar \eta_{jN} (x,t) =\eta_N (|\log \ve |^5 |x-P_j |),$$ and $\eta_N$ as in \eqref{cutoff}.

	Comparing with \eqref{app1}, you may notice that this ansatz has the same form as the one used  for the construction of the improved approximate leapfrogging of vortex rings $(W^*, \Psi^*)$. Observe though that here we are taking  cut-offs  slightly shorter than the ones taken in the definition of $(W^*, \Psi^*)$.

	Let $S_1$ and $S_2$ be the Euler operators introduced in \eqref{defS}.
	Then  the operator $S_1$  evaluated at $(W,\Psi)$ becomes
	\begin{align*}
		S_1(W,\Psi) &= \sum_{j=1}^k {\bar \eta_{1j} \over \ve_j^4} E_j [\phi_j , \psi_j \, \psi^{out} , P] + E^{out} [\phi^{out}, \Psi^{out} , \phi^{in}, \psi^{in} , P] \\
		&{\mbox {where}} \nonumber \\
		&\phi^{in}(y,t) = (\phi_1(y,t) , \ldots, \phi_k(y,t) ), \quad \psi^{in}(y,t)  = (\psi_1(y,t) , \ldots, \psi_k(y,t) ). \nonumber \end{align*}
The operators  $E_j$, $j=1, \ldots , k$,  and $E^{out}$ are defined respectively 
	\begin{align*}
		E_j^{in} & [\phi_j , \psi_j \, \psi^{out} , {\bf a} ] (y,t) := |\log \ve | \ve_j^2 (1 +{\ve_j \over r_j } y_1) \pp_t \phi_j   - |\log \ve | \left( \ve_j  \dot{\ve_j} (1+{\ve_j \over r_j} y_1)    \nabla \phi_j \cdot y  -{\ve_j^2 \over r_j} y_1 \dot P_j \cdot \nabla \phi_j \right) \nonumber \\
		& + \nabla^\perp \left(  (1+{\ve_j \over r_j} y_1)^2 (\Psi_j  -   |\log \ve | +\psi_j^*    + \psi_j + r_j \psi^{out} )  -\ve_j |\log \ve |  \dot{ {\bf a}_j } \right)\cdot \nabla \phi_j \nonumber \\
		&+ \nabla^\perp \left((1+{\ve_j y_1 \over r_j} )^2 (\psi_j +r_j \psi^{out} ) \right) \nabla  (\ve_j^2 r_j W^* ) \nonumber \\
		& +\ve_j^4 S_1 (W^*, \Psi^*) (\ve_j y + P_j),  \quad |y|< 3R , \quad R:= {1 \over \ve |\log \ve|^5} 
	\end{align*}
	with $\dot{} = {d \over dt}$ and $y={x-P_j \over \ve_j} $, and 
	\begin{equation} 
	\begin{aligned}\nonumber
		E^{out} &[\phi^{out}, \Psi^{out} , \phi^{in}, \psi^{in} , {\bf a}] (x,t) :=
		|\log \ve | \, r \, \phi^{out}_t\\
		&+ \nabla_x^\perp ( r^2 (\Psi^*  + \sum_{j=1}^k { \bar \eta_{j2}\over r_j} \psi_j ({x-P_j \over \ve_j }) + \psi^{out} -r_0^{-1} |\log \ve |)) \nabla_x \phi^{out} \nonumber \\
		&+\sum_{j=1}^k \left[ r \, |\log \ve |  \,  \pp_t \bar \eta_{j1} + \nabla_x^\perp ( r^2 (\Psi^*  + \sum_{j=1}^k  { \bar \eta_{j2}\over r_j} \psi_j ({x-P_j \over \ve_j }) + \psi^{out}-r_0^{-1} |\log \ve |)) \nabla \bar \eta_{1j} \right] {\phi_j \over \ve_j^2 r_j} \nonumber  \\
		&+ \left[ \sum_{j=1}^k (\bar \eta_{2j} - \bar \eta_{1j}) \nabla_x^\perp (r^2 ({\psi_j \over r_j} +\psi^{out}) ) + {r^2 \psi_j \over r_j} \nabla_x^\perp \bar \eta_{2j} \right] \nabla_x W^*  \nonumber \\
		&+ (1-\sum_{j=1}^k \bar \eta_{2j} ) \nabla^\perp (r^2 \psi^{out} ) \cdot \nabla W^* 
		+ (1-\sum_{j=1}^k \bar \eta_{j1} ) S_1 (W^*,\Psi^*)=0 \quad (x,t) \in \Sigma \times [0,T)  . \nonumber 
	\end{aligned}
	\end{equation}
With all this set up, we notice that the pair $(W, \Psi)$ is a solution to \eqref{leap01} if $(\phi^{in}, \psi^{in}, \phi^{out}, \psi^{out} )$ solve 
	the {\it inner-outer gluing} system given by the inner problems
$$
		\begin{aligned}
			E_j^{in} & [\phi_j , \psi_j \, \psi^{out} , {\bf a}] (y,t)  = 0, \quad (y,t) \in B(0; 3R ) \times [0,T)\\
			-\Delta_y\psi_j &-{3 \ve_j \over r_j + \ve_j y_1} \pp_{y_1} \psi_j = \phi_j,  \quad (y,t) \in B(0; 3R  ) \times [0,T)
		\end{aligned}
$$
	for all $j=1, \ldots , k$, 	coupled with the outer problem
$$
		\begin{aligned}
			E^{out} &[\phi^{out}, \psi^{out} , \phi^{in}, \psi^{in} , {\bf a}] (x,t) =0, \quad (x,t) \in \Sigma \times [0,T)\\
			E_1^{out} &[\phi^{out}, \psi^{out} , \phi^{in}, \psi^{in} , {\bf a}] (x,t) =0, \quad (x,t) \in \Sigma \times [0,T)
		\end{aligned}
		$$
	where
	\begin{align*}
		E_1^{out}& := 	\Delta_5 \psi^{out} + \phi^{out} + \sum_{j=1}^k (\bar \eta_{j1} -\bar \eta_{j2} ) {\phi_j \over r_j \ve_j^2} 
		+\sum_{j=1}^k ( {\psi_j \over r_j} \Delta_5 \bar \eta_{j2}  + 2 \nabla_x \bar \eta_{j2} \nabla_x {\psi_j \over r_j}  ), \quad (x,t) \in \Sigma \times [0,T)	
	\end{align*}
coupled 	with the boundary and decay conditions on $\psi^{out}$
	$$
		{\pp \over \pp r } \psi^{out} (x,t) = 0 , \quad (x,t) \in \pp \Sigma \times [0,T], \quad |\psi^{out} (x,t)| \to 0 , \quad \ass |x| \to \infty. 
	$$
	The solution predicted by Theorem \ref{teo} is obtained solving the inner-outer gluing system 
$$
 \begin{aligned}
			E_j^{in} & [\phi_j , \psi_j \, \psi^{out} , {\bf a}] (y,t)  = 0, \quad (y,t) \in B(0; 3R ) \times [0,T)\\
		-\Delta_y\psi_j &-{3 \ve_j \over r_j + \ve_j y_1} \pp_{y_1} \psi_j = \phi_j,  \quad (y,t) \in B(0; 3R  ) \times [0,T), \quad j=1, \ldots , k \\
			E^{out} &[\phi^{out}, \psi^{out} , \phi^{in}, \psi^{in} , {\bf a}] (x,t) =0, \quad (x,t) \in \Sigma \times [0,T)\\
			E_1^{out} &[\phi^{out}, \psi^{out} , \phi^{in}, \psi^{in} , {\bf a}] (x,t) =0, \quad (x,t) \in \Sigma \times [0,T)\\
   {\pp \over \pp r } \psi^{out} &(x,t) = 0 , \quad (x,t) \in \pp \Sigma \times [0,T], \quad |\psi^{out} (x,t)| \to 0 , \quad \ass |x| \to \infty.
		\end{aligned}
		$$
 In
order to obtain the desired solution (with initial conditions equal to zero in all the
parameter functions) we will formulate the system as a fixed point problem for a
compact operator in a ball of a suitable Banach space. We will find a solution
by means of a degree theoretical argument. That involves establishing a priori
estimates for a homotopical deformation of the problem into a linear one.   These arguments are performed in Sections \S\ref{sec9} and \S \ref{sec10}.
The rest of this paper is devoted to carrying out in detail the steps outlined
above.

	



		\section{Approximate travelling vortex ring} \label{sect}


	In this section we  define the basic building block for the construction of the leapfrogging. This object is an approximate travelling vortex ring  with 
	highly $\ve$-concentrated vorticity near a point  $P \in \Sigma$. It is achieved finding 
	 a constant  $\alpha$ and a stream function  $ \Psi_\ve $ "almost" solving in a neighbourhood of $P$ the  equations for the travelling vortex ring
	\begin{equation}\label{uu}
		\begin{aligned}
			S_\alpha (W_\ve , \Psi_\ve ) &:=	\nn^\perp (r^2( \Psi_\ve  - \alpha |\log \ve|) ) \cdot \nn W_\ve   \sim 0,\quad
		{\mbox {where} } \quad 	  W_\ve =-\Delta_5 \Psi_\ve
		\end{aligned}
	\end{equation}
	where $\Delta_5$ is the operator introduced in \eqref{D5}. We recall it here
$$
\Delta_5 \Psi = \partial_{rr} \Psi + \frac{3}{r} \partial_{r} \Psi 
+ \partial_{zz} \Psi , \quad x = (r,z).
$$
The point $P$ represents the centre of a travelling ring, and we take it of the form
$$
P= (\bar r , \bar z ) = P_0 + q, \quad P_0 = (r_0 , 0),
$$
with $r_0 >0$ a fixed number, and $|q| \to 0$ as $\ve \to 0$. In accordance with the discussion in the introduction, it is expected that the vorticity $r W_\ve$ satisfies
\begin{equation}\label{star} r \, W_\ve (r,z) =  \frac 1 {\ve^{2}}U \left ( \frac {x-P }\ve   \right) (1+ o(1)) , \quad x=(r,z)
\end{equation}
where $U$ is the rapidly decaying function in \eqref{ks},
and  that the constant $\alpha$  satisfies $\alpha = {1\over r_0} (1+ o(1)) $, with $o(1) \to 0$ as $\ve \to 0$. The associated stream function $\Psi_\ve $ will correspond to an $\ve$-regularization of
	the following Green's function
	$$
	-\Delta_5 G(x;P)  =  8\pi \delta_{P}, \quad {\partial \over \partial r} G (x;P) =0 , \quad {\mbox {on}} \quad \partial \Sigma, \quad G(x;P) \to 0 \ass |x|\to \infty.
	$$
	We write  the Green's function  $G$ as 
	$$
	G(x;P) =  \log \frac 1{|x-P|^4} \big (1  -  \frac 3{2\bar r} (r-\bar r )  + H(x;P) \big )  + K(x; P),
	$$
	where $H(\cdot ; P)$ and $K(\cdot ; P)$ satisfy respectively
	\begin{equation}\label{eqH}
		\Delta_5 \Big( \log \frac{1}{|x-P|^4} H(x;P) \Big)
		=
		-30 \frac{(r-\bar r )^2}{r \bar r  |x-P|^2}
		+\frac{9}{2 r \bar r }  \log \frac{1}{|x-P|^4} ,
	\end{equation}
	and
	\begin{equation}\label{eqK}
		\Delta_5 K(x;P) =0 .
	\end{equation}
	Let the $\ve$-regularization of the Green's function $G$ be given by  
		\begin{equation}\label{greg}
		G_\ve (x;P) := \log \frac 1{ (\ve^2 + |x-P|^2)^2} \big (1  -  \frac 3{2\bar r} (r-\bar r)  + H(x;P) \big ) + K(x;P).
		\end{equation}
Inserting this approximation in the operator defined in \eqref{uu} produces a term of size $\ve$ in a neighborhood of $P$, when expressed in the expanded variable $y={x-P \over \ve}$. We reduce the size to $\ve^2$ slightly modifying the approximation in a region close to $P$ .  For this purpose, we introduce the constant $A$  given by the relation
\begin{equation}\label{defA}
\int_{\R^2} y_1 U(y) \left( \Gamma_0 + A \right) \, {\partial \Gamma_0 \over \partial y_1} \, dy = 0,
\end{equation}
where $\Gamma_0$ is given in \eqref{defU},
and the function $\Gamma :\R^2 \to \R$ defined as
\begin{equation}\label{defGamma}
\Gamma (y) := \int_\rho^\infty {(1+ \eta^2)^2 \over  \eta^3} \int_0^\eta  {s^3 \over 1+s^2}  U(s) (\Gamma_0 (s) + A) \, ds \, d\eta, \quad \rho = |y|.
\end{equation}
A direct computation gives that $\Gamma $ solves 
$$
\Delta (y_1 \Gamma )  + U \, y_1 \, \Gamma  + y_1 U(y) \left( \Gamma_0 + A \right) = 0 \quad \inn \quad \R^2
$$
and satisfies
	$$
	\Gamma (y) = O\left( {\log (1+ |y| ) \over 1+ |y| }\right) , \quad \ass |y| \to \infty.
	$$

	\medskip
	\noindent
 		We make our construction  precise in the next Proposition. 
	
	\begin{prop}\label{f1}
		Let $r_0$ be a fixed positive number and $P= (\bar r , \bar z ) = P_0 + q$, with $P_0 = (r_0 , 0)$ and $|q| \to 0 $ as $\ve \to 0$. 
		Define $\alpha = \alpha (\ve , P)$ as
		\begin{equation}\label{defalpha}
			\alpha = {1\over \bar r} -{ A+ \log 8 - 6 - 2 \bar r \pp_r K (P;P) - 4K(P;P) \over 4 \bar r |\log \ve |} ,
			\end{equation}
where $A$ is given by \eqref{defA}, 		and $\Psi_\ve [P] (x)$ as 
		\begin{equation} \label{f}
			\begin{aligned}
				\bar r \, \Psi_\ve [P] (x) &= G_\ve (x;P)
			 +  {r-\bar r  \over  2 \bar r }  \Gamma ({x-P \over \ve} ) ,
			\end{aligned}
		\end{equation}
	where $G_\ve (x;P)$ is in \eqref{greg} and $\Gamma$ in \eqref{defGamma}. 
		Set
		$$
			W_\ve [P] (x ) = - \Delta_5 \Psi_\ve [P] (x).
		$$
		Then
		for any fixed $\delta >0$ small and any $x$ with $|x-P|<\delta$, 
		setting
		$$
		y= {x- P\over \ve}= \rho \,  e^{i\theta}, \quad |y| <{\delta \over \ve},
		$$
		$S_\alpha  (W_\ve , \Psi_\ve)$ as defined in \eqref{uu}  has the following expansion
		\begin{equation}\label{ee1}
			\begin{aligned}
				\ve^4 \,  & S_\alpha  (W_\ve , \Psi_\ve) (\ve y + P) =
				{ \ve^2 |\log \ve | \over 1+ |y|^4}  E_2(\rho,\theta,\varepsilon, P)	\\
				&+ {\ve^3 |\log \ve |  \over 1+|y|^3} |\log (\ve (1+  |y |))| [ E_1(\rho,\theta,\varepsilon, P) + E_3(\rho,\theta,\varepsilon, P)] \\
				&+O \left( {\ve^4 |\log \ve |^2  \over 1+ |y|^2} \, \right), \quad \ass \ve \to 0.
			\end{aligned}
		\end{equation}
		Here we have written $E_k(\rho,\theta,\varepsilon, P)$ for a function  of the form
		\begin{equation}\label{defEk}
			\begin{aligned}
				E_k(\rho,\theta,\varepsilon, P)
				= E_{k,1}(\rho,\varepsilon, P) \cos(k \theta)
				+ E_{k,2}(\rho,\varepsilon, P) \sin(k \theta)
			\end{aligned}
		\end{equation}
		where
		\begin{align*}
			\sum_{j=0}^2\Bigl|
			(1+\rho)^j \frac{\partial^j E_{k,i}}{\partial \rho^j }
			\Bigr| +  \Bigl| \nabla_{P} E_{k,i} \Bigl| \lesssim 1, \quad \ass \ve \to 0.
		\end{align*}

	\end{prop}

		\medskip
			\begin{proof}[Proof of Proposition \ref{f1}] 
Let us fix $\delta >0$ and consider the region of points $x$ with $|x-P| <\delta$. We use the expanded variable
		$$
		y= {x-P \over \ve}, \quad P= (\bar r , \bar z)
		$$
		which we also identify with polar coordinates $y= \rho e^{i\theta}$, $\rho = |y|$.
	
	\medskip{}{}
	From the definition of $\Psi_\ve$ given in  \eqref{f} we get the following expansions, for $|y|<{\delta \over \ve}$,
	\begin{equation}\label{barp00}
		\begin{aligned}
			 \Psi_\ve [P] (x) &= {1\over \bar r} \psi^0(y), \quad {\mbox {where}}  \\
				\psi^0(y)&= 
				\Gamma_0(y) - 4 \log\varepsilon - \log 8 + K (P;P)
				\\
				& \quad 
				+ \frac{\varepsilon y_1}{2\bar r }\Big(-3\Gamma_0(y) +A_\ve -4K(P ,  P) +\Gamma (y)
				\Big) 
				\, 
				+\ve^2 \theta_1[P] (y) + \ve^3 \theta_2 [P] (y) ,
			\end{aligned}
		\end{equation}
		and
		\begin{equation}\label{barp}
			\begin{aligned}
				& \frac {r^2}{\bar r}  \Big(\Psi_\ve [P] (x) - \alpha |\log\ve| \Big) 
				= 
				\Gamma_0(y) - (4-\alpha \bar r ) \log\varepsilon - \log 8 + K (P;P)
				\\
				& \quad 
				+ \frac{1}{2\bar r }\Big(\Gamma_0(y) +\bar A +\Gamma (y)
				\Big) 
				\, \varepsilon y_1
				+\ve^2 \theta_1[P] (y) + \ve^3 \theta_2 [P] (y) .
			\end{aligned}
		\end{equation}
		In the above expansions, $\bar A $ and $A_\ve$ are constants given by
		\begin{equation}\label{barA}
	\bar A = A -6, \quad A_\ve = 4 (4-\alpha \bar r)  \log \ve + \bar A + 4 \log 8 
\end{equation}
and $\Gamma = \Gamma (y)$ is as in \eqref{defGamma}. Moreover $	\theta_1[P] (y)$ and $	\theta_2[P] (y)$ denote generic reminders with the following form
			\begin{equation}\label{theta1n}
	\begin{aligned}
		\theta_1[P] (y) &=\Biggl[   a (P) + b_1 (P) \cos 2\theta + b_2 (P) \sin 2\theta  \\
		&+ |\log (\ve (1+|y|)) | ( a (P) + b_1 (P) \cos 2\theta + b_2 (P) \sin 2\theta )\Biggl] \, O (|y|^2)
	\end{aligned}
\end{equation}
and
\begin{equation}\label{theta2n}
	\theta_2[P] (y) = a(P) O (|y|^3 |\log (\ve (1+|y|)|))
\end{equation}
for $a$, $b_1$ and $b_2$ smooth  functions of $P$, uniformly bounded as $\ve \to 0$.
 Here $\Gamma_0(y) = \log U(y)$ as in \eqref{defU}.

		\medskip
	Formula \eqref{barp00} gives the asymptotic expansion of the stream function $\Psi_\ve$ associated to our approximate vortex ring. In order to describe the asymptotic expansion of the vorticity $W_\ve$, let us introduce
		\begin{equation}\label{el2}
 f( r^2 (\Psi - \alpha |\log \ve |))  = {8 \over \bar r } e^{-K (P;P)} \ve^{2 -\alpha  \bar r  }
f_0\Big( \frac{r^2}{\bar r}( \Psi - \alpha |\log \ve|) \Big), \quad {\mbox {with}} \quad f_0(s) = e^s
\end{equation}
and
\begin{equation} \label{el}
			\begin{aligned}
				E[\Psi] (x)&:= \Delta_5 \Psi +  f( r^2 (\Psi - \alpha |\log \ve |))   .
			\end{aligned}
		\end{equation}
		The proof of Proposition \ref{f1} follows from showing that $\alpha$ and   $\Psi_\ve $ are so that the following expansion for $E[\Psi_\ve]$, given by \eqref{el}-\eqref{el2}, holds true:
		for all $|y|<{\delta \over \ve}$ we have
		\begin{equation}\label{estin1}
			\begin{aligned}
				\ve^2 \, \bar r  \, E [\Psi_\ve] (x)&=  {\ve^2 \over 1+ |y|^2} \Biggl[   a (P) + b_1 (P) \cos 2\theta + b_2 (P) \sin 2\theta  \\
				&+ |\log (\ve (1+|y|)) | ( a (P) + b_1 (P) \cos 2\theta + b_2 (P) \sin 2\theta )\Biggl]\\
				&+ O({\ve^3 \over 1+ |y|} ) |\log (\ve (|y|+1) )|, \, \quad 
			\end{aligned}
		\end{equation}
		uniformly as $\ve \to 0$. Here 
		$a$, $b_1$ and $b_2$ denote smooth  functions of $P$, uniformly bounded together with their derivative as $\ve \to 0$, whose definition may change from line to line.

				\medskip
Assume expansion \eqref{estin1} is true. Since $W_\ve [P] = - \Delta_5 \Psi_\ve [P]$, we have
$$
S_\alpha (W_\ve , \Psi_\ve ) = \nabla^\perp  (r^2( \Psi_\ve  - \alpha |\log \ve|) ) \cdot \nabla (E [\Psi_\ve ] )
$$
and expansion \eqref{ee1} readily  follows from
		\eqref{barp} and \eqref{estin1}.

	\medskip\medskip
We also observe that, in the region $|y|<{\delta \over \ve}$, the vorticity of the approximate vortex ring can be described as
	\begin{equation}
		\label{Ubar}
		\begin{aligned}
			W_\ve [P] (x)&= {1\over \ve^2 \bar r} w_\ve [P] (y)\\
			w_\ve  [P] (y)&= U(y) \left( 1+{\ve y_1 \over 2 \bar r} (\Gamma_0 + \bar A + \Gamma) + \ve^2 \theta_1 [P] (y) + \ve^3 \theta_2 [P] (y)  \right)
		\end{aligned}
	\end{equation}
where $U(y)$ and $\Gamma_0 (y)$ are defined in \eqref{defU},  $\bar A$ in \eqref{barA}, $\theta_1$ and $\theta_2$ in \eqref{theta1n} and \eqref{theta2n}. 
 Hence $r W_\ve (x) $ approaches, locally around $P$, a Dirac delta, as $\ve \to 0$, in accordance with the expectation \eqref{star}. 
	We will make use of these estimates in the next Section.

		\medskip

		As we explained before, we want to prove the validity of \eqref{estin1}. 
		We start with the observation that, since the operator in \eqref{el} is invariant under translations in the $z$-direction, it is not restrictive to assume that $P= (\bar r , 0)$ and to work in the class of functions that are even in the variable $z$.

		To simplify notation, we drop the dependence on $P$ in the functions  $G_\ve$, $H$ and $K$. 
		
	 Then for $|x-P| < \delta$ we have 
		$$
		G_\ve (x)  = (\Gamma_0(y) - 4\log (\ve)  - \log 8) \Big (1  
		-  \frac 3{2\bar r } \ve  y_1  + H( P + \ve  y)\Big )    + K(P + \ve  y) , \quad y={x-P \over \ve}
		$$
		where we mean $H( P + \ve  y) = H( P + \ve  y; P)$, etc. Observe that $\pp_z K(P) =0$ by symmetry.
		From \eqref{eqH} we get that $H(x;P) $ has the following expansion as $x \to P$
		\begin{eqnarray} \label{expH}
			H(x)= a_1(P) (r-\bar r)^2 + a_2 (P)  z^2 + b(P) {|x-P|^2 \over \log |x-P|^2} + O(|x-P|^3)
		\end{eqnarray}
		where  $a_1$, $a_2$, $b$ are constants whose value depends on  $P$.
		Using \eqref{expH}, we expand 
		$$	\begin{aligned}
				G_\ve (x)  
				& = \Gamma_0(y)  - 4\log \ve -\log 8 +K (P)   -
				\frac{3}{2\bar r}\left[  \Gamma_0(y)  - 4\log \ve -\log 8 -{2 \bar r \over 3} \partial_r K(P)  \right] \varepsilon  y_1 
				\\
				& \quad 
				+\ve^2 \theta_1[P] (y) + \ve^3 \theta_2 [P] (y)
			\end{aligned}
		$$
		where $\theta_1$, $\theta_2$ are reminders that can be described as in \eqref{theta1n} and \eqref{theta2n}.
		
		For $
			\Psi_{P} (x) = \frac{1}{\bar r} G_\varepsilon(x), 
		$
		we get
		\begin{align*}
			& \frac {r^2}{\bar r}  \Big(\Psi_{P}(x) - \alpha |\log\ve| \Big) 
			= 
			\Gamma_0(y) - (4-\alpha \bar r ) \log\varepsilon - \log 8 + K (P)
			\\
			& \quad 
			+ \frac{1}{2\bar r}\Big(\Gamma_0(y) - (4-4\alpha \bar r )
			\log\varepsilon
			+2\bar r \partial_r K(P) + 4 K (P)
			-\log(8)
			\Big) 
			\, \varepsilon y_1
			\\
			& \quad 
			+\ve^2 \theta_1[P] (y) + \ve^3 \theta_2 [P] (y) ,
		\end{align*}
		and 
		\begin{equation}\label{c1}
			\begin{aligned}
				& {8 \over \bar r } e^{-K (P)}  \ve^{2-\alpha \bar r} 
				f\Big(  \frac {r^2}{\bar r}  ( \Psi_{P}(x) - \alpha |\log\ve|)  \Big) 
				\\
				&=
				\frac{1}{\varepsilon^{2}  \bar r } U(y)
				\exp\Big[
				\frac{1}{2\bar r }\Big(\Gamma_0(y) - (4-4\alpha \bar r )
				\log\varepsilon
				+2\bar r \partial_r K (P) + 4 K (P)
				-\log 8
				\Big) 
				\, \varepsilon y_1
				\\
				& \quad 
				+\ve^2 \theta_1[P] (y) + \ve^3 \theta_2 [P] (y)
				\Big] ,
			\end{aligned}
		\end{equation}
		for  functions $\theta_1$, $\theta_2$ of the form \eqref{theta1n} and \eqref{theta2n}.

		Next we compute 
		\begin{align*}
			\Delta_5 \Psi_{P} = (1) + (2)
		\end{align*}
		where
		\begin{align*}
			(1) &= \frac{1}{\bar r}
			\Delta_5\Big[ ( \Gamma_0 ({x-P \over \ve} )- 4 \log \varepsilon - \log8  ) \Big( 1 - \frac{3}{2 \bar r}(r-\bar r) \Big)
			\Big]
			\\         
			(2) &= 
			\frac{1}{\bar r }
			\Delta_5 [
			( \Gamma_0 ({x-P \over \ve} )- 4 \log \varepsilon - \log8  )
			H
			] ,
		\end{align*}
		since by definition $\Delta_5 K = 0$.
		Setting again  $\ve y = x-P$, we get
		\begin{align*}
			(1) &= 
			\frac{1}{\bar r}
			\Big[
			\Big( 
			-\frac{1}{\varepsilon^2}U(y) 
			+ \frac{3}{\varepsilon  (\bar r +\varepsilon y_1) } \Gamma_0'(y) \frac{y_1}{\rho}
			\Big) \Big( 1-\frac{3}{2 \bar r} \varepsilon \ y_1\Big)
			-\frac{3}{\varepsilon \bar r }
			\Gamma_0'(\rho) \frac{y_1}{\rho}
			-\frac{9}{2 r \bar r } ( \Gamma_0 - 4 \log \varepsilon - \log 8 )
			\Big]
			\\
			&= 
			\frac{1}{\bar r}
			\Big[
			-\frac{1}{\varepsilon^2 }U(y)
			+\frac{3}{2 \varepsilon \bar r} U(y) y_1
			- \frac{15}{2}\frac{\Gamma_0'(\rho)}{\rho} \frac{y_1^2}{r \bar r  }
			-\frac{9}{2 r \bar r } ( \Gamma_0 - 4 \log \varepsilon - \log 8  )
			\Big]\\
			&= \frac{1}{\ve^2 \bar r}
			\Big[
			-U(y)
			+\frac{3}{2  \bar r} \ve U(y) y_1
			+30 {(r-\bar r)^2 \over \bar r  r} {\ve^2 \over \ve^2 + |x-\bar P|^2 }
			-\frac{9}{2 r \bar r} \ve^2 \log {1\over (\ve^2 + |x-\bar P|^2 )^2}
			\Big].
		\end{align*}
		From \eqref{eqH}
		we get
		\begin{align*}
			(2) &=
			-30 \frac{(r-\bar r)^2}{r \bar r^2 |x-P|^2}
			+\frac{9}{2 r \bar r^2}  \log \frac{1}{|x-P|^4}+ 
			\frac{1}{\bar r}
			\Delta_5 [
			( \Gamma_0 ({x-P \over \ve} )- 4 \log \varepsilon - \log8 - \log {1\over |x-P|^4} )
			H
			] \\
			&=
			-30 \frac{(r-\bar r)^2}{r \bar r^2 |x-P|^2}
			+\frac{9}{2 r \bar r^2}  \log \frac{1}{|x-P|^4}+ 
			\frac{2}{\bar r}
			\Delta_5 [
			\log {|{x-P \over \ve}|^2\over 1 +|{x-P \over \ve}|^2}    
			H
			] 
		\end{align*}
		Combining the above expressions we conclude that
		\begin{equation}\label{c2} 
			\begin{aligned}
				- \Delta_5 \Psi_{P}
				&= 
				\frac{1}{\bar r}
				\Big[
				\frac{1}{\varepsilon^2}U(y) 
				- \frac{3}{\varepsilon \bar r } U(y) y_1
				+ O(\frac{1}{1+\rho^2})
				\Big]\\
				&= 
				\frac{1}{\bar r \varepsilon^2} U(y) 
				\Big[
				1
				- \frac{3}{ \bar r} \varepsilon y_1
				+ \ve^2 \theta_1[P] (y) + \ve^3 \theta_2 [P] (y)
				\Big],
			\end{aligned}
		\end{equation}
		where $\theta_1$ and $\theta_2$ are described in \eqref{theta1n} and \eqref{theta2n}.
		
		\medskip
		Putting together \eqref{c1} and \eqref{c2} we find that 
		\begin{equation}\label{err0}
			\ve^2 E[\Psi_{P}] = 
			\frac{1}{\bar r} [ \varepsilon E_0 + \Theta_\ve (y) ]
		\end{equation}
		where
		\begin{align*}
			E_0(y) &=
			\frac{ y_1}{ 2 \bar r }U(y)
			\Big(\Gamma_0(y) - 4(1-\alpha \bar r )
			\log\varepsilon
			+ 2 \bar r \partial_r K(P) + 4 K (P)
		-\log8 +6
			\Big)  ,
		\end{align*}
		and
		\begin{align*}
			\Theta_\ve (y)&=  U(y)
			\exp\Big[
			\frac{1}{2\bar r}\Big(\Gamma_0(y) - (4-4\alpha \bar r)
			\log\varepsilon
			+2\bar r\partial_r K (P) + 4 K (P)
			-\log 8
			\Big) 
			\, \varepsilon y_1
			\\
			& \quad 
			+\ve^2 \theta_1[P] (y) + \ve^3 \theta_2 [P] (y)
			\Big]\\
			&- U(y) \Big[ 1 + \frac{1}{2\bar r}\Big(\Gamma_0(y) - (4-4\alpha \bar r)
			\log\varepsilon
			+2 \bar r \partial_r K (P) + 4 K (P)
			-\log 8
			\Big) 
			\, \varepsilon y_1\Big]\\
			&+ U(y) \left[\ve^2 \theta_1[P] (y) + \ve^3 \theta_2 [P] (y) \right],
		\end{align*}
		where $\theta_1$, $\theta_2$ denote again generic functions of the form \eqref{theta1n}, \eqref{theta2n}.
		A closer look at this expression  
		gives that $\Theta_\ve (y) $ can be described as follows
		\begin{equation}\label{Thetaex}
			\begin{aligned}
				\Theta_\ve (y) &= { \ve^2 \over 1+ |y|^2}   \Biggl[   a (P) + b_1 (P) \cos 2\theta + b_2 (P) \sin 2\theta  \\
				&+ |\log (\ve (1+|y|)) | ( a (P) + b_1 (P) \cos 2\theta + b_2 (P) \sin 2\theta )\Biggl]   \\
				&+ O({\ve^3 \over 1+ |y|} ) \log (\ve (1+|y|) ) a(P) 
			\end{aligned}
		\end{equation}
		uniformly as $\ve \to 0$. Also here
		$a$, $b_1$, $b_2$ stand for generic smooth  functions of $P$, uniformly bounded as $\ve \to 0$.

		In order to reduce the size of the error term in \eqref{err0}  we solve 
		\begin{align}
			\label{appro}
			\Delta \bar \psi  + U(y) \bar \psi   + \varepsilon E_0(y)=0 \quad {\mbox {in}} \quad \R^2, \quad \lim_{|y|\to\infty } \bar \psi (y) = 0. 
		\end{align}
		It is here when we introduce the function $\Gamma$ and we use the explicit definition of $\alpha$ as given in \eqref{defalpha}.

		  \medskip{}{}
		  It is known that all bounded solutions to 
		$$
		\Delta \bar \psi  + U(y) \bar \psi  =0 \quad {\mbox {in}} \quad \R^2
		$$
		are given by linear combinations of
		$$
		\frac{\partial \Gamma_0}{\pp y_i }(y), \quad i=1,2, \quad 2 + \nabla \Gamma_0 \cdot y.
		$$
		This result can be found in \cite{bp}. By the standard Fredholm alternative for this problem we need 
		the following solvability condition satisfied
		$$
		\int_{\R^2}  E_0(y)  \frac{\partial \Gamma_0}{\pp y_1 }(y)\, dy\ =\ 0.
		$$
		Infact, the error term $E_0$ is by definition even with respect to the variable $y_2$. Hence
		the remaining solvability conditions 
		$$	\int_{\R^2}  E_0(y)  \frac{\partial \Gamma_0}{\pp y_2 }(y)\, dy\ =\ 0, \quad 	\int_{\R^2}  E_0(y)  \left( 2 + \nabla \Gamma_0 \cdot y \right) \, dy\ =\ 0$$ are automatically satisfied by symmetry.
		
		\medskip
		Let us then compute 
		\begin{align*}
			2\bar r  \int_{\R^2}  E_0(y)  \frac{\partial \Gamma_0}{\pp y_1 }(y)\, dy\ &= \left(  \int_{\R^2}  U(y) y_1   \frac{\partial \Gamma_0}{\pp y_1 }(y)\, dy \right)
			(	2 \bar r \partial_r K(P) + 4 K (P)
		-\log8 +6-4 (1-\alpha \bar r ) \log \ve )  \\
			&+  \int_{\R^2}  U(y) y_1   \frac{\partial \Gamma_0}{\pp y_1 }(y)  \Gamma_0 \, dy.
		\end{align*}
		Then $
		\int_{\R^2}  E_0(y)  \frac{\partial \Gamma_0}{\pp y_1 }(y)\, dy\ =\ 0
		$
		is satisfied  if we
		choose $\alpha $  as in \eqref{defalpha}. In terms of $P$ we have
		\begin{equation}\label{defbeta}
			\alpha = \frac{1}{\bar r} + \frac{\beta (P)}{|\log \varepsilon|}, \quad \beta (P) =  -{ A+ \log 8 - 6 - 2 \bar r \pp_r K (P;P) - 4K(P;P) \over 4 \bar r }.
		\end{equation}
	With this choice of $\alpha $
		we can solve \eqref{appro}.
		Writing in polar coordinates $y = \rho e^{i\theta}$ we observe that $E_0$ has the form
		$E_0(y) = Q(\rho) \cos(\theta)$ with $Q(\rho) = O( \frac{\log \rho}{\rho^3})$ as $\rho\to \infty$.
		A direct computation yields  to
		\begin{equation}\label{barpsi}
			\bar \psi(y; P )  = {\varepsilon \over 2\bar r } \Gamma (\rho) y_1.
		\end{equation}
		Using now the whole expression of $\Psi_\ve$ as in \eqref{f}, we decompose
		\begin{align*}
			\ve^2 E[ \Psi_\ve ] &= \ve^2 E[\Psi_{P} ] + {\ve^3 \over \bar r } [ (\pp_{rr} + \pp_{zz} )  \bar \Gamma + f' (\Psi_{P} ) \bar \Gamma ]
			+  \ve^3 {3\over \bar r r} \pp_r \bar \Gamma \\
			&+\ve^2 [  f (\Psi_{P} + \ve \bar \Gamma ) -  f (\Psi_{P}  ) -  f' (\Psi_{P}  ) \ve \bar \Gamma ]
		\end{align*}
		where $\bar \Gamma = {r-\bar r \over  2 \bar r}  \Gamma ({x-P \over \ve} ) $ and $f$ is
		$$
		f(s)=  {8 \over \bar r} e^{-K (P;P)} \ve^{2 -\alpha  \bar r }
		f_0\Big( \frac{r^2}{\bar r}( s - \alpha |\log \ve|) \Big).
		$$
		For $|y|<{\delta \over \ve}$,
		\begin{align*}
			f' (\Psi_{P} )&= \frac{1}{\varepsilon^{2}  } U(y)
			\exp\Big[
			\frac{1}{2\bar r}\Big(\Gamma_0(y) - (4-4\alpha \bar r)
			\log\varepsilon
			+2\bar r \partial_r K (P) + 4 K (P)
			-\log 8
			\Big) 
			\, \varepsilon y_1
			\\
			& \quad 
			+\ve^2 \theta_1[P] (y) + \ve^3 \theta_2 [P] (y)
			\Big] \times (1+{\ve y_1 \over \bar r})^2\\
			&= {1\over \ve^2} U(y)  \Big[ 1+ \frac{1}{2 \bar r}\Big(\Gamma_0(y) - (4-4\alpha \bar r)
			\log\varepsilon + a(P) \Big) \ve y_1 +\ve^2 \theta_1[P] (y) + \ve^3 \theta_2 [P] (y) \Big] 
		\end{align*}
		where $\theta_1$, $\theta_2 $ satisfy \eqref{theta1n}, \eqref{theta2n}.
		We have
		\begin{align*}
			{\ve^3 \over \bar r} & [ (\pp_{rr} + \pp_{zz} )  \bar \Gamma + f' (\Psi_{P} ) \bar \Gamma ]=
			{1\over \bar r} [\Delta_y \bar \psi + U(y) \bar \psi ]
			\\ &+ U(y)\bar \psi   \Big[  \frac{1}{2\bar r}\Big(\Gamma_0(y) - (4-4\alpha \bar r)
			\log\varepsilon + a(P) \Big) \ve y_1 +\ve^2 \theta_1[P] (y) + \ve^3 \theta_2 [P] (y) \Big]  .
		\end{align*}
		Recall now that $\ve^2 E [\Psi_P]$ can be written as in \eqref{err0}. Since $\bar \psi$ solves \eqref{appro} and has the form \eqref{barpsi}, we get
		\begin{align*}
			\ve^2 E& [\Psi_{P} ] + {\ve^3 \over \bar r} [ (\pp_{rr} + \pp_{zz} )  \bar \Gamma + f' (\Psi_{P} ) \bar \Gamma ] = \Theta_\ve (y) 
		\end{align*}
		where $\Theta_\ve$ can be described as  in \eqref{Thetaex}. Besides, using the form of the function $\Gamma$, as described in \eqref{f}, one sees with a direct inspection that
		$$
		\ve^3 {3\over \bar r r} \pp_r \bar \Gamma +\ve^2 [  f (\Psi_{P} + \ve \bar \Gamma ) -  f (\Psi_{P}  ) -  f' (\Psi_{P}  ) \ve \bar \Gamma ]
		=\Theta_\ve (y),
		$$
		with $\Theta_\ve$ another function of the form \eqref{Thetaex}. This concludes the proof of \eqref{estin1}.

	\end{proof}

	\medskip

	\section{First approximate leapfrogging}\label{sec3}

The rest of the paper is devoted to find a solution to  Problem \eqref{leap00} with the properties described in Theorem \ref{teo}. With a little abuse of notation, from now on we will use the variable $t$ instead of $\tau$. Given $r_0 >0$, we look for  $(\Psi , W)$ solving
	\begin{equation}\label{leap0}
		\begin{aligned} 
			\left \{ \begin{aligned} &
				|\log\ve| \,  r \, \pp_t W  + \nn^\perp (r^2( \Psi - r_0^{-1} \, |\log \ve|) ) \cdot \nn W = 0  \quad {\mbox {in}} \quad  \Sigma  \times [0,T) \\ & -\Delta_5 \Psi = W, \quad \quad {\mbox {in}} \quad  \Sigma  \times [0,T)   \\
				&{\partial \over \partial r} \Psi (x,t) = 0  \quad {\mbox {on}} \quad \partial  \Sigma  \times [0,T), \quad  |\Psi(x,t)| \to 0 \quad \text{as } |x|\to \infty.  \end{aligned} \right.   
		\end{aligned} 
	\end{equation}
We recall that  $\Sigma =\{ x=(r,z) \, : \, r>0, \, z \in \R \}$, see \eqref{defSigma},  and
	\[
	\Delta_5 \Psi = \partial_{rr} \Psi + \frac{3}{r} \partial_{r} \Psi 
	+ \partial_{zz} \Psi .
	\]


	\medskip This Section is devoted to define a first approximate solution to \eqref{leap0}, given as a sum of approximate travelling vortex rings, as built in Section \ref{sect}. These travelling vortex rings are centered at different points, at relative distance $|\log \ve |^{-{1\over 2}}$ one from each other, all of them collapsing to $(r_0, 0)$ as $\ve \to 0$. Let us be more precise.

	\subsection{The parameter functions}
	\label{subsec41}
	
	Fix an integer $k \geq 2$ and consider points $P_j = P_j(t)$, for   $j\in \{1, \ldots , k\}$, which evolve in time and have the form
	\begin{equation}\label{point}
		\begin{aligned}
			P_j &= P_j (t) = (r_j (t) , z_j (t) ), \quad t \in [0,T) \quad {\mbox {with}} \\
			P_j &= {\bf P}_j + {\bf a}_j (t), \quad {\bf P}_j= P_0  +P_j^0 (t) + P^1_j (t) , \quad P_0:= 	(r_0 , 0).
		\end{aligned}
	\end{equation}
	Let us describe the different terms in the decomposition of $P_j$. The points $P_j^0 (t)  = (r^0_j (t), z^0_j (t) )$ are explicit and will be determined towards the end of this section, in the form
	\begin{equation}\label{b0}
		P_j^0 ={1\over \sqrt{|\log \ve |}} q_j + Q_j , \quad 
		\| Q_j \|_{L^\infty [0,T)}  + \| \dot Q_j \|_{L^\infty [0,T)}  \lesssim  {\log |\log \ve | \over |\log \ve |},
	\end{equation}
	where $q_j$ are the given solutions to the  {\it leapfrogging dynamics} \eqref{car1}.
	
	The points $P_j^1  (t) = (r_j^1 (t) , z_j^1 (t))$ in \eqref{point} will also be determined in the process of the construction of an approximate leapfrogging solution and they will satisfy
	\begin{equation}\label{b1}
		\| P^1_j \|_{L^\infty [0,T)}  + \| \dot{P^1_j} \|_{L^\infty [0,T)}  \lesssim  \ve^{2-\sigma} ,
	\end{equation}
	for some $\sigma >0$, small and independent of $\ve$.

	The points ${\bf a}_j  (t) = (a_{j1} (t) , a_{j2} (t))$ are free parameters to adjust at the end of our proof. For the moment, we  ask they are continuous functions in $[0,T]$ for which $\dot {\bf a}_j$ exists and such that 
	\begin{equation}\label{b11}
		\| {\bf a}_j \|_{C^1 [0,T)}   \lesssim  \ve^{3+\sigma} .
	\end{equation}
	The following notation will be used to identify the different sets of points in the decomposition of $P_j$ given in \eqref{point}
$$
		\begin{aligned}
			P&= (P_1 , \ldots , P_k ) , \quad
			P^b = (P_1^b , \ldots , P_k^b ), \quad b=0,1, \quad {\bf a} = ({\bf a}_1 , \ldots , {\bf a}_k ).
		\end{aligned}
$$
Since the relative distance between two points is of order $|\log \ve |^{-{1\over 2}}$
we have 
$$ \{ x \, : \, |x-P_i| <|\log \ve |^{-1} \} \cap \{ x \, : \, |x-P_\ell| <|\log \ve |^{-1} \} = \emptyset
$$ for all $\ve $ small and $i \not= \ell$.
Besides, 	under assumptions \eqref{point}, \eqref{b0}, \eqref{b1} and \eqref{b11}, we have that
	$$
	\| P_j (t) - (r_0, 0) \|_{L^\infty [0,T]} ={c \over \sqrt{|\log  \ve |}},  \quad \forall j =1, \ldots , k,
	$$
	for some  constant $c$ uniformly bounded and bounded away from $0$ as $\ve \to 0$.
	
	\medskip
	
	Given the points $P_j$ as described in \eqref{point}, we introduce  positive functions 
	$\ve_j = \ve_j (t) >0$ such that, for all $j=1, \ldots , k$,
 \begin{equation}\label{ass11}
		r_j (t) \,  \ve_j^2 (t) = r_0 \,  \ve^2 \quad \foral \quad t \in [0,T),
	\end{equation}
where $r_j(t)$ is the first component of the point $P_j(t)$.

	From \eqref{point}--\eqref{b11} we get that for all $j=1, \ldots , k$,
	\begin{equation}\label{b2}
		\| \ve_j - \ve \|_{L^\infty [0,T)}  +  \| \dot \ve_j \|_{L^\infty [0,T)}  \lesssim  \ve |\log \ve|^{-{1\over 2}}, \quad \forall \quad j=1, \ldots , k, \quad \ass \ve \to 0.
	\end{equation}
	For such $\ve_j(t)$ and $P_j (t)$, let  $\alpha_j$ be given as in \eqref{defalpha},   Proposition \ref{f1} so that
	$$
	\alpha_j  [P_j ] (t) = {1\over r_j (t)} + {\beta (P_j (t) )\over  |\log \ve_j (t)|},
	$$
	where $\beta (s)$ is the smooth function defined as in \eqref{defbeta}.
	Observe that
	$$
	\alpha_j [P_0] (t) = {1\over r_0} + {\beta (P_0 )\over  |\log \ve|}
	$$ is constant in time.
We also have 
	\begin{equation}\label{alphajalpha0}
		r_0^{-1}  - \alpha_j [P_j] (t) = {r_j - r_0 \over r_0 r_j } - {\beta (P_j) \over |\log \ve_j|},
	\end{equation}
	and $ \| r_0^{-1} - \alpha_j \|_{L^\infty [0,T)} = O(  |\log \ve|^{-{1\over 2}} )$ as $\ve \to 0$.

	\subsection{The function $H^0$ and definition of the very first approximation}\label{subsec42}
	
	\medskip
	
	For any $j=1, \ldots , k$,  we define 
	\begin{equation}\label{defWj}
	\Psi_j^0  (x,t) := \Psi_{\ve_j (t)} [P_j(t)] (x,t), \quad		W_j^0  (x,t):=  - \Delta_5 \Psi_{\ve_j (t)} [P_j (t) ] (x), 
	\end{equation}
	where $\Psi_\ve [P] (x)$ is the approximate travelling vortex ring introduced  in 
	\eqref{f}, Proposition \ref{f1}. Since we are assuming that the point $P_j(t)$ evolves with time, the functions $	\Psi_j^0$ and $	W_j^0$ also depend on the time variable $t \in [0,T]$, and their dependence on time is through $P_j$ (and $\ve_j$).
	Writing
	\begin{equation}\label{fee2}
	\Psi_j^0 (x,t) = {1\over r_j}  \psi^0_j (y) , \quad y={x-P_j \over \ve_j}
	\end{equation}
	from \eqref{defWj} we get
	\begin{equation}\label{fe2}
		W_j^0 (x,t) = {1\over \ve_j^2 r_j} w_j^0 (y), \quad {\mbox {with}} \quad -\Delta_{5,j} \psi_j^0 =
		w_j^0
	\end{equation}
	where
	\begin{equation} \label{d5}
		\Delta_{5,j} = \pp_{y_1}^2 + \pp_{y_2}^2 +{3\ve_j\over r_j + \ve_j y_1} \, \pp_{y_1}.
	\end{equation}
	Let  $\delta >0$ be as in Proposition \ref{f1}. We can assume $\delta <r_0$. We have that in the region $|x-P_j |<\delta$
	\begin{equation}
		\label{Ubar1}
		\begin{aligned}
			w_j^0 (y)&= U(y) \left( 1+{\ve_j y_1 \over 2r_j} (\Gamma_0 + \bar A + \Gamma) + \ve^2 \theta_1 [P_j] (y) + \ve^3 \theta_2 [P_j] (y)  \right), \quad y= {x-P_j \over \ve_j}, 
		\end{aligned}
	\end{equation}
	where $\theta_1$ and $\theta_2$ are functions also described by \eqref{theta1n} and \eqref{theta2n} respectively. This  expansion has been obtained in Section \ref{sect}, formula \eqref{Ubar}.
	
\medskip
The starting point of our construction is to assume that the vorticity of a leapfrogging of vortex rings is at main order the sum of the vorticities of vortex rings.  We do the same with the stream functions, which we then multiply by a cut off function to make it $0$ at infinity.

	Having introduced the points
	$
	P = (P_1, P_2, \ldots, , P_k ) ,
	$
	define
	$$
	\bar \Psi^0 (x,t) = \eta (x) \, \sum_{j=1}^k  \Psi_j^0  (x,t), \quad W^0 (x,t) = \eta (x) \, \sum_{j=1}^k W_j^0   (x,t),
	$$
	where $\eta$ is the smooth cut-off function given by 
	$$
	\eta (x) = \eta_1 \left({4|x-(r_0, 0) |\over \delta } \right),
	$$
	with $\eta_1$ as in \eqref{cutoff}.  We then immediately see that $|\bar \Psi^0 (x,t)| \to 0 $ as $|x| \to \infty$ and that ${\pp \over \pp r} \bar \Psi^0 (x,t) = 0 $ on $\pp \Sigma $, for any $t \in [0,T]$. On the other hand we no longer have that $-\Delta_5 \bar \Psi^0 = W^0$.
	We shall then slightly modify $\bar \Psi^0$ by a function $H^0$ to have, for any $t \in [0,T]$ $$\Delta_5 \left( \bar \Psi^0 + H^0 \right)  (x,t) + W^0 (x,t) = 0, \quad x \in \Sigma. $$ For this purpose, consider the linear problem		\begin{equation}\label{ext0}
		\Delta_5  \psi + h =0, \quad  {\mbox {in }} \Sigma , \quad {\partial  \psi \over \partial r}  = 0 \quad {\mbox {on}} \, \,  \pp \Sigma , \quad \psi (x ) \to 0, \, \,  \ass |x|\to \infty,
	\end{equation}
	for a smooth function $h$ satisfying 
	\begin{equation}\label{decayh}
		|h (x)| \leq {C\over 1+ |x|^{2+\nu}}, 
	\end{equation}
	where $C>0$. Recall that $\Delta_5 = {\pp^2 \over \pp r^2} + {3\over r} {\pp \over \pp r} + {\pp^2 \over \pp z^2}$, for $x= (r,z) \in \Sigma$.

	We have
	\begin{lemma}\label{ll1} Assume $h$ satisfies \eqref{decayh}, with $\nu >0$. Then there exist a solution $\psi = {\mathcal T} (h) $ to \eqref{ext0} and a constant $C_1>0$ such that
		$$
		(1+ |x|) |\nabla \psi (x)|+	|\psi (x) | \leq {C_1 \over 1+ |x|^{\min (\nu , 3)}}.
		$$
	\end{lemma}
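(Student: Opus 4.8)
The plan is to construct the solution $\psi = \mathcal{T}(h)$ explicitly via the fundamental solution of $-\Delta_5$ and then extract the decay estimate from the decay hypothesis \eqref{decayh}. The key observation is that $\Delta_5$ is, up to the factor $r^2$, the radial part of the ordinary Laplacian in dimension $5$: if we regard $h = h(r,z)$ as a function on $\R^5$ that depends only on the distance $r$ to a fixed $3$-plane (spanned by a $z$-axis and... more precisely, depending on $|y'|=r$ where $y'\in\R^4$ and $z\in\R$), then $\Delta_5 \psi = \Delta_{\R^5}\psi$ acting on such axisymmetric functions. Thus I would set
\[
\psi(x) = \frac{1}{(5-2)\omega_4}\int_{\R^5} \frac{h(|\tilde y - \tilde x|)}{|\tilde y - \tilde x|^{3}}\, d\tilde y,
\]
interpreting $h$ and $\psi$ as the corresponding $\R^5$-functions, where $\tilde x \in \R^5$ is any lift of $x=(r,z)\in\Sigma$ and $\omega_4 = |S^4|$. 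This $\psi$ automatically satisfies $\Delta_5\psi + h = 0$ in $\Sigma$, is smooth, is even and radially regular so that $\partial_r\psi(0,z)=0$ holds, and decays at infinity; reflecting across $\partial\Sigma$ is built into the $\R^5$ picture.

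Next I would prove the decay bound. Writing $n = 5$, the Newtonian potential estimate in $\R^5$ says that if $|h(\tilde y)| \le C(1+|\tilde y|)^{-(2+\nu)}$ then its potential $\psi$ satisfies $|\psi(\tilde x)| \lesssim (1+|\tilde x|)^{-\min(\nu,3)}$ — with the value $3 = n-2$ being the decay rate of the fundamental solution itself, which caps the achievable rate, and a logarithmic correction appearing only in the borderline case $\nu = 3$, which one avoids by shrinking $\nu$ slightly or absorbing into the constant. This is obtained by the standard splitting of the integration domain into $\{|\tilde y| \le |\tilde x|/2\}$, $\{|\tilde y - \tilde x| \le |\tilde x|/2\}$, and the far region $\{|\tilde y| \ge 2|\tilde x|\}$: on the first region $|\tilde y - \tilde x| \sim |\tilde x|$ and we are left with $\int_{|\tilde y|\le |\tilde x|/2}|h| \lesssim |\tilde x|^{\max(3-\nu,0)} $ up to a log, divided by $|\tilde x|^3$; on the second region $h$ is essentially constant of size $|\tilde x|^{-(2+\nu)}$ and the singular kernel is locally integrable, giving $|\tilde x|^{2}\cdot|\tilde x|^{-(2+\nu)} = |\tilde x|^{-\nu}$; on the far region the kernel is bounded by $|\tilde y|^{-3}$ and $\int_{|\tilde y|\ge 2|\tilde x|}|\tilde y|^{-3}(1+|\tilde y|)^{-(2+\nu)}\,d\tilde y \lesssim |\tilde x|^{-\nu}$. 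Collecting the three contributions gives $|\psi(\tilde x)| \lesssim (1+|\tilde x|)^{-\min(\nu,3)}$. The gradient bound $(1+|x|)|\nabla\psi| \lesssim (1+|x|)^{-\min(\nu,3)}$ follows either by differentiating under the integral sign (the kernel $\nabla_{\tilde x}|\tilde y - \tilde x|^{-3}$ has an extra power of decay, which is exactly what produces the gain of one power of $|\tilde x|$) and repeating the same three-region splitting, or by interior elliptic estimates rescaled at scale $|x|$.

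Finally I would verify the boundary condition and uniqueness/well-posedness carefully: the lift to $\R^5$ makes $\psi$ a genuine smooth function on a neighborhood of the $z$-axis, hence $\partial_r\psi = 0$ at $r=0$ is forced by smoothness and evenness; and $\psi(x)\to 0$ as $|x|\to\infty$ is immediate from the decay estimate. The main obstacle I anticipate is purely bookkeeping in the borderline exponents — making sure the statement is with $\min(\nu,3)$ and not $\min(\nu,3)$ minus an epsilon, i.e. handling the logarithmic losses in the region-splitting estimate when $\nu$ is close to $3$; this is resolved by noting that the only place a log enters is the first region when $3-\nu \ge 0$, and there one can bound $\int_{|\tilde y| \le R}(1+|\tilde y|)^{-(2+\nu)} d\tilde y$ by $C\,R^{(3-\nu)_+}$ when $\nu \ne 3$ and by $C\log R$ when $\nu = 3$, so for the stated conclusion it suffices to take $\nu$ slightly smaller if it happens to equal $3$, or simply observe that a log times $R^{3-\nu-\delta}$ is still $\lesssim R^{3-\nu}$. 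All other steps are the routine Newtonian-potential estimates recalled above.
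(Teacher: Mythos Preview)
Your approach is essentially identical to the paper's: both lift the problem to $\R^5$ via the identification $r=|Y'|$, $Y'\in\R^4$, $z=Y_5$, solve $-\Delta_{\R^5}\Psi=H$ with the Newtonian potential, and read off the decay by splitting the integration domain according to $|\tilde y|$ versus $|\tilde x|$. Aside from a harmless typo in your displayed formula (the integrand should be $h(\tilde y)/|\tilde y-\tilde x|^{3}$, not $h(|\tilde y-\tilde x|)/|\tilde y-\tilde x|^{3}$), the argument is correct and in fact more detailed than the paper's brief sketch.
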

	\begin{proof}
		Recalling that the Laplacian of a radially symmetric function in $\R^n$ is 
		$$\Delta_Y = {d^2 \over d s^2 } + {n-1\over s} {d \over d s}, \quad Y= (Y_1, \ldots , Y_n), \quad s= \sqrt{ Y_1^2 + \ldots Y_n^2}$$
		we interpret the differential operator in \eqref{ext0} as the Laplacian in $\R^5$ and recast Problem \eqref{ext0} in $\R^5$.
		Define
		$$
		\Psi (Y_1, \ldots, Y_4, z) = \psi ( \sqrt{ Y_1^2 + \ldots Y_4^2},z), \quad r=\sqrt{ Y_1^2 + \ldots Y_4^2}, \quad  Y_5=z
		$$
		and
		$$
		H (Y_1, \ldots, Y_4, z) = h ( \sqrt{ Y_1^2 + \ldots Y_4^2},z), \quad  r=\sqrt{ Y_1^2 + \ldots Y_4^2}, \quad \quad Y_5=z.
		$$
		To solve	Problem \eqref{ext0} we find bounded (non-singular) solutions $\Psi$ to 
		$$
		\Delta_{\R^5} \Psi  + H =0, \quad  {\mbox {in }} \R^5 , \quad \Psi (Y ) \to 0, \quad \ass |Y|\to \infty.
		$$
		We define $\Psi$ using the Newtonian potential in $\R^5$ as
		$$
		\Psi (Y) = {1\over 15 \omega_5 }\int_{\R^5} {1\over |Z-Y|^3} H(Z) \, dZ,
		$$
		with $\omega_5$  the volume of the unit $5$-ball.  Moreover
			$
		\nabla \Psi (Y) = - {3\over 15 \omega_5 }\int_{\R^5} {(Z-Y) \over |Z-Y|^5} H(Z) \, dZ.
		$
		From \eqref{decayh} we get that $(1+ |Y|^{2+ \nu} ) |H(Y) | \lesssim 1$. Estimating the above integrals splitting them in the region $|Z|< {|Y| \over 2} $ and its complement we get
		$$
		(1+ |Y|) |\nabla \Psi (Y)|+	|\Psi (Y) | \lesssim {1\over 1+ |Y|^{\min (3,\nu)}} .
		$$
		Going back to the original variables $x= (r,z) \in \Sigma$ we get the required estimates.
	\end{proof}
	
	
	\medskip
For any $t \in [0,T]$,	we denote by  $ H^0 [P] = {\mathcal T} \left( \Delta_5 \bar \Psi^0 + W^0 \right) $ the  solution to 
	\begin{equation}\label{defH0}
		\Delta_5  H^0 + \Delta_5 \bar \Psi^0 + W^0 =0 \quad  {\mbox {in }} \Sigma , \quad {\partial  H^0 \over \partial r}  = 0 \quad {\mbox {on }} \pp \Sigma 
	\end{equation}
	with $H^0 [P] (x,t)\to 0$ as $|x|\to \infty$, for all $t \in [0,T]$.
	
	A direct computation gives
	\begin{equation}\label{fe1}
		\begin{aligned}
			\Delta_5 \bar \Psi^0 + W^0&= 
   (\Delta_5  \eta (x) ) \sum_{j=1}^k  \Psi_j^0  + 2 \sum_{j=1}^k  \nabla_{r,z} \eta \cdot \nabla_{r,z}  \Psi_j^0 .
		\end{aligned}
	\end{equation}
	The function $\Delta_5 \bar \Psi^0 + W^0$
	is smooth, uniformly bounded as $\ve \to 0$ and  with compact support.
	From Lemma \ref{ll1} and \eqref{fe1} we get that
	$$
	(1+ |x|) |\nabla H^0 [P] | +	| H^0 [P] | \lesssim {1 \over 1+ |x|^3}, \quad 
	$$
	uniformly for $t \in [0,T]$ as $\ve \to 0$.	
	
	\medskip
For
	$
	P = (P_1, P_2, \ldots, , P_k ) 
	$, we can now define the first approximate solution to \eqref{leap0} to be
	\begin{equation}\label{appro1}
		\begin{aligned}
			\Psi^0 [P]  &= \eta (x) \sum_{j=1}^k  \Psi_j^0 + H^0 [P], \quad
			W^0 [P] =  \eta (x)\sum_{j=1}^k W_j^0 .
		\end{aligned}
	\end{equation}
	
	\subsection{The very first error} \label{subsec43}
	We recast Problem \eqref{leap0}  as the problem of finding 
	$(W,\Psi)$ with
	$$
	S_1 (W,\Psi ) = S_2 (W,\Psi) = 0 \quad {\mbox {in}}
	\quad \Sigma \times [0,T), \quad {\partial  \Psi \over \partial r}  = 0 \quad {\mbox {on }} \pp \Sigma \times [0,T).
	$$
	Here $S_1$ and $S_2$ are the Euler operators introduced in \eqref{defS}.
	The leapfrogging of vortex rings are then solutions $(W,\Psi)$ with $W$ and $\Psi$ close respectively to $W^0$ and $\Psi^0$, and $|\Psi (x,t)| \to 0 $ as $|x| \to \infty$. 
	By construction what we have so far is that  
$$ \begin{aligned}
			S_2 (W^0 , \Psi^0 ) &= 0 \quad \Sigma \times [0,T), \quad {\partial \over \partial r} \Psi^0 (r,z,t) = 0 \quad \partial \Sigma \times [0,T),\\
			& |\Psi^0(x,t) | \to 0 , \quad \ass |x| \to \infty.
	\end{aligned}$$
	We shall now describe $S_1 (W^0, \Psi^0 )$. We can  write
	\begin{equation}\label{gru}
		\begin{aligned}
			S_1(W^0, \Psi^0 ) &= \sum_{j=1}^k E_j^0, \quad {\mbox {where}} \\
			E_j^0 &= |\log\ve| \,  r \, \eta \, \pp_t W^0_j  + \nn^\perp (r^2( \Psi^0 - r_0^{-1} |\log \ve|) ) \cdot \nn (\eta W_j^0) .
		\end{aligned}    
	\end{equation}
	We start with the following general remark.
	
	\medskip
	
	\begin{remark}\label{r1}	For a function $W(x,t)$ given in the form
	$$
			W(x,t) 
			= \frac{1}{r_j \varepsilon_j^2}
			w\Bigl( \frac{x-P_j}{\varepsilon_j},t\Bigr),
$$
		for some function $w (y,t)$, $y={x-P_j \over \ve_j}$ we have
		\begin{equation}\label{Sexp}
			\ve_j^4 |\log\ve| \, r \, \pp_t W=  \ve_j^2 |\log \ve| (1+{\ve_j \over r_j} y_1) \pp_t w  - \ve_j |\log\ve| \nabla w \cdot \dot{ P_j} + B_0 (w) ,
		\end{equation}
			where
	\begin{equation}\label{defB0}
		\begin{aligned}
			B_0 (\phi ) = 
			-\ve_j  \dot \ve_j (1+{\ve_j \over r_j} y_1)    \nabla \phi \cdot y  -{\ve_j^2 \over r_j} y_1 \dot P_j \cdot \nabla \phi
		\end{aligned}
	\end{equation}
		An equivalent expression, which will be useful in the sequel, is
			\begin{equation}\label{Sexp1}
			\begin{aligned}
				\ve_j^4 |\log\ve| r \pp_t W	= |\log \ve| &\Bigg[ - \ve_j \nabla w \cdot \dot P_j + \ve_j^2 \pp_t w  +\ve_j \dot \ve_j (y_1 \pp_1 w - y_2 \pp_2 w)  \\
				&\quad -{\ve_j^2 \over r_j}\dot z_j  y_1 \pp_2 w + {\ve_j^3 \over r_j} y_1 \pp_t w - {\ve_j^2 \over r_j} \dot \ve_j y_1 \nabla w \cdot y \Bigg].
			\end{aligned}
		\end{equation}
		
		\medskip{\bf Proof of \eqref{Sexp}}. \ \ 
		For $x=\ve_j y +P_j$, 
		\begin{align*}
			\ve_j^4 |\log\ve|\,  r \, \pp_t W&= |\log \ve| (1+{\ve_j \over r_j} y_1) \Biggl[\ve_j^2 w_t - ({\dot r_j \over r_j} +{2\dot \ve_j \over \ve_j}) \ve_j^2 w 
			- \ve_j \dot \ve_j \nabla w \cdot y - \ve_j \nabla w \cdot \dot P_j \Biggl].
		\end{align*}
		From our first assumption \eqref{ass11} on the parameters $r_j$ and $\ve_j$ we get
		\begin{equation}\label{ass1}
			\frac{\dot r_j}{r_j} = - 2 
			\frac{\dot \varepsilon_j }{\varepsilon_j} , \quad \foral t
		\end{equation}
		which gives \eqref{Sexp}. 
		
			\medskip{\bf Proof of \eqref{Sexp1}}. \ \ 
		This expression  follows from observing that
		\begin{align*}
			\ve_j^4 |\log\ve| r \pp_t W= |\log \ve| &\Bigg[ - \ve_j \nabla w \cdot \dot P_j + \ve_j^2 \pp_t w - \ve_j \dot \ve_j \nabla w \cdot y  \\
			&- {\ve_j^2 \over r_j} y_1 \dot P_j \cdot \nabla w  + {\ve_j^3 \over r_j} y_1 \pp_t w - {\ve_j^2 \over r_j} \dot \ve_j y_1 \nabla w \cdot y \Bigg]
		\end{align*} 
		and (again \eqref{ass1} for $P_j (t) = (r_j (t) , z_j (t))$)
		\begin{align*}
			&- \ve_j \dot \ve_j \nabla w \cdot y  - {\ve_j^2 \over r_j} y_1 \dot P_j \cdot \nabla w = \ve_j \dot\ve_j (y_1 \pp_1 w - y_2 \pp_2 w) -{\ve_j^2 \over r_j}\dot z_j  y_1 \pp_2 w.
		\end{align*}
	\end{remark}
	
	\medskip	
Recalling \eqref{Ubar1} and	using \eqref{Sexp} and \eqref{fe2} we easily get that the first term in $E_j^0$ in \eqref{gru} is given by 
	$$
		\begin{aligned}
			\ve_j^4 |\log\ve| \, r \,  \eta \, \pp_t W^0_j&= - \ve_j  |\log \ve| (1+{\ve_j \over r_j} y_1)  \, \eta \,  \dot P_j \cdot \nabla w^0_j \\
			&+  \ve_j |\log \ve| (1+{\ve_j \over r_j} y_1) \, \eta \,  \left[ \ve_j \pp_t w^0_j - \dot \ve_j \nabla w^0_j \cdot y \right].
		\end{aligned}
	$$
	We  now pass to the second term in $E_j^0$ given in \eqref{gru}. It is convenient to use the decomposition
	\begin{align*}
	    r^2 \, \left( \Psi^0 - r_0^{-1} |\log \ve | \right) &= r^2 \, \left( \eta \Psi_j^0 - \alpha_j |\log \ve_j | \right) \\
	    &+r^2 \, \left( \alpha_j |\log \ve_j|  - r_0^{-1} |\log \ve | \right) + r^2 \, \left( H^0+  \eta \sum_{\ell \not= j} \Psi^0_\ell \right).
	\end{align*}
Consider the region around the point $P_j$ defined by $\{ x \, : \, |x-P_j| <|\log \ve |^{-1} \} $. In this region one has  that $\eta (x) = 1$, for all $\ve $ small, as a consequence of the assumptions on the points $P_j$. 
	We write, for $y= {x-P_j \over \ve_j} $, $|y|< \ve_j^{-1} |\log \ve |^{-1}$,
	\begin{align*}
		\ve^4_j    \nn^\perp (r^2( \Psi^0 - r_0^{-1} |\log \ve|) ) \cdot \nn W_j^0 & =\nabla^\perp \left( (1+{\ve_j \over r_j} y_1)^2 (\psi_j^0  - r_j \alpha_j  |\log \ve_j | )  \right) \cdot \nabla  w^0_j\\
		&+   \nabla^\perp (\tilde \varphi_j (\ve_j y + P_j ; P) \, \, )  \,   \cdot \nabla w^0_j,
	\end{align*}
	where
	\begin{equation}\label{varphij1}
		\begin{aligned}
			\tilde \varphi_j (x;P)&= \tilde \varphi_j (\ve_j y + P_j ; P)= r_j (1+ {\ve_j \over r_j} y_1 )^2 \times \\
			&  \Biggl( \alpha_j |\log \ve_j| -  r_0^{-1} |\log \ve|  \, \, + H^0 [P] (\ve_j y + P_j ) + \sum_{\ell \not= j} {1 \over r_\ell} \,  \,   \psi_\ell^0 ({\ve_j \over \ve_\ell } y + {P_j -P_\ell \over \ve_\ell  }) \Biggl),
		\end{aligned}
	\end{equation}
  	with $H^0$  the correction introduced in \eqref{defH0}. 
In this region we have
\begin{align*}
    \ve_j^4 \, E_j^0 (P_j + \ve_j y) &= - \ve_j  |\log \ve| (1+{\ve_j \over r_j} y_1)  \,  \dot P_j \cdot  w^0_j +  \ve_j |\log \ve| (1+{\ve_j \over r_j} y_1) \,  \left[ \ve_j \pp_t w^0_j - \dot \ve_j \nabla w^0_j \cdot y \right] \\
   &+ \nabla^\perp \left( (1+{\ve_j \over r_j} y_1)^2 (\psi_j^0  - r_j \alpha_j  |\log \ve_j | )  \right) \cdot \nabla  w^0_j+   \nabla^\perp (\tilde \varphi_j (\ve_j y + P_j ; P) \, \, )  \,   \cdot \nabla w^0_j.
\end{align*}
Reordering the terms, 
$E_j^0$ takes the form
	\begin{align*}
		\ve_j^4 \, E^0_j (P_j + \ve_j y)&= \nabla^\perp {\mathcal R}_j  (y,t) \cdot \nabla w^0_j +  \ve_j^2 |\log \ve| (1+{\ve_j \over r_j} y_1)  \pp_t w^0_j  +|\log \ve|B_0 (w_j^0) \\
		&+ \nabla^\perp \left( (1+{\ve_j \over r_j} y_1)^2 (\psi_j^0  - r_j \alpha_j  |\log \ve_j | )  \right) \cdot \nabla  w^0_j, \quad y= {x-P_j \over \ve_j}
	\end{align*}
	where $B_0$ is defined in \eqref{defB0} and
\begin{equation}\label{errejey}
		{\mathcal R}_j  (y,t)= \ve_j |\log \ve| \,  \dot P_j^\perp \cdot y  + \tilde \varphi_j ( \ve_j y+ P_j;P ) - \tilde \varphi_j (P_j ;P).
\end{equation}
	In the same region, the contribution of $E_i^0$ for $i \not = j$ has the form
	\begin{align*}
		\ve_j^4 \, E^0_i (P_j + \ve_j y)&= {\ve_j^4 \over \ve_i^4} \, \ve_i^4 \, E^0_i (({\ve_j \over \ve_i} {P_j - P_i \over \ve_j} + {\ve_j \over \ve_i }y ) =O( {\ve^5 |\log \ve |^2  \over 1+ |y|} )  \quad \quad y= {x-P_j \over \ve_j}.
	\end{align*}
From the result in Proposition \ref{f1} we recognize that
		\begin{equation}\label{fee22}
			\begin{aligned}
				&(1+{\ve_j \over r_j} y_1)^2 (\psi_j^0  - r_j \alpha_j  |\log \ve_j | )
				= 
				\Gamma_0(y) - (4-\alpha_j r_j ) \log\varepsilon_j - \log 8 + K (P_j;P_j)
				\\
				& \quad 
				+ \frac{\ve_j \, y_1}{2r_j }\Big(\Gamma_0(y) +\bar A +\Gamma (y)
				\Big) \, 
				+\ve^2 \theta_1[P_j] (y) + \ve^3 \theta_2 [P_j] (y) ,
			\end{aligned}
		\end{equation}
		where $\bar A$ is the constant defined in \eqref{barA},
 $\Gamma = \Gamma (y)$ as in \eqref{defGamma}, $\theta_1 $ and $\theta_2$ have the form described in \eqref{theta1n} and \eqref{theta2n}. Moreover
\begin{align*}
	\nabla^\perp \left( (1+{\ve_j \over r_j} y_1)^2 (\psi_j^0  - r_j \alpha_j  |\log \ve_j | )  \right) \cdot \nabla  w^0_j & = r_j^2 \, \ve_j^4 \, S_{\alpha_j} (W_j^0; \Psi_j^0 ) (\ve_j y + P_j)
	\end{align*}
where the operator $S_\alpha$ is given by \eqref{uu}. From estimates \eqref{ee1}  on this term, we conclude that, for $\rho = |y|= |{x-P_j \over \ve_j}| < \ve_j^{-1}  |\log \ve |^{-1}$,   
	\begin{equation}\label{error1}
		\begin{aligned}
			\ve_j^4 \,  & S_1(W^0, \Psi^0 ) (P_j + \ve_j y) 
			=   \ve_j^4 \sum_{i=1}^k E^0_i (P_j + \ve_j y ) \\
			&= \nabla^\perp {\mathcal R}_j  (y,t) \cdot \nabla w^0_j 
			+  \ve_j^2 |\log \ve| (1+{\ve_j \over r_j} y_1)   \pp_t w^0_j + |\log \ve | B_0 ( w^0_j  ) \\
			&+  { \ve^2  |\log \ve| \over 1+ |y|^4}  E_2(\rho,\theta,t,\varepsilon)+ {\ve^3  |\log \ve |^2 \over 1+|y|^3}  [ E_1(\rho,\theta,t,\varepsilon) + E_3(\rho,\theta,t,\varepsilon)] 
			+ O\left( {\ve^4 |\log \ve |^2  \over 1+ |y|^2} \right)  \, 
		\end{aligned}
	\end{equation}
	as $\ve \to 0$. 
	To get this estimate we have used \eqref{ee1}. The functions $E_j(\rho,\theta, t, \varepsilon)$ have the following form
$$
		\begin{aligned} 
			E_j(\rho,\theta,t,\varepsilon)
			= E_{j,1}(\rho,\varepsilon, P (t) ) \cos(j \theta)
			+ E_{j,2}(\rho,\varepsilon, P (t)  ) \sin(j \theta)
		\end{aligned}
$$
	where
	\begin{align*}
		\sum_{i=0}^2 \Bigl|
		(1+\rho)^i \frac{\partial^i E_{j,\ell}}{\partial \rho^i}
		\Bigr| + \Bigl|
		\nabla_P E_{j,\ell }
		\Bigr| \lesssim 1 \quad \ell = 1,2, \quad \ass \ve \to 0,
	\end{align*}
	uniformly for $t \in [0,T)$.

	\subsection{Dynamics for $P^0_j$ and  local reduction of the  first error.} \label{subsec44} \ \ It is possible to reduce the size of the error term \eqref{error1} in each of the regions $|{x-P_j \over \ve_j}| < \ve_j^{-1}  |\log \ve |^{-1}$, $j=1, \ldots , k$, by choosing properly  the points $P_j^0$ as in \eqref{b0}. 
	Write the function $\tilde \varphi_j$ in \eqref{varphij1} as 
	$$
	\tilde \varphi_j (x;P)= \varphi_j (x;P) + \ve^2 \theta_{j\ve_j} (x;P)
	$$
	with
	\begin{equation}\label{varphij}
		\varphi_j (x;P)= {r^2\over r_j} \Biggl( \alpha_j |\log \ve_j| -  r_0^{-1} |\log \ve|  \, \, + H^0 (x;P ) + \sum_{\ell \not= j} {1 \over r_\ell} \,  \,  \bar  \psi_\ell^0 (x;P_\ell ) \Biggl)\end{equation}
	\begin{align*}
		\bar \psi_\ell^0 (x;P_\ell) &= 	 \log \frac 1{  |x-P_\ell|^4} \big (1  -  \frac 3{2r_\ell} (r-r_\ell)   + H(x;P_\ell) \big )  + K(x; P_\ell ) + \ve_\ell \Gamma (x; P_\ell) \\
		&{\mbox {where}} \quad \ve_\ell \Gamma (x;P_\ell) = {r-r_\ell \over 2 r_\ell } \Gamma ({x-P_\ell \over \ve_\ell} ) \quad {\mbox {and}}\\
		\theta_{j\ve_j} (x;P) &= {1\over \ve^2} \sum_{\ell \not= j} {r^2\over r_j r_\ell} \log \frac {|x-P_\ell |^4}{  (\ve_\ell^2 +|x-P_\ell|^2)^2} \big (1  -  \frac 3{2r_\ell} (r-r_\ell)   + H(x;P_\ell) \big ) .
	\end{align*} 
We refer to \eqref{eqH}, \eqref{eqK} and \eqref{f} for the definition of $H$, $K$ and $\Gamma$.	We Taylor expand
	\begin{equation}\label{ta}
		\begin{aligned}
			\ttt \vp_j &( P_j+   \ve_j y ; P)   = \  \ttt \vp_j ( P_j ; P ) + \ve_j \nn_x\vp_j ( P_j ; P )\cdot y  + \bar \vp_j  \\
			\bar \vp_j &= \sum_{k=2}^4 \frac{\ve_j^k}{k!}
			D_x^k \vp_j(P_j; P) [y]^k  \, +\,  \ve_j^3 D_x \theta_{j  \ve_j} (P_j; P ) [y]+{\ve_j^4 \over 2} D_x^2 \theta_{j \ve_j } (P_j ; P)[y]^2+ Q(\ve_j y , P)
		\end{aligned}
	\end{equation}
	where $Q(z,\zeta)$ is a function smooth in its arguments that satisfies
	$$
	|\nn _zQ(z , \zeta)|\ \le \ C\ve |z|^4.
	$$
Recalling the definition of ${\mathcal R}_j$ in \eqref{errejey} we can write
	\begin{align*}
	    \nabla^\perp 	{\mathcal R}_j  (y,t) &= 	- \ve_j 	|\log \ve| \,  \dot P_j   + \ve_j \nabla_x^\perp \varphi_j ( P_j; P) + \nabla^\perp \bar \vp_j,
	\end{align*}
	and we choose the points $P^0_j$ in \eqref{point}-\eqref{b0}
	to satisfy the ODEs system
	\begin{equation}\label{din1}
	-	|\log \ve| \,  \dot{ P_j^0}   + \nabla_x^\perp \varphi_j ( P_0+ P_j^0; P_0 + P^0) = 0,   \quad j=1, \ldots k.
	\end{equation}
Here $P_0 = (r_0, 0)$ and $P^0 = (P_1^0 , \ldots , P_k^0)$. We use \eqref{alphajalpha0} to write
	$$
	\nabla_x \varphi_j^\perp ( P_j ; P) = \Theta_j (P) =\Theta_j^0 ( P) + \Theta_j^1 ( P) 
	$$
	where for $P_j = (r_j , z_j)$,
	\begin{align*}
		\Theta_j^0 ( P)  &= -4 \sum_{\ell \not= j} \,  \,   \frac{(P_j-P_\ell)^\perp }{ |P_j-P_\ell|^2}  -2 \, {r_j-r_0 \over r_0^2  } \,  |\log \ve | \, {\bf e_2}.
	\end{align*}
	Recall now the form of the points $P_j^0$ from \eqref{b0}
$$
		P_j^0 ={1\over \sqrt{|\log \ve |}} q_j + Q_j.
$$
	Here $q_j$ are the given solutions to the  {\it leapfrogging dynamics} \eqref{car1}, which gets rewritten as 
	$$
	- |\log \ve |^{1\over 2} \dot q_j + \Theta_j^0 (P_0+ {1\over \sqrt{|\log \ve |}} q) = 0 \quad t \in [0,T].
	$$
	We choose $Q_j$ to solve the initial value problem
	\begin{align*}
	-\dot Q_j &+\tilde \Theta_j (t,Q) = 0 \quad t \in [0,T]\\
	\tilde \Theta_j (t,Q) &:= {1\over |\log \ve |} \left[ \Theta_j^0 (P_0 + P^0 ) -\Theta_j^0 (P_0 + {1\over \sqrt{|\log \ve |}} q) \right] + {1\over |\log \ve |}  \Theta_j^1 ( P_0 + P^0) ,   \\
	Q_j (0) &= 0.
	\end{align*}
	A direct computation gives that
	$$
	\| {1\over |\log \ve |}  \Theta_j^1 ( P_0 +{1\over \sqrt{|\log \ve |}} q) \|_{L^\infty ( [0,T] )} \lesssim {\log |\log \ve | \over |\log \ve |}
	$$
	The functions $\tilde \Theta_j$ are Lipschitz continuous in $Q$ in the set
		$$
\, \| Q_j \|_{L^\infty [0,T)} \lesssim  {\log |\log \ve | \over |\log \ve | },
	$$
	 and continuous in $t$, for $t \in [0,T]$. Standard ODEs theory ensures the existence of a solution $(Q_1, \ldots , Q_k)$ satisfying the bounds \eqref{b0}.

	\medskip
	\medskip
	This choice for $P^0$  automatically reduces from $\ve $ to $\ve^2$ the size of the term $ \nabla^\perp {\mathcal R}_j  (y,t) \cdot \nabla w^0_j$  in the first line of  the error of approximation described by \eqref{error1}.   
	Let us now analyze the terms in \eqref{error1} given by
	$$
	\ve_j |\log \ve| (1+{\ve_j \over r_j} y_1)  \left[ \ve_j \pp_t w^0_j - \dot \ve_j \nabla w^0_j \cdot y \right] -\ve_j^2 |\log \ve | y_1 \dot P_j \cdot \nabla w^0_j.
	$$
	They	have the same form as the terms in the following  line in \eqref{error1}, with the only difference that  in this case  the functions $E_j (\rho, \theta, t, \ve)$  depend on $\dot P$, not only on $P$. This fact is consequence of  \eqref{Sexp1} and of our assumptions on the points $P_j$.

	We are thus in a position to conclude that if we choose the points $P^0$ of the form \eqref{b0} to satisfy \eqref{din1}, the error of approximation \eqref{error1} in each region $|x-P_j| <|\log \ve|^{-1} $ can be described as, for $y={x-P_j \over \ve_j}$, $\rho = |y|$
	\begin{equation}\label{error11}
		\begin{aligned}
			\ve_j^4S_1(W^0, \Psi^0 )& (P_j + \ve_j y) 
			=   \nabla^\perp {\mathcal R}_j (y,t;P) \cdot \nabla w_j^0 +  { \ve^2 |\log \ve |  \over 1+ |y|^4}  E_2(\rho,\theta,t,\varepsilon)\\
			&+ {\ve^3 |\log \ve |^2  \over 1+|y|^3}  [ E_1(\rho,\theta,t,\varepsilon) + E_3(\rho,\theta,t,\varepsilon)] + O\left( {\ve^4 |\log \ve |^2  \over 1+ |y|^2} \right)  \, 
		\end{aligned}
	\end{equation}
	as $\ve \to 0$, 	where
	the functions $E_j(\rho,\theta, t, \varepsilon)$ now depends also on $\dot P$, and  have the form 
	\begin{equation}\label{Ekkn}
		\begin{aligned} 
			E_j [P, \pp_t P ] (\rho,\theta,t,\varepsilon)&
			= E_{j,1}(\rho,\varepsilon, P (t) , \dot P (t)) \cos(j \theta)
			\\
			&+ E_{j,2}(\rho, \varepsilon, P (t) , \dot P (t) ) \sin(j \theta),
		\end{aligned}
	\end{equation}
	where
	\begin{align*}
		\sum_{i=0}^2 \Bigl|
		(1+\rho)^i \frac{\partial^i E_{j,\ell}}{\partial \rho^i}
		\Bigr| + \Bigl|
		\nabla_P E_{j,\ell }
		\Bigr| +   \Bigl|
		\nabla_{\dot  P} E_{j,\ell }
		\Bigr|\lesssim 1 \quad \ell = 1,2, \quad \ass \ve \to 0.
	\end{align*}
	Besides,
	\begin{equation}\label{defRj}
		\begin{aligned}
			{\mathcal R}_j (y,t;P)&= \ve_j  |\log \ve|  {d \over dt} (P_j -P_j^0)^\perp \cdot y  + \tilde \varphi_j (\ve_j y + P_j ;  P )  - \tilde \varphi_j (P_j;P)  \\
			&-\nabla_x \varphi_j (\bar P_j^0; \bar P^0) \, \ve_j y .
	\end{aligned}\end{equation}
	where $ \bar P^0_j   (t) =  P_0  + P_j^0 (t)$.

\medskip

		In the complementary region $\cap_{j=1}^k \{ x \in \Sigma \, : |x-P_j |>|\log \ve|^{-1}  \, \}$, it is convenient to describe the error in the original variable $(x,t)$. Given the expression for $S_1 (W^0, \Psi^0)(x,t)$ from \eqref{gru}, this error is automatically zero in the region $|x-P_0| >{\delta \over 2}$. Thus we consider $|x-P_0| \leq {\delta \over 2}$. Taking possibly $\ve$ smaller, for points in this region we have that
  $$
  |x-P_j | < {\delta }, \quad \forall j=1, \ldots , k.$$
  Hence the expansions in Proposition \ref{f1} and \eqref{Ubar}  hold true. A direct computation gives that in the complementary region $\cap_{j=1}^k \{ x \in \Sigma \, : |x-P_j |>|\log \ve|^{-1} \,  \}$ one has
	\begin{equation}\label{gru1}
		S_1 (W^0, \Psi^0)(x,t) =  \sum_{j=1}^k {O(\ve^{2-\sigma})   \over (\ve^4  + |x-P_j|^4)} , \quad \nabla_x S_1 (W^0, \Psi^0)(x,t) =  \sum_{j=1}^k {O(\ve^{2-\sigma})   \over (\ve^5  + |x-P_j|^5)}  
	\end{equation}
 for some $\sigma >0$ arbitrarily small.

	\

	\section{Improvement of the approximation}\label{sec4}
	
	In the previous section we introduced the functions $\Psi^0 (x,t)$ and $W^0 (x,t)$  defined in \eqref{appro1}, where $P$ is a collection of $k$ points of the form 
	$$
	P= (P_1, \ldots , P_k ), \quad P_j (t)= P_0 + P^0_j (t) +P^1_j (t) + {\bf a}_j (t), \quad P_0 =(r_0, 0). 
	$$
	So far we have defined $P_j^0$ explicitly with the form \eqref{b0} to solve \eqref{din1}, while $P^1_j$ and ${\bf a}_j$ are still parameters that are assumed to satisfy \eqref{b1} and \eqref{b11} respectively.

	The next step in our argument is to modify $( W^0, \Psi^0)$ in order to produce a better approximate solution  $(W^*,\Psi^*)$. We do it taking $(W^*,\Psi^*)$ of the form
	\begin{align}
		\Psi^* (x,t;P) &= \Psi^0 + \sum_{j=1}^k    {\eta_{j2}\over r_j} \psi_j^* ({x-P_j \over \ve_j },t) + \psi^{*,out} (x,t) \label{f110}\\
		W^*  (x,t;P) &= W^0  + \sum_{j=1}^k    {\eta_{j1} \over r_j \ve_j^2 } \phi_j^* ({x-P_j \over \ve_j },t) + \phi^{*,out} (x,t) \label{f220}
	\end{align}
	where
	\begin{equation} \label{zeta} \eta_{jN} (x,t) =\eta_N  \left(|\log \ve|^\zeta \, |x-P_j |\right),
	\end{equation}
	and $\eta_N$ is given by \eqref{cutoff}. 
	Here $\zeta >1 $ is a positive number, independent of $\ve$. Since the relative distance between $P_j$ and $P_i$, $j\not= i$, is of the order $|\log \ve |^{-{1\over 2}}$,  we have
	$$
 P_i \in {\mbox {support}}\, ( \eta_{i2} ),
	\quad  {\mbox {support}}\, ( \eta_{i2} ) \cap  {\mbox {support}}\, ( \eta_{j2} ) =\emptyset , \quad i\not= j.
	$$ 
	It will be convenient to choose    $$\zeta =3.$$  This will guarantee that $\Psi^0$ and $W^0$ are the main terms in the decomposition of $\Psi^*$ and $W^*$ in the region where the corresponding cut-off functions $\eta_{j2}$, $\eta_{j1}$ are non-zero.

	\medskip
	If we insert the expressions of $\Psi^*$ and $W^*$ given by \eqref{f110} and \eqref{f220} in the Euler  operator $S_1$ (see \eqref{defS}) we get
	\begin{align}\label{fullS10}
		S_1(W^*,\Psi^*) = \sum_{j=1}^k {\eta_{1j} \over \ve_j^4} E_j^{in} [\phi_j^* , \psi_j^* \, \psi^{*,out} , P] 
		+ E^{out} [\phi^{*, out}, \psi^{*, out} , \phi^{*,in}, \psi^{*,in} , P],
	\end{align}    
	where
	$$
	\phi^{*,in}= (\phi_1^*, \ldots , \phi_k^*),\quad  \psi^{*,in} = (\psi_1^*, \ldots , \psi_k^*).
	$$
	The inner-operators $E_j^{in}$, $j=1, \ldots , k$,   are defined as
	\begin{align}\label{Ejin}
		E_j^{in} & [\phi_j , \psi_j \, \psi^{out} , P] (y,t) := |\log \ve | \ve_j^2 (1 +{\ve_j \over r_j } y_1) \pp_t \phi_j   + |\log \ve | B_0 (\phi_j )  \nonumber \\
		& + \nabla^\perp \left(  (1+{\ve_j \over r_j} y_1)^2 (\psi_j^0  - r_j \alpha_j  |\log \ve_j |  + \psi_j + r_j \psi^{out} )  +{\mathcal R}_j (y,t;P) \right)\cdot \nabla \phi_j \nonumber \\
		&+ \nabla^\perp \left((1+{\ve_j y_1 \over r_1} )^2 (\psi_j +r_j \psi^{out} ) \right) \nabla  (w^0_j + \sum_{i\not=j} {r_j \ve_j^2 \over r_i \ve_i^2 } w^0_i ) \nonumber \\
		& +\ve_j^4 S_1 (W^0, \Psi^0) (\ve_j y + P_j), \quad |y|< 3R_j , \quad R_j:= {1 \over \ve_j |\log \ve|^\zeta}
	\end{align}
	with $y={x-P_j \over \ve_j} $,  $t \in [0,T)$, and ${\mathcal R}_j$ defined in \eqref{defRj}. Moreover $B_0$ is the operator defined in \eqref{defB0},
	which can be equivalently written as
	\begin{align*}
		B_0 (\phi)= \ve_j \dot \ve_j (y_1 \pp_1 \phi - y_2 \pp_2 \phi) -{\ve_j^2 \over r_j} \dot \ve_j y_1 y \cdot \nabla \phi -{\ve_j^2 \over r_j} \dot z_j y_1 \pp_2 \phi
	\end{align*}
	see \eqref{Sexp}  and \eqref{Sexp1} for the derivation of $B_0$, and its equivalent form.
	The outer-operator $E^{out}$ is given by
	\begin{align}\label{Eout}
		E^{out} &[\phi^{out}, \psi^{out} , \phi^{in}, \psi^{in} , P] (x,t) :=
		|\log \ve | \, r \, \partial_t \phi^{out}\\
		&+ \nabla_x^\perp ( r^2 (\Psi^0  + \sum_{j=1}^k {  \eta_{j2}\over r_j} \psi_j ({x-P_j \over \ve_j }) + \psi^{out} -r_0^{-1} |\log \ve |)) \cdot  \nabla_x \phi^{out} \nonumber \\
		&+\sum_{j=1}^k \left[ r \, |\log \ve |  \,  \pp_t \bar \eta_{j1} + \nabla_x^\perp ( r^2 (\Psi^0  + \sum_{j=1}^k  {  \eta_{j2}\over r_j} \psi_j ({x-P_j \over \ve_j }) + \psi^{out}-r_0^{-1} |\log \ve |)) \nabla \bar \eta_{1j} \right] {\phi_j \over \ve_j^2 r_j} \nonumber  \\
		&+ \left[ \sum_{j=1}^k ( \eta_{2j} - \eta_{1j}) \nabla_x^\perp (r^2 ({\psi_j \over r_j} +\psi^{out}) ) + {r^2 \psi_j \over r_j} \nabla_x^\perp \eta_{2j} \right] \cdot \nabla_x W^0 \nonumber \\
		&+ (1-\sum_{j=1}^k \eta_{2j} ) \nabla^\perp (r^2 \psi^{out} ) \cdot \nabla W^0   
		+ (1-\sum_{j=1}^k  \eta_{j1} ) S_1 (W^0, \Psi^0) =0 \quad (x,t) \in \Sigma \times [0,T) .  \nonumber 
	\end{align}
		If we now insert the expressions of $\Psi^*$ and $W^*$ given by \eqref{f110} and \eqref{f220} in the Euler  operator $S_2$ (see \eqref{defS}) we get
	\begin{equation}\label{S20}
		\begin{aligned}
			S_2 & [ \Psi^* , W^*] = S_2[ \phi^{*,in}, \psi^{*,in}, \phi^{*,out}, \psi^{*,out}, P], \quad {\mbox {with}}\\
			S_2 & [ \phi^{in}, \psi^{in}, \phi^{out}, \psi^{out}, P]=  \sum_{j=1}^k  {\eta_{1j}\over r_j \ve_j^2} [ \Delta_{5,j} \psi_j+ \phi_j] +\Delta_5 \psi^{out} + \phi^{out} \\
			&+  \sum_{j=1}^k ( \eta_{j1} - \eta_{j2} ) {\Delta_{5,j} \psi_j \over r_j \ve_j^2}+ \sum_{j=1}^k ( {\psi_j \over r_j} \Delta_5  \eta_{j2}  + 2 \nabla_x  \eta_{j2} \nabla_x {\psi_j \over r_j}  )  . 
		\end{aligned}
	\end{equation}
	Here $\Delta_{5,j}$ is the differential operator in the expanded $y$-variable defined in \eqref{d5}, while $\Delta_5$ is the differential operator defined in the original $x$-variable as given in \eqref{D5}.
	We require conditions on the boundary and at infinity on $\psi^{*,out}$: for all $t \in [0,T]$
$$
		{\partial \psi^{*,out} \over \partial r} (x,t)=0, \quad {\mbox {on}} \quad \partial \Sigma \times [0,T], \quad 
		|\psi^{*, out} (x,t) | \to 0 , \quad \ass |x| \to \infty.
$$

	\medskip
	We are able to prove that there exist points $P_1^1, \ldots , P_k^1$ in \eqref{point} such that, for any choice of points ${\bf a}_1 , \ldots , {\bf a}_k$ in \eqref{point} satisfying \eqref{b11}, it is possible to construct a good approximate leapfrogging of vortex rings $(W^*, \Psi^*)$ with the form \eqref{f110}-\eqref{f220}. This is the content of next Proposition.

		\begin{prop}\label{Approximation}
			There exist  points 
			$$ {\bf P} = ({\bf P}_1, \ldots , {\bf P}_k) , \quad {\bf P}_j = P_0 + P^0_j + P^1_j, \quad P_0= (r_0, 0)
			 $$satisfying \eqref{b1}-\eqref{b11} for some $\sigma >0$ small, and functions
			$$ \phi^{*, in}:= (\phi_1^*(y,t) , \ldots, \phi_k^* (y,t) ),  \quad \psi^{*,in}:= (\psi_1^* (y,t) , \ldots, \psi_k^* (y,t) ), \quad \phi^{*,out}, \quad \psi^{*,out}
			$$
			in \eqref{f110}, \eqref{f220} 
			such that, for any points ${\bf a}$ satisfying \eqref{b11}, the following facts hold. For 
		$$
				P= {\bf P}+ {\bf a} , 
		$$
			we have
	\begin{align*}
				E_j^{in}  [\phi_j^* , \psi_j^* \, \psi^{*,out} , P ] (y,t) &= 
				\ve_j \nabla^\perp\left[\left(  |\log \ve|  \dot {\bf a }^\perp_j +  D_x \nn_x\vp_j ( {\bf P}_j ; {\bf P} ) [{\bf a}] \right)\cdot y   \right] \nabla U \\
				&	+ {\mathcal E}_j  [\phi_j^* , \psi_j^* \, \psi^{*,out} , {\bf a}] (y,t)
			\end{align*}
			where $\varphi_j$ is given in \eqref{varphij} and	 
		$$
			\Bigl|	{\mathcal E}_j  [\phi_j^* , \psi_j^* \, \psi^{*,out} , {\bf a}] (y,t) \Bigl|= O \left( {\ve^{5-\sigma_*} \over 1+ |y|^3 } \right) , \quad |y|< 3R_j , \quad R_j:= {1 \over \ve_j |\log \ve|^3}
		$$
			for all $j=1, \ldots , k$, and
		$$
			\left| E^{out} [\phi^{*,out}, \psi^{*,out} , \phi^{*,in}, \psi^{*, in} , P]  (x,t) \right| \lesssim {\ve^{4-2\sigma_* } \over 1+ |x|^4} , \quad (x,t) \in \Sigma \times [0,T]
		$$
  for some $\sigma_* >0$ arbitrarily small.
			We refer to \eqref{Ejin} and \eqref{Eout} for the explicit definitions of $E_j^{in}$ and $E^{out}$. 
			Besides, 
			\begin{align*}
				(1+ |y|) |\nabla \psi_j^* (y,t) |+  |\psi_j^* (y,t) | & \lesssim \ve^{2-\sigma_*} ,\quad |y|< 3R_j , \, t \in [0,T]\\
				(1+ |y|^2) |\phi_j^* (y,t)| &\lesssim \ve^{2-\sigma_*}  ,\quad |y|< 3R_j , \, t \in [0,T]\\
				(1+ |x|^4) |\phi^{*,out} (x,t) | + (1+ |x|^2 ) |\psi^{*, out} (x,t) | &\lesssim \ve^{2-2\sigma_*} , \quad (x,t) \in \Sigma \times [0,T].
			\end{align*}
		\end{prop}
		
		\medskip
		This result is telling that the new approximate solution 
		  $(W^*, \Psi^*)$ in $\equ{f110}$-$\equ{f220}$
		with the parameter functions considered above produces  total errors in $\equ{fullS10}$ and \eqref{S20} which can be estimated as follows
		$$
		\begin{aligned}
			| S_1(W^*, \Psi^*)(x,t)| \ \le &\     C\ve^{1-\sigma_*}\sum_{j=1}^k \frac 1 {1+ |y_j|^3} + C{\ve^{4-2\sigma_* } \over 1+ |x|^4} , \quad y_j = \frac {x-P_j(t)} {\ve}, \\
			|S_2(W^*, \Psi^*)(x,t)|\ \le &\   C\ve^{4-\sigma_*} 
			\eta_1 \left({4|x-(r_0, 0) |\over r_0} \right),
		\end{aligned}
		$$
		for all $x \in \Sigma$ and $t \in [0,T]$.
		By construction these errors are also  uniformly Lipschitz in the parameter points  ${\bf a}$.
		Besides, in combination with \eqref{el2}-\eqref{Ubar1} we get that, for all $j=1, \ldots , k$, 
		\begin{equation} \label{d}
		\begin{aligned}
		W^* (x,t;P) &= {1\over r_j \ve_j^2 } \,  f\left( 	(1+{\ve_j \over r_j} y_1)^2 (\psi_j^0  - r_j \alpha_j  |\log \ve_j |  + \psi_j^* ) \right) \left(1+ O(\ve^2) \right),\\
		{\mbox {where}} \quad f(s)&= 8 \, e^{-K(P_j , P_j)} \, \ve_j^{2-\alpha_j r_j} \, e^s, \quad {\mbox {and}} \quad  
		\Psi^* (x,t;P)= {1\over r_j} \, \psi_j^0 + O(\ve^2 |\log \ve|)
		\end{aligned}
		\end{equation}
		uniformly in the  region $|x-P_j |< |\log \ve |^{-3}$, with $x= P_j + \ve_j y$. The definition of $\psi_j^0$ is given in \eqref{fee2}, see also \eqref{fee22}.
		We also have 
	\begin{equation}\label{barp00n}
			\begin{aligned}
					(1+{\ve_j \over r_j} y_1)^2 (\psi_j^0  - r_j \alpha_j  |\log \ve_j |  + \psi_j^* )
			&	= 
				\Gamma_0(y) - (4-\alpha_j  r_j ) \log\varepsilon_j - \log 8 + K (P_j;P_j)
				\\
				& \quad 
				+ \frac{\ve_j y_1 }{2r_j }\Big(\Gamma_0(y) +\bar A +\Gamma (y)
				\Big) 
				\, 
				+b_j^{*} (y,t) +b_j^{**} (y,t)
			\end{aligned}
		\end{equation}
where
$$
	|\log \ve |^{1\over 2}	|\pp_t b_j^{*}(y,t)| + (1+ |y|) |  \,  \nn_y b_j^{*} (y,t)|  + |b_j^{*} (y,t)| \ \le  \  C \ve^2 \, (1+ |y|^2) \, 
	\log (2+ |y|).
$$
In $b_j^{**} (y,t)$ we collect all reminding terms which depend on $\dot {\bf a}$. On these terms we have the following control
$$
	 (1+ |y|) |  \,  \nn_y b_j^{**} (y,t)|  + |b_j^{**} (y,t)| \ \le  \  C \ve^{2+\bar \sigma} \, (1+ |y|^2) \, 
$$
for $y= {x-P_j \over \ve_j}$, $|y|< |\log \ve |^{-3}$, for some fixed $\bar \sigma >0$.
		Next section will be devoted to
		build this approximate solution and to prove Proposition \ref{Approximation}.

	\section{Improvement of the approximation: Proof of Proposition \ref{Approximation}} \label{sec5}
	
	\medskip 
	The approximate solution $(W^*, \Psi^*)$ predicted by Proposition \ref{Approximation} is constructed improving the inner errors $E_j^{in}$ defined in \eqref{Ejin} ten successive times, and improving the outer error $E^{out}$ defined in \eqref{Eout}  once.
	
	It is useful to have at hand a more explicit expression of the inner-operators $E_j^{in}$ in \eqref{Ejin}.
	The key observation is that in the region
	$ |x-P_j | < 3   |\log \ve |^{-\zeta }$, we have, for $y= {x-P_j \over \ve_j} $,
	\begin{equation}\label{newR}
		\begin{aligned}
			(1+{\ve_j \over r_j} y_1)^2 &(\psi_j^0  - r_j \alpha_j  |\log \ve_j | ) +	{\mathcal R}_j (y,t;P)  \\
			&= \Gamma_0(y) - (4-\alpha r_0) \log\varepsilon_j - \log 8 + K (P_j;P_j)
			\\
			& 
			+ \frac{\varepsilon_j y_1}{2r_j}\Big(\Gamma_0(y) +\bar A +\Gamma (y)
			\Big) 
			\,  + {\mathcal R}^0_j (y,t;P), \quad {\mbox {where}}\\
			{\mathcal R}^0_j (y,t;P)&=\ve_j  |\log \ve|   (\dot P_j -\dot P_j^0)^\perp \cdot y  + \ve_j [\nn_x\vp_j ( P_j ; P ) - \nn_x\vp_j ( \bar P_j^0 ; \bar P^0 )] \cdot y  \\
			& +\,  \ve_j^3 D_x \theta_{j  \ve_j} (P_j; P ) [y]
			+ \ve^2 |\log \ve| {\mathcal Q}_1 [P] (y) + \ve^3 |\log \ve |^{3\over 2} {\mathcal Q}_2 [P] (y) 
		\end{aligned}
	\end{equation}
	with $\bar P^0_j = P_0 + P_j^0$, $\bar P^0 = (\bar P^0_1, \ldots , \bar P_k^0 )$,
	${\mathcal Q}_1$, ${\mathcal Q}_2$  denote  functions of the form, for $y= \rho e^{i\theta}$,
	\begin{equation}\label{calQ}
		\begin{aligned}
			{\mathcal Q}_1[P] (y) &=\Biggl[   a (P) + b_1 (P) \cos 2\theta + b_2 (P) \sin 2\theta \Biggl] \, O (|y|^2)\\
			{\mathcal Q}_2[P_0] (y)& = a(P) O (|y|^3 ),
		\end{aligned}
	\end{equation}
	for $a$, $b_1$ and $b_2$ smooth  functions of the points $P$, which are uniformly bounded as $\ve \to 0$. Also: $\bar A$ is the explicit constant defined by \eqref{barA} and 
	$\Gamma = \Gamma (y)$ is the function introduced in \eqref{defGamma}.

	\noindent
	\begin{proof}[Proof of \eqref{newR}.]
		In order to prove \eqref{newR}, we combine an expansion of the term $ (1+{\ve_j \over r_j} y_1)^2 (\psi_j^0  - r_j \alpha_j  |\log \ve_j | )  $  following the lines to get \eqref{barp}. 
		From \eqref{ta} and \eqref{defRj} we get
		\begin{align*}
			{\mathcal R}_j (y,t;P) & = \ve_j  |\log \ve|   (\dot P_j -\dot P_j^0)^\perp \cdot y  + \ve_j [\nn_x\vp_j ( P_j ; P ) - \nn_x\vp_j ( \bar P_j^0 ; \bar P^0 )] \cdot y  \\
			& +\,  \ve_j^3 D_x \theta_{j  \ve_j} (P_j; P ) [y] + \sum_{k=2}^4 \frac{\ve_j^k}{k!}
			D_x^k \vp_j(P_j; P) [y]^k  +{\ve_j^4 \over 2} D_x^2 \theta_{j \ve_j } (P_j ; P)[y]^2+ Q(\ve_j y , P)
		\end{align*}
		Under our assumptions   \eqref{point}, \eqref{b1} and \eqref{b11} on the form and size of the points $P_j$, we have that 
		\begin{equation}\label{expRj}
			\begin{aligned}
				{\mathcal R}_j (y,t;P) & = \ve_j  |\log \ve|  (\dot P_j -\dot P_j^0)^\perp \cdot y  + \ve_j [\nn_x\vp_j ( P_j ; P ) - \nn_x\vp_j ( \bar P_j^0 ; \bar P^0 )] \cdot y  \\
				& +\,  \ve_j^3 D_x \theta_{j  \ve_j} (P_j; P ) [y] + \ve^2 |\log \ve| {\mathcal Q}_1 [P] (y) + \ve^3 |\log \ve |^{3\over 2} {\mathcal Q}_2 [P] (y) 
			\end{aligned}
		\end{equation}
		where
		${\mathcal Q}_1$, ${\mathcal Q}_2$ satisfy \eqref{calQ}. This fact gives \eqref{newR}. 
		
	\end{proof}

	\subsection{Strategy for the improvement} \label{strategy}
	
	Replacing \eqref{newR} in \eqref{Ejin} and using \eqref{ass11}   we re-write the inner operator $E^{in}_j$ (see \eqref{Ejin}) as
	\begin{align}\label{Ej1}
		E_j^{in}  [\phi_j  ,  \psi_j , \psi^{out} , P] (y,t) &:=L_1 (\phi_j) + L_2 (\phi_j) + L_3 (\psi_j) + Q(\psi_j , \phi_j)  
		+ L_3 (r_j \psi^{out})  + Q(\psi^{out} , \phi_j) \nonumber \\
		& +\ve_j^4 S_1 (W^0, \Psi^0) (\ve_j y + P_j),\end{align}
	where
	\begin{equation}\label{ll}
		\begin{aligned}
			L_1(\phi)&:= |\log \ve | \, \ve_j^2 (1 +{\ve_j \over r_j } y_1) \pp_t \phi   + |\log \ve | B_0 (\phi)  \\
			L_2 (\phi)& :=
			\nabla^\perp \left(\Gamma_0 +  \frac{\ve_j}{2r_j}\, y_1 \, \Big(\Gamma_0(y) +\bar A +\Gamma (y)
			\Big) 
			\, 
			+{\mathcal R}^0_j (y,t;P) \right) \cdot \nabla \phi \\
			L_3 (\psi ) &:=  \nabla^\perp ((1+{\ve_j y_1 \over r_1} )^2 \psi)\cdot  \nabla (w^0_j +\sum_{i\not = j} w_i^0) \\
			Q(\psi,\phi) &:=\nabla^\perp \left( (1+{\ve_j \over r_j} y_1)^2 \psi   \right) \cdot \nabla \phi  .
		\end{aligned}
	\end{equation}
We recall the definition of the operator $B_0$ in \eqref{defB0}.

	\medskip
	The  inner error is improved using three different types of mechanisms. The choice of the mechanism is dictated by the form of the part of the error we aim at removing.
	
	\medskip
	Suppose the  error  
	$
	E(y,t),
	$ with $y= \rho e^{i\theta}$, 
	has a Fourier decomposition in the $\theta$-variable  given by
	$$
	E(y,t) = E (\rho e^{i\theta} ,t) = \sum_{n \in \Z} E_n (\rho ,t ) e^{i\, n \, \theta} , \quad E_n (\rho ,t ) = \int_0^{2\pi} E(\rho e^{in\theta} ) e^{in \theta} \, d\theta.
	$$
	We call $E_n$ the $n-$th mode in the Fourier decomposition of $E$. 
	
	\medskip{}
	{\it The elliptic improvement.} \ \ If the part of the error we want to remove   has no $0$-th mode in its Fourier decomposition then we will solve using the elliptic operator
$$
		L[\psi]\,  :=    \, \nabla^\perp  \Gamma_ 0 \cdot   \nabla \phi + \nabla^\perp    \psi  \cdot \nabla U, \quad \phi= -\Delta_{5,j} \psi .
$$
	You recover $L[\psi]$ from $L_2 (\phi_j) + L_3 (\psi_j)$ in \eqref{ll} just formally taking $\ve_j=0$, $\omega_j^0 = U$ and $\omega_i^0= 0$ for $i\not= j$. 
	Using the fact that $ -\Delta \Gamma_0 = f_0(\Gamma_0) = U $ where $f_0(u) = e^u$ and $-\Delta_{5,j} \psi = \phi$ we see that
	\begin{align*}
		L[\psi]\  = & - \nabla^\perp \Gamma_0 \cdot \nabla [ \Delta_y \psi +{3\ve_j \over r_j + \ve_j y_1} \pp_{y_1} \psi + f_0'(\Gamma_0)\psi ].
	\end{align*}
	In polar coordinates $y=\rho e^{i\theta}$ in $\R^2$, we
	check that
	$$
	L[\psi]  =  - \frac 4{\rho^2 + 1} \frac{\pp}{\pp\theta}  [ \Delta_y \psi +{3\ve_j \over r_j + \ve_j y_1} \pp_{y_1} \psi + f_0'(\Gamma_0)\psi ].
	$$
	It is enough to use the simplified version of this operator given by
	\begin{equation}\label{gadd}
		L_0[\psi] := 	- \frac 4{\rho^2 + 1} \frac{\pp}{\pp\theta}  [ \Delta_y \psi + f_0'(\Gamma_0)\psi ],  \quad \phi= -\Delta_{5,j} \psi .
	\end{equation}
For terms in the error of Fourier mode $1$ or higher, we  will solve with the elliptic operator in \eqref{gadd}. When the error has mode $1$, it will be possible to solve only under certain orthogonality conditions, which requires a proper adjustment of the points $P_j^1$. We call this procedure the {\it elliptic improvement}. We will discuss the solvability and a-priori bounds for problems of the form $L_0 (\psi ) = E$ in Lemma \ref{alpha}.
	
	\medskip
{\it The ODEs improvement.} \ \ 	If the part of  inner error $E(y,t)$ we want to remove  has $0$-th mode in its Fourier decomposition, we will solve 
	$$
	|\log \ve | \, \ve_j^2  \, \pp_t \phi= E, \quad \phi (0, \cdot ) = 0.
	$$
	This is the main part of $L_1 (\phi)$ in \eqref{ll}. We can solve the above ODEs
	at the expenses of losing two power of $\ve_j$ (and gaining one power of $|\log \ve|$) in the estimates for the solution. We will see that the construction is decided by powers of $\ve$, in the sense that the exact number of powers of $|\log \ve|$ we gain will be easily absorbed in a slightly smaller  power of $\ve$. We call this procedure the {\it ODEs improvement}. It has the advantage of keeping track of the Fourier modes of the solutions.
	
	\medskip
{\it The transport improvement.} \ \ 	After the fifth inner improvement, we will need to reduce not only  the size of the error, expressed in terms of powers of $\ve$,  but also its decay rate in the $\rho= |y|$-variable. In this case we will solve the transport-type equation
	$$
	L_1 (\phi) + L_2 (\phi) = E.
	$$
	We call this procedure the {\it transport improvement}. A-priori estimates for solutions of this transport-type equation are contained in Lemma \ref{transport3-new}. After solving this linear equation, we will have lost control on the Fourier modes of the solution $\phi$, but the structure of the problem will automatically give a new error whose main term has no $0$-th mode in its Fourier decomposition. This will be crucial for the scheme of improvement to work. 
	
	\medskip
	We summarize the complete process of improvement in the following diagram: for $\rho = {|x-P_j| \over \ve_j}$
	$$
	e_1 = {\ve^2 |\log \ve | \over 1+ \rho^4} E_{ 2} \,  \underset{{\mathcal E}}{\Rightarrow} \, e_2 = {\ve^3 |\log \ve |^2 \over 1+ \rho^3} E_{1} \,  \underset{{\mathcal E \& P^1}}{\Rightarrow} \, e_3 = {\ve^4 |\log \ve |^4 \over 1+ \rho^2} E_{0 } \,  \underset{{\mathcal ODEs}}{\Rightarrow}\, e_4 = {\ve^3 |\log \ve |^6 \over 1+ \rho^3} E_{ 12} \sin \theta \,  \underset{{\mathcal E} \& P}{\Rightarrow}
	$$
	$$
	\, e_5 = {\ve^4 |\log \ve |^8 \over 1+ \rho^2} E_{ 2} \,  \underset{{\mathcal E }}{\Rightarrow} \, e_6 = {\ve^5 |\log \ve |^8 \over 1+ \rho} E_{0 } \,  \underset{{\mathcal T}}{\Rightarrow}
	\,
	e_7 = {\ve^3 |\log \ve |^8 \over 1+ \rho^5} E_{ 1} \,  \underset{{\mathcal Out} }{\Rightarrow} \, e_8 = {\ve^3 |\log \ve |^b \over 1+ \rho^5} E_{ 1} \,  \underset{{\mathcal E \& P^1 }}{\Rightarrow} \, 
	$$
	$$
	e_9 = {\ve^4 |\log \ve |^b \over 1+ \rho^4} E_{0 } \,  \underset{{\mathcal ODEs}}{\Rightarrow} \, e_{10} = {\ve^3 |\log \ve |^b \over 1+ \rho^5} E_{ 12} \sin \theta\,  \underset{{\mathcal E} \& P^1 }{\Rightarrow} \, e_{11} = {\ve^4 |\log \ve |^b \over 1+ \rho^4} E_{ 2} \,  \underset{{\mathcal E  }}{\Rightarrow} \, e_{12} = {\ve^5 |\log \ve |^b \over 1+ \rho^3} E_{0 } .\,  
	$$
	
	\medskip
	Here ${\mathcal E}$ stands for {\it inner elliptic improvement}, 
	${\mathcal E} \& P^1$ stands for {\it inner elliptic improvement with adjustment of the points $P^1$}, ${\mathcal T}$ stands for {\it inner transport improvement}, ${\mathcal ODEs}$ stands for {\it inner ODEs improvement}, and ${\mathcal Out}$ stands for {\it outer improvement}.
	Besides $b$ is a positive number whose value may change from line to line and within the same line. 
	
	Let us explain how to interpret the diagram: we start with an initial error of which we aim at eliminating the part of size $\ve^2 |\log \ve |$, decay in space ${1\over 1+ \rho^4}$ and Fourier mode $2$: we write it as $e_1={\ve^2 |\log \ve | \over 1+ \rho^4} E_{ 2} $. To do so, we proceed with the inner elliptic improvement: we write $\underset{{\mathcal E}}{\Rightarrow}$. After this correction is done, we have a new error. Of the new error we aim now at eliminating the main term, which has size $\ve^3 |\log \ve|^2$, decay in space ${1\over 1+ \rho^3}$ and Fourier mode $1$: we write  $e_2={\ve^3 |\log \ve |^2 \over 1+ \rho^3} E_{ 1} $. And so on. Notice that the errors $e_4$ and $e_{10}$ has mode $1$, but only with $\sin \theta$ (mode $1$, odd in $y_2$).
	
	\medskip
	Before starting the process of improvement of the approximation, 
	we observe that the expression for the initial error $	\ve_j^4S_1(W^0, \Psi^0 ) (P_j + \ve_j y) $
	in \eqref{error11}  gets a simpler form if we use \eqref{newR}. Using \eqref{expRj} and \eqref{Ubar}, we write 
	\begin{equation}\label{R00}
		\begin{aligned}
			{\mathcal R}_j (y,t;P) & = {\mathcal R}_j^{00} (y,t;P) + \ve^2 |\log \ve| {\mathcal Q}_1 [P] (y) + \ve^3 |\log \ve |^{3\over 2} {\mathcal Q}_2 [P] (y) 
			\\
			& \\
			{\mathcal R}_j^{00} (y,t;P) &:=	\ve_j  |\log \ve|   (\dot P_j -\dot P_j^0) \cdot y  + \ve_j [\nn_x\vp_j ( P_j ; P ) - \nn_x\vp_j ( P_j^0 ; P^0 )] \cdot y  \\
			& +\,  \ve_j^3 D_x \theta_{j  \ve_j} (P_j; P ) [y] ,
		\end{aligned}
	\end{equation}
	and
	\begin{align*}
		w_j^0  &= U(y) \left( 1+{\ve_j y_1 \over 2r_j} (\Gamma_0 + \bar A + \Gamma)+ \ve^2 |\log \ve| {\mathcal Q}_1 [P] (y) + \ve^3 |\log \ve |^{3\over 2} {\mathcal Q}_2 [P] (y)  \right),
	\end{align*}
	with ${\mathcal Q}_i$, $i=1,2$ as in \eqref{calQ}.
	Under the  constraints \eqref{point}-\eqref{b0}-\eqref{b1}-\eqref{b11}  on the points $P_j$, we get
	\begin{align*}
		{\mathcal R}_j (y,t;P) \cdot \nabla w_j^0 &= 	{\mathcal R}^{00}_j (y,t;P) \cdot \nabla U  +  { \ve^2 |\log \ve |  \over 1+ |y|^4}  E_2(\rho,\theta,t,\varepsilon)\\
		&+ {\ve^3 |\log \ve |^2  \over 1+|y|^3}   (E_1+ E_3) (\rho,\theta,t,\varepsilon) + O \left( {\ve^4 |\log \ve |^2  \over 1+ |y|^2}  \right) \,  ;
	\end{align*}
	hence
	$	\ve_j^4S_1(W^0, \Psi^0 ) (P_j + \ve_j y) $
	in \eqref{error11} becomes
	\begin{equation}\label{error111}
		\begin{aligned}
			\ve_j^4S_1(W^0, \Psi^0 )& (P_j + \ve_j y) 
			=   \nabla^\perp {\mathcal R}_j^{00} (y,t;P) \cdot \nabla U +  { \ve^2 |\log \ve |  \over 1+ |y|^4}  E_2(\rho,\theta,t,\varepsilon)\\
			&+ {\ve^3 |\log \ve |^2  \over 1+|y|^3}   (E_{1} + E_3) (\rho,\theta,t,\varepsilon) +O \left( {\ve^4 |\log \ve |^2  \over 1+ |y|^2} \right) \, 
		\end{aligned}
	\end{equation}
	as $\ve \to 0$, where
	the functions $E_i(\rho,\theta, t, \varepsilon)$ now depends also on $\pp_t P$, and  have the form \eqref{Ekkn}. These estimates are valid for $|y| <3 \ve_j^{-1} |\log \ve |^{-\zeta}$.

	\medskip
	We are now ready to start improving. At each step of improvement we analyze the term of the error we want to remove, we describe the strategy to do it and we compute the new error produced by the correction.

	\subsection{}\label{prima}\ \ {\it First inner improvement.} \ \  The first improvement of the error will remove  part of the mode-2 term of size $\ve^2 |\log \ve| $ given by
	$$
	{ \ve^2  |\log \ve| \over 1+ |y|^4}  E_2(\rho,\theta,t,\varepsilon)
	$$
	in the error $\ve_j^4 S_1 (W^0, \Psi^0) (\ve_j y + P_j)$ computed in \equ{error111}. This term can be decomposed as the sum of two parts, one depending on $\pp_t P$ and one not:
	$$
	{ \ve^2  |\log \ve| \over 1+ |y|^4}  E_2= { \ve^2  |\log \ve| \over 1+ |y|^4}  E_2^* [P] + { \ve^2  |\log \ve| \over 1+ |y|^4}  E_2^{**}[P, \dot P].
	$$
	In fact the origin of ${ \ve^2  |\log \ve| \over 1+ |y|^4}E_2^{**}[P, \dot P]$ is 
	$ \ve_j^2 |\log \ve| (1+{\ve_j \over r_j} y_1)  \pp_t w^0_j $,  $ |\log \ve |B_0 (w_j^0)$ and
	$	{\mathcal R}_j (y,t;P) \cdot \nabla w_j^0$, and the dependence on $\dot P$ is linear. We check that 
	$$
	{ \ve^2  |\log \ve| \over 1+ |y|^4} E_2^{**}[P, \pp_t P] =
	{ \ve^2  |\log \ve| \over 1+ |y|^4} E_2^{**}[r_0 {\bf e}_1 +P^0 + P^1,  (\dot P^0+ \dot P^1)]  +
	{ \ve^5  |\log \ve| \over 1+ |y|^4}  E_2 
	$$ 
	with $E_2$ satisfying \eqref{Ekkn}. See \eqref{point} for the assumptions on $P$, $P^0$ and $P^1$.
	We will remove the part of the error 
	${ \ve^2  |\log \ve| \over 1+ |y|^4} E_2 $ given by
	\begin{equation}\label{e2}
		e_1:={ \ve^2  |\log \ve| \over 1+ |y|^4}  E_2^* [P]+
		{ \ve^2  |\log \ve| \over 1+ |y|^4} E_2^{**}[(r_0, 0) + P^0+P^1,  (\dot P^0 +\dot P^1)],
	\end{equation}
	thus leaving out what depends on $\dot {\bf a}$.
	We will do it solving the simplified linear elliptic operator 
	$$
	L_0[\psi] := 	- \frac 4{\rho^2 + 1} \frac{\pp}{\pp\theta}  [ \Delta_y \psi + f_0'(\Gamma_0)\psi ].
	$$
	introduced in \eqref{gadd}.
	We freeze the time variable and consider the problem 
	\begin{equation}\label{linear1}
		\begin{aligned}
			{4\over 1+ \rho^2} 	\frac{\pp}{\pp\theta}&  \left[ \Delta \psi + e^{\Gamma_0(y)}\psi  \right] +  g(y)= 0 \inn B(0, {8    \over \ve |\log \ve |^\zeta } ), \quad  \\
			\psi & = 0 \inn \partial B(0, {8  \over \ve |\log \ve |^\zeta } ), \quad 	\phi  = -\Delta_{5,j} \psi ,
		\end{aligned}
	\end{equation}
	for a  bounded function $g: B(0, {8  \over \ve |\log \ve |^\zeta } ) \subset \R^2  \to\R$.
	A necessary condition for the solvability of \equ{linear1} is that
	\be \label{ort0} \int_0^{2\pi} g(\rho e^{i\theta} )\, d\theta =0 \foral  \rho\in (0,8R_\ve), \quad R_\ve ={1 \over \ve |\log \ve |^\zeta }. \ee
	As 
	$$
	L_0 [Z_\ell] = 0, \quad Z_\ell(y) = \pp_{y_\ell} \Gamma_0(y) .
	$$
	we also assume the orthogonality conditions
	\be \label{ort2} \int_{B (0,8 R_\ve)}(1+|y|^2)\, g(y)\, Z_\ell (y)\, dy\, =\, 0, \quad \ell=1,2. \ee
	We have the validity of the following result.

	\begin{lemma}\label{alpha} Assume that $3\le m \le 5$ and 
		\be \label{decay}
		|g(y)| \  \le\  (1+ |y|)^{-m}
		\ee
		There exists a constant $C>0$ such that for all $\ve >0$  sufficiently small and
		$g\in L^\infty (B (0,8R_\ve) )$ that satisfies conditions $\equ{ort0}$, $\equ{ort2}$ and $\equ{decay}$,
		there exists a unique solution  $(\psi,\phi)$ of equation $\equ{linear1}$ that satisfies
		$$ \int_0^{2\pi} \psi(\rho e^{i\theta})\, d\theta =0 \foral  \rho\in (0,8R_\ve) $$
		and the estimate
		$$\begin{aligned}
			& |\psi(y)| \, +\,  (1+|y|)\, |\nn \psi(y)|\,  + \,  (1+ |y|^2)\, | \phi(y)| 
			\nonumber
			\\
			& \le \  C{(1+|y|)^{4-m}} \,\begin{cases} \log \Big ( \frac {16 \ve^{-1}  } {|y|+1}  \Big) & \hbox{ if \quad} m =5   \\ 1  & \hbox{ if \quad}  3< m < 5  \\
				\log \Big ( \frac {16\ve^{-1}  } {|y|+1}  \Big) & \hbox{ if \quad} m =3   \end{cases}
			 \end{aligned}$$

	\end{lemma}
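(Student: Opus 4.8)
The strategy is to integrate out the angular derivative $\frac{\partial}{\partial\theta}$, reducing $\equ{linear1}$ to a Dirichlet problem on the ball $B(0,8R_\varepsilon)$ for the linearized Liouville operator $\Delta+U$, $U=e^{\Gamma_0}=\frac8{(1+|y|^2)^2}$, for which the required solvability and weighted estimates are by now classical.

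\emph{Step 1 (reduction).} By $\equ{ort0}$ the function $\hat g(\rho,\theta):=-\tfrac{1+\rho^2}4\, g(\rho e^{i\theta})$ has zero $\theta$-average, hence a unique primitive $H$ with $\partial_\theta H=\hat g$ and $\int_0^{2\pi}H\,d\theta=0$; from $\equ{decay}$ one reads off $|H(y)|\le C(1+|y|)^{2-m}$. I claim that solving $\equ{linear1}$ with $\int_0^{2\pi}\psi\,d\theta=0$ is equivalent to solving
\[
\Delta\psi+U\psi=H \ \text{in }B(0,8R_\varepsilon),\qquad \psi=0 \ \text{on }\partial B(0,8R_\varepsilon),\qquad \phi=-\Delta_{5,j}\psi.
\]
Indeed, if $\psi$ solves $\equ{linear1}$ then $\partial_\theta(\Delta\psi+U\psi-H)=0$, so $\Delta\psi+U\psi-H$ depends only on $\rho$; averaging in $\theta$ and using $\int H\,d\theta=0$ and $\psi_0\equiv0$ gives $\Delta\psi+U\psi=H$. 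Conversely, any solution of the displayed problem has zeroth mode solving $\Delta_0\psi_0+U\psi_0=0$, regular at the origin and vanishing at $8R_\varepsilon$; since the only radial homogeneous solution regular at the origin is a multiple of $Z_0=\frac{1-|y|^2}{1+|y|^2}$, which does not vanish at $8R_\varepsilon$, $\psi_0\equiv0$ and $\psi$ solves $\equ{linear1}$. Finally, since $Z_1=\partial_{y_1}\Gamma_0$, $Z_2=\partial_{y_2}\Gamma_0$ are of pure angular mode one, extracting the first mode of $H$ shows that the pair $\equ{ort2}$ is equivalent to $\int_{B(0,8R_\varepsilon)}H\,Z_\ell\,dy=0$, $\ell=1,2$.

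\emph{Step 2 (linear theory for $\Delta+U$).} For the reduced problem we use the non-degeneracy of the linearized Liouville equation, whose bounded kernel in $\R^2$ is $\mathrm{span}\{Z_0,Z_1,Z_2\}$ with $Z_1,Z_2\sim|y|^{-1}$. The Dirichlet operator on $B(0,8R_\varepsilon)$ is invertible for $\varepsilon$ small, and a contradiction–rescaling argument (standard for Liouville-type concentration problems) shows that under $\int_B H Z_\ell\,dy=0$ — which neutralizes the slowly decaying almost-kernel directions $Z_1,Z_2$ — one has a weighted a priori bound $\|\psi\|_{*}\lesssim\sup_y(1+|y|)^{m}|g(y)|$, uniform in $\varepsilon$, where $\|\cdot\|_*$ carries the weight $(1+|y|)^{4-m}$ (with the stated logarithm when $m=3,5$). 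The precise pointwise bound then follows by a barrier/comparison argument in $\rho_0<|y|<8R_\varepsilon$ ($\rho_0$ a fixed large constant), using that $\frac{|y|^2-1}{|y|^2+1}$ is a positive solution of the homogeneous equation in $\{|y|>1\}$ to restore the maximum principle; one takes supersolutions of size $(1+|y|)^{4-m}$ for $3<m<5$ and $(1+|y|)^{4-m}\log\frac{16\varepsilon^{-1}}{|y|+1}$ at the endpoints, the logarithm being forced because $4-m=\pm1$ then resonates with the mode-one homogeneous behaviour $|y|^{\pm1}$; inside $B(0,\rho_0)$ one uses interior elliptic regularity. Gradient bounds follow from interior $W^{2,p}$ estimates, and the bound on $\phi=-\Delta\psi-\frac{3\varepsilon_j}{r_j+\varepsilon_j y_1}\partial_{y_1}\psi$ from $\Delta\psi=H-U\psi$ together with $\varepsilon_j(1+|y|)\lesssim|\log\varepsilon|^{-3}$ for $|y|<3R_j$ (see $\equ{d5}$).

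\emph{Step 3 (uniqueness and the main difficulty).} If $\psi,\psi'$ are two zero-mean solutions, their difference solves $\Delta(\psi-\psi')+U(\psi-\psi')=0$ with Dirichlet data (Step 1 with $H=0$); mode by mode, the homogeneous solution regular at the origin does not vanish at $8R_\varepsilon$ for $\varepsilon$ small ($Z_0$ for $n=0$, the profile $\rho/(1+\rho^2)$ for $|n|=1$, a function $\sim\rho^{|n|}$ at infinity for $|n|\ge2$), so every mode vanishes and $\psi=\psi'$. The main obstacle is Step 2: making the weighted estimate \emph{uniform in $\varepsilon$} requires the contradiction argument to identify the limiting kernel correctly and genuinely use the orthogonality $\equ{ort2}$, and then the construction of barriers that reproduce the exact power $(1+|y|)^{4-m}$, with the logarithmic corrections precisely at the resonant exponents $m=3,5$, and that remain valid all the way up to the free boundary $|y|=8R_\varepsilon$; the $\theta$-integration and the uniqueness are routine.
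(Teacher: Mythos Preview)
Your reduction in Step~1 to the Dirichlet problem $\Delta\psi+U\psi=H$ is correct and is exactly what the paper uses as well, though only at the very end, to deduce the bounds on $\nabla\psi$ and $\phi$ from standard elliptic estimates once $|\psi|$ is controlled. The approaches diverge in how the pointwise bound on $\psi$ itself is obtained.

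The paper does not use a contradiction--rescaling argument or a global barrier. Instead it carries the Fourier decomposition all the way through: writing $g=\sum_{n\ne0}g_n(\rho)e^{in\theta}$ and $\psi=\sum_{n\ne0}p_n(\rho)e^{in\theta}$, each radial profile $p_n$ solves an explicit second-order ODE $\mathcal L_n[p_n]=\frac{i(1+\rho^2)}{4n}g_n$ with $p_n(8R_\ve)=0$. Using the known homogeneous solutions $\zeta_n$ (with $\zeta_{\pm1}=\rho/(1+\rho^2)$ decaying and $\zeta_n\sim\rho^{|n|}$ at both ends for $|n|\ge2$), the paper writes $p_n$ by variation of parameters and estimates the resulting double integral directly. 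For $|n|\ge2$ this gives $|p_n|\lesssim(1+\rho)^{4-m}$ without logarithms, and a one-line supersolution argument yields the summable bound $|p_n|\le\gamma n^{-3}\bar P(\rho)$. For $|n|=1$, the orthogonality \equ{ort2} is used in a very concrete way: it allows the inner integral in the variation-of-parameters formula to be rewritten as $\int_r^{8R_\ve}$ instead of $\int_0^r$, and the logarithm at $m=3,5$ then drops out of a single explicit computation.

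Your softer route via contradiction plus maximum-principle barriers is valid and closer in spirit to the general concentration-compactness literature, but the paper's route is shorter here precisely because the operator is radial and the homogeneous solutions are explicit: variation of parameters delivers the sharp weight, including the logarithmic corrections at the resonant exponents, in one stroke, whereas your scheme splits the work into two nontrivial pieces (uniformity in $\ve$ from the contradiction argument, then the precise weight from the barrier), each of which you rightly flag as the main obstacle.
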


	\begin{proof}
		
		
		Let $y= \rho \, e^{i\theta}$ and decompose $\psi $ and $g$ in Fourier series in the $\theta$-variable
		$$
		g(\rho e^{i\theta} ) = \sum_{n \in \Z} g_n (\rho ) e^{i\, n \, \theta} , \quad \psi (\rho e^{i\theta} ) = \sum_{n \in \Z} p_n (\rho ) e^{i\, n \, \theta}.
		$$ 
		Condition \equ{ort0} amounts to $g_0 \equiv 0$. Imposing $p_0\equiv 0$,
		equation \eqref{linear1} decouples into the infinitely many problems.
		\be\label{pk}
		\mathcal L_n [p_n] := \partial^2_{\rho} p_n + \frac{1}{\rho}\partial_{\rho} p_n
		-\frac{n^2}{\rho^2} p_n + \frac{8p_n}{(1+\rho^2)^2}
		= \frac{i(1+\rho^2)}{4n} g_n(\rho), \quad p_n(8 R_\ve) = 0.
		\ee
		For each $n\ne 0$, there exists a positive function $\zeta_n(\rho)$ such that $\mathcal L_{n} [\zeta_n] =0$ and
		\[
		\zeta_n(\rho) =  \rho^{|n|} (1+ o(1))  \quad \text{as }\rho \to 0
		\]
		For $n=\pm 1$ we explicitly have
		$ \zeta_n(\rho) = \frac{\rho}{1+\rho^2} $, while for $|n|\ge 2$ we have
		\[
		\zeta_n(\rho) =  \rho^{|n|} (1+ o(1))  \quad \text{as }\rho \to +\infty.
		\]
		Problem \equ{pk} is uniquely solved by the formula
		$$
		p_{n} (\rho)   =  \mathcal L_n^{-1}[g_n]\, :=    \frac {i}{4n}\zeta_n(\rho)  \int_\rho^{8R_\ve} \frac {dr}{ r\zeta_n(r)^2} \int_0^r (1+s^2) g_{n}(s)\zeta_n(s)\,s\, ds . \quad
		$$
		Let us consider the case $|n|\ge 2$. We have
		$$
		|p_{n} (\rho)|\ \le\  \mathcal L_n^{-1}[(1+\rho)^{-\alpha} ]\le   c_n (1+\rho)^{|n|}  \int_\rho^{8R_\ve}   (1+ r)^{-|n|+3 -\alpha} dr
		$$
		so that
		\[
		|p_{n} (\rho)|\ \le\      c_n (1+\rho)^{4-\alpha}
		\]
		since
		$m+ |n| > 4$.
		Let us denote $ \bar P(\rho) =   \mathcal L_2^{-1}[ (1+ \rho)^\alpha ]$ we claim that for some $\gamma>0$ and all $|n|\ge 2$ we have the validity of
		the estimate
		$$
		|p_{n} (\rho)| \ \le \  \frac \gamma {n^3}\bar P(\rho) .
		$$
		That follows from the fact that the right hand side defines a positive supersolution for the real and imaginary parts of \equ{pk}. Indeed, if $\gamma$ is taken sufficiently large we get
		$$
		\mathcal L_n \big [ \frac \gamma {n^3}\bar P(\rho) \big ]  +  \frac {1+\rho^2}{4n} |g_n(\rho)|    \le     \gamma \frac {4-n^2} {10\rho^2} (1+ \rho)^{4+\alpha}
		$$
		Fourier modes $\pm 1$ need to be separately treated because $\zeta_1(\rho)$ decays at infinity.
		At this point we observe that
		$$
		Z_1(y) = \zeta_1(\rho) \cos\theta , \quad Z_2(y) = \zeta_1(\rho) \sin\theta
		$$
		and that the orthogonality conditions \equ{ort2} assumed are equivalent to
		$$
		\int_0^{8R_\ve} (1+\rho^2)\,g_{\pm1}(\rho) \zeta_1(\rho)\, \rho\, d\rho = 0  .
		$$
		Therefore we can write
		$$
		p_{\pm 1} (\rho)  =    \mp \frac {i}{4}\zeta_1(\rho)  \int_\rho^{8R_\ve}  \frac {dr}{ r\zeta_1(r)^2} \int_r^{8R_\ve}  (1+s^2) g_{\pm 1}(s)\zeta_1(s)\,s\, ds . \quad
		$$
		and we obtain, if we now assume $3 < m \le 5$,
		$$
		|p_{\pm 1} (\rho) | \ \lesssim \    \begin{cases}  (1+ \rho)^{-1} \log {\frac {16R_\ve}{\rho+1}}  &\hbox{ if } m = 5 \\ (1+ \rho)^{4 -m } &\hbox{ if }3<  m < 5  \\
			(1+  \rho ) \log {\frac {16R_\ve }{\rho+1}}  &\hbox{ if } m= 3   . \end{cases}
		$$
		The desired result then follows from addition of the above estimates since
		$$
		|\psi(y)| \ \le\ \sum_{n\in \Z} |p_n(|y|)| .
		$$
		Finally,  since $\psi$ satisfies the equation
		$$
		\Delta \psi + e^\Gamma_0 \psi  =  -\frac 14 (1+\rho^2) \int_0^\theta g(\rho,\theta)\, d\theta, \inn B (0,8R_\ve) , \quad \psi=0\onn\pp B (0,8R_\ve) ,
		$$
		the bounds for $\phi$ and $\nn \psi $ follow from standard elliptic estimates.
		
	\end{proof}



	\bigskip
	
	We extend the function $e_1$ in \eqref{e2}  to be equal to $0$ outside  $ B(0,3R_j)$. Under the assumptions \eqref{point} on $P$, $P^0$ and $P^1$, we have that
	$$
	(1+ |y|^4) 	\left( |e_1 (y,t) | + |\log \ve |^{1\over 2} |\pp_t e_1 (y,t) | \right) \lesssim \ve^2 |\log \ve|.
	$$ 
	Moreover the function $e_1$ satisfies authomatically the orthogonality conditions \eqref{ort0} and \eqref{ort2}.
	Let
	$\psi^1_{j} = \psi_j^1 (y,t)$ and $\phi_j^1 (y,t)= - \Delta_{5,j} \psi_j^1$  be the solution to \eqref{linear1} when $g= e_1$, as predicted by Lemma \ref{alpha}. We have 
	$$ |\psi^1_j(y,t)| \, +\,  (1+|y|)\, |\nn \psi_j^1(y,t)|\,  \lesssim \ve^2 |\log \ve|, \quad \inn B(0,3R_j) \times  [0,T].
	$$
	and
	$$
	\,  (1+ |y|^3)\, |\nabla_y \phi_j^1 (y,t)| + \,  (1+ |y|^2)\, |\phi_j^1 (y,t)| \lesssim \ve^2 |\log \ve|  \quad \inn B(0,3R_j)  \times  [0,T].
	$$
	Having left out what depends on $\dot {\bf a}$ in the part of the error $e_1$ we are considering (see \eqref{e2}), 	we can harmlessly differentiate  in time equations \eqref{linear1}, to also get
	$$
	\,  (1+ |y|^2)\, |\pp_t \phi_j^1 (y,t)| \lesssim \ve^2 |\log \ve|^{1\over 2}  \quad \inn B(0,3R_j) \times  [0,T].
	$$
	Besides, a consequence of the proof of Lemma \ref{alpha} is that the Fourier decomposition of $\psi_j^1$ only contains mode-2 terms. Hence the Fourier decomposition of the function $\phi_j^1 (y,t)$
	has a mode-$2$ term of size $\ve_j^2 |\log \ve|$, mode-$1$ and mode-$3$ terms of size  $\ve_j^3 |\log \ve|$ or smaller.
	
	Using these properties of the functions $\psi_j^1$, $\phi_j^1$, and the notations introduced in \eqref{ll}  we get
	\begin{align*}
		& L_1 (\phi_j^1)  =  {\ve_j^4 |\log \ve |^{3\over 2}   \over 1+|y|^2}   E_2   + {\ve_j^5 |\log \ve |^{3\over 2}   \over 1+|y|}   E_1,  \\
		&  \nabla^\perp \Gamma_0 \cdot \nabla [ {3\ve_j \over r_j + \ve_j y_1} \pp_{y_1} \psi_j^1  ]	 =  {\ve_j^3 |\log \ve |   \over 1+|y|^3}   (E_1 + E_3) +  {\ve_j^4 |\log \ve | \over 1+|y|^2}   (E_{ 0} + E_2)  ,\\
		&\quad {\mbox {and}}\\
		&L_2 [\phi_j^1] + L_3 [\psi_j^1 ] - L[\psi_j^1] + Q (\psi_j^1, \phi_j^1)  = {\ve_j^3 |\log \ve |^2  \over 1+|y|^3}  (E_1 + E_3)  + {\ve_j^4 |\log \ve |^2  \over 1+|y|^2}   E_{i\geq 0}  .\end{align*}
	In the above expression $E_j$ denotes the $j$-th mode in the Fourier decomposition of $E(y,t)$, in accordance with the notation introduced in \eqref{defEk}. 
	Inserting this information in \eqref{Ej1} we get the description of the new error, in the region $|x-P_j|<3 |\log \ve|^{-\zeta}$, for $y={x-P_j \over \ve_j}$,
	\begin{align*}
		E_j^{in} & [\phi_j^1  ,  \psi_j^1,0  , P] (y,t) := \nabla^\perp  {\mathcal R}_j^{00} (y,t;P)  \cdot \nabla U + {\ve_j^3 |\log \ve |^2  \over 1+|y|^3}   (E_1+E_3) \nonumber \\
		&+ {\ve_j^4 |\log \ve |^2  \over 1+|y|^2}  ( E_{ 0} + E_2)  +{\ve_j^5 |\log \ve |^2  \over 1+ |y|} \, E_{i\geq 0} ,\nonumber\end{align*}
	as $\ve \to 0$, where
	the functions $E_j(\rho,\theta, t, \varepsilon)$  have the form described in \eqref{Ekkn}.  For our next improvement it is convenient to write the error as a term 
	of size $\ve_j^3 |\log \ve |^2$-size in Fourier mode $1$ or higher, with spacial decay bounded by ${1\over 1+ |y|^3}$, and that does  depend on $P$, $\dot P^0$, but not on $\dot P^1 + \dot {\bf a}$. We use the fact that the error depends in a linear way on $\pp_t P$ and assumptions \eqref{b1}-\eqref{b11}  to conclude that
	\begin{align}\label{Ej2}
		E_j^{in} & [\phi_j^1  ,  \psi_j^1,0  , P] (y,t) := \nabla^\perp  {\mathcal R}_j^{00} (y,t;P)  \cdot \nabla  U   + {\ve^3 |\log \ve |^2  \over 1+|y|^3}  (E_1 + E_3)  [P, \dot P^0 ] + +O \left( {\ve^4 |\log \ve |^2  \over 1+ |y|^2} \right)
\end{align}
	
	

	\subsection{}\ \ {\it Second  inner improvement.} \ \ Our next step is to eliminate the terms in the error \eqref{Ej2} 
	of size $\ve^3 |\log \ve |^2$. A difference with the first improvement is that this time these terms posses a mode $1$.  
	We will solve again an elliptic problem of the form \eqref{linear1}, but only at the expenses of asking that the orthogonality conditions \eqref{ort2} for the right-hand side are satisfied. We shall see that this is possible with an adjustment of the points $P$.

	In the process of the construction of the approximation $(\Psi^*, W^*)$, we will need to correct these points several times. All these corrections are encoded in the point we called $P^1$  in \eqref{point}-\eqref{b1}. The final definition of $P^1$ is given by 
	\begin{equation}
		\label{P1}
		P^1(t) = \sum_{\ell =1}^4 P^{1\ell } (t).
	\end{equation}
	where $P^{1\ell}$ correspond to successive explicit adjustments of $P^1$. Take $\ell =1$, for the first adjustment, and
	for 
	$$\bar P^1= (\bar P^1_1, \ldots , \bar P^1_k) , \quad \bar P^1_j = (r_0 , 0 ) + P^0_j  + P_j^{11},
	$$ we take 
	$$
	e_2(y,t): = \nabla^\perp  {\mathcal R}_j^{00} (y,t;\bar P^1 )  \cdot \nabla U  + {\ve_j^3 |\log \ve |^2  \over 1+|y|^3}   (E_1 + E_3) [\bar P^1 , \pp_t P^0 ](\rho,\theta,t,\varepsilon).
	$$
	We directly check that $\int_0^{2\pi} e_2 (\rho e^{i\theta} , t) \, d\theta =0$, and $e_2$ satisfies the decay \eqref{decay} with $m =3$. In fact, under the assumptions \eqref{point} on $P$, we have
	$$
	(1+ |y|^3) 	( |e_2 (y,t) | + |\log \ve |^{1\over 2} |\pp_t e_2 (y,t) | ) \lesssim \ve^3 |\log \ve|^2, \quad |y| <3 R_j.
	$$
	The orthogonality conditions
	$$
	\int_{B_{8R_j}}  (1+ |y|^2) e_2 (y,y)  Z_\ell (y) \, dy =0, \quad \ell =1,2
	$$
	become a system of ODEs for the point $P^{11} = (P_1^{11} , \ldots , P_k^{11})$ that has the form
	\begin{equation}\label{reduced}
		\ve_j |\log \ve | 	 \left[ \dot P_j^{11} + A P^{11} + |\log \ve |^{-{1\over 2}} B (P^{11}) \right] = \ve^3 |\log \ve |^3 f (\bar P^1) 
	\end{equation}
	where $A$ is the $2 \times 2$ matrix defined by
	$$
	A = |\log \ve |^{-1} D^2_x\vp_j ( \bar P_j^0 ;  P^0 ), \quad t \in [0,T),
	$$
	and $B$
	$$
	B(P^{11} ) =|\log \ve |^{-1}  [\nn_x\vp_j ( \bar P_j^1 ; \bar P^1 ) - \nn_x\vp_j ( P_j^0 ; P^0 )
	- D^2_x\vp_j ( \bar P_j^0 ;  P^0 ) P^{11}].
	$$
	See \eqref{varphij} for the definition of $\varphi_j$. Under our assumptions \eqref{b0}  on the points $P^0$,  we have
	$$
	A= O(1) , \quad B(P^{11} ) = (|\log \ve |^{-{1\over 2} }) , \quad  t \in [0,T),
	$$
	uniformly  as $\ve \to 0$. Besides, $f (\bar P^1)$ is a smooth function of $P^{11}$, which is uniformly bounded, together with its derivative, for $t \in [0,T)$, uniformly as $\ve \to 0$.
	Standard ODEs theory gives that, for all $\ve >0$ small enough there exists a unique solution $P^{11}$ to \eqref{reduced} with initial condition $P^{11} (0) = 0$, which satisfy the bounds
$$
		\| P^{11}_j \|_{L^\infty [0,T)}  + \| \dot P^{11}_j \|_{L^\infty [0,T)}  \lesssim  \ve^2 |\log \ve |^2 .
$$
	We denote by  $\psi_j^2 = \psi_j^2 (y,t)$ and $\phi_j^2 (y,t)= - \Delta_{5,j} \psi_j^2$ the solution to \eqref{linear1}, with $g=e_2$,
	whose existence and estimates are  given by Lemma \ref{alpha}. We have that $\int_0^{2\pi} \psi_j^2 (\rho e^{i\theta} , t ) \, d\theta =0$ and  
	$$ |\psi^2_j(y,t)| \, +\,  (1+|y|)\, |\nn \psi_j^2(y,t)|\,  \lesssim \ve^3 |\log \ve|^3  \,  (1+|y|) , \quad \inn B (0, 3R_j)  \times  [0,T].
	$$
	We also have
	$$
	\,  (1+ |y|^2)\, |\nabla_y \phi_j^2 (y,t)| + \,  (1+ |y|)\, |\phi_j^2 (y,t)| \lesssim \ve^3 |\log \ve|^3  \quad \inn B (0, 3R_j) \times  [0,T].
	$$
	For the same reason we did it before, we can differentiate in time the equation, to also get
	$$
	\,  (1+ |y|)\, |\log \ve |^{1\over 2} \,  |\pp_t \phi_j^2 (y,t)| \lesssim \ve^3 |\log \ve|^3  \quad \inn B (0, 3R_j)  \times  [0,T].
	$$
	From Lemma \ref{alpha} we also get that the Fourier decomposition of $\psi_j^2$ only contains Fourier mode-$1$ and mode-$3$ terms. Hence the Fourier decomposition of the function $\phi_j^2 (y,t)$
	has  mode-$1$ and mode-$3$ terms of size $\ve^3 |\log \ve|^3$,  terms of mode-$0$, mode-$2$ or higher of size  $\ve^4 |\log \ve|^3$ or smaller.
	Using these properties and the notations introduced in \eqref{ll} we get
	\begin{align*}
		& L_1 (\phi_j^2)  =  {\ve^5 |\log \ve |^{7\over 2}   \over 1+|y|}   E_{i\geq 0},  \\
		&  \nabla^\perp \Gamma_0 \cdot \nabla [ {3\ve_j \over r_j + \ve_j y_1} \pp_{y_1} \psi^2_j  ]	 = {\ve^4 |\log \ve |^3 \over 1+|y|^2}   (E_{ 0} + E_{i\geq 2} ) +{\ve^5 |\log \ve |^3   \over 1+|y|}   E_{i\geq 0} ,\\
		&L_2 [\phi_j^2] + L_3 [\psi_j^2 ] - L[\psi_j^2] + Q (\psi_j^2, \phi_j^2)  =  {\ve^4 |\log \ve |^4 \over 1+|y|^2}   (E_{ 0} + E_{i\geq 2} ) +{\ve^5 |\log \ve |^4   \over 1+|y|}   E_{i\geq 0} \\
		&\quad {\mbox {and}}\\
		& Q (\psi_j^1, \phi_j^2) + Q (\psi_j^2, \phi_j^1) ={\ve^5 |\log \ve |^4   \over 1+|y|}   E_{i\geq 0} .\end{align*}
	Define
$$\tilde \phi_j^{2} =\sum_{i=1}^2 \phi_j^i , \quad \tilde \psi_j^{2} = \sum_{i=1}^2
		\psi_j^i.
$$
	We get the
	new error 
	\begin{align}\label{Ej3}
		E_j^{in} & [\tilde \phi_j^{2}  ,  \tilde \psi_j^{2},0  , P] (y,t) :=[ \nabla^\perp  {\mathcal R}_j^{00} (y,t;P)  - \nabla^\perp  {\mathcal R}_j^{00} (y,t;\bar P^1) ] \cdot \nabla U \nonumber \\
		& +{\ve^4 |\log \ve |^4  \over 1+ |y|^2} \, (E_{ 0} + E_{i\geq  2} )[P,\dot P]    +{\ve^5 |\log \ve |^4  \over 1+ |y|} \, E_{i\geq 0} [P, \dot P] .
	\end{align}
	Using the definition of ${\mathcal R}^{00}_j$ in \eqref{R00}, we check that
	$$
	\begin{aligned}
		{\mathcal R}_j^{00} (y,t;P) & -   {\mathcal R}_j^{00} (y,t;\bar P^1) =	\ve_j  |\log \ve|   (\dot P_j - {d \over dt} \bar  P_j^1)^\perp \cdot y \nonumber \\& + \ve_j [\nn_x\vp_j ( P_j ; P ) - \nn_x\vp_j ( \bar P_j^1 ; \bar P^1 )] \cdot y \\
		& +\,  \ve_j^3 D_x \theta_{j  \ve_j} (P_j; P ) [y] -  \ve_j^3 D_x \theta_{j  \ve_j} (\bar P^1_j; \bar P^1 ) [y].\nonumber
	\end{aligned}
	$$

	\subsection{}  {\it Third inner improvement.} \ \ Our next step is the elimination of the Fourier mode-$0$ term of size $\ve^4 |\log \ve|^4$ in formula \eqref{Ej3}.  We define $\phi_j^3$ as follows
	$$
	\phi_j^3 (y,t) = - { \ve^4 \, |\log \ve |^3 \over 1+ |y|^2} \, \int_0^t  \ve_j^{-2} (s)  E_0 (\rho, \theta, s , \ve) \, ds.
	$$
	It solves
	\begin{align*}
		|\log \ve | \ve_j^2 \pp_t \phi_j^3 & + {\ve^4 |\log \ve |^4  \over 1+ |y|^2} \, E_{ 0}   = 0 \quad (y,t) \in  B (0, 3 R_j) \times [0,T)\\
		\phi_j^3 (y,0) &= 0 ,
	\end{align*}
	it satisfies 
$$
		(1+ |y|) |\nabla_y \phi_j^3 (y,t) | +	|\phi_j^3 (y,t) | \lesssim {\ve_j^2 |\log \ve|^3 \over 1+ |y|^2} \, , \quad 
$$
	and	its Fourier decomposition  only has mode $0$.
	Let $\psi_j^3$ be the solution to 
	$$
	-\Delta_{5,j} \psi_j^3 = \phi_j^3 \quad (y,t) \in   B (0, 4 R_j) \times [0,T), \quad \psi_j^3 =0 \quad (y,t) \in \partial  B (0, 4 R_j) \times [0,T).
	$$
	Let $p(y,t)$, with $p=0$ on $\partial  B (0, 4 R_j)$ be the positive radial solution to
	$$
	\Delta_y p + {4 \over 1+|y|^2} = 0 , \quad y \in  B (0, 4 R_j) , \quad p(\rho, t) =4 \int_\rho^{4R_j} {\log (1+ s^2) \over s} \, ds.
	$$
	Then 
	$$
	\Delta_y p + {3 \ve_j \over r_j + \ve_j y_1  } \pp_1 p + {2 \over 1+|y|^2} \leq 0 ,
	$$
	thanks to the fact that  $R_j={\delta \over \ve_j |\log \ve|^\zeta}$ with $\zeta >1$. Take $\bar \psi (y,t) = M\, \ve^2 |\log \ve|^3 \,  p$, for some $M>0$. This is  a super solution for
	$$
	\Delta_{5,j} \psi +  \phi_j^3 \leq 0,
	$$
	and  gives that 
$$
		|\psi_j^3 (y,t) |  + |(1+ |y|) \nabla \psi_j^3 (y,t) | \lesssim \ve_j^2 \, |\log \ve |^5 .
	$$
	Besides the Fourier decomposition 
	of the function $\psi_j^3$
	has  mode-$0$  terms of size $\ve^2 |\log \ve|^5$,  terms of mode-$1$ or higher of size $\ve^3 |\log \ve|^5$ or smaller.
	
	It is important to notice that $
	B_0 (g)$ only contains Fourier mode-$2$ or mode-$1$ terms if $g$ is a Fourier mode-$0$ function. We refer to \eqref{defB0} and Remark \ref{r1} for the definition of $B_0$ and equivalent formulations. This observation yields that
	$$
	L_1 (\phi_j^3 )- |\log \ve | \ve_j^2 \pp_t \phi_j^3 ={\ve^4 |\log \ve |^4 \over 1+|y|^2} E_2 + {\ve^5 |\log \ve |^4 \over 1+|y|} E_1.
	$$
	If $f$ is a Fourier mode-$2$ function and $g$ is a Fourier mode-$0$ function, then
	$
	\nabla^\perp f \cdot \nabla g 
	$
	is a Fourier mode-$2$ function. We use this, together with the explicity expression of ${\mathcal R}_j^0$ in \eqref{newR} and the explicit form of the operator $L_2$ in \eqref{ll} to get
	$$
	L_2 (\phi_j^3) = {\ve^3 |\log \ve |^4 \over 1+ |y|^3} E_{1,2} \sin \theta +{\ve^4 |\log \ve |^4 \over 1+|y|^2} E_2 + {\ve^5 |\log \ve |^4 \over 1+|y|} E_{i\geq 0}.
	$$
	The key fact is that  the main error of size $\ve^3 |\log \ve|^4$ has a Fourier mode-$1$ term, but only containing $\sin \theta$, not $\cos \theta$: it is of the form described in \eqref{Ekkn} for $j=1$ and $E_{j,1}=0$. A similar expression is valid for $L_3 (\psi_j^3)$, with two more powers of $|\log \ve|$:
	$$
	L_3 (\psi_j^3) = {\ve^3 |\log \ve |^6 \over 1+ |y|^3} E_{1,2} \sin \theta +{\ve^4 |\log \ve |^6 \over 1+|y|^2} E_2 + {\ve^5 |\log \ve |^6 \over 1+|y|} E_{i\geq 0}.
	$$
	We also have
	$$
	Q(\psi_j^3, \phi_j^3) = {\ve^5 |\log \ve |^8   \over 1+|y|}   E_{i\geq 0}
	$$
	$$
	Q (\psi_j^1, \phi_j^2) + Q (\psi_j^2, \phi_j^1) ={\ve^4 |\log \ve |^6   \over 1+|y|^4}   E_{2} + {\ve^5 |\log \ve |^6   \over 1+|y|}   E_{i\geq 0}
	$$
	With this in mind, calling
$$
		\tilde \phi_j^{3} = \sum_{i=1}^3 \phi_j^{i} , \quad \tilde \psi_j^{3} = \sum_{i=1}^3
		\psi_j^{i}
$$
	we get the new error
	\begin{align}\label{Ej4}
		E_j^{in} & [\tilde \phi_j^{3}  ,  \tilde \psi_j^{3},0  , P] (y,t) :=[ \nabla^\perp  {\mathcal R}_j^{00} (y,t;P)  - \nabla^\perp  {\mathcal R}_j^{00} (y,t;\bar P^1) ] \cdot \nabla U  \nonumber \\
		& +  {\ve_j^3 |\log \ve |^6  \over 1+ |y|^3} \, E_{1,2}  \sin \theta    +{\ve_j^4 |\log \ve |^6  \over 1+ |y|^2} \, E_{ 2} [P, \dot P]+{\ve_j^5 |\log \ve |^8  \over 1+ |y|} \, E_{i\geq 0} [P, \dot P] .
	\end{align}
	If we compare this error $	E_j^{in}  [\tilde \phi_j^{3}  ,  \tilde \psi_j^{3},0  , P]$ with the error 
	$	E_j^{in}  [\phi_j^1  ,  \psi_j^1,0  , P] $ in \eqref{Ej2}, we observe  a crucial difference: even though their main term have size $\ve^3$ (multiplied by powers of $|\log \ve|$) and are in Fourier mode-$1$, in $	E_j^{in}  [\tilde \phi_j^{3}  ,  \tilde \psi_j^{3},0  , P]$ Fourier mode-$1$ enters only with a $\sin \theta$. We shall proceed as in the second improvement of the approximation, with adjusting the points $P^1$ and solving the same elliptic linear problem, but this time the new error will not have Fourier mode-$0$ of size $\ve^4$. This subtle fact allows us to proceed with the construction.

	\subsection{} {\it Fourth  inner improvement.} \ \ Our next step is to eliminate the term ${\ve_j^3 |\log \ve |^6  \over 1+ |y|^3} \, E_{1,2}  \sin \theta $ in the error \eqref{Ej4}.
	Take $\ell =2$ in the decomposition \eqref{P1} of $P^1(t)$ and 
	for $\bar P^2 =  (\bar P^2_1, \ldots , \bar P^2_k) $, $\bar P^2_j = P_0+ P_j^0 + P_j^{11} + P_j^{12}$, we take 
	$$
	e_4(y,t): = \nabla^\perp  {\mathcal R}_j^{00} (y,t;\bar P^2 )  \cdot \nabla U  +{\ve_j^3 |\log \ve |^6  \over 1+ |y|^3} \, E_{1,2} [\bar P^2,  (\dot P^0 + \dot P^{11})] \sin \theta .
	$$
	We have $\int_0^{2\pi} e_4 (\rho e^{i\theta} , t) \, d\theta =0$, and $e_4$ satisfies the decay \eqref{decay} with $m =3$. In fact, under the assumptions \eqref{point} on $P$, we have
	$$
	(1+ |y|^3) 	( |e_4 (y,t) | + |\log \ve |^{1\over 2} |\pp_t e_4 (y,t) | ) \lesssim \ve^3 |\log \ve|^5, \quad |y| <3 R_j.
	$$
	The orthogonality conditions
	$$
	\int_{B_{8R_j}}  (1+ |y|^2) e_4 (y,y)  Z_\ell (y) \, dy =0, \quad \ell =1,2
	$$
	become a system of ODEs for the point $P^{12} = (P_1^{12} , \ldots , P_k^{12})$ of the same form as \eqref{reduced}.
	Standard ODEs theory gives that, for all $\ve >0$ small enough there exists a unique solution $P^{12}$ to \eqref{reduced} with initial condition $P^{12} (0) = 0$, which satisfy the bounds
$$
		\| P^{12}_j \|_{L^\infty [0,T)}  + \| \dot P^{12}_j \|_{L^\infty [0,T)}  \lesssim  \ve^2 |\log \ve |^6 .
$$
	We denote by  $\psi_j^4 = \psi_j^4 (y,t)$ and $\phi_j^4 (y,t)= - \Delta_{5,j} \psi_j^4$ the solution to \eqref{linear1}, with $g=e_4$,
	whose existence and estimates are  given by Lemma \ref{alpha}. We have that $\int_0^{2\pi} \psi_j^4 (\rho e^{i\theta} , t ) \, d\theta =0$ and  
	$$ |\psi^4_j(y,t)| \, +\,  (1+|y|)\, |\nn \psi_j^4(y,t)|\,  \lesssim \ve^3 |\log \ve|^7  \,  (1+|y|) , \quad \inn B (0, 3R_j)  \times  [0,T].
	$$
	We also have
	$$
	\,  (1+ |y|^2)\, |\nabla_y \phi_j^4 (y,t)| + \,  (1+ |y|)\, |\phi_j^4 (y,t)| \lesssim \ve^3 |\log \ve|^7  \quad \inn B (0, 3R_j) \times  [0,T].
	$$
	We can differentiate in time the equation, to also get
	$$
	\,  (1+ |y|)\, |\log \ve |^{1\over 2} \,  |\pp_t \phi_j^4 (y,t)| \lesssim \ve^3 |\log \ve|^7  \quad \inn B (0, 3R_j)  \times  [0,T].
	$$
	From Lemma \ref{alpha} we infer that is that the Fourier decomposition of $\psi_j^4$  contains Fourier mode-$1$ terms, but only with $\cos \theta$. Hence the Fourier decomposition of the function $\phi_j^4 (y,t)$
	has  mode-$1$ terms  with $\cos \theta$ of size $\ve^2 |\log \ve|^7$,  terms of mode-$0$, mode-$2$ or higher of size $\ve^4 |\log \ve|^7$ or smaller.
	Using these properties and the notations introduced in \eqref{ll} we get
	\begin{align*}
		& L_1 (\phi_j^4)  =  {\ve^5 |\log \ve |^7   \over 1+|y|}   E_{i\geq 0},  \\
		&  \nabla^\perp \Gamma_0 \cdot \nabla [ {3\ve_j \over r_j + \ve_j y_1} \pp_{y_1} \psi^4_j  ]	 = {\ve^4 |\log \ve |^7 \over 1+|y|^2}   E_{i\geq 2}  +{\ve^5 |\log \ve |^7   \over 1+|y|}   E_{i\geq 0} ,\\
		&L_2 [\phi_j^4] + L_3 [\psi_j^4 ] - L[\psi_j^4] + Q (\psi_j^4, \phi_j^4)  =  {\ve^4 |\log \ve |^8 \over 1+|y|^2}    E_{i\geq 2}  +{\ve^5 |\log \ve |^4   \over 1+|y|}   E_{i\geq 0} \\
		&\quad {\mbox {and}}\\
		& Q (\tilde \psi_j^3, \phi_j^4) + Q (\psi_j^4, \tilde \phi_j^3) ={\ve^5 |\log \ve |^8   \over 1+|y|}   E_{i\geq 0} .\end{align*}
	Define
$$\tilde \phi_j^{4} =\sum_{i=1}^4 \phi_j^i , \quad \tilde \psi_j^{4} = \sum_{i=1}^4
		\psi_j^i.
$$
	We get the
	new error, for $y \in B(0, 3R_j)$,
	\begin{align}\label{Ej5-1}
		E_j^{in} & [\tilde \phi_j^{4}  ,  \tilde \psi_j^{4},0  , P] (y,t) :=[ \nabla^\perp  {\mathcal R}_j^{00} (y,t;P)  - \nabla^\perp  {\mathcal R}_j^{00} (y,t;\bar P^2) ] \cdot \nabla U \nonumber \\
		& +{\ve^4 |\log \ve |^8  \over 1+ |y|^2} \,  E_{i\geq  2}   +{\ve^5 |\log \ve |^8  \over 1+ |y|} \, E_{i\geq 0} [P, \dot P] .
	\end{align}
	
	The $0$-th Fourier mode of the new error comes with size $\ve^5$ (and powers of $|\log \ve |$).
	

	
	\subsection{} {\it Fifth inner improvement.} \ \ 
	We now remove   part of the Fourier mode-$2$ or higher terms of size $\ve^4 |\log \ve|^8 $ in \eqref{Ej5-1}. As in the first inner improvement, we take just the parts that depend on $P$ and on $\dot P^0$, $\dot P^1$, but not the ones depending on $\dot {\bf a}$, and we call it $e_5$.
	We extend the function $e_5$ in \eqref{e2}  to be equal to $0$ outside  $B_j = B(0,3R_j)$. Under the assumptions \eqref{point} on $P$, $P^0$ and $P^1$, we have that
	$$
	(1+ |y|^2) 	( |e_5 (y,t) | + |\log \ve |^{1\over 2} |\pp_t e_5 (y,t) | ) \lesssim \ve^4 |\log \ve|^8.
	$$ 
	Moreover the function $e_5$ satisfies authomatically the orthogonality conditions \eqref{ort0} and \eqref{ort2}.
	Let
	$\psi^5_{j} $ and $\phi_j^5 (y,t)= - \Delta_{5,j} \psi_j^5$  be the solution to \eqref{linear1} when $g= e_5$, as predicted by Lemma \ref{alpha}. We have 
	$$ |\psi^5_j(y,t)| \, +\,  (1+|y|)\, |\nn \psi_j^5(y,t)|\,  \lesssim \ve^4 |\log \ve|^8 (1+|y|^2), \quad \inn B (0,8R_j)  \times  [0,T].
	$$
	and
	$$
	\,  (1+ |y|)\, |\nabla_y \phi_j^5 (y,t)| + \,  \, |\phi_j^5 (y,t)| \lesssim \ve^4 |\log \ve|^8  \quad \inn B (0,8R_j)  \times  [0,T].
	$$
	We can differentiate in time equations \eqref{linear1}, to also get
	$$
	\,  \, |\pp_t \phi_j^5 (y,t)| \lesssim \ve^4 |\log \ve|^8  \quad \inn B (0,8R_j) \times  [0,T].
	$$
	Define
$$\tilde \phi_j^{5} =\sum_{i=1}^5 \phi_j^i , \quad \tilde \psi_j^{5} = \sum_{i=1}^5
		\psi_j^i.
$$
We get the
new error, for $y \in B (0,3R_j)$ 
\begin{align}\label{Ej5-2}
		E_j^{in} & [\tilde \phi_j^{5}  ,  \tilde \psi_j^{5},0  , P] (y,t) :=[ \nabla^\perp  {\mathcal R}_j^{00} (y,t;P)  - \nabla^\perp  {\mathcal R}_j^{00} (y,t;\bar P^2) ] \cdot \nabla U 
		+{\ve^5 |\log \ve |^{8}  \over 1+ |y|} \, E_{i\geq 0} .
\end{align}

\subsection{} {\it Sixth inner improvement.} \ \ Our next step is the elimination of  the $\ve_j^5 |\log \ve|^{8}$-term in \eqref{Ej5-2}, ${\ve_j^5 |\log \ve |^{8}  \over 1+ |y|} \, E_{i\geq 0} $, to get  faster decay in the $y$ variable. To do so,  rather than solving  an elliptic problem or an ODEs,
we solve the transport-type equation
\begin{align*}
		&L_1 (\phi_j) + L_2 (\phi_j) \\
		&=|\log \ve | \, \ve_j^2 (1 +{\ve_j \over r_j } y_1) \pp_t \phi_j   + |\log \ve | B_0 (\phi_j ) \\
		& +\nabla^\perp \left(\Gamma_0 +  \frac{\ve_j}{2r_j}\, y_1 \, \Big(\Gamma_0(y) +\bar A +\Gamma (y)
		\Big) 
		\, 
		+{\mathcal R}^0_j (y,t;P) \right) \cdot \nabla \phi_j 
		=E(y,t)  ,
	\end{align*}
	in $ B_{3R_j} \times [0,T]$,
	with initial condition $\phi(y,0)  =0$ in $B (0,3R_j)$.
	We write the operator $B_0$  defined in \eqref{defB0} as follows
		\begin{align*}
			|\log \ve | B_0 (\phi ) &= {\mathcal B}_j^0 (y,t;P) \cdot \nabla \phi \\
			{\mathcal B}_j^0 (y,t;P) &=- {|\log \ve| \over 2}  {d \over dt} (\ve_j^2 ) (1+{\ve_j \over r_j} y_1)    \, y  -|\log \ve| {\ve_j^2 \over r_j} y_1 \dot P_j .
		\end{align*}
	We will need uniform differentiability in $t$ of the coefficients  $\RR_j^0$ and ${\mathcal B}_j^0$. Thus we consider the following slightly-modified  transport equation
	\begin{align}
		\left\{
		\begin{aligned}
			\ve_j^2 |\log \ve | (1+ {\ve_j \over r_j} y_1) \pp_t  \phi 
			&+ \nabla_y^\perp(\Gamma_0(y) +  \tilde \RR_j ( y,t;P) ) \cdot \nabla_y \phi + {\mathcal B}_j 
			( y,t;P) \cdot \nabla_y \phi \\
			&=  E(y,t) ,
			\quad \text{in } B (0,3R_j) \times [0,T]
			\\
			\phi(y,0) & =0 , \quad \text{in } B (0,3R_j)
		\end{aligned}
		\right.
		\label{pico}\end{align}
	Here 
	$$
	\tilde \RR_j (y,t) =  \frac{\ve_j}{2r_j}\, y_1 \, \Big(\Gamma_0(y) +\bar A +\Gamma (y) \Big) + 	{\mathcal R}^1_j (y,t;P)
	$$
	where
$$
		{\mathcal R}^0_j (y,t;P) = {\mathcal R}^1_j (y,t;P) + \ve_j  |\log \ve|  \, \dot {\bf a}_j \cdot y
$$
	see \eqref{newR}. In other words, we leave out the term involving $\dot {\bf a}$. We do the same to ${\mathcal B}_j^0 $:  using the fact that ${d \over dt} \ve_j^2 = -r_0 \ve^2 {\dot r_j \over r_j^2}$, we write
	$$
	{\mathcal B}_j^0 (y,t;P) = {\mathcal B}_j (y,t;P) 
	+ {|\log \ve| \ve^2 r_0 \over 2}  {\dot {\bf a}_{j1} \over r_j^2} (1+{\ve_j \over r_j} y_1)     y  -|\log \ve| {\ve_j^2 \over r_j} y_1 \dot {\bf a}_j .
	$$
	It is straightforward to check that, under our assumptions \eqref{point}, \eqref{b0} and \eqref{b1}, for $y \in B (0,3R_j)$ we have
	\begin{equation}\label{papa-new}
		\begin{aligned}
			| \RR^1_j( y,t)| + |\pp_t  \RR^1_j( y,t)| & \ \le \ M \ve^2 |y|^2, \\
			|\nabla_y \RR^1_j( y,t) | + |\pp_t  \nabla_y \RR^1_j( y,t)| \ &\le \ M \ve^2 |y| ,\\
			|D^2_y \RR^1_j( y,t) | &\le \ M \ve^2, \\
			|{\mathcal B}_j( y,t) | + |\pp_t  {\mathcal B}_j ( y,t)| \ &\le \ M \ve^2 |\log \ve |^{1\over 2} |y| ,\\
			|\nabla_y {\mathcal B}_j( y,t) | \ &\le \ M \ve^2 |\log \ve |^{1\over 2} ,
		\end{aligned}
	\end{equation}
	for some positive constant $M$ independent of $\ve$.



	We consider a smooth cut-off function $\eta_4(s)$ as in \eqref{cutoff}, and take
	\begin{equation}\label{eta2epsilon}
		\eta_{4 \ve} (y) = \eta_ 4 ( |\log \ve |^\zeta \, \ve \, |y|).
	\end{equation}
	We will then have a solution to \equ{pico} by restricting to $B (0,3R_j)$ the solution of the Cauchy problem
	\begin{align}
		\left\{
		\begin{aligned}
			\ve_j^2 |\log \ve | (1+ \eta_{4 \ve} {\ve_j \over r_j} y_1) \pp_t \phi 
			&+ \nabla_y^\perp(\Gamma_0(y) + \eta_{4 \ve} \tilde \RR_j ( y,t;\xi) ) \cdot \nabla_y \phi
			+ \eta_{4\ve} {\mathcal B}_j 
			( y,t;P) \cdot \nabla_y \phi
			\\
			&= \eta_{4 \ve}  E(y,t) ,
			\quad \text{in } \R^2 \times [0,T]
			\\
			\phi(y,0) & =0 , \quad \text{in } \R^2
		\end{aligned}
		\right.
		\label{pico1}\end{align}
	
	\medskip
	In \S~\ref{secProofTransportInner} we prove the following result

	\begin{lemma}\label{transport3-new}
		Let us assume that $\tilde \RR_j$ and ${\mathcal B}_j$ satisfy $\equ{papa-new}$.
		Then there exist numbers $C,\delta>0$ such that for all sufficiently small $\ve$  and any function $E(y,t)$ that satisfies for some $C,\alpha\in \R$
		\[
	(1+|y|^2 ) |D^2_y E (y,t)|+	(1+|y|) \left( |\nn_y E (y,t)| + |\pp_t \nn_y E (y,t)|\right) + | E (y,t)| + |\pp_t E(y,t) |  \ \le \ C\, (1+ |y|)^\alpha ,
		\]
		the solution of $\equ{pico1}$ satisfies
	$$
		(1+|y|)|\nn_y \phi(y, t)    |  + |\phi(y, t)| \  \le\  C \ve^{-2} |\log \ve |^{-1}  C (1+|y|)^\alpha 
		$$
		for all $y \in \R^2$, with $|y| <4 \ve^{-1} |\log \ve|^{-\zeta}$, $t \in [0,T)$.
	\end{lemma}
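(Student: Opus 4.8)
The plan is to produce the solution of the Cauchy problem \eqref{pico1} on $\R^2\times[0,T]$ by the method of characteristics and to read the two bounds off a quantitative analysis of the characteristic flow, the decisive feature being that the principal transport term is a fast rotation. Since $\phi(\cdot,0)\equiv0$ and, by \eqref{b2}, the coefficient $\ve_j^2|\log\ve|(1+\eta_{4\ve}\tfrac{\ve_j}{r_j}y_1)$ of $\pp_t\phi$ lies between $\tfrac12\ve^2|\log\ve|$ and $2\ve^2|\log\ve|$ once $\ve$ is small, the characteristics may be parametrised by the physical time $t$; writing $Y(\cdot\,;y,t)$ for the backward characteristic issuing from $(y,t)$ one obtains
\[
\phi(y,t)=\int_0^t\frac{(\eta_{4\ve}E)\bigl(Y(\sigma;y,t),\sigma\bigr)}{\ve_j^2|\log\ve|\bigl(1+\eta_{4\ve}\tfrac{\ve_j}{r_j}Y_1(\sigma;y,t)\bigr)}\,d\sigma ,
\]
whence, using $|E|\le A(1+|y|)^\alpha$, $\ |\phi(y,t)|\le 2\ve^{-2}|\log\ve|^{-1}A\int_0^T(1+|Y(\sigma;y,t)|)^\alpha\,d\sigma$. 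Thus the bound for $\phi$ reduces to the uniform comparability
\begin{equation}\label{planradii}
\tfrac12\bigl(1+|y|\bigr)\ \le\ 1+|Y(\sigma;y,t)|\ \le\ 2\bigl(1+|y|\bigr),\qquad \sigma\in[0,t],\ t\in[0,T],
\end{equation}
with constants independent of $\ve$ and of $y$ in the ball $|y|<8\ve^{-1}|\log\ve|^{-\zeta}$.

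Proving \eqref{planradii} is the crux and the step I expect to be the main obstacle. In the $t$–parametrisation the characteristic speed is of order $\ve^{-2}|\log\ve|^{-1}$, so over the $O(1)$ time interval the characteristic traverses an enormous arclength, and a naive Gronwall estimate fed with the pointwise bounds of \eqref{papa-new} loses a factor $\ve^{-1}$. The structural fact to exploit is that $\Gamma_0$ is radial, so that $\nn^\perp\Gamma_0\cdot\nn=-\tfrac4{1+\rho^2}\pp_\theta$ (the angular operator of \eqref{gadd}): the principal part of the transport is a rotation that preserves $\rho:=|Y|$ and, on the support of $\eta_{4\ve}$, sweeps the angle at a rate bounded below away from zero. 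In an arclength parameter $s$ and polar coordinates $y=\rho e^{i\theta}$ the characteristic equations read
\[
\dot\rho=-\frac{\eta_{4\ve}}{\rho}\,\pp_\theta\tilde\RR_j(\rho e^{i\theta},t)+\eta_{4\ve}\,\tfrac{y}{\rho}\cdot\mathcal B_j ,\qquad \dot\theta=-\frac4{1+\rho^2}\bigl(1+o(1)\bigr),
\]
and by \eqref{newR} the dominant part of $\tilde\RR_j$ is the explicit term $\tfrac{\ve_j}{2r_j}y_1(\Gamma_0+\bar A+\Gamma)$, a pure first Fourier mode in $\theta$; hence the leading radial drift is a total $\theta$–derivative, with zero mean over each revolution. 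I would remove it by the averaging change of variable $\widetilde\rho:=\rho-\tfrac{(1+\rho^2)\eta_{4\ve}\ve_j}{8r_j}\cos\theta\,(\Gamma_0+\bar A+\Gamma)(\rho)$, chosen so that $\pp_\theta\widetilde\rho$ absorbs the leading drift through $\dot\theta$; by \eqref{papa-new} and $\rho<8\ve^{-1}|\log\ve|^{-\zeta}$ this correction is $o(1+\rho)$. For $\widetilde\rho$ the leading drift cancels, the principal residual is a product of two first–mode factors and hence of second Fourier mode — again of zero mean over each revolution — while all remaining drift terms carry an extra power of $\ve$; estimating the cumulative effect of the residual over the $\gg1$ revolutions by the small variation of $\rho$ and $t$ across one revolution gives a net change $o(1+\rho)$, precisely because $\zeta$ has been taken large (the choice $\zeta=3$). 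A Gronwall inequality for $\widetilde\rho$ then yields $\widetilde\rho(s)\sim\widetilde\rho(0)$, hence \eqref{planradii}.

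For the weighted gradient bound I would differentiate the representation, $\nn_y\phi(y,t)=\int_0^t\bigl[\nn\bigl(\tfrac{\eta_{4\ve}E}{\ve_j^2|\log\ve|(1+\cdots)}\bigr)\bigr]\!\bigl(Y(\sigma;y,t)\bigr)\,D_yY(\sigma;y,t)\,d\sigma$, and analyse the flow Jacobian $D_yY$. Up to the perturbations above it is a rotation by the swept angle: its angular column is bounded (of magnitude $\sim|y|$) and contributes only $\lesssim\ve^{-2}|\log\ve|^{-1}A(1+|y|)^\alpha$ to $(1+|y|)|\nn_y\phi|$, whereas differentiating the swept angle in the radial variable produces one large entry of size $\sim\ve^{-2}|\log\ve|^{-1}$. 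That entry multiplies $\tfrac1{|Y|}\pp_\theta(\eta_{4\ve}E)(Y,\sigma)$, which oscillates rapidly in $\sigma$ along the fast–rotating characteristic; integrating by parts in $\sigma$ — writing $\pp_\theta(\eta_{4\ve}E)$ through $\tfrac{d}{d\sigma}\bigl[(\eta_{4\ve}E)(Y,\sigma)\bigr]$ by means of the leading transport — produces a boundary term at $\sigma=t$ that vanishes because no angle has yet been swept, a boundary term at $\sigma=0$ and a remaining integral that are $\lesssim\ve^{-2}|\log\ve|^{-1}(1+|y|^2)^{-1}A(1+|y|)^\alpha\lesssim\ve^{-2}|\log\ve|^{-1}A(1+|y|)^{\alpha-1}$, and error terms from $\pp_t(\eta_{4\ve}E)$ and from the $\tilde\RR_j,\mathcal B_j$–part of the transport that are absorbed using the hypotheses $(1+|y|)|\nn_yE|+|\pp_tE|\le A(1+|y|)^\alpha$ and \eqref{papa-new}. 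Finally, on the range $|y|<4\ve^{-1}|\log\ve|^{-\zeta}$ one has $\eta_{4\ve}\equiv1$, so the solution of \eqref{pico1} restricts there to a solution of \eqref{pico} obeying the stated estimates. The careful execution of these averaging arguments is carried out in \S\ref{secProofTransportInner}.
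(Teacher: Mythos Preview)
Your outline is essentially correct, and for the gradient bound your integration-by-parts along characteristics is exactly what the paper does. The one genuine methodological difference is in how you establish the radii comparability \eqref{planradii}. You propose an averaging/normal-form argument: remove the leading first-mode radial drift by an explicit correction $\widetilde\rho$, observe the residual is of higher Fourier mode or carries an extra power of $\ve$, and close by Gronwall. The paper instead exploits the Hamiltonian structure directly: since the principal part of the characteristic vector field is $\nabla_y^\perp H/(1+\eta_{4\ve}\tfrac{\ve_j}{r_j}y_1)$ with $H=\Gamma_0+\eta_{4\ve}\tilde\RR_j$, one has $\nabla_y^\perp H\cdot\nabla_y H=0$ and hence $\tfrac{d}{ds}H(\bar y(s),t(s))$ involves only the $\mathcal B_j$-contribution and $\pp_t\tilde\RR_j$; using \eqref{papa-new} and the explicit form of $\tilde\RR_j$ these integrate to $O(|\log\ve|^{-1/2})$ over $[0,\tau_T]$, giving $\log(1+|\bar y(s)|^2)=(1+o(1))\log(1+|y|^2)+o(1)$ and thus \eqref{planradii} in one stroke. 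This near-conserved-quantity argument is both shorter and more robust than the averaging you sketch (no need to track Fourier modes of residuals or iterate the normal form), though your route would also succeed after more bookkeeping. For the gradient estimate the paper writes the Jacobian column as $\bar\rho_{y_\ell}e^{i\bar\theta}-\bar y^\perp\bar\theta_{y_\ell}$, bounds the first piece by $|\bar\rho_{y_\ell}|\le M$ (which itself comes from the $H$-conservation), and for the second piece uses $\nabla_{\bar y}\mathcal E\cdot\tfrac{d\bar y}{ds}=\tfrac{d}{ds}\mathcal E-\ve_j^2|\log\ve|\,\pp_t\mathcal E$ and integrates by parts in $s$, just as you propose.
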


	We want to apply Lemma \ref{transport3-new} for the right hand side 
	$$
	E(y,t)= e_6 , \quad 	e_6=  {\ve^5 |\log \ve |^{8}  \over 1+ |y|} \, E_{i\geq 0} (\rho,\theta,t,\varepsilon)
	$$
	where we freeze this term at $P_j=P_0 +P_j^0 + P_j^1$.
	We define $\phi_j^6$ to be the solution to problem \eqref{pico}, with $E$ as above, predicted by Lemma \ref{transport3-new}. It satisfies
	$$
	(1+|y|)|\nn_y \phi^6_j (y, t)    |  + |\phi^6_j (y, t)| \  \le\  C {\ve^{3} |\log \ve |^{7}  \over  1+|y|}.
	$$
	Let $\psi_j^6$ be the solution to 
	$$
	-\Delta_{5,j} \psi_j^6 = \phi_j^6 \quad (y,t) \in  B (0,4R_j) \times [0,T), \quad \psi_j^6 =0 \quad (y,t) \in \partial B (0,4R_j) \times [0,T).
	$$
	Let $p(y,t)$, with $p=0$ on $\partial B (0,4R_j)$ be the positive smooth radial solution to
	$$
	\Delta_y p + {4 \over 1+|y|} = 0 , \quad y \in B (0,4R_j) . 
	$$
	Then $p(y) = 16 (R_j - |y|) - 4 \int_{|y|}^{4R_j} {\log (1+ s) \over s} \, ds  $, and  $|p(y)| \lesssim (1+ |y|)$ and
	$$
	\Delta_y p + {3 \ve_j \over r_j + \ve_j y_1  } \pp_1 p + {2 \over 1+|y|} \leq 0 .
	$$
	Take $\bar \psi (y,t) = M\, \ve^3 |\log \ve|^7 \, p$, for some $M>0$. It is  a super solution for
	$$
	\Delta_{5,j} \psi +  \phi_j^6 \leq 0,
	$$
	and  gives that 
	$$
	(1+|y|^2)|D^2_y \psi^6_j (y, t)    |  +	(1+|y|)|\nn_y \psi^6_j (y, t)    |  + |\psi^6_j (y, t)| \  \le\  C \ve^{3} |\log \ve |^{8}    (1+|y|).
	$$
	We use this information to compute the following terms
	\begin{align*}
		& L_3 ( \psi_j^6 ) + Q (\psi_j^6  , \phi_j^6 )= {\ve^3 |\log \ve |^8 \over 1+ |y|^5} E_{i\geq 1} , \quad  Q (\psi_j^6  , \tilde \phi_j^5 )+  Q (\tilde \psi_j^5  , \phi_j^6 )=
		{\ve_j^5 |\log \ve |^{8} \over 1+ |y|^3} E_{i\geq 0}
	\end{align*}
	Define
$$\tilde \phi_j^{6} =\sum_{i=1}^6 \phi_j^i , \quad \tilde \psi_j^{6} = \sum_{i=1}^6
		\psi_j^i.
$$
	We get the
	new error 
	\begin{align}\label{Ej6}
		E_j^{in} & [\tilde \phi_j^{6}  ,  \tilde \psi_j^{6},0  , P] (y,t) :=[ \nabla^\perp  {\mathcal R}_j^{00} (y,t;P)  - \nabla^\perp  {\mathcal R}_j^{00} (y,t;\bar P^2) ] \cdot \nabla U \nonumber \\
		& + {\ve^3 |\log \ve |^8 \over 1+ |y|^5} E_{i\geq 1} +{\ve^5 |\log \ve |^{8}  \over 1+ |y|^3} \, E_{i\geq 0} .
	\end{align}
	If we compare this error with the one in \eqref{Ej2}, we see that both their main terms have size $\ve^3$ multiplied by a power of $|\log \ve|$, and have Fourier mode-$1$. The difference is in their decay in the $y$-variable: the error in \eqref{Ej6} has a much faster decay, which will be crucial to make the final argument of our construction work. We explain this in Section \ref{sec8} where the {\it inner-outer} scheme is described in detail.
	
	\medskip
	At this point of our construction we choose $\zeta $ in \eqref{zeta}. Take
	$$
	\zeta =3.
	$$
	Then
	$$
	W^0 (x) +    {\eta_{j1} \over r_j \ve_j^2 } \tilde \phi_j^6 ({x-P_j \over \ve_j },t) = {1\over r_j \ve_j^2} U({x-P_j \over \ve_j}) (1+ o(1) ), \ass \ve \to 0
	$$
	for $|x-P_j | < |\log \ve |^{-3}$.

	%

	\medskip

	\subsection{} {\it The outer improvement.} \ \ 
	So far we have modified the approximate solution in the inner regions, namely at a small distance from the points $P_j$. We will now improve the outer error given by
\begin{equation} \label{nik2}	\begin{aligned}
		E_0 (x,t;P)&= E^{out} [0, 0 , \tilde \phi^6, \tilde \psi^6 , P] (x,t) \\ &:=
		\sum_{j=1}^k \left[ r \, |\log \ve |  \,  \pp_t \bar \eta_{j1} + \nabla_x^\perp ( r^2 (\Psi^0  + \sum_{j=1}^k  {  \eta_{j2}\over r_j} \tilde \psi_j^{6} ({x-P_j \over \ve_j }) -r_0^{-1} |\log \ve |)) \nabla \bar \eta_{1j} \right] {\tilde \phi_j^{6} \over \ve_j^2 r_j} \nonumber  \\
		&+ \left[ \sum_{j=1}^k ( \eta_{2j} - \eta_{1j}) \nabla_x^\perp (r^2 {\tilde \psi_j^{6} \over r_j} )  + {r^2 \tilde \psi_j^6 \over r_j} \nabla_x^\perp \eta_{2j} \right] \nabla_x W^0 \\
		&
		+ (1-\sum_{j=1}^k  \eta_{j1} ) S_1 (W^0, \Psi^0)  \quad (x,t) \in \Sigma \times [0,T)   \nonumber 
	\end{aligned}
\end{equation}
	We also want to reduce the size of $	S_2  [ \tilde \phi^6, \tilde \psi^6, 0, 0, P]$.
	We refer to \eqref{defS} and also \eqref{S20}. 
	Observe that the function $\Psi^0  + \sum_{j=1}^k  {  \eta_{j2}\over r_j} \tilde \psi_j^6  ({x-P_j \over \ve_j }) $ satisfies the 
	conditions on the boundary and at infinity, see \eqref{appro1}: for all $t \in [0,T]$
$$
		{\partial \psi \over \partial r} (x,t)=0, \quad {\mbox {on}} \quad \pp \Sigma, \quad 
		|\psi (x,t)  | \to 0 , \quad \ass |x| \to \infty.
$$
	
	To reduce the outer error, we first solve in $\phi^{out}$ the {\it outer transport equation}
	\begin{equation}\label{transport-outer}
		\begin{aligned}
			|\log \ve | \, r \, \pp_t \phi^{out}
			&+ \nabla_x^\perp ( r^2 (\Psi^0   -r_0^{-1} |\log \ve |)) \nabla_x \phi^{out} =E_0, \quad {\mbox {in}} \quad  \Sigma \times [0,T)\\
			\phi^{out} (x,0) &= 0, \quad \quad {\mbox {in}} \quad  \Sigma.
		\end{aligned}
	\end{equation}
	We define
	\begin{equation}\label{ou1}
		\begin{aligned}
			\tilde E_0 (x,t) = {E_0 (x,t) \over r }, \quad
			|\log \ve | B(x,t)= {1\over r} \nabla_x^\perp ( r^2 (\Psi^0   -r_0^{-1} |\log \ve |)) 
		\end{aligned}
	\end{equation}
	and re-write \eqref{transport-outer} as
	\begin{equation}\label{to1}
		\begin{aligned}
			|\log \ve | \, \pp_t \phi^{out}
			+|\log \ve | B \cdot  \nabla_x \phi^{out} &=\tilde E_0, \quad {\mbox {in}} \quad  \Sigma \times [0,T)\\
			\phi^{out} (x,0) &= 0, \quad \quad {\mbox {in}} \quad  \Sigma.
		\end{aligned}
	\end{equation}
	From the very definition of $\Psi^0$ 
	the following properties for  $B(x,t)$ follow: if  $B (x,t) = (B_1 (x,t), B_2 (x,t))$, we have
	\begin{equation}\label{estB}\begin{aligned}
			B_1(x,t) &= 0 \inn (x,t) \in \pp \Sigma \times [0,T]
	\end{aligned} \end{equation}
	and
	\begin{equation}\label{estB1}\begin{aligned}
			B(x,t) &= O(1) \inn (x,t) \in  \Sigma \times [0,T], \quad \ass \ve \to 0.
	\end{aligned} \end{equation}
	
	Since the function  $B$  is continuous and log-Lipschitz in $x$ uniformly in $t$, for $(x,t) \in \Sigma \times [0,T)$,
	we can represent the solution to \eqref{to1}
	using the Duhamel's representation formula
	\begin{equation}\label{phi3}
		\phi^{out} (x,t) = |\log \ve |^{-1} \int_0^t \tilde E_0 (\bar x (s;x,t), s) \, ds
	\end{equation}
	where $\bar x (s;x,t)$ are the characteristic curves defined as
	\begin{equation}\label{char}
		\, 	{d \over ds} \bar x (s; x,t) = B (\bar x (s; x,t), s ) \quad s \in (0,t), \quad \bar x (t; x,t) = x,
	\end{equation}
	which exist and are unique, mainly thanks to \eqref{estB} as we shall prove later.

	The right-hand side of our equation \equ{transport-outer} is supported away from the vortices $P_j(t)$ at a distance proportional to $|\log \ve |^{-3}$. We assume that for some fixed number $\delta'$ we have
	\begin{equation}
		\tilde E_0 , \, E_0 \equiv 0  \inn  \Big \{ (x,t) \in \Sigma \times [0,T] \ /\  x\in  \bigcup_{j=1}^k B (P_j (t) , \delta' |\log \ve |^{-3} )   \Big \}.
		\label{assE}\end{equation}
	
	

	\medskip We have the validity of the following 
	
	\begin{lemma}\label{int1}
		For $x\in \Sigma$ the characteristic curves \eqref{char} satisfy
		$\bar x(s; t, x)\in \Sigma$ for all $0\le s\le t$.  The solution of $\equ{transport-outer}$ given by $\equ{phi3}$ satisfies for any $1\le p\le +\infty$,
		\begin{equation*}
			\|\phi(\cdot ,t)\|_{L^p(\Omega)}  \le    |\log \ve |^{-1}  t \, \sup_{0\le s\le t} \|\tilde E_0 (\cdot, s)\|_{L^p (\Omega)} .
		\end{equation*}
		If $E_0$ satisfies $\equ{assE}$,  there exists a number $\delta^*>0$ independent of  $\delta'$ and $\ve $ such that  the solution of $\equ{transport-outer}$
		satisfies
		\[
		\phi \equiv 0  \inn  \Big \{ (x,t) \in \Sigma \times [0,T] \ /\  x\in  \bigcup_{j=1}^k B (P_j(t),  \delta^* |\log \ve |^{-3} )   \Big \}.
		\]
		Moreover, 	if $\tilde E_0$ satisfies $\equ{assE}$
		and
		$$
		(1+ |x|)  |\nn_x \tilde E_0(x,t)|  + |\tilde E_0 (x,t) |  \le {A   \over 1+ |x|^4} \foral (x,t) \in \Sigma \times [0,T],
		$$ then there exists $C>0$ such that
		the solution of $\equ{transport-outer}$ satisfies
		the estimate
		\be\label{poto1} (1+ |x|)   |\nn_x \phi(x,t)| + |\pp_t \phi (x,t)|  + |\phi(x,t) |\ \le\ C\, |\log \ve |^{-1}  {A \over 1+ |x|^4} . \ee
	\end{lemma}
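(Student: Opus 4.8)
\medskip
\noindent\textbf{Proof plan.} The field $B$ in \eqref{char} is continuous and log--Lipschitz in $x$, uniformly in $t$, by the explicit form of $\Psi^0$ and the bounds on the parameters, so I would first note that Osgood's criterion gives, for each $(x,t)$, a unique characteristic depending continuously on the data, and hence that \eqref{phi3} is well defined and solves \eqref{transport-outer}. To see $\bar x(s;x,t)\in\Sigma$, write $\bar x=(\bar r,\bar z)$; since $B_1\equiv 0$ on $\partial\Sigma$ (property \eqref{estB}) and $B$ is log--Lipschitz, $|B_1(r,z,s)|\le C\,r\log(e/r)$ for $r$ small, so $u:=\log(e/\bar r)$ obeys $|\dot u|\le Cu$ while $\bar r$ is small and Grönwall keeps $u$ finite, whence $\bar r$ cannot reach $0$ on $[0,t]$. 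For the $L^p$ bound I would use $rB=|\log\ve|^{-1}\nabla_x^\perp\big(r^2(\Psi^0-\alpha_0|\log\ve|)\big)$ (see \eqref{ou1}), so that $\div_x(rB)=|\log\ve|^{-1}\div_x\nabla_x^\perp(\cdot)\equiv 0$ and the flow $\Phi^{t,s}:x\mapsto\bar x(s;x,t)$ preserves $r\,dr\,dz$; inserting this into \eqref{phi3} together with Minkowski's integral inequality and the change of variables $y=\Phi^{t,s}(x)$ yields, for every $1\le p\le\infty$, $\|\phi(\cdot,t)\|_{L^p}\le|\log\ve|^{-1}\int_0^t\|\tilde E_0(\Phi^{t,s}(\cdot),s)\|_{L^p}\,ds=|\log\ve|^{-1}\int_0^t\|\tilde E_0(\cdot,s)\|_{L^p}\,ds\le|\log\ve|^{-1}\,t\,\sup_{0\le s\le t}\|\tilde E_0(\cdot,s)\|_{L^p}$, the case $p=\infty$ being immediate without measure preservation.

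The support statement is where the geometry enters. The quantity $F(x,t):=r^2(\Psi^0(x,t)-\alpha_0|\log\ve|)$ satisfies $\frac{d}{ds}F(\bar x(s),s)=\partial_t F(\bar x(s),s)$ along \eqref{char}, since $\nabla F\cdot\nabla^\perp F=0$. Near $P_j$ one has $\Psi^0\approx r_j^{-1}\log|x-P_j|^{-4}$, so the level sets of $F$ are, to leading order, circles about $P_j$ and, as a function of $d=|x-P_j|$, $|\partial_d F|\sim r_0/d$; hence the $F$--gap between $\{d=\delta^*|\log\ve|^{-3}\}$ and $\{d=\delta'|\log\ve|^{-3}\}$ is a positive constant $\sim r_0\log(\delta'/\delta^*)$, independent of $\ve$. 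Meanwhile $|\partial_t F|=r^2|\partial_t\Psi^0|$ is small and \emph{oscillates} along the characteristic, which near $P_j$ rotates rapidly about $P_j$; an averaging argument then shows that $F(\bar x(s),s)-F(\bar x(t),t)$ drifts by $o(1)$ over $[0,T]$ as long as the characteristic stays near $P_j$. Choosing $\delta^*$ a small enough fixed fraction of $\delta'$ therefore traps any characteristic ending in $B(P_j(t),\delta^*|\log\ve|^{-3})$ inside $B(P_j(s),\delta'|\log\ve|^{-3})$ for all $s\le t$, where $\tilde E_0\equiv 0$; by \eqref{phi3}, $\phi\equiv 0$ on those balls. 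Equivalently, the distance of the characteristic to the moving point $P_j(\cdot)$ is an approximate adiabatic invariant of the flow.

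For \eqref{poto1} I would start from $|\phi(x,t)|\le|\log\ve|^{-1}\int_0^t A\,(1+|\bar x(s;x,t)|^4)^{-1}\,ds$, so that it suffices to show $1+|\bar x(s;x,t)|^4\gtrsim 1+|x|^4$ on the relevant $s$--set: away from the vortex cluster $B=O(1)$, hence characteristics are displaced only $O(T)=O(1)$, which together with the adiabatic invariance above (preventing a characteristic started far from the vortices from sinking into the strong--field shells) gives $|\bar x(s;x,t)|\ge|x|-C$ for $|x|$ large, while for $|x|\sim r_0$ the estimate is trivial. For $\nabla_x\phi$ and $\phi_t$ I would differentiate \eqref{phi3}, writing $\nabla_x\phi=|\log\ve|^{-1}\int_0^t (D_x\bar x(s;x,t))^\top\nabla\tilde E_0(\bar x(s),s)\,ds$ and $\partial_t\phi=|\log\ve|^{-1}\tilde E_0(x,t)+|\log\ve|^{-1}\int_0^t\nabla\tilde E_0(\bar x(s),s)\cdot\partial_t\bar x(s;x,t)\,ds$ with $\partial_t\bar x(t;x,t)=-B(x,t)$; on the open set where $\phi$ already vanishes (previous step) these are zero, and elsewhere $B$ and $DB$ are $O(1)$ along the characteristic, so the variational equation gives $|D_x\bar x|+|\partial_t\bar x|\lesssim 1$ and the estimate closes as for $|\phi|$. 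The hard part will be precisely the control of $D_x\bar x$ and of the trajectory through the thin shells $|x-P_j|\sim|\log\ve|^{-3}$, where $|B|\sim|\log\ve|^2$ and $\|DB\|\sim|\log\ve|^4$ so that naive Grönwall is useless: one must exploit that to leading order the flow on such a shell is a radius--dependent rotation, so the stretching is a slowly growing shear while $\nabla\tilde E_0$ oscillates along the rotation, producing the cancellation needed to keep the shell contributions to $\nabla_x\phi$ and $\phi_t$ of size $|\log\ve|^{-1}A(1+|x|^4)^{-1}$; quantifying this is the first--order sharpening of the averaging estimate used for the support property, and is the technical heart of the argument.
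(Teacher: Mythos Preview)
Your plan is mostly sound and several pieces are cleaner than the paper's own argument: the confinement $\bar x\in\Sigma$ via the log--Lipschitz/Osgood barrier is fine (the paper argues instead by contradiction, noting that $\bar r\equiv 0$ would itself be a characteristic through $\bar x(s_0)$, violating uniqueness), and your observation that $\mathrm{div}_x(rB)=0$ so that the flow preserves $r\,dr\,dz$ yields the $L^p$ estimate without the loose constant the paper incurs. For the support statement your adiabatic--invariant reasoning on $F=r^2(\Psi^0-\alpha_0|\log\ve|)$ is exactly the outer--variable version of what the paper does: it rescales $y=(x-P_j)/\ve_j$, lands on the inner transport equation \eqref{pico1-new}, and invokes Lemma~\ref{transport2}, whose proof (via Lemma~\ref{transport1-new}) is precisely the approximate conservation of the inner Hamiltonian $H(\bar y(s),t(s))$ along characteristics.

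Where you diverge substantively is in the derivative bounds \eqref{poto1}, and here you are making the problem much harder than it needs to be. You correctly identify that naive Gr\"onwall on $D_x\bar x$ fails in the shells $|x-P_j|\sim|\log\ve|^{-3}$ and propose to rescue it by a refined averaging argument exploiting the near--rotation structure. The paper bypasses this entirely with a one--line trick: once you know $\phi\equiv 0$ on the balls $B(P_j(t),\delta^*|\log\ve|^{-3})$, $\phi$ is \emph{also} a solution of the transport equation with $B$ replaced by $\tilde B:=(1-\sum_j\eta_j)B$, where $\eta_j$ is a cutoff supported in those balls. The modified field $\tilde B$ is $O(1)$ uniformly (by \eqref{estB1} and the support of $\eta_j$), and so are its spatial derivatives away from the now--irrelevant cores; the characteristics of the modified equation satisfy $|\bar x(s;t,x)|=|x|+O(1)$ and the variational equation gives $|D_x\bar x|,\,|\partial_t\bar x|=O(1)$ by ordinary Gr\"onwall over $[0,T]$. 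The pointwise, gradient and time--derivative bounds then follow immediately from the Duhamel formula for the modified flow, with no averaging needed. This cutoff--and--replace step is the technical heart of the paper's proof of \eqref{poto1}; your averaging route might be made to work, but it is the hard way around.
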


	For the proof of Lemmas~\ref{int1} see \S~\ref{secProofTransportOuter}.

	\medskip

	From \eqref{gru1}, we explicitly compute
	\begin{align*}
		{	(1-\sum_{j=1}^k  \eta_{j1} )	S_1(W^0, \Psi^0 ) \over r } &= |\log \ve| (1-\sum_{j=1}^k  \eta_{j1} ) \, \eta \sum_{j=1}^k  \biggl[ \pp_t W^0_j  
		+ {r \over |\log \ve |} \nn^\perp (\Psi^0 - r_0^{-1} |\log \ve| ) \cdot \nn W_j^0   \\
		&- {2\over |\log \ve |}  ( \Psi^0 - r_0^{-1} |\log \ve|) \, {\bf e_2} \cdot \nn W_j^0 
		\Biggl].
	\end{align*}    
	This fact, together with \eqref{nik2} and the estimates on $\tilde \phi^6$, $\tilde \psi^6$ give that $\tilde E_0 (x,t)$ has no singularity at $r=0$ and satisfies
	\begin{equation}\label{est-tilde-E0}
		\begin{aligned}
			\tilde E_0 (x,t)& =  \sum_{j=1}^k  (1-  \eta_{j1} ) {O(\ve^2 |\log \ve | )\over (|\log \ve |^{-3} + |x-P_j|)^4} , \\
				\nabla_x \tilde E_0 (x,t)& =  \sum_{j=1}^k (1-  \eta_{j1} ) {O(\ve^2  |\log \ve | )\over (|\log \ve |^{-3} + |x-P_j|)^5} .
		\end{aligned}
	\end{equation}
We define $\phi^{out}_1$ to solve \eqref{transport-outer}. Since \eqref{est-tilde-E0}, from Lemma \ref{int1} we obtain that 
$$ (1+ |x|)  |\nn_x \phi^{out}_1(x,t)| + |\pp_t \phi^{out}_1(x,t)|  + |\phi^{out}_1 (x,t) |\ \le\ C\,  {\ve^2 |\log \ve |^b \over 1+ |x|^4}  $$
	where $b>0$ denotes a number whose exact value will change from line to line and also within the same line. At the end of this construction, we will absorb any positive power of $|\log \ve |$ in an arbitrarily smaller power of $\ve$. 
	
	\medskip
	We next introduce $\psi_1^{out}$ to solve 
	\begin{align*}
		-\Delta_5 \psi^{out}_1 &= \phi^{out}_1 + S_2  [ \tilde \phi^6, \tilde \psi^6, 0, 0, P], \quad {\mbox {on}} \quad \Sigma  \times [0,T)   , \quad  
		\\
		{\partial \psi^{out}_1 \over \partial r}  & = 0  \quad {\mbox {on}} \quad \partial \Sigma  \times [0,T), \quad  |\psi^{out}_1(x,t)| \to 0 \quad \text{as } |x|\to \infty .
	\end{align*}
From the estimates on $\tilde \phi^6$ and $\tilde \psi^6$ we get
	\begin{align*}
	S_2 & [ \tilde \phi^6, \tilde \psi^6, 0, 0, P]=   \sum_{j=1}^k ( \eta_{j1} - \eta_{j2} ) {\Delta_{5,j} \tilde \psi_j^6 \over r_j \ve_j^2}\\
	&+ \sum_{j=1}^k ( {\tilde \psi_j^6 \over r_j} \Delta_5  \eta_{j2}  + 2 \nabla_x  \eta_{j2} \nabla_x {\tilde \psi_j^6 \over r_j}  ) =O( \ve^2 |\log \ve |^{b} ) {\bf 1}_{\{ |\log \ve |^{-3} < |x-P_j| < 4|\log \ve |^{-3}\}} . 
\end{align*}
	With the aid of Lemma \ref{ll1}, we have
	\begin{equation}\label{psiout1} 
		(1+ |x| ) |\nabla \psi_1^{out} (x,t) | + |\psi_1^{out} (x,t) | \lesssim {\ve^2 |\log \ve |^{b} \over 1+ |x|^2},
	\end{equation}
	and hence
	\begin{equation}\label{S2000}
	S_2  [\tilde  \phi^6, \tilde \psi^6, \phi_1^{out}, \psi_1^{out}, P]= 0 \quad (x,t) \in \Sigma \times [0,T)    .
	\end{equation}
	Having introduced the outer corrections $\phi_1^{out}$ and $\psi_1^{out}$, we compute  the new outer error $E^{out} [\phi^{out}_1, \psi^{out}_1, \tilde \phi^6, \tilde \psi^{6} , P]$ and the new inner error $	E_j^{in}  [\tilde \phi_j^6   , \tilde \psi_j^6  , \psi_1^{out} , P]$.

	\medskip
	 The new outer error is given by
	\begin{align*}
		E^{out} &[\phi^{out}_1, \psi^{out}_1, \tilde \phi^6, \tilde \psi^{6} , P] (x,t) :=
		\nabla_x^\perp ( r^2 (\sum_{j=1}^k  {  \eta_{j2}\over r_j} \tilde \psi_j^6  + \psi^{out}_1) ) \cdot  \nabla_x \phi^{out}_1 \nonumber \\
		&+\sum_{j=1}^k   \nabla_x^\perp ( r^2   \psi^{out}_1 ) \nabla \bar \eta_{1j}  {\tilde \phi_j^6 \over \ve_j^2 r_j}+  \sum_{j=1}^k ( \eta_{2j} - \eta_{1j}) \nabla_x^\perp (r^2 \psi^{out}_1)  \cdot \nabla_x W^0 \nonumber \\
		&+ (1-\sum_{j=1}^k \eta_{2j} ) \nabla^\perp (r^2 \psi^{out}_1 ) \cdot \nabla W^0  \quad (x,t) \in \Sigma \times [0,T)    
	\end{align*}
	We see that 
	$	E^{out} [\phi^{out}_1, \psi^{out}_1, \tilde \phi^6, \tilde \psi^{6} , P] (x,t) $ is a regular function, which is $\equiv 0$ in the region
	$$
	\Big \{ (x,t) \in \Sigma \times [0,T] \ /\  x\in  \bigcup_{j=1}^k B (P_j (t) , \bar \delta |\log \ve |^{-3} )   \Big \}
	$$
	for some $\bar \delta>0$, and satisfies the bounds
	\begin{equation}\label{Eout1-2}
		| E^{out} [\phi^{out}_1, \psi^{out}_1, \tilde \phi^6, \tilde \psi^{6} , P] (x,t) | \lesssim {\ve^4 |\log \ve|^{b} \over 1+ |x|^4}.
	\end{equation}
	The new inner error is given by
	\begin{align*}
		E_j^{in} & [\tilde \phi_j^6   , \tilde \psi_j^6  , \psi_1^{out} , P] (y,t) := 
		\nabla^\perp ((1+{\ve_j y_1 \over r_1} )^2 r_j \psi^{out}_1 )\cdot  \nabla (w^0_j + \sum_{i\not= j} w_i^0)\nonumber \\
		&+ \nabla^\perp \left( (1+{\ve_j \over r_j} y_1)^2  r_j \psi^{out}_1    \right) \cdot \nabla  \tilde \phi_j^6 +	E_j^{in}  [\tilde \phi_j^6  ,  \tilde \psi_j^6,0  , P] (y,t).\end{align*}
	The biggest term in this expression comes from $\nabla^\perp ((1+{\ve_j y_1 \over r_1} )^2 r_j \psi^{out}_1 )\cdot  \nabla (w^0_j + \sum_{i\not= j} w_i^0)$. Using \eqref{psiout1} we see that the size of this term is ${\ve^3 |\log \ve |^{b} \over 1 + |y|^5}$. Moreover, since $w_j^0$ satisfies \eqref{Ubar1}, its  Fourier modes-$0$ come at order $\ve^4 |\log \ve |^{b}$ or smaller. In combination with \eqref{Ej6}, we get
	\begin{align}\label{Ej7}
		E_j^{in} & [\tilde \phi_j^6   , \tilde \psi_j^6  , \psi_1^{out} , P] (y,t) := 
		[ \nabla^\perp  {\mathcal R}_j^{00} (y,t;P)  - \nabla^\perp  {\mathcal R}_j^{00} (y,t;\bar P^2) ] \cdot \nabla U \nonumber \\
		& + {\ve^3 |\log \ve |^{b} \over 1+ |y|^5} E_{i\geq 1}+  {\ve^4 |\log \ve |^{b} \over 1+ |y|^4} E_{0}  +{\ve^5 |\log \ve |^{b}  \over 1+ |y|^3} \, E_{i\geq 0} .
	\end{align}
	Recall that $b>0$ denotes a number whose exact value will change from line to line and also within the sale line.
	
	
	\subsection{} {\it Seventh inner  improvement.} \ \ 
	Our next step is to eliminate the terms in the error \eqref{Ej7} given by
	$ {\ve^3 |\log \ve |^{13} \over 1+ |y|^5} E_{i\geq 1} $. We proceed as in the second and fourth improvements.
	Take $\ell =3$ in the decomposition \eqref{P1} of $P^1(t)$ and 
	for $\bar P^3= (\bar P_1^3, \ldots , \bar P_k^3 )$, $\bar P^3_j = P_0 + P_j^0 + P_j^{11} + P_j^{12} + P_j^{13}$, we take 
	$$
	e_7(y,t): = \nabla^\perp  {\mathcal R}_j^{00} (y,t;\bar P^3 )  \cdot \nabla U  +{\ve_j^3 |\log \ve |^{b}  \over 1+ |y|^5} \, E_{i\geq 1} [\bar P^2, \dot P^0 + \dot P^{11} + \dot P^{12}] .
	$$
	We have $\int_0^{2\pi} e_7 (\rho e^{i\theta} , t) \, d\theta =0$, and $e_7$ satisfies the decay \eqref{decay} with $m =5$. In fact, under the assumptions \eqref{point} on $P$, we have
	$$
	(1+ |y|^5) 	( |e_7 (y,t) | + |\log \ve |^{1\over 2} |\pp_t e_7 (y,t) | ) \lesssim \ve^3 |\log \ve|^{b}, \quad |y| <3 R_j.
	$$
	The orthogonality conditions
	$$
	\int_{B_{8R_j}}  (1+ |y|^2) e_7 (y,y)  Z_\ell (y) \, dy =0, \quad \ell =1,2
	$$
	become a system of ODEs for the point $P^{13} = (P_1^{13} , \ldots , P_k^{13})$ of the same form as \eqref{reduced}.
	Standard ODEs theory gives that, for all $\ve >0$ small enough there exists a unique solution $P^{13}$ to \eqref{reduced} with initial condition $P^{13} (0) = 0$, which satisfy the bounds
$$
		\| P^{13}_j \|_{L^\infty [0,T)}  + \| \dot P^{13}_j \|_{L^\infty [0,T)}  \lesssim  \ve^2 |\log \ve |^c ,
	$$
	for some $c>0$.
	We denote by  $\psi_j^7 = \psi_j^7 (y,t)$ and $\phi_j^7 (y,t)= - \Delta_{5,j} \psi_j^7$ the solution to \eqref{linear1}, with $g=e_7$,
	whose existence and estimates are  given by Lemma \ref{alpha}. We have that $\int_0^{2\pi} \psi_j^7 (\rho e^{i\theta} , t ) \, d\theta =0$ and  
	$$ |\psi^7_j(y,t)| \, +\,  (1+|y|)\, |\nn \psi_j^7(y,t)|\,  \lesssim  \, {\ve^3 |\log \ve|^{b}  \over  1+|y|}, \quad \inn B (0, 3R_j)  \times  [0,T].
	$$
	We also have
	$$
	\,  (1+ |y|^4)\, |\nabla_y \phi_j^7 (y,t)| + \,  (1+ |y|^3)\, |\phi_j^7 (y,t)| \lesssim \ve^3 |\log \ve|^{b}  \quad \inn B (0, 3R_j) \times  [0,T].
	$$
	We can differentiate in time the equation, to also get
	$$
	\,  (1+ |y|^3)\, |\log \ve |^{1\over 2} \,  |\pp_t \phi_j^7 (y,t)| \lesssim \ve^3 |\log \ve|^{b}  \quad \inn B (0, 3R_j)  \times  [0,T].
	$$
	Define
	$$\tilde \phi_j^{7} =\sum_{i=1}^7 \phi_j^i , \quad \tilde \psi_j^{7} = \sum_{i=1}^7
	\psi_j^i.
	$$
	Arguing as in the second improvement we get 
	\begin{align}\label{Ej8}
		E_j^{in} & [\tilde \phi_j^{7}  ,  \tilde \psi_j^{7},\psi_1^{out}  , P] (y,t) :=[ \nabla^\perp  {\mathcal R}_j^{00} (y,t;P)  - \nabla^\perp  {\mathcal R}_j^{00} (y,t;\bar P^3) ] \cdot \nabla U \nonumber \\
		& +{\ve^4 |\log \ve |^{b}  \over 1+ |y|^4} \,  E_{i\geq  0}   +{\ve^5 |\log \ve |^{28}  \over 1+ |y|^3} \, E_{i\geq 0} [P, \dot P] .
	\end{align}

	\subsection{} {\it Eighth inner  improvement.} \ \ Our next step is the elimination of the Fourier mode-$0$ term of size $\ve^4 |\log \ve|^{26}$ in formula \eqref{Ej8}.  We proceed as in the third inner improvement and  define $\phi_j^8$ as follows
	$$
	\phi_j^8 (y,t) = - { \ve^4 \, |\log \ve |^{b} \over 1+ |y|^4} \, \int_0^t  \ve_j^{-2} (s)  E_0 (\rho, \theta, s , \ve) \, ds.
	$$
	It solves
	\begin{align*}
		|\log \ve | \ve_j^2 \pp_t \phi_j^8 & + {\ve^4 |\log \ve |^b \over 1+ |y|^4} \, E_{ 0}   = 0 \quad (y,t) \in  B (0, 3R_j) \times [0,T)\\
		\phi_j^8 (y,0) &= 0 \inn B(0, 3R_j) ,
	\end{align*}
	it satisfies 
	$$
	(1+ |y|) |\nabla_y \phi_j^8 (y,t) | +	|\phi_j^8 (y,t) | \lesssim {\ve^2 |\log \ve|^b \over 1+ |y|^4} \, , \quad 
	$$
	and	its Fourier decomposition  only has mode $0$.
	Let $\psi_j^8$ be the solution to 
	$$
	-\Delta_{5,j} \psi_j^8 = \phi_j^8 \quad (y,t) \in  B_{4R_j} \times [0,T), \quad \psi_j^8 =0 \quad (y,t) \in \partial B_{4R_j} \times [0,T).
	$$
	Thanks to the space decay in $\phi_j^8$ we get
	$$
	|\psi_j^8 (y,t) |  + |(1+ |y|) \nabla \psi_j^8 (y,t) | \lesssim {\ve^2 \, |\log \ve |^{b} \over 1+|y|^2} .
	$$
	As in the third inner improvement we get, for
	$$
	\tilde \phi_j^{8} = \sum_{i=1}^8 \phi_j^{i} , \quad \tilde \psi_j^{8} = \sum_{i=1}^8
	\psi_j^{i},
	$$
	\begin{align}\label{Ej9}
		E_j^{in} & [\tilde \phi_j^{8}  ,  \tilde \psi_j^{8},\psi_1^{out}  , P] (y,t) :=[ \nabla^\perp  {\mathcal R}_j^{00} (y,t;P)  - \nabla^\perp  {\mathcal R}_j^{00} (y,t;\bar P^3) ] \cdot \nabla U  \nonumber \\
		& +  {\ve^3 |\log \ve |^{b}  \over 1+ |y|^5} \, E_{1,1}  \sin \theta    +{\ve^4 |\log \ve |^{b}  \over 1+ |y|^4} \, E_{ 2} [P, \dot P]+{\ve^5 |\log \ve |^{b}  \over 1+ |y|^3} \, E_{i\geq 0} [P, \dot P] .
	\end{align}
	In comparison with \eqref{Ej8}, we loose one power of $\ve$ and produce a new error in mode $1$, in the form of $\sin \theta$. We have already seen how to deal with such a situation in the fourth inner improvement and in this way we proceed next.

	\subsection{} {\it Ninth inner  improvement.} \ \ We eliminate the terms in the error \eqref{Ej9} 
	${\ve^3 |\log \ve |^{b}  \over 1+ |y|^5} \, E_{1,1}  \sin \theta $.
	Take $\ell =4$ in the decomposition \eqref{P1} of $P^1(t)$ and 
	for  $\bar P^4= (\bar P_1^4, \ldots , \bar P_k^4 )$, $\bar P^4_j = P_0 + P_j^0 + P_j^{11} + P_j^{12} + P_j^{13} + P^{14}_j$, we take 
	$$
	e_9(y,t): = \nabla^\perp  {\mathcal R}_j^{00} (y,t;\bar P^4 )  \cdot \nabla U  +{\ve^3 |\log \ve |^{b}  \over 1+ |y|^5} \, E_{1,1} [\bar P^4, \dot P^4 - \dot P^{14}] \sin \theta .
	$$
	We have $\int_0^{2\pi} e_9 (\rho e^{i\theta} , t) \, d\theta =0$, and $e_9$ satisfies the decay \eqref{decay} with $m =5$. In fact, under the assumptions \eqref{point} on $P$, we have
	$$
	(1+ |y|^5) 	( |e_9 (y,t) | + |\log \ve |^{1\over 2} |\pp_t e_9 (y,t) | ) \lesssim \ve^3 |\log \ve|^{b}, \quad |y| <3 R_j.
	$$
	The orthogonality conditions
	$$
	\int_{B_{8R_j}}  (1+ |y|^2) e_8 (y,y)  Z_\ell (y) \, dy =0, \quad \ell =1,2
	$$
	become a system of ODEs for the point $P^{14} $ of the same form as \eqref{reduced}.
	Standard ODEs theory gives that, for all $\ve >0$ small enough there exists a unique solution $P^{14}$ to \eqref{reduced} with initial condition $P^{14} (0) = 0$, which satisfy the bounds
$$
		\| P^{14}_j \|_{L^\infty [0,T)}  + \| \dot  P^{14}_j \|_{L^\infty [0,T)}  \lesssim  \ve^2 |\log \ve |^c 
$$
	for some $c>0$.
	We denote by  $\psi_j^9 = \psi_j^9 (y,t)$ and $\phi_j^9 (y,t)= - \Delta_{5,j} \psi_j^9$ the solution to \eqref{linear1}, with $g=e_9$,
	whose existence and estimates are  given by Lemma \ref{alpha}. We have that $\int_0^{2\pi} \psi_j^9 (\rho e^{i\theta} , t ) \, d\theta =0$ and  
	$$ |\psi^9_j(y,t)| \, +\,  (1+|y|)\, |\nn \psi_j^9(y,t)|\,  \lesssim { \ve^3 |\log \ve|^{b}  \over  1+|y|} , \quad \inn B (0, 3R_j)  \times  [0,T].
	$$
	We also have
	$$
	\,  (1+ |y|^4)\, |\nabla_y \phi_j^9 (y,t)| + \,  (1+ |y|^3)\, |\phi_j^9 (y,t)| \lesssim \ve^3 |\log \ve|^{b}  \quad \inn B (0, 3R_j) \times  [0,T].
	$$
	We can differentiate in time the equation, to also get
	$$
	\,  (1+ |y|^3)\, |\log \ve |^{1\over 2} \,  |\pp_t \phi_j^9 (y,t)| \lesssim \ve^3 |\log \ve|^{27} \quad \inn B (0, 3R_j)  \times  [0,T].
	$$
	Arguing as in the fourth improvement we get, for
	$$\tilde \phi_j^{9} =\sum_{i=1}^9 \phi_j^i , \quad \tilde \psi_j^{9} = \sum_{i=1}^9
	\psi_j^i.
	$$
	We get the
	new error 
	$$
	\begin{aligned}
		E_j^{in} & [\tilde \phi_j^{9}  ,  \tilde \psi_j^{9},\psi_1^{out}  , P] (y,t) :=[ \nabla^\perp  {\mathcal R}_j^{00} (y,t;P)  - \nabla^\perp  {\mathcal R}_j^{00} (y,t;\bar P^4) ] \cdot \nabla U \nonumber \\
		& +{\ve^4 |\log \ve |^{50}  \over 1+ |y|^4} \,  E_{i\geq  2}   +{\ve^5 |\log \ve |^{52}  \over 1+ |y|^3} \, E_{i\geq 0} [P, \dot P] .
	\end{aligned}
	$$

	\subsection{} \label{ultima} {\it Tenth  inner  improvement.} \ \ Our last step consists in removing the term ${\ve^4 |\log \ve |^{b}  \over 1+ |y|^4} \,  E_{i\geq  2} $ and we argue exactly as in the first inner improvement, but with two more power  of $\ve$. We find $\psi_j^{10}$, $\phi_j^{10}$ satisfying 
	$$ |\psi^{10}_j(y,t)| \, +\,  (1+|y|)\, |\nn \psi_j^{10}(y,t)|\,  \lesssim \ve^4 |\log \ve|^{b}, \quad \inn B (0,8R_j)  \times  [0,T].
	$$
	and
	$$
	\,  (1+ |y|^3)\, |\nabla_y \phi_j^{10} (y,t)| + \,  (1+ |y|^2)\, |\phi_j^{10} (y,t)| \lesssim \ve^4 |\log \ve|^{b}  \quad \inn  B (0,8R_j)  \times  [0,T],
	$$
	such that, for 
	$$\tilde \phi_j^{10} =\sum_{i=1}^{10} \phi_j^i , \quad \tilde \psi_j^{10} = \sum_{i=1}^{10}
	\psi_j^i.
	$$
	we get
	\begin{align*}
		E_j^{in} & [\phi_j^{10}  ,  \psi_j^{10},\psi_1^{out}  , P] (y,t) := [ \nabla^\perp  {\mathcal R}_j^{00} (y,t;P)  - \nabla^\perp  {\mathcal R}_j^{00} (y,t;\bar P^4) ] \cdot \nabla U +{\ve^5 |\log \ve |^{b} \over 1+ |y|^3} \, E_{i\geq 0} [P, \dot  P] \nonumber\end{align*}
	as $\ve \to 0$, where
	the functions $E_j(\rho,\theta, t, \varepsilon)$  have the form described in \eqref{Ekkn}.  Using the definition of ${\mathcal R}^{00}_j$ in \eqref{R00}, we check that
	\begin{align*}
		{\mathcal R}_j^{00} (y,t;P) & -   {\mathcal R}_j^{00} (y,t;\bar P^4) =	\ve_j  |\log \ve|  \dot {\bf a}_j \cdot y \nonumber \\& + \ve_j [\nn_x\vp_j ( P_j ; P ) - \nn_x\vp_j ( \bar P_j^4 ; \bar P^4 )] \cdot y \\
		& +\,  \ve_j^3 D_x \theta_{j  \ve_j} (P_j; P ) [y] -  \ve_j^3 D_x \theta_{j  \ve_j} (\bar P^4_j; \bar P^4 ) [y].\nonumber
	\end{align*}  
	We take
	$$
	\phi_j^*= \bar \phi_j^{10}, \quad \psi_j^* = \bar \psi_j^{10}, \quad \psi^{*,out} = \psi_1^{out}, \quad \phi^{*,out} = \phi_1^{out}.
	$$
	Recall from \eqref{S2000} that 
	$$
	S_2  [\tilde  \phi^6, \tilde \psi^6, \phi_1^{out}, \psi_1^{out}, P]= 0 \quad (x,t) \in \Sigma \times [0,T)    .
$$
Hence
	$$
		|S_2[ \phi^{*,in}, \psi^{*,in}, \phi^{*,out}, \psi^{*,out}, P]	|\ \le \   C\ve^{4-\sigma} 
			\eta_1 \left({4|x-(r_0, 0) |\over r_0} \right).
			$$
			Besides, from \eqref{Eout1-2} we get
				$$
			\left| E^{out} [\phi^{*,out}, \psi^{*,out} , \phi^{*,in}, \psi^{*, in} , {\bf a}]  (x,t) \right| \lesssim {\ve^4 |\log \ve |^{b} \over 1+ |x|^4} , \quad (x,t) \in \Sigma \times [0,T].
			$$
	This concludes the proof of Proposition \ref{Approximation}.

	\section{The inner modified transport equation}
	\label{secProofTransportInner}

	This section is devoted to discuss the result of Lemma \ref{transport3-new}, on the solution of the Cauchy Problem \eqref{pico1}.  We write it as
	\begin{align}
		\begin{aligned}
			\ve_j^2 |\log \ve |  \pp_t \phi
			&	+ {\nabla_y^\perp(\Gamma_0(y) +  \eta_{4 \ve}  \tilde \RR_j ( y,t) ) + \eta_{4\ve} {\mathcal B}_j 
				( y,t)\over (1+ {\ve_j \over r_j} \eta_{4 \ve} y_1)} \cdot \nabla_y \phi 
			= {\eta_{4 \ve} E(y,t) \over (1+ {\ve_j \over r_j} \eta_{4 \ve} y_1)} ,
			\quad \text{in } \R^2\times [0,T],
			\\
			\phi(y,0) & =0 , \quad \text{in } \R^2.
		\end{aligned}
		\label{pico1-new}\end{align}
We refer to \eqref{eta2epsilon} for the definition of $\eta_{4\ve}$.	Problem \eqref{pico1-new} is a transport equation. It is convenient to introduce the change of variable in time and let 
	$t = t(\tau )$, 
	$$t= |\log \ve |\int_0^\tau \ve_j^2 (s) \, ds , \quad T=|\log \ve| \int_0^{\tau_T} \ve_j^2 (s) \, ds .$$ Problem \eqref{pico1-new} gets transformed into a problem for $\phi= \phi (y, t(\tau)) = \phi(y,\tau)$ as
	\begin{align}
		\begin{aligned}
\partial_\tau			\phi
			&+  {\nabla_y^\perp(\Gamma_0(y) +  \eta_{4 \ve} \tilde \RR_j ( y,t) ) +\eta_{4\ve} {\mathcal B}_j 
				( y,t)  \over (1+ {\ve_j \over r_j} \eta_{4 \ve} y_1)}   \cdot \nabla_y \phi 
			= {\eta_{4 \ve} E(y,t) \over (1+ {\ve_j \over r_j} \eta_{4 \ve} y_1)} ,
			\quad \text{in } \R^2\times [0,\tau_T],
			\\
			\phi(y,0) & =0 , \quad \text{in } \R^2
		\end{aligned}
		\label{pico2}\end{align}
	We solve \equ{pico2} by the method of characteristics.

	\medskip
	The characteristic curve $\bar y(s;\tau,y)$  is by definition the solution
	$\bar y (s)$ of the ODE system
	\begin{align}
		\label{characteristic1-new}
		\left\{
		\begin{aligned}
			\frac{d \bar y }{d s} (s) & =
			{\mathcal B} (\bar y(s) , t(s) )
			\\
			\bar y(\tau) &= y  .
		\end{aligned}
		\right.
	\end{align}
	where
	$$
	{\mathcal B} (y, t(\tau))= {\nabla_y^\perp(\Gamma_0(y) +  \eta_{4 \ve} \tilde \RR_j ( y,t) ) +\eta_{4\ve} {\mathcal B}_j 
		( y,t)  \over (1+ {\ve_j \over r_j} \eta_{4 \ve} y_1)} .
	$$
	Observe that $(1+ {\ve_j \over r_j} \eta_{4\ve} y_1) = 1+O(|\log \ve |^{-3})$, uniformly in $\ve$ and that ${\mathcal B}$ is
	log-Lipschitz in $y$ uniformly in $t$, that is, for some $L>0$,
	\[
	| {\mathcal B} (y_1,t) - {\mathcal B} (y_2,t) | \leq L |y_1-y_2| ( 1+ |\log|y_1-y_2| \, | )
	\]
	and continuous in its two variables. Hence system \eqref{characteristic1-new} has a unique solution.
	For a locally bounded function $E$, the unique solution of \equ{pico2} is then represented by the formula
	\begin{align}
		\label{phi}
		\phi(y,\tau) =
		\int_0^\tau {\mathcal E} (\bar y(s;\tau,y) , t(s))\,ds , \quad {\mathcal E}= {\eta_{4 \ve} E(y,t) \over (1+ {\ve_j \over r_j} \eta_{4 \ve} y_1)}.
	\end{align}
	Under the assumptions in Lemma \ref{transport3-new}, we get that
\begin{equation}\label{cotita}
(1+|y|^2) |D_y^2     \mathcal{E} (y,t)| + (1+|y|) |\nn_y \mathcal \pp_t \mathcal E (y,t)|+		(1+|y|) |\nn_y \mathcal E (y,t)| + |\mathcal  E (y,t)| + |\pp_t \mathcal E (y,t) |  \ \le \ C\, (1+ |y|)^\alpha .
	\end{equation}
		
	\medskip
	Our first result is
	
	\begin{lemma}\label{transport1-new}
		Let $1\le p\le +\infty$ and $m \in \R$. There exists a number $C>0$ such that
		for any function  $E(y,t)$ that satisfies
		\[
		\sup_{t\in [0,T]}
		\| (1+|\cdot |)^{-m } E(\cdot , t)\|_{L^p(\R^2)}  \ <\ +\infty
		\]
		we have that for all sufficiently small $\ve$, the solution of $\equ{pico1-new}$ satisfies
		$$
		\sup_{t\in [0,T]}
		\| (1+|\cdot |)^{-m } \phi (\cdot , t)\|_{L^p(\R^2)} \ \le \
		C \ve^{-2} |\log \ve|^{-1} \sup_{t\in [0,T]}
		\| (1+|\cdot |)^{-m } E(\cdot , t)\|_{L^p(\R^2)}  .
		$$
	\end{lemma}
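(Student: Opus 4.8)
The plan is to estimate the solution via the explicit Duhamel representation formula \eqref{phi}, which reduces the weighted $L^p$ bound for $\phi$ to a control of the characteristic curves $\bar y(s;\tau,y)$ and a change of the time variable. The key observation is that after rescaling time by $t = |\log\ve|\int_0^\tau \ve_j^2(s)\,ds$, the factor $\ve_j^2|\log\ve|$ in front of $\pp_t\phi$ in \eqref{pico1-new} is absorbed, and since $t(\tau_T) = T = O(1)$ we have $\tau_T = O(\ve^{-2}|\log\ve|^{-1})$. Thus the integral in \eqref{phi} runs over an interval of length $O(\ve^{-2}|\log\ve|^{-1})$, which is exactly the prefactor we are after; all that remains is to show the integrand is bounded, in the appropriate weighted norm, by the corresponding norm of $E$ uniformly in $\ve$.

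First I would record that the velocity field
$$
{\mathcal B}(y,t) = \frac{\nabla_y^\perp(\Gamma_0(y) + \eta_{4\ve}\tilde\RR_j(y,t)) + \eta_{4\ve}{\mathcal B}_j(y,t)}{1 + \frac{\ve_j}{r_j}\eta_{4\ve}y_1}
$$
is globally log-Lipschitz in $y$, uniformly in $t$ and $\ve$: indeed $\nabla_y^\perp\Gamma_0$ is smooth and bounded with bounded derivative (it decays like $|y|^{-1}$), while the corrections $\eta_{4\ve}\tilde\RR_j$ and $\eta_{4\ve}{\mathcal B}_j$ contribute $O(\ve^2)$-small perturbations on the support of $\eta_{4\ve}$ (here \eqref{papa-new} and the fact that $\eta_{4\ve}$ is supported in $|y|\lesssim \ve^{-1}|\log\ve|^{-\zeta}$ are used, so that e.g. $\ve^2(1+|y|)^2\lesssim |\log\ve|^{-2\zeta}$ there), and the denominator is $1 + O(|\log\ve|^{-3})$. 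Log-Lipschitz regularity guarantees existence and uniqueness of the flow, and, more importantly, a Gronwall-type estimate: along any two characteristics the quantity $|\bar y(s;\tau,y_1) - \bar y(s;\tau,y_2)|$ can grow by at most a factor controlled by $e^{C|s-\tau|^{\text{something}}}$; since $|s-\tau| \le \tau_T = O(\ve^{-2}|\log\ve|^{-1}) \to \infty$ this is not immediately uniform, so instead I would use the simpler bound $|\frac{d}{ds}|\bar y|| \le |{\mathcal B}(\bar y,t)| \le C(1+|\bar y|)$, giving by Gronwall $1 + |\bar y(s;\tau,y)| \le e^{C|s-\tau|}(1+|y|)$ — but this also degenerates. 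The correct route is to exploit that the divergence-free part $\nabla_y^\perp\Gamma_0$ generates a bounded flow (circular-type motion, $|\bar y|$ essentially conserved by the leading term), and the perturbation has size $O(\ve^2)$ pointwise times a factor $(1+|y|)$, so over a time of length $O(\ve^{-2})$ the displacement is $O(1)\cdot(1+|y|)$ at worst, i.e. $1 + |\bar y(s)| \sim 1 + |y|$ up to a uniform multiplicative constant. For the $L^p$ statement one also uses that ${\mathcal B}$ is (approximately) divergence-free — $\div_y\nabla_y^\perp\Gamma_0 = 0$ and the perturbing terms have divergence $O(\ve^2|\log\ve|^{1/2})$ on the cutoff support, hence $O(|\log\ve|^{-3+1/2})$ — so the flow map $y \mapsto \bar y(s;\tau,y)$ is measure-preserving up to a factor $1 + o(1)$, making the change of variables in the $L^p$ norm harmless.

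With these two facts — the comparability $1 + |\bar y(s;\tau,y)| \asymp 1 + |y|$ uniformly, and the near-measure-preservation of the flow — the estimate follows by writing, for each $\tau$,
$$
\|(1+|\cdot|)^{-m}\phi(\cdot,\tau)\|_{L^p} \le \int_0^\tau \big\|(1+|\cdot|)^{-m}{\mathcal E}(\bar y(s;\tau,\cdot),t(s))\big\|_{L^p}\,ds,
$$
bounding the integrand using $|{\mathcal E}(z,t)| \lesssim (1+|z|)^{-3}\cdot(1+|z|)^m\sup_t\|(1+|\cdot|)^{-m}E\|$ is not needed — more directly, $(1+|y|)^{-m}|{\mathcal E}(\bar y(s),t(s))| \lesssim (1+|\bar y(s)|)^{-m}|E(\bar y(s),t(s))|$ by the comparability, then changing variables $z = \bar y(s;\tau,y)$ yields $\|(1+|\cdot|)^{-m}E(\cdot,t(s))\|_{L^p}$ up to a uniform constant. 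Integrating in $s$ over $[0,\tau]\subset[0,\tau_T]$ and using $\tau_T \le C\ve^{-2}|\log\ve|^{-1}$ (which follows from $\|\ve_j - \ve\|_{L^\infty} \lesssim \ve|\log\ve|^{-1/2}$ in \eqref{b2}, so $\ve_j^2 \sim \ve^2$ and $t = |\log\ve|\int_0^\tau\ve_j^2 \sim \ve^2|\log\ve|\,\tau$) gives exactly the claimed factor $\ve^{-2}|\log\ve|^{-1}$. The main obstacle, and the point requiring the most care, is establishing the uniform (in $\ve$) comparability $1 + |\bar y(s;\tau,y)| \asymp 1 + |y|$ over the long time interval $[0,\tau_T]$ of length $\sim\ve^{-2}$: a naive Gronwall argument using only $|{\mathcal B}| \lesssim 1 + |y|$ would produce an $e^{C\ve^{-2}|\log\ve|^{-1}}$ factor, which is useless, so one genuinely must use the structure of the leading drift $\nabla_y^\perp\Gamma_0$ — whose flow preserves $|y|$ exactly — together with the smallness $O(\ve^2(1+|y|))$ of the correction on the cutoff support, so that $\frac{d}{ds}\log(1+|\bar y|) = O(\ve^2)$ and hence over a time $O(\ve^{-2})$ the logarithm changes by $O(1)$. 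This is precisely the mechanism that makes the scheme close, and it is why the cutoff $\eta_{4\ve}$ (restricting to $|y| \lesssim \ve^{-1}|\log\ve|^{-\zeta}$) was inserted in \eqref{pico1}.
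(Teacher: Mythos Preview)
Your overall strategy --- rescaling time, using the Duhamel representation \eqref{phi}, and controlling the characteristics so that $(1+|\bar y(s)|)\asymp(1+|y|)$ uniformly over $[0,\tau_T]$ --- is exactly what the paper does. The gap is in how you establish that comparability.

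You claim the perturbation to $\nabla_y^\perp\Gamma_0$ in the drift ${\mathcal B}$ has size $O(\ve^2(1+|y|))$ on the support of $\eta_{4\ve}$, invoking \eqref{papa-new}. But \eqref{papa-new} bounds only ${\mathcal R}_j^1$ and ${\mathcal B}_j$, whereas $\tilde{\mathcal R}_j$ contains in addition the term $\frac{\ve_j}{2r_j}\,y_1\bigl(\Gamma_0+\bar A+\Gamma\bigr)$, whose gradient --- and in particular its radial contribution --- is only $O(\ve\log(1+|y|))$. Hence the radial component of the drift is $O(\ve)$, not $O(\ve^2)$, and your Gronwall estimate $\frac{d}{ds}\log(1+|\bar y|)=O(\ve^2)$ fails: over a time interval of length $\tau_T\sim\ve^{-2}|\log\ve|^{-1}$ a direct bound would allow $\log(1+|\bar y|)$ to change by $O(\ve^{-1})$, which is useless.

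The paper circumvents this by tracking not $|\bar y|$ but the full Hamiltonian
\[
H(y,s)=\Gamma_0(y)+\eta_{4\ve}\,\tilde{\mathcal R}_j(y,t(s))
\]
along the flow. Since the drift is, up to the harmless factor $\bigl(1+\frac{\ve_j}{r_j}\eta_{4\ve}y_1\bigr)^{-1}$, equal to $\nabla_y^\perp H+\eta_{4\ve}{\mathcal B}_j$, the identity $\nabla_y^\perp H\cdot\nabla_y H=0$ kills the entire $O(\ve)$ contribution to $\frac{d}{ds}H(\bar y(s),t(s))$: only $\nabla_y H\cdot\eta_{4\ve}{\mathcal B}_j$ and the explicit time-dependence $\pp_s(\eta_{4\ve}\tilde{\mathcal R}_j)$ survive, and both of these are genuinely of order $\ve^2|\log\ve|^{1/2}$ (using \eqref{papa-new} for ${\mathcal B}_j$ and the bounds on $\pp_t\ve_j$, $\pp_t{\mathcal R}_j^1$). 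Integrating over $[0,\tau_T]$ therefore gives $H(\bar y(s))-H(y)=O(|\log\ve|^{-1/2})$, and since $H\approx -2\bigl(1+O(\ve y_1\eta_{4\ve})\bigr)\log(1+|y|^2)$ this yields the uniform comparability
\[
a\,(1+|y|^2)\ \le\ 1+|\bar y(s)|^2\ \le\ b\,(1+|y|^2)
\]
that you need. In short: the $O(\ve)$ term is not small enough to discard, but it is Hamiltonian, and the right quantity to estimate along the characteristics is the Hamiltonian itself rather than $|\bar y|$.
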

	
	\begin{proof}
		Let
		$$
		H(y,s) = \Gamma_0(y) +  \eta_{4 \ve} \tilde \RR_j ( y,t (s)) .
		$$
		We use the explicit expression for $\RR_j ( y,t (s))$ to get that  along the characteristic curves defined in \eqref{characteristic1-new} one has
		\begin{align*}
			{d \over ds} &H(\bar y (s), t(s))= \nabla_y (\Gamma_0(\bar y) +  \eta_{4 \ve} \tilde \RR_j ( \bar y,t (s)) )\cdot 	\frac{d \bar y }{d s} (s) + \pp_s (\eta_{4 \ve} \tilde \RR_j ( \bar y,t (s)) ) \\
			&= \nabla_y (\Gamma_0(\bar y) +  \eta_{4 \ve} \tilde \RR_j ( \bar y,t (s)) )\cdot {\eta_{4\ve} {\mathcal B}_j 
				( \bar y,t (s)) \over (1+ {\ve_j \over r_j} \eta_{4 \ve} y_1) } + \pp_s (\eta_{4 \ve}  \tilde \RR_j ( \bar y ,t (s)) ).
		\end{align*}
		Observe that 
		\begin{align*}
			\pp_s (\eta_{4 \ve}  \tilde \RR_j ( \bar y ,t (s)) )	&= \eta_{4\ve}  {d \over dt} (\frac{\ve_j}{2r_j} ) {dt \over ds} \, \bar y_1 \, \Big(\Gamma_0(\bar y) +\bar A +\Gamma (\bar y) \Big) + \eta_{4\ve}	\pp_t \tilde {\mathcal R}^1_j (\bar y,t;P) {dt \over ds}\\
			&= \eta_{4\ve}  {d \over dt} (\frac{\ve_j}{2r_j} ) \ve_j^2 |\log \ve|  \, \bar y_1 \, \Big(\Gamma_0( \bar y) +\bar A +\Gamma (\bar y) \Big) + \eta_{4\ve}	\pp_t \tilde {\mathcal R}^1_j (\bar y,t;P) \ve_j^2 |\log \ve|,
		\end{align*}
		as ${dt \over ds} = \ve_j^2 |\log \ve|$. 
		We integrate both sides from $\tau$ to  $s $  and  get
		$$
		\begin{aligned}
			H (\bar y(s), t(s) )&- H(y,t(\tau))=  \int_\tau^s
			\nabla_y (\Gamma_0(\bar y (\xi) ) +  \eta_{4 \ve} \tilde \RR_j ( \bar y (\xi),t (\xi)) )\cdot {\eta_{4\ve} {\mathcal B}_j 
				( \bar y (\xi),t (\xi )) \over (1+ {\ve_j \over r_j} \eta_{4 \ve} \bar y_1 (\xi)) }
			\\ &+ |\log \ve|\int_\tau^s
			\eta_{4\ve}  {d \over dt} (\frac{\ve_j}{2r_j} ) \ve_j^2   \, \bar y_1 (\xi) \, \Big(\Gamma_0( \bar y (\xi)) +\bar A +\Gamma (\bar y (\xi)) \Big) \, d\xi \\
			&+ |\log \ve|\int_\tau^s \eta_{4\ve}	\pp_t \tilde {\mathcal R}^1_j (\bar y (\xi) ,t;P) \ve_j^2 (t(\xi)) \, d\xi. 
		\end{aligned}
$$
		By assumptions \eqref{point} and \eqref{b2}, we have that ${d \over dt} (\frac{\ve_j}{2r_j} ) = O(\ve |\log \ve |^{-{1\over 2}})$. Furthermore, $s \in [0, \tau)$, $\tau \in [0, \ve_j^{-2} |\log \ve |^{-1} ]$, and using \eqref{papa-new} we get
		$$
		H (\bar y(s), t(s) )- H(y,t(\tau))=  |\log \ve |^{-{1\over 2}} g(s)
		$$
		for $g(s) =O(1)$ as $s \in [0,\tau)$, uniformly as $\ve \to 0$, such that $g(\tau )=0$.
		This relation is equivalent to 
		\begin{align*}
			&(1+ \eta_{4\ve} \frac{\ve_j}{2r_j} \bar y_1 (s)  )	\Gamma_0 (\bar y(s))  + \eta_{4\ve} \frac{\ve_j}{2r_j}\,\bar  y_1 (s) \, \Big(\bar A +\Gamma (\bar y (s)) \Big) + \eta_{4\ve} \tilde  {\mathcal R}^1_j (\bar y(s) ,t(s);P)  \\
			&= (1+ \eta_{4\ve} \frac{\ve_j}{2r_j}  y_1   )	\Gamma_0 (y)  + \eta_{4\ve} \frac{\ve_j}{2r_j}\, y_1  \, \Big(\bar A +\Gamma (y) \Big) + \eta_{4\ve} \tilde {\mathcal R}^1_j (y ,t(\tau);P) +  O(|\log \ve |^{-{1\over 2}}).
		\end{align*}
		A closer look at this identity gives
		\begin{align*}
			\log (1+ |\bar y(s)|^2) = (1+ O(\eta_{4\ve} |\log \ve|^{-1}))	\log (1+ | y|^2) +  O(|\log \ve |^{-{1\over 2}}),
		\end{align*}
		and hence, taking the exponential on both sides and using the fact that $(\ve |\log \ve |)^{-|\log \ve |^{-1}} = O(1)$ as $\ve \to 0$,
		\begin{align*}
			(1+ |\bar y(s)|^2) &= 	(1+ | y|^2)\,  [ 2+ O({\log |\log \ve| \over |\log \ve | }) +O(|\log \ve |^{-{1\over 2}})] \\
			&= (1+ | y|^2) \,  [ 2 +O(|\log \ve |^{-{1\over 2}})].
		\end{align*}
		Therefore there are positive constants $a,b$ independent of $\tau\in (0, \tau_T)$ and $\ve$ such that 
		\be\label{bounds1}
a (	1+ | y(s)|^2)	\leq 1+ |\bar y(s)|^2 \leq b(	1+ | y(s)|^2 ) \quad \forall s \in [0,\tau_T).
		\ee
		We recall the representation formula \equ{phi}
		$$
		\phi(y,\tau) = \int_0^ \tau  {\mathcal E} ( \bar y (s;\tau,y) ,t(s)) \, ds  , \quad {\mathcal E} = {\eta_{4 \ve} E(y,t) \over (1+ {\ve_j \over r_j} \eta_{4 \ve} y_1)}	$$
		Using \equ{bounds1} we readily get
		$$
		\sup_{t\in [0,T]}
		\| (1+|\cdot |)^{-m } \phi (\cdot , t)\|_{L^p(\R^2)} \ \le \
		C \ve^{-2} |\log \ve|^{-1} \sup_{t\in [0,T]}
		\| (1+|\cdot |)^{-m } E(\cdot , t)\|_{L^p(\R^2)}  .
		$$
		for any $1\le p\le +\infty$, as desired.
	\end{proof}

	\medskip A consequence of the bounds \eqref{bounds1} on the characteristic curves is a control on the spacial support of $\phi$ if the spacial support of the function $E$ stays
	at a uniform large distance of the origin then so does the solution of \equ{pico1-new}. 
	
	\begin{lemma}\label{transport2}  There exist numbers $\ve_0>0$, $\beta>0$  such that for any sufficiently small $0<\ve <\ve_0$ and any locally bounded function $E$ such that
		$$E(y,t) = 0 \foral (y,t) \in B_{\ve^{-1} |\log \ve |^{-\zeta}}(0)\times [0,T] $$
		we have that the solution of $\equ{pico1-new}$ satisfies
		$$\phi(y,t) = 0 \foral (y,t) \in B_{\beta  \ve^{-1} |\log \ve |^{-\zeta}}(0)\times [0,T]. $$
	\end{lemma}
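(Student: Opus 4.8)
The plan is to read the conclusion off directly from the method-of-characteristics representation $\equ{phi}$ combined with the two-sided bound $\equ{bounds1}$ on the characteristic curves, which was already established in the proof of Lemma~\ref{transport1-new}. First I would pass, as in $\equ{pico2}$, to the rescaled time $\tau$ with $t=|\log\ve|\int_0^\tau \ve_j^2(s)\,ds$, so that the solution of $\equ{pico1-new}$ is
$$
\phi(y,\tau)=\int_0^\tau \mathcal E\big(\bar y(s;\tau,y),t(s)\big)\,ds,\qquad
\mathcal E=\frac{\eta_{4\ve}E(y,t)}{1+\frac{\ve_j}{r_j}\eta_{4\ve}y_1},
$$
where $\bar y(s;\tau,y)$ solves $\equ{characteristic1-new}$ with terminal condition $\bar y(\tau)=y$. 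Since $\mathcal E$ vanishes at every point at which $E$ does, $\phi(y,\tau)=0$ as soon as the whole characteristic $\{\bar y(s;\tau,y):0\le s\le\tau\}$ remains inside the ball $B_{R_\ve}(0)$, $R_\ve:=\ve^{-1}|\log\ve|^{-\zeta}$, on which $E(\cdot,t)\equiv0$ by hypothesis.

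Next I would invoke $\equ{bounds1}$: there are constants $a,b>0$, independent of $\tau\in(0,\tau_T)$ and of $\ve$ small, such that $a(1+|y|^2)\le 1+|\bar y(s;\tau,y)|^2\le b(1+|y|^2)$ for all $s\in(0,\tau)$. This is exactly what is proved there, from the approximate conservation of $H(y,s)=\Gamma_0(y)+\eta_{4\ve}\tilde\RR_j(y,t(s))$ along characteristics, using $\Gamma_0=\log 8-2\log(1+|y|^2)$ and the bounds $\equ{papa-new}$; it relies only on $\tau\le\tau_T=O(\ve^{-2}|\log\ve|^{-1})$, which holds because $T=O(1)$. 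Applying the upper bound for $|y|<\beta R_\ve$ gives $1+|\bar y(s;\tau,y)|^2\le b(1+\beta^2R_\ve^2)=b+b\beta^2R_\ve^2$ for all $s\in[0,\tau]$.

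I would then fix $\beta:=\tfrac1{2\sqrt b}$, so that $b\beta^2=\tfrac14$. Since $R_\ve\to\infty$ as $\ve\to0$, there is $\ve_0>0$ with $b\le\tfrac14 R_\ve^2$ for all $0<\ve<\ve_0$, whence $1+|\bar y(s;\tau,y)|^2\le \tfrac12 R_\ve^2<R_\ve^2$, i.e.\ $|\bar y(s;\tau,y)|<R_\ve$ for every $s\in[0,\tau]$. Consequently $E\big(\bar y(s;\tau,y),t(s)\big)=0$ for all such $s$, hence $\mathcal E\big(\bar y(s;\tau,y),t(s)\big)=0$ and $\phi(y,\tau)=0$. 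Translating back to the original time variable, $\phi(y,t)=0$ for all $(y,t)\in B_{\beta R_\ve}(0)\times[0,T]$, with $\beta$ as above and $\beta<1$ since $b$ is close to $2$.

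The only delicate point — and it is already disposed of in the proof of Lemma~\ref{transport1-new} — is the uniformity of the constant $b$ in $\equ{bounds1}$ with respect to $\tau$ and $\ve$; everything else is elementary bookkeeping. In particular the argument uses no quantitative information on $E$ beyond local boundedness, because it is purely a statement about the propagation of the support of $E$ by the transport flow.
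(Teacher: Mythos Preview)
Your proof is correct and takes exactly the same approach as the paper, which dispatches the lemma in one line as ``an immediate consequence of estimates \equ{bounds1} for the characteristics and the representation formula \equ{phi}.'' You have simply spelled out the elementary bookkeeping behind that sentence: choosing $\beta$ so that the upper bound in \equ{bounds1} forces the characteristic starting from $|y|<\beta R_\ve$ to remain inside $B_{R_\ve}(0)$, where $E$ vanishes.
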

	
	\begin{proof}
		This is an immediate consequence of estimates \equ{bounds1} for the characteristics and the representation formula \equ{phi}.
	\end{proof}

	\medskip
	We can now prove Lemma \ref{transport3-new}. 
	
	\noindent{\it Proof of Lemma \ref{transport3-new}.}\ \ 
	
	The bound on the function $|\phi (y,t)|$ follows directly from the control on the characteristic curves we obtained in \eqref{bounds1} and the representation formula \eqref{phi}. 
	
	To estimate $	\pp_{y_\ell}\phi(y,\tau)$, we formally differentiate  \equ{phi} with respect to $y_\ell$, $\ell=1,2$ and obtain
	\begin{align}
		\label{phi1}
		\pp_{y_\ell}\phi(y,\tau) =
		\int_0^\tau \nn _{\bar y} {\mathcal E}(\bar y(s;\tau,y) , t(s) ) \cdot \bar y_{y_\ell}(s;\tau,y) \,ds .
	\end{align}
Recall that
	\begin{align*}
	 \left(1+ {\ve_j \over r_j} \eta_{4 \ve} (\bar y) \bar y_1 \right)\, 	\frac{d \bar y }{d s} (s) & = \nabla_y^\perp(\Gamma_0(\bar y ) +  \eta_{4 \ve} \tilde \RR_j ( \bar y,t (s)) ) +\eta_{4\ve} {\mathcal B}_j 
			( \bar y,t)  , \quad 
		\bar y(\tau) = y  ,
	\end{align*}
	(see \eqref{characteristic1-new}). Under assumptions \eqref{papa-new}, the above system has the form
		\begin{align*}
	 \left(1+ {\ve_j \over r_j} \eta_{4 \ve} (\bar y) \bar y_1 \right)\, 	\frac{d \bar y }{d s} (s) & = \nabla_y^\perp \left(\Gamma_0(\bar y ) +  \eta_{4 \ve}\frac{\ve_j}{2r_j}\, y_1 \, (\Gamma_0(y) +\bar A +\Gamma (y) ) \right) +\ve^2 |\log \ve |^{1\over 2} \, \eta_{4\ve} \,  \mathcal A (\bar y , s)  , \quad 
		\bar y(\tau) = y  ,
	\end{align*}
	where $\mathcal A$ satisfies 
		\begin{equation}\label{matha}
\left| \pp_s \mathcal A (y,s)\right|+	\left|  \mathcal A ( y , s ) \right| \leq M  | y|, \quad \left|  \nabla_y \mathcal A ( y , s ) \right| \leq M
	\end{equation}
	for some constant $M$ independent of $\ve$. Consider the point  $a= (a_1 ,a_2)$, close to $y=0$,  such that
	$$
	\Big( \nabla^\perp \left(\Gamma_0( y ) +  \eta_{4 \ve}\frac{\ve_j}{2r_j}\, y_1 \, (\Gamma_0(y) +\bar A +\Gamma (y) ) \right)  +\ve^2 |\log \ve |^{1\over 2} \, \eta_{4\ve} \,  \mathcal A ( y , s)  \Big)_{y=a} =0.
	$$
	A direct inspection gives
	$$
	a_1 = -{\ve_j \over 8 r_j} (\bar A + \Gamma (0) ) + a_{1\ve} (s) , \quad a_2 = a_{2\ve} (s) , \quad {\mbox {with}} \quad |\log \ve |^{1\over 2} |{d \over ds} a_{i\ve} | + |a_{i\ve} | \lesssim \ve^2 |\log \ve |^{1\over 2}  , \quad i=1,2.
	$$
	Introducing the change of variable $\bar z=\bar y-a$, we obtain
		\begin{align*}
	 \left(1+ {\ve_j \over r_j} \eta_{4 \ve}  (\bar z_1 + a_1) \right)\, 	\frac{d \bar z }{d s} (s) & = B_a (\bar z) +\ve^2 |\log \ve |^{1\over 2} \, \eta_{4\ve} \,  \mathcal A (\bar z+a , s)  , \quad 
		\bar z(\tau) = y -a (\tau ),
	\end{align*}
	where $\mathcal A$ stands for a function satisfying \eqref{matha} and 
	\begin{align*}
	B_a (z ) &=\nabla_z^\perp \left(\Gamma_0( z +a ) +  \eta_{4 \ve}\frac{\ve_j}{2r_j}\, ( z_1 +a_1) \, (\Gamma_0(z+a) +\bar A +\Gamma (z+a) ) \right) \\
	&= { (z+a)^\perp \over |z+a| } \left( \Gamma_0' ( |z +a| ) +  \eta_{4 \ve}\frac{\ve_j}{2r_j}\, ( z_1 +a_1) \, (\Gamma_0' (|z+a|)  +\Gamma' (|z+a|) ) \right) \\
	&+   \eta_{4 \ve}\frac{\ve_j}{2r_j} \, (\Gamma_0(z+a) +\bar A +\Gamma (z+a) ) \, {\bf e}_2 + \ve^2 |\log \ve |^{1\over 2} \, \eta_{4\ve} \,  \mathcal A_1 ( z+a , s)
	\end{align*}
	where $\mathcal A_1$ also satisfies \eqref{matha}. 
	The vector field $B_a(z)$ is smooth and the choice of $a$ gives
	\begin{equation}\label{mathb}
	B_a (0) +\ve^2 |\log \ve |^{1\over 2} \, \eta_{4\ve} \,  \mathcal A (a , s) =0, \quad \forall s.
	\end{equation}
	Let us introduce polar coordinates around $a$:
	$$
	\bar z = \bar y - a = \bar \rho e^{i \bar \theta}.
	$$
	Hence
	$$
	{d \bar z \over ds} = \bar \rho_s \hat \rho + \bar \rho \bar \theta_s \hat \theta , \quad \hat \rho = e^{i \bar \theta} , \quad \hat \theta = i e^{i \bar \theta}.
$$	
	We have that 
	\begin{align*}
	    B_a ( z) \cdot \hat \rho &=  { a^\perp \cdot \hat \rho\over |z+a| } \left( \Gamma_0' ( |z +a| ) +  \eta_{4 \ve}\frac{\ve_j}{2r_j}\, ( z_1 +a_1) \, (\Gamma_0' (|z+a|)  +\Gamma' (|z+a|) ) \right)  \\
	    	&+   \eta_{4 \ve}\frac{\ve_j}{2r_j} \, (\Gamma_0(z+a) +\bar A +\Gamma (z+a) ) \, \sin \theta  + \ve^2 |\log \ve |^{1\over 2} \, \eta_{4\ve} \,  \mathcal A_1 ( z+a , s) \cdot \hat \rho 
	    	 \,\\
	    	  \quad \quad {\mbox {and}} \quad  &\\
	    B_a (z) \cdot \hat \theta &= { z^\perp \cdot \hat \theta \over |z+a| } \left( \Gamma_0' ( |z +a| ) +  \eta_{4 \ve}\frac{\ve_j}{2r_j}\, ( z_1 +a_1) \, (\Gamma_0' (|z+a|)  +\Gamma' (|z+a|) ) \right)  \\
	    &+{ a^\perp \cdot \hat \theta \over |z+a| } \left( \Gamma_0' ( |z +a| ) +  \eta_{4 \ve}\frac{\ve_j}{2r_j}\, ( z_1 +a_1) \, (\Gamma_0' (|z+a|)  +\Gamma' (|z+a|) ) \right)  \\
	    	&+   \eta_{4 \ve}\frac{\ve_j}{2r_j} \, (\Gamma_0(z+a) +\bar A +\Gamma (z+a) ) \, \cos \theta  + \ve^2 |\log \ve |^{1\over 2} \, \eta_{4\ve} \,  \mathcal A_1 ( z+a , s) \cdot \hat \theta .
	\end{align*}
	Set
\begin{align*}
    B_{a1} (z,s) &=  { a_1 \over |z+a| } \left( \Gamma_0' ( |z +a| ) +  \eta_{4 \ve}\frac{\ve_j}{2r_j}\, ( z_1 +a_1) \, (\Gamma_0' (|z+a|)  +\Gamma' (|z+a|) ) \right)  \\
	    	&+   \, \eta_{4 \ve}\frac{\ve_j}{2r_j} \, (\Gamma_0(z+a) +\bar A +\Gamma (z+a) ).
\end{align*}
In the region we are considering
$$
B_{a1} (0,s) = O(\ve^2 |\log \ve |^{1\over 2}) , \quad |B_{a1} (z,s)| = O(\ve |\log \ve |), \quad |\pp_s B_{a1} (z,s)| = |\pp_t B_{a1} | \, \ve^2 \, |\log \ve| = O (\ve^3 |\log \ve |^{-{1\over 2}}).
$$
With the definition of $B_{a1}$, we can write
\begin{align*}
    B_a ( z) \cdot \hat \rho &=  B_{a1} \,  \sin \theta   + \ve^2 |\log \ve |^{1\over 2} \, \eta_{4\ve} \,  \mathcal A_1 ( z+a , s) \cdot \hat \rho \\
     B_a (z) \cdot \hat \theta &= 
	    	{ \rho \over |z+a| } \left( \Gamma_0' ( |z +a| ) +  \eta_{4 \ve}\frac{\ve_j}{2r_j}\, ( z_1 +a_1) \, (\Gamma_0' (|z+a|)  +\Gamma' (|z+a|) ) \right)  \\ 
	    	&+ B_{a1} \, \cos \theta + \ve^2 |\log \ve |^{1\over 2} \, \eta_{4\ve} \,  \mathcal A_1 ( z+a , s) \cdot \hat \theta .
\end{align*}
Inserting these computations in the ODEs system for ${d \bar z \over ds}$ we get
\begin{equation}\label{mathc}
	\begin{aligned}
	 \left(1+ {\ve_j \over r_j} \eta_{4 \ve}  (\bar z_1 + a_1) \right)\, 	\bar \rho_s & = B_{a1} \,  \sin \bar \theta   + \ve^2 |\log \ve |^{1\over 2} \, \eta_{4\ve} \,  \mathcal A ( \bar z+a , s) \cdot \hat \rho  , \\
	 \left(1+ {\ve_j \over r_j} \eta_{4 \ve}  (\bar z_1 + a_1) \right)\, \bar \rho	\bar \theta_s & = 	{ \bar \rho \over |\bar z+a| } \left( \Gamma_0' ( |\bar z +a| ) +  \eta_{4 \ve}\frac{\ve_j}{2r_j}\, ( \bar z_1 +a_1) \, (\Gamma_0' (|\bar z+a|)  +\Gamma' (|\bar z+a|) ) \right)  \\ 
	    	&+ B_{a1} \, \cos \bar \theta + \ve^2 |\log \ve |^{1\over 2} \, \eta_{4\ve} \,  \mathcal A ( \bar z+a , s) \cdot \hat \theta .
	\end{aligned}
\end{equation}
Let us analyse the second equation in \eqref{mathc}. Using the control on the characteristics  in the proof of Lemma \ref{transport1-new}, we have the bound \eqref{bounds1} also for $\bar z(s)$. Thanks to \eqref{mathb}, we can divide the second equation in \eqref{mathc} by $\bar \rho$ and we write it as
\begin{align*}
	\bar \theta_s & = -{4 \over 1+ \bar \rho^2} \left( 1+ g_\ve (s,\bar \rho) \right)   \,  \, + \tilde B_{a1}  \, \cos \bar \theta \,  \, + \ve^2 |\log \ve |^{1\over 2} \, \eta_{4\ve} \,  \mathcal A ( z+a , s) \cdot \hat \theta,
\end{align*}
where
$$
(1+ |z|) \, |\tilde B_{a1} (z,s)| = O(\ve |\log \ve |), \quad (1+ |z|) \, |\pp_s \tilde B_{a1} (z,s)| = O (\ve^3 |\log \ve |).
$$
Here $|g_\ve (s,\bar \rho) | $ is a function such that $|g_\ve (s,\bar \rho) | = O(|\log \ve |^{-\zeta +1})$, as $\ve \to 0$, where $\zeta >2$ is the number introduced  in the definition of $\eta_{4\ve}$ as in \eqref{eta2epsilon}. Let $\bar \theta_0$ be 
$$
\bar \theta_0 (s)  = - \int_{\tau}^s {4  \over 1+ \bar \rho^2 (\eta) } \, d\eta
$$
and let $g$ 
to solve $\pp_s	\left( \bar \theta_0 (1+g) \right)  = -{4 \over 1+ \bar \rho^2} \left( 1+ g_\ve (s,\bar \rho) \right)   $. This is possible for
$$
\bar \theta_0 g  = -\int_{\tau}^s {4 \over 1+ \bar \rho^2 (\eta) }  g_\ve (\eta ,\bar \rho)   d\eta .
$$
Let $\rho = \bar \rho (\tau)$. Following the proof of Lemma \ref{transport1-new}, we have that $\bar \rho (s)^2 = \rho^2 (1+ o(1))  $ as $\ve \to 0$, we get
$$
(1+g) \bar \theta_0 = -{4 (s-\tau) \over 1+  \rho^2  } \, \left( 1 + O\left( |\log \ve|^{-1}   \right) \right).
$$
Let  $\bar \theta_1$ solve at main order
$$
\pp_s \bar \theta_1 = \tilde B_{a1}  \, \cos \left( \bar \theta_0 \right).
$$
An integration by parts gives, for $z= \bar \theta_0 (s)   $
\begin{align*}
\bar \theta_1 &=-\int \tilde B_{a1}  {1+ \bar \rho^2 \over 4}  \cos z \, dz = \tilde B_{a1} {1+ \bar \rho^2 \over 4}  \sin \left( \bar \theta_0  \right) - \int_s^\tau {d \over ds} (\tilde B_{a1}  ) \, \, {1+ \bar \rho^2 \over 4}  \sin \left( \bar \theta_0  \right) \\
&= \tilde B_{a1}  {1+ \bar \rho^2 \over 4}  \sin \left(\bar \theta_0  \right) \left( 1+ O( |\log \ve |^{-{1\over 2}} )\right).
\end{align*} 
We write $\bar \theta = \bar \theta_0 + \bar \theta_1 + \theta_2$. We get  
 $\theta_2 = O(|\log \ve |^{-{1\over 2}})$ and $\pp_s \theta_2 = O(\ve^2 |\log \ve |^{1\over 2})$ and 
we conclude that
\begin{equation}\label{perparti0}
\begin{aligned}
    \bar \theta &= -{4 (s-\tau) \over 1+ \bar \rho^2 } \, \left( 1 + O\left( |\log \ve|^{-1}  \right) \right) +  \tilde B_{a1}  {1+ \bar \rho^2 \over 4}  \sin \left( \bar \theta_0  \right)  \left( 1+ O( |\log \ve |^{-{1\over 2}} )\right) + O(|\log \ve |^{-{1\over 2}})\\
\end{aligned}
\end{equation}
Inserting this result into the first equation in \eqref{mathc}, we get
\begin{equation}\label{perparti}
\bar \rho (s) =\rho  + \rho O (\ve  \rho \eta_{4\ve}) \cos ({4 (s-\tau) \over 1+ \rho^2}).
\end{equation}
Since $a = a(s)$, we have that
$$
{d \bar y \over dy_\ell} = {d \bar z \over d y_\ell } = \bar \rho_{y_\ell} \hat \rho + \bar \rho \bar \theta_{y_\ell} \hat \theta,
$$
and using  \eqref{phi1} we write
	\begin{align*}
		\pp_{y_\ell}\phi(y,\tau) =
		\int_0^\tau \nn _{\bar y} {\mathcal E}(\bar y , t(s) ) \cdot \left( \bar \rho_{y_\ell} \hat \rho + \bar \rho \bar \theta_{y_\ell} \hat \theta \right)  \,ds .
	\end{align*}
	First we estimate $	\int_0^\tau \nn _{\bar y} {\mathcal E}(\bar y , t(s) ) \cdot \bar \rho_{y_\ell} \hat \rho \, ds$. From \eqref{perparti} we obtain
	$$
	\bar \rho_{y_\ell}=  \left( 1 + O(\ve \rho \eta_{4\ve}) \cos {4(s-\tau) \over 1+ \rho^2} \right)\,  + 
	O(\ve \rho \eta_{4\ve})  \, {s-\tau \over 1+ \rho^2} \sin {4(s-\tau) \over 1+ \rho^2} \, .
	$$
	Set $g(\bar y, s) := \nabla_{\bar y}  {\mathcal E}( \bar y (s) , t(s) ) \cdot \hat \rho$, 
	\begin{equation}\label{boh}
	\begin{aligned}
	    \int_0^\tau \nn _{\bar y} {\mathcal E}(\bar y , t(s) ) \cdot \bar \rho_{y_\ell} \hat \rho \, ds&=  \int_0^\tau g   \left( 1 + O(\ve \rho \eta_{4\ve}) \cos {4(s-\tau) \over 1+ \rho^2} \right)\,  \, ds\\
	    &+  \int_0^\tau g \, 	O(\ve \rho \eta_{4\ve})  \, {s-\tau \over 1+ \rho^2} \sin {4(s-\tau) \over 1+ \rho^2} \,  \, ds = A+B.
	\end{aligned}
	\end{equation}
	From \eqref{cotita} we readily get
	$$
	|A| \lesssim \ve^{-2} |\log \ve |^{-1} (1+ |y|)^{\alpha -1}.
	$$
	To estimate $B$, we integrate by parts, we use that $\int x \sin x \, dx = - x \cos x + \sin x +C$  and obtain
	\begin{align*}
	    |B| & \leq O(\ve \rho \eta_{4\ve}) ) \left( |g(\bar y (\tau ), t (\tau) ) | \,  \tau \, 
	    | \cos {4\tau \over 1+ \rho^2}| + |g| \, (1+ \rho^2) \, | \sin {4\tau \over 1+ \rho^2}| \right)\\
	    &+  O(\ve \rho \eta_{4\ve}) ) \, \ve^2 |\log \ve | \, \left( C \, (1+ |y|^{\alpha -1}) \, \int_0^\tau  s \cos {4 s \over 1+ \rho^2} + \sin {4s \over 1+ \rho^2} \right) \\
	    &\lesssim  O(\ve \rho \eta_{4\ve}) ) \, \ve^{-2} |\log \ve |^{-1} \, (1+ |y|^{\alpha -1} ). 
	\end{align*}
	These estimates allow us conclude that
	\begin{align*}
	    \int_0^\tau \nn _{\bar y} {\mathcal E}(\bar y , t(s) ) \cdot \bar \rho_{y_\ell} \hat \rho \, ds \lesssim \ve^{-2} |\log \ve |^{-1} (1+ |y|)^{\alpha -1}.
	    \end{align*}
	    Next we treat the term we estimate $	\int_0^\tau \nn _{\bar y} {\mathcal E}(\bar y , t(s) ) \cdot  \bar \rho \bar \theta_{y_\ell} \hat \theta \, ds$.
	    To this purpose we observe that
	    $$
	    {d \over ds} \mathcal E = \nabla_{\bar y} \mathcal E {d \bar y \over ds} + \ve^2 |\log \ve |\, {d \over dt} \mathcal E .
	    $$
	    Since ${d \bar y \over ds} = \bar \rho_s \hat \rho + \bar \rho \bar \theta_s \hat \theta$, letting $\gamma= {\bar \theta_{y_\ell} \over \bar \theta_s}$ we get
	    $$
	    \nn _{\bar y} {\mathcal E}(\bar y , t(s) ) \cdot  \bar \rho \bar \theta_{y_\ell} \hat \theta = \gamma {d \over ds} \mathcal E - \gamma \nabla_{\bar y} \mathcal E \bar \rho_s \hat \rho - \gamma \, \ve^2 |\log \ve |\, {d \over dt} \mathcal E.
	    $$
	    We evaluate separately the following integrals
	    $$
	    I= \int_0^\tau \gamma {d \over ds} \mathcal E \, ds  , \quad II = \int_0^\tau  \gamma \nabla_{\bar y} \mathcal E \bar \rho_s \hat \rho \, ds , \quad III= \int_0^\tau \gamma \, \ve^2 |\log \ve |\, {d \over dt} \mathcal E\, ds.
	    $$
	    From \eqref{perparti0} it follows that
\begin{align*}
    \pp_{y_\ell} \bar \theta &= {8 (s-\tau) \bar \rho \bar \rho_{y_\ell} \over (1+ \bar \rho^2)^2 } \, \left( 1 + O\left( |\log \ve|^{-1}   \right) \right)   \\
    &+ \left( ( \pp_{y_\ell} \tilde B_{a1}  {1+ \bar \rho^2 \over 4}   +\tilde B_{a1}  { \bar \rho \pp_{y_\ell} \bar \rho \over 2} ) \sin \bar \theta_0  
    + \tilde B_{a1}  {1+ \bar \rho^2 \over 4}  \cos \left( \bar \theta_0  \right)  \pp_{y_\ell} \bar \theta_0 \right)
  \left( 1+ O( |\log \ve |^{-{1\over 2}} )\right))\\
    &+ O(|\log \ve |^{-{1\over 2}}).
\end{align*}
Under the assumptions on $\tilde B_{a1}$, we get
$$
\left|  ( \pp_{y_\ell} \tilde B_{a1}  {1+ \bar \rho^2 \over 4}   +\tilde B_{a1}  { \bar \rho \pp_{y_\ell} \bar \rho \over 2} ) \sin \bar \theta_0   \right| \lesssim \ve |\sin \bar \theta_0|.
$$
We thus have
\begin{align*}
    \pp_{y_\ell} \bar \theta &= {8 (s-\tau) \bar \rho \bar \rho_{y_\ell} \over (1+ \bar \rho^2)^2 } \, \left( 1 + O\left( |\log \ve|^{-1}  \right) + O( \ve \bar \rho \eta_{4\ve} )\right)  + O(|\log \ve |^{-{1\over 2}}).
\end{align*}
	    Hence
	    \begin{align*}
	        \gamma &:= {\bar \theta_{y_\ell} \over \bar \theta_s}
	        ={2 (s-\tau) \bar \rho \bar \rho_{y_\ell} \over (1+ \bar \rho^2) } \, \left( 1 + O\left( |\log \ve|^{-1}  \right) + O( \ve \bar \rho \eta_{4\ve} )\right) + O(|\log \ve |^{-{1\over 2}}) ,
	    \end{align*}
	    and
	    $$
	    {d\gamma \over ds} = O \left( {  \bar \rho \bar \rho_{y_\ell} \over (1+ \bar \rho^2) }\right) .
	    $$
	    Since
	    \begin{align*}
	        I&= \int_0^\tau  {d \over ds} (\gamma \mathcal E ) \, ds - \int_0^\tau \mathcal E {d \gamma \over ds} \, ds,
	    \end{align*}
	    we easily get
	    $$
	   | \int_0^\tau  {d \over ds} (\gamma \mathcal E ) \, ds | \lesssim | \mathcal E (y,\tau )| \, {\tau \over 1+ \rho} \lesssim \ve^{-2} \,  |\log \ve |^{-1} \, (1+ |y|)^{\alpha -1},
	   $$
	   and
	   $$
	   \left| \int_0^\tau \mathcal E {d \gamma \over ds} \, ds \right| \lesssim \ve^{-2} \,  |\log \ve |^{-1} \, (1+ |y|)^{\alpha -1},
	   $$
	    as expected. We now treat integral $II:= \int_0^\tau  \gamma \nabla_{\bar y} \mathcal E \bar \rho_s \hat \rho \, ds$. From the first equation in \eqref{mathc} and \eqref{perparti0}, we get
	    \begin{align*}
	        II&=\int_0^\tau \, B_{a1} \,  (\nabla_{\bar y} \mathcal E \cdot  \hat \rho  ) \, \bar \rho \, \bar \rho_{y_\ell} {2 (s-\tau ) \over 1+  \rho^2}
	          \,  \sin  {2 (s-\tau ) \over 1+  \rho^2}  \, (1+ o(1) ) \, ds  + O \left( \ve^2 |\log \ve |^{1\over 2}  \,  \int_0^\tau \, \gamma (\nabla_{\bar y} \mathcal E  \cdot \hat \rho  ) \, \right).
	    \end{align*}
	    To estimate the first integral in the above formula we proceed as in the estimate of the term $B$ in formula \eqref{boh}. The second integral can be bounded as follows
	    $$
	    \left| \ve^2 |\log \ve |^{1\over 2}  \,  \int_0^\tau \, \gamma (\nabla_{\bar y} \mathcal E  \cdot \hat \rho  ) \right| \lesssim \ve^2 |\log \ve |^{1\over 2} \, (1+ |y|)^{\alpha -1} \,  \int_0^\tau (s-\tau ) \, ds \lesssim  \ve^{-2} |\log \ve |^{-{3\over 2}} \, (1+ |y|)^{\alpha -1}.
	    $$
	   We thus get
	    $$
	   \left| II \right| \lesssim \ve^{-2} \,  |\log \ve |^{-1} \, (1+ |y|)^{\alpha -1}.
	   $$
	   In an analogous way we can treat the last integral $III=\int_0^\tau \gamma \, \ve^2 |\log \ve |\, {d \over dt} \mathcal E\, ds $ and we obtain
	    $$
	   \left| III \right| \lesssim \ve^{-2} \,  |\log \ve |^{-1} \, (1+ |y|)^{\alpha -1}.
	   $$
	   Collecting all the above estimates, we arrive to the desired bound
	   $$
	   \left| \pp_{y_\ell}\phi(y,\tau)  \right|  \lesssim \ve^{-2} \,  |\log \ve |^{-1} \, (1+ |y|)^{\alpha -1}.
	   $$
This concludes the proof of Lemma \ref{transport3-new}.
	


	\section{The outer modified transport equation}
	\label{secProofTransportOuter}

	\begin{proof}[Proof of Lemma \ref{int1}] Let $x \in \Sigma$. We shall prove that $\bar x (s;t,x) \in \Sigma$, for all $s \in (0,t)$. For the definition of $\bar x$ we refer to \eqref{char}. By contradiction, assume there exists $s_0 \in (0,t)$ such that  $\bar x (s;t,x) \in \Sigma$, for all $s \in (s_0,t)$ and $\bar x (s_0 ; t,x) \in \partial \Sigma$. Setting $\bar x (s_0 ; t,x) = (\bar r (s_0 ; t,x), \bar z (s_0 ; t,x))$, it holds
		$\bar r (s_0 ; t,x) =0$. Setting $B (x,t) = (B_1 (x,t), B_2 (x,t))$, we get from \eqref{ou1} that
		$B_1 (\bar x (s_0 ; t,x), s ) =0$. See \eqref{estB}. Then the ODEs
		$$
		{d \over ds} \bar x (s; x,t) \cdot {\bf e_1} = B_1 (\bar x (s;t,x),s), \quad \bar x (s_0; x,t) \cdot {\bf e_1} =0
		$$
		has the trivial solution $\bar x (s; x,t) \cdot {\bf e_1} \equiv 0$, which contradicts uniqueness of solutions for ODEs. Formula \equ{phi3} is then well-defined.
		
		In order to get estimates on the solution, we write the solution using
		Duhamel's representation formula 
		$$
		\phi^{out} (x,t) = |\log \ve |^{-1} \int_0^t  \tilde E_0 (\bar x (s;x,t  ), s) \, ds
		$$
		where $\bar x (s;x,t)$ are the characteristic curves defined as
		$$
		\, 	{d \over ds} \bar x (s; x,t  ) = B (\bar x (s; x,t  ), s ) \quad s \in (0,t ), \quad \bar x (t ; x,t ) = x.
		$$
		Using the estimates on $B(x,t)$ described in \eqref{estB}	we get 
		\[
		|\phi(x,t)|   \le |\log \ve |^{-1}   t  \|E\|_{L^\infty (\Sigma \times [0,t])} .
		\]
		We also have that for $1\le p <+\infty $,
		$$
		\int_\Sigma | E(\bar x(s;t,x),s) |^p\,dx \leq  C \int_\Sigma | E( x; t,s) |^p\,dx
		$$
		for some $C>0$, 
		since the differential of area for the change of variable  $x\mapsto \bar x(s;t,x)$ is a bounded function. From this we readily get
		\begin{equation*}
			\| \phi(\cdot ,t)\|_{L^p(\Sigma)} \ \le |\log \ve |^{-1}  \ t \sup_{s\in [0,t]} \| E(\cdot ,s)\|_{L^p(\Sigma)}.
		\end{equation*}

		\medskip
		With the change of variables $y= \frac {x-P_j(t)}{\ve}$  an equation of the type \equ{pico1-new}
		is satisfied. The result then follows from
		Lemma \ref{transport2}: since $E_0$ and $\tilde E_0$ are zero in the spacial region $\sum_{j=1}^k B(P_j,2 |\log \ve |^{-3})$ for all $t \in [0,T)$, there exists $\beta >0$ such that the solution $\phi$ to \eqref{transport-outer} is zero in  $\sum_{j=1}^k B(P_j,\beta |\log \ve |^{-3})$, for all $t \in [0,T)$. An alternative way to see this is to estimate the characteristics for points $x \in B(P_j , \delta |\log \ve |^{-3} )$ and use \eqref{estB1}. 
		
		\medskip
		This fact allows us to say that $\phi$  satisfies an equation of the form
		$$
				|\log \ve | \pp_t 	\phi  + \nn_x^\perp H \cdot \nn \phi  =  E(x,t)
				\quad \text{in } \Sigma \times [0,T], \quad 
				\phi(x,0)  =0 \quad \text{in } \Sigma,
			$$
		where  $$ H(x,t)= |\log \ve | \Big(1- \sum_{j=1}^k\eta_j(x,t) \Big ) B$$
		and
		$\eta_j$ is a smooth function with $\eta_j(x,t) =1$ whenever $|x-P_j(t)|<\beta  |\log \ve |^{-3}$  and $=0$ if  $|x-P_j(t)|>2 \beta  |\log \ve |^{-3}$.
		Using the explicit expression of the coefficient $B$ given in \eqref{ou1}-\eqref{estB1}, we see that
		$$
		H(x,t)= \Big(1- \sum_{j=1}^k\eta_j(x,t) \Big )  O(|\log \ve | ), \quad \text{in } \Sigma\times [0,T],
		$$
		uniformly as $\ve \to 0$. Hence
		$$
		|\bar x (s;t,x) | = |x| + O(1), \quad \ass \quad \ve \to 0, 
		$$
		and bounds \eqref{poto1} readily follow.
	\end{proof}

	\subsection{Uniform continuity}

	Let us consider an equation of the form
	\be\label{ff}
	\left\{
	\begin{aligned}
		|\log \ve | \, r \, \pp_t \phi  + \nn_x^\perp (r^2 (\Psi^0 - \alpha_0 |\log \ve | + e )  ) \cdot \nn \phi  =  &E(x,t)
		\quad \text{in } \Sigma \times [0,T],
		\\
		\phi(x,0) & =0 , \quad \text{in } \Sigma .
	\end{aligned}
	\right.
	\ee
 Let
$$
		|\log \ve |	{d \over ds}  {\bar x} (s; t ,x ) = \bar r \nabla^\perp 
		H \left( \bar x (s;t ,x) \right) + 2 H \left( \bar x (s;t ,x ) \right) {\bf e}_2 , \quad
		\quad \bar x (t ; t , x ) = x
	$$
	where 
	$$
	H (x,t):= \Psi^0 (x,t) - \alpha_0 |\log \ve | + e (x,t), \quad x=(r,z).
	$$
	We know that the characteristics  $\bar x = \bar x (s; t,x)$  are well defined in $\Sigma \times [0,T]$ if $\nabla e$, $r \Delta e 
	\in L^\infty (\Sigma  \times [0,T])$. Besides the formula
$$
		\phi(x,t)  = \int_0^t E(\bar x(s; t,x) , s) \, ds\
$$
	gives the solution of \equ{ff}.
	We have

	\begin{lemma} \label{modc1}
		
		For all $\varrho>0$ there exists a positive number
		$$\delta = \delta (\varrho,\| \nabla  e\|_{L^\infty(\Sigma \times [0,T])} , \| r\Delta e\|_{L^\infty(\Sigma \times [0,T])}, T, \Sigma) $$
		such that
		for all $(x_1,t_1  ), \, (x_2,t_2  ) \in \bar \Sigma \times [0,T]$ we have
		$$
		|t_1 - t_2 | + |x_1 - x_2 | < \delta \implies  \left| \bar x  (s; t_1 , x_1 ) - \bar x (s; t_2 , x_2) \right| < \varrho.
		$$
		If  $E(x,t)$ is a bounded function that  satisfies
	$$
		\sup_{t\in [0,T], \ |x_1-x_2|< \mu} |E(x_1,t)-E(x_2,t)|\le \Theta (\mu)
		$$
		for a certain function $\Theta$ with  $\Theta(\mu)\to 0$ as $\mu\to 0$, then  the solution $\phi(x,t)$ of  \equ{ff} satisfies that
		for all $(x_1,t_1  ), \, (x_2,t_2  ) \in \bar \Sigma  \times [0,T]$ we have
		$$
		|t_1 - t_2 | + |x_1 - x_2 | < \delta \implies  \left| \phi( x_1,t_1 ) - \phi (x_2,t_2) \right| < \varrho.
		$$
	\end{lemma}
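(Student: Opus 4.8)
The plan is to argue directly with the characteristic flow, using that for each fixed small $\ve>0$ the transporting velocity is log-Lipschitz in $x$, uniformly in $t$. First I would write the system in \eqref{ff} as $\frac{d}{ds}\bar x(s;t,x)=b(\bar x(s;t,x),s)$ with
$$
b(x,t)=\tfrac{1}{|\log\ve|}\big(r\,\nn^\perp H(x,t)+2H(x,t)\,{\bf e}_2\big),\qquad H=\Psi^0-\alpha_0|\log\ve|+e,\quad x=(r,z),
$$
and record two facts about $b$. By the estimates on $\Psi^0$ from Section~\S\ref{sec3} and Lemma~\ref{ll1}, together with $\nn e\in L^\infty$, one has $b\in L^\infty(\Sigma\times[0,T])$ with $\|b\|_\infty\le M_0$ controlled by the quantities in the statement. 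More importantly, $b(\cdot,t)$ is log-Lipschitz uniformly in $t$: the $\ve$-regularized logarithmic cores in $\Psi^0$ are $C^{1,1}$ for fixed $\ve$, $H^0$ is smooth by elliptic regularity from the right-hand side of \eqref{defH0}, and the hypothesis $\Delta e\in L^\infty$ forces $\nn e$ to be log-Lipschitz (Calderón--Zygmund estimate for the Newtonian potential), the weight $r$ being harmless because $\nn r$ is bounded and $r\Delta e\in L^\infty$ near $\pp\Sigma$. Recall also, as noted before the statement, that under these hypotheses the characteristics stay in $\bar\Sigma$ and are uniquely defined on $[0,T]$. Hence there is $L=L(\ve,T,\|\nn e\|_\infty,\|r\Delta e\|_\infty)$ with $|b(x_1,t)-b(x_2,t)|\le L\,\mu(|x_1-x_2|)$, where $\mu(\rho)=\rho(1+|\log\rho|)$ for small $\rho$ (extended linearly for large $\rho$), a continuous modulus with $\int_{0^+}d\rho/\mu(\rho)=+\infty$.

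For the first assertion I would fix $(x_1,t_1),(x_2,t_2)\in\bar\Sigma\times[0,T]$ with $t_2\le t_1$, set $X_i(s)=\bar x(s;t_i,x_i)$, and first use $|\dot X_1|\le M_0$ to get $|X_1(t_2)-x_1|\le M_0|t_1-t_2|$, so that $d(t_2):=|X_1(t_2)-X_2(t_2)|\le M_0|t_1-t_2|+|x_1-x_2|=:d_0$. For $s\in[0,t_2]$ the quantity $d(s)=|X_1(s)-X_2(s)|$ satisfies $|\dot d(s)|\le|b(X_1(s),s)-b(X_2(s),s)|\le L\,\mu(d(s))$; integrating this differential inequality backward in $s$ from $t_2$, Osgood's lemma yields $d(s)\le\beta(d_0;t_2-s)$, where for each fixed $\tau\in[0,T]$ the function $\rho\mapsto\beta(\rho;\tau)$ is nondecreasing with $\beta(\rho;\tau)\to0$ as $\rho\to0$ (for small $\rho$, $\beta(\rho;\tau)=\rho^{\,e^{-L\tau}}$ works). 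Therefore $\sup_{0\le s\le t_2}|\bar x(s;t_1,x_1)-\bar x(s;t_2,x_2)|\le\beta\big((M_0+1)(|x_1-x_2|+|t_1-t_2|);T\big)$, and choosing $\delta$ with $\beta((M_0+1)\delta;T)<\varrho$ gives the first implication.

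For the second assertion I would use the representation $\phi(x,t)=\int_0^t E(\bar x(s;t,x),s)\,ds$ and split at $t_2$:
$$
\phi(x_1,t_1)-\phi(x_2,t_2)=\int_0^{t_2}\!\big(E(X_1(s),s)-E(X_2(s),s)\big)\,ds+\int_{t_2}^{t_1}\!E(X_1(s),s)\,ds .
$$
The last integral is $\le\|E\|_{L^\infty}|t_1-t_2|\le\|E\|_{L^\infty}\delta$, and by the modulus hypothesis on $E$ (with $\Theta$ taken nondecreasing) together with the flow estimate above, the first integral is $\le T\,\Theta\big(\beta((M_0+1)\delta;T)\big)$. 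Both terms tend to $0$ with $\delta$, so it is enough to take $\delta$ so small that $\|E\|_{L^\infty}\delta+T\,\Theta\big(\beta((M_0+1)\delta;T)\big)<\varrho$; for fixed $\ve$ this $\delta$ depends only on $\varrho$, $\|\nn e\|_\infty$, $\|r\Delta e\|_\infty$, $T$ and $\Sigma$, as claimed.

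The hard part will be the log-Lipschitz bound for $b$ --- precisely, extracting a log-Lipschitz modulus for $r\,\nn^\perp e$ from $\nn e,\,r\Delta e\in L^\infty$ (the Yudovich-type gradient estimate for functions with bounded Laplacian) and checking its uniformity up to $\pp\Sigma$, where the factor $\tfrac3r$ in $\Delta_5$ appears. The remaining ingredients --- boundedness and smoothness of $\Psi^0$ and $H^0$ for fixed $\ve$, Osgood's lemma, and the dominated-convergence-type estimate for $\phi$ --- are routine.
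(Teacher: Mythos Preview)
Your proposal is correct and follows essentially the same route as the paper: a log-Lipschitz bound on the transporting velocity, an Osgood-type integration of the resulting differential inequality for $d(s)=|\bar x(s;t_1,x_1)-\bar x(s;t_2,x_2)|$ (the paper integrates $\big|\frac{d}{ds}\log\log\frac{1}{|h(s)|^2}\big|\le CA$ explicitly, yielding precisely your $\rho^{e^{-L\tau}}$), and then the representation formula $\phi(x,t)=\int_0^t E(\bar x(s;t,x),s)\,ds$ for the second part. Your write-up is in fact more explicit than the paper's about \emph{why} the velocity is log-Lipschitz --- the paper simply asserts $|\dot h|\le CA|h||\log|h||$ with $A=\|\nn e\|_\infty+\|r\Delta e\|_\infty$ --- so the ``hard part'' you flag is exactly the step the paper leaves to the reader.
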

	
	\begin{proof}
		By definition
		\begin{align*}
			|\log \ve |\, 	{d \over ds}  {\bar x} (s; t_i ,x_i ) &= \bar r (s;t_i ,x_i )  \nabla^\perp H\left( \bar x (s;t_i ,x_i ) \right) + 2 H \left( \bar x (s;t_i ,x_i ) \right) {\bf e}_2 , \quad \\
			\bar x (t_i ; t_i , x_i ) &= x_i
		\end{align*}
		for $i=1,2$. Let $h(s) = \bar x (s; t_1 , x_1 ) - \bar x (s; t_2 , x_2 )$. Then
		$$
		| {d \over ds} h (s) | 
		\leq C \,  A \, | h(s) | \, |\log (|h (s) |) |,
		$$
		where $A= \left( \| \nabla e \|_\infty +\| r \Delta  e \|_\infty \right) $.
		Setting $\beta (s) := |h(s) |^2$, we can assume that $0< \beta (s) <1$. Then we get
		$$
		\left| {d \over ds} \left( \log \left( \log {1\over \beta (s) }\right) \right) \right| \leq C  \, A .
		$$
		Integrating, we  obtain
		\be \label{cott}
		e^{-C \, A  \, T} \, \log {1 \over \beta (t_1 ) } \leq \log {1\over \beta (s) } \leq  e^{ C \, A \, T} \log {1\over  \beta (t_1 ) }.
		\ee
		Observe now that
		\be \label{cott1}
		|h (t_1)|  = |x_1 - x_2 | + |x_2 - \bar x (t_1 ; t_2 , x_2 ) | \leq C \, A \,  \left( |x_1 - x_2 | + |t_1 - t_2| \right) .
		\ee
		Combining \eqref{cott} and \eqref{cott1}, we obtain the first statement of the Lemma.
		
		\medskip The second statement of the Lemma  is a direct consequence of  the representation formula
		$$
		\phi(x,t)  = \int_0^t E(\bar x(s; t,x) , s) \, ds\
		$$
		for the solution of \equ{ff}.
	\end{proof}

	\medskip
	Let us now consider
	\be\label{ff1}
	\left\{
	\begin{aligned}
		\partial_\tau \phi  +  \nn_x^\perp  \left( \Gamma_0 (y) + b(y,t) \right) \cdot \nn \phi  =  &E(y,t)
		\quad \text{in } \R^2 \times [0, \tau_T],
		\\
		\phi(y,0) & =0 , \quad \text{in } \R^2 ,
	\end{aligned}
	\right.
	\ee
	with $b(y,t) = 0= E(y,t)$ for $|y| >R$ and 
	$$t= |\log \ve |\int_0^\tau \ve_j^2 (s) \, ds , \quad T=|\log \ve| \int_0^{\tau_T} \ve_j^2 (s) \, ds .$$ 
	This problem has the same form as the one in \eqref{pico2}, and under our assumption
	$$
	\tau_T \sim \ve^{-2} |\log \ve |^{-1}, \quad R \to \infty \ass \ve \to 0.
	$$
	As in the previous problem, the modulus of continuity for the characteristics
	$\bar y = \bar y (s; \tau , y)$ for \eqref{ff1} depends only on $\| \Delta_y  b \|_{L^\infty (\R^2 \times [0,T] ) }$, and that of the solution
	only on a uniform bound for $E$ and for its  modulus of continuity. Arguing as in the proof of Lemma \ref{modc1}, we can show that

	\begin{lemma} \label{modc2}
		Assume that
		$$
		\sup_{\tau \in [0,\tau_T], \ |y_1-y_2|< \mu} |E(y_1,\tau )-E(y_2,\tau )|\le \Theta (\mu)
		$$
		for a certain function $\Theta$ with  $\Theta(\mu)\to 0$ as $\mu\to 0$. Then
		for each $\varrho>0$ there exists a positive number
		$$\delta = \delta (\varrho,\|\Delta b\|_{L^\infty(\R^2 \times [0,\tau_T ])},\|E\|_\infty, \Theta, T, \ve) $$ such
		that the solution $\phi(y, \tau )$ of  \equ{ff1} satisfies that
		for all $(y_1,\tau_1  ), \, (y_2,\tau_2  ) \in  \R^2 \times [0,\tau_T ]$ we have
		$$
		|\tau_1 - \tau_2 | + |y_1 - y_2 | < \delta \implies  \left| \phi( y_1,\tau_1 ) - \phi (y_2,\tau_2) \right| < \varrho.
		$$
	\end{lemma}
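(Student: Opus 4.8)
The plan is to run the scheme of the proof of Lemma \ref{modc1}, now carried out in the fast variable $y$ and over the long interval $[0,\tau_T]$ with $\tau_T\sim \ve^{-2}|\log\ve|^{-1}$. As already observed, \eqref{ff1} has exactly the structure of \eqref{pico2}, so it is solved by the method of characteristics, and the two ingredients needed are: a quantitative modulus of continuity for its characteristics, depending only on $\|\Delta_y b\|_{L^\infty(\R^2\times[0,\tau_T])}$, and the Duhamel representation of its solution.

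First I would record that the drift field $V(y,\tau):=\nabla^\perp\left(\Gamma_0(y)+b(y,t(\tau))\right)$ is log-Lipschitz in $y$, uniformly in $\tau$. Indeed $\nabla\Gamma_0(y)=-\tfrac{4y}{1+|y|^2}$ is globally Lipschitz, while $\nabla_y b(\cdot,\tau)$ is supported in $\{|y|\le R\}$ and $\Delta_y b(\cdot,\tau)\in L^\infty$, so the standard potential-type estimate gives
\[
|V(y_1,\tau)-V(y_2,\tau)|\ \le\ L\,|y_1-y_2|\left(1+\bigl|\log|y_1-y_2|\bigr|\right),\qquad L=L\bigl(\|\Delta_y b\|_{L^\infty(\R^2\times[0,\tau_T])}\bigr).
\]
Since moreover $|V|\le \|\nabla\Gamma_0\|_\infty+\|\nabla b\|_\infty=:M<\infty$, the characteristic system $\tfrac{d}{ds}\bar y(s)=V(\bar y(s),s)$, $\bar y(\tau)=y$, has a unique solution $\bar y(s;\tau,y)$ on all of $[0,\tau_T]$ (Osgood uniqueness), staying in a fixed ball, and the solution of \eqref{ff1} is $\phi(y,\tau)=\int_0^\tau E(\bar y(s;\tau,y),s)\,ds$.

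Next, for the modulus of continuity of the flow I would fix $(y_1,\tau_1),(y_2,\tau_2)$ with, say, $\tau_1\ge\tau_2$, set $h(s)=\bar y(s;\tau_1,y_1)-\bar y(s;\tau_2,y_2)$ and $\beta(s)=|h(s)|^2$. The log-Lipschitz bound yields $|\tfrac{d}{ds}h(s)|\le L|h(s)|(1+|\log|h(s)||)$ wherever $0<|h(s)|<1$, hence $\bigl|\tfrac{d}{ds}\bigl(\log\log\tfrac1{\beta(s)}\bigr)\bigr|\le C L$ exactly as in the derivation of \eqref{cott}; integrating over $[0,\tau_T]$ gives
\[
e^{-C L\tau_T}\log\tfrac1{\beta(\tau_1)}\ \le\ \log\tfrac1{\beta(s)}\ \le\ e^{C L\tau_T}\log\tfrac1{\beta(\tau_1)},\qquad s\in[0,\tau_T].
\]
On the other hand, as in \eqref{cott1}, boundedness of $V$ gives $|h(\tau_1)|=|y_1-\bar y(\tau_1;\tau_2,y_2)|\le |y_1-y_2|+M|\tau_1-\tau_2|$. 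Combining, for every $\mu>0$ there is $\delta=\delta(\mu,\|\Delta_y b\|_\infty,T,\ve)$ such that $|y_1-y_2|+|\tau_1-\tau_2|<\delta$ forces $\sup_{s\in[0,\tau_T]}|\bar y(s;\tau_1,y_1)-\bar y(s;\tau_2,y_2)|<\mu$; the $\ve$-dependence enters only through $\tau_T$, which the statement permits.

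Finally, writing (for $\tau_1\ge\tau_2$)
\[
\phi(y_1,\tau_1)-\phi(y_2,\tau_2)=\int_0^{\tau_2}\bigl[E(\bar y(s;\tau_1,y_1),s)-E(\bar y(s;\tau_2,y_2),s)\bigr]\,ds+\int_{\tau_2}^{\tau_1}E(\bar y(s;\tau_1,y_1),s)\,ds,
\]
the second integral is $\le|\tau_1-\tau_2|\,\|E\|_\infty$, and for the first I choose $\mu$ with $\Theta(\mu)<\varrho/(2\tau_T)$ and then $\delta$ as above, so the integrand is $\le\Theta(\mu)$ and the term is $\le\tau_T\Theta(\mu)<\varrho/2$; shrinking $\delta$ once more so that $\delta\|E\|_\infty<\varrho/2$ closes the estimate. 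The only genuine obstacle is the double-exponential factor $e^{C L\tau_T}$ produced by the Osgood integration over the very long interval $[0,\tau_T]\sim[0,\ve^{-2}|\log\ve|^{-1}]$; this is harmless here because $\ve$ is fixed and $\delta$ is allowed to depend on $\ve$, but it is precisely why the conclusion cannot be made uniform in $\ve$.
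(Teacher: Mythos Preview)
Your proof is correct and follows exactly the approach indicated in the paper, which simply says to argue as in the proof of Lemma~\ref{modc1} using that the modulus of continuity of the characteristics depends only on $\|\Delta_y b\|_{L^\infty}$ and that of the solution only on a uniform bound for $E$ and its modulus of continuity. Your write-up is in fact more detailed than the paper's own sketch, making explicit the log-Lipschitz bound on the drift, the Osgood/Gronwall argument parallel to \eqref{cott}--\eqref{cott1}, and the splitting of the Duhamel integral; your closing remark about the $\ve$-dependence entering through $\tau_T$ is also accurate and consistent with the statement.
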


	\medskip 
	
	These results will be useful in Section \ref{sec10} for the final argument in our construction.

	\section{The inner-outer gluing procedure}\label{sec8}
	
	In the previous sections 
	we proved the existence of  points $P_1^1, \ldots , P_k^1$ in \eqref{point} such that, for any choice of points ${\bf a}_1 , \ldots , {\bf a}_k$ in \eqref{point} satisfying \eqref{b11}, there exists  a good approximate leapfrogging of vortex rings $(W^*, \Psi^*)$ with the form \eqref{f110}-\eqref{f220}, as described in Proposition \ref{Approximation}. 
	
	\medskip
	We now set up  the inner-outer gluing procedure which will lead us  to find an exact solution $(\Psi , W)$ to \eqref{leap0} close to  the approximation $(\Psi^*, W^*  )$.  
	
	\medskip
	The solution $(\Psi,W)$ will have the   form 
	\begin{equation}\label{f11}
		\begin{aligned}
			\Psi &= \Psi^* (x,t) + \sum_{j=1}^k \bar \eta_{j2} {1\over r_j} \psi_j ({x-P_j \over \ve_j },t) + \psi^{out} (x,t) \\
			W &= W^* (x,t) + \sum_{j=1}^k \bar \eta_{j1}  {1\over r_j \ve_j^2 } \phi_j ({x-P_j \over \ve_j },t) + \phi^{out} (x,t) 
		\end{aligned}
	\end{equation}
	where
	$$\bar \eta_{jN} (x) =\eta_N  ({|\log \ve |^5 |x-P_j |\over  \delta  }).$$ Here $\eta_N$ is the cut-off function introduced in \eqref{cutoff}.  This ansatz has the same form as the one used in \eqref{f110} and \eqref{f220} for the construction of the improved approximate leapfrogging of vortex rings $(W^*, \Psi^*)$. We notice though that here we are taking  cut-offs  slightly shorter than the ones in \eqref{zeta}.

	Let $S_1$ and $S_2$ be the Euler operators introduced in \eqref{defS}.
	Then  the operator $S_1$  evaluated at $(W,\Psi)$ becomes
	\begin{align}\label{fullS1}
		S_1(W,\Psi) &= \sum_{j=1}^k {\bar \eta_{1j} \over \ve_j^4} E_j [\phi_j , \psi_j \, \psi^{out} , P] + E^{out} [\phi^{out}, \Psi^{out} , \phi^{in}, \psi^{in} , P] \\
		&{\mbox {where}} \nonumber \\
		&\phi^{in}(y,t) = (\phi_1(y,t) , \ldots, \phi_k(y,t) ), \quad \psi^{in}(y,t)  = (\psi_1(y,t) , \ldots, \psi_k(y,t) ). \nonumber \end{align}
	In \eqref{fullS1} $E_j$, $j=1, \ldots , k$,  and $E^{out}$ are defined respectively 
	\begin{align}\label{Ej}
		E_j^{in} & [\phi_j , \psi_j \, \psi^{out} , {\bf a} ] (y,t) := |\log \ve | \ve_j^2 (1 +{\ve_j \over r_j } y_1) \pp_t \phi_j   + |\log \ve | B_0 (\phi_j )  \nonumber \\
		& + \nabla^\perp \left(  (1+{\ve_j \over r_j} y_1)^2 (\psi_j^0  - r_j \alpha_j  |\log \ve_j | +\psi_j^*    + \psi_j + r_j \psi^{out} )  +{\mathcal R}_j (y,t;P) \right)\cdot \nabla \phi_j \nonumber \\
		&+ \nabla^\perp \left((1+{\ve_j y_1 \over r_j} )^2 (\psi_j +r_j \psi^{out} ) \right) \nabla  (\ve_j^2 r_j W^* ) \nonumber \\
		& +\ve_j^4 S_1 (W^*, \Psi^*) (\ve_j y + P_j),  \quad |y|< 3R , \quad R:= {1 \over \ve |\log \ve|^5} 
	\end{align}
	with $y={x-P_j \over \ve_j} $, where $B_0 $ is defined in \eqref{defB0}, and 
	\begin{equation}
	\label{Eoutn}
	\begin{aligned}
		E^{out} &[\phi^{out}, \Psi^{out} , \phi^{in}, \psi^{in} , {\bf a}] (x,t) :=
		|\log \ve | \, r \, \partial_t \phi^{out}\\
		&+ \nabla_x^\perp ( r^2 (\Psi^*  + \sum_{j=1}^k { \bar \eta_{j2}\over r_j} \psi_j ({x-P_j \over \ve_j }) + \psi^{out} -r_0^{-1} |\log \ve |)) \nabla_x \phi^{out}  \\
		&+\sum_{j=1}^k \left[ r \, |\log \ve |  \,  \pp_t \bar \eta_{j1} + \nabla_x^\perp ( r^2 (\Psi^*  + \sum_{j=1}^k  { \bar \eta_{j2}\over r_j} \psi_j ({x-P_j \over \ve_j }) + \psi^{out}-r_0^{-1} |\log \ve |)) \nabla \bar \eta_{1j} \right] {\phi_j \over \ve_j^2 r_j}   \\
		&+ \left[ \sum_{j=1}^k (\bar \eta_{2j} - \bar \eta_{1j}) \nabla_x^\perp (r^2 ({\psi_j \over r_j} +\psi^{out}) ) + {r^2 \psi_j \over r_j} \nabla_x^\perp \bar \eta_{2j} \right] \nabla_x W^*   \\
		&+ (1-\sum_{j=1}^k \bar \eta_{2j} ) \nabla^\perp (r^2 \psi^{out} ) \cdot \nabla W^* 
		+ (1-\sum_{j=1}^k \bar \eta_{j1} ) S_1 (W^*,\Psi^*)=0 \quad (x,t) \in \Sigma \times [0,T)    
	\end{aligned}
	\end{equation}
	It is straightforward to check that a pair $(W, \Psi)$ of the form \eqref{f11} is a solution to \eqref{leap0} if $(\phi^{in}, \psi^{in}, \phi^{out}, \psi^{out} )$ solve 
	the {\it inner-outer gluing} system given by the inner problem
	\begin{equation} \label{inner}
		\begin{aligned}
			E_j^{in} & [\phi_j , \psi_j \, \psi^{out} , {\bf a}] (y,t)  = 0, \quad (y,t) \in B(0; 3R ) \times [0,T)\\
			-\Delta_{5,j} \psi_j &= \phi_j,  \quad (y,t) \in B(0; 3R  ) \times [0,T)
		\end{aligned}
	\end{equation}
	for all $j=1, \ldots , k$, 	coupled with the outer problem
	\begin{equation}\label{out}
		\begin{aligned}
			E^{out} &[\phi^{out}, \psi^{out} , \phi^{in}, \psi^{in} , {\bf a}] (x,t) =0, \quad (x,t) \in \Sigma \times [0,T)\\
			E_1^{out} &[\phi^{out}, \psi^{out} , \phi^{in}, \psi^{in} , {\bf a}] (x,t) =0, \quad (x,t) \in \Sigma \times [0,T)
		\end{aligned}
	\end{equation}
	where
	\begin{align}
\nonumber
		E_1^{out}& := 	\Delta_5 \psi^{out} + \phi^{out} + \sum_{j=1}^k (\bar \eta_{j1} -\bar \eta_{j2} ) {\phi_j \over r_j \ve_j^2} \\
  \label{Eout1}
		&+\sum_{j=1}^k ( {\psi_j \over r_j} \Delta_5 \bar \eta_{j2}  + 2 \nabla_x \bar \eta_{j2} \nabla_x {\psi_j \over r_j}  ), \quad (x,t) \in \Sigma \times [0,T)	
	\end{align}
	with boundary and decay conditions on $\psi^{out}$
	\begin{equation}\label{boundary}
		{\pp \over \pp r } \psi^{out} (x,t) = 0 , \quad (x,t) \in \pp \Sigma \times [0,T], \quad |\psi^{out} (x,t)| \to 0 , \quad \ass |x| \to \infty. 
	\end{equation}
	The rest of the paper is devoted to solve the inner-outer gluing system \eqref{inner}-\eqref{out}-\eqref{boundary}.
	
	\subsection{Setting the inner problems in the whole $\R^2$} 	The inner problems  in \eqref{inner} can be extended to the whole space $\R^2$ with the use of cut-off functions. We will use two types of cut-off functions: one with large support much beyond the region $B(0,3R)$, the other with support bigger that   $B(0,3R)$ but of comparable size.
	
	Using the notation introduce in \eqref{cutoff}, we define
	\begin{equation}\label{ccuts}
	\chi (y_1) = \eta_{10} \left({|y_1| \over \ve \, |\log \ve |^3}\right), \quad \eta_{4\ve} (y) = \eta_4 \left({|y| \over R}\right), \quad {\mbox {where}} \quad R:= {1 \over \ve |\log \ve|^5} .
\end{equation}
We use $\chi$ to extend the operator $\Delta_{5,j}$  to $\R^2$. The functions $\phi_j$ and $\psi_j$ are related via the operator $\Delta_{5,j}$ as described in \eqref{inner}. This operator can be written as
	$$
		-\div \left(  (1+ {\ve_j \over r_j} y_1)^3  \nabla \psi_j \right) = (1+ {\ve_j \over r_j} y_1)^3 \, \phi_j,  \quad (y,t) \in B(0, 3R  ) \times [0,T).
		$$
Letting $\mathcal P (y_1, t)$ be defined as
\begin{equation}\label{defP}
		\begin{aligned}
			{\mathcal P}  (y_1,t) &= p(y_1 , t)^3 , \quad  \quad {\mbox {where}} \quad p(y_1 , t)= \left( 1+ {\ve_j \over r_j} y_1  \chi \right),
		\end{aligned}
	\end{equation}
 it is clear that, if $(\psi_j, \phi_j)$ are such that
	$$
		-\div \left(  \mathcal P   \nabla \psi_j \right) = \mathcal P \, \phi_j,  \quad \inn \quad \R^2 \times [0,T]
	$$
	then their restriction to $B(0,3R)$ satisfy the second relation in \eqref{inner}. 
	
	\medskip{}{}
	Let us introduce the functions
	\begin{equation}
	    \label{mm}
	 \begin{aligned}
  \psi_{0j} (y,t) &=  \Gamma_0(y)
				+ \frac{\ve_j y_1 }{2 r_j  }\, \chi \, \Big(-3 \Gamma_0(y) +A_\ve - 4 K(P_j, P_j) +\Gamma (y)
				\Big)\\
   \Gamma_{0j}(y,t) &=  \Gamma_0(y)
				+ \frac{\ve_j y_1 }{2 r_j  }\, \chi \, \Big(\Gamma_0(y) +\bar A +\Gamma (y)
				\Big) .
    \end{aligned}
	\end{equation}
	For $y \in B(0, 3R  )$ (and beyond) these functions  coincide with part of the expansion of $\psi_j^0 + \psi_j^*$ and $ (1+{\ve_j \over r_j} y_1)^2 (\psi_j^0  - r_j \alpha_j  |\log \ve_j | +\psi_j^*  )$ respectively, as given in \eqref{barp00n}:
			\begin{equation}\label{mm1}\begin{aligned}
   \psi_j^0    + \psi_j^* &=  \psi_{0j} - 4 \log\varepsilon_j - \log 8 + K (P_j;P_j) +c_j^* (y,t)  +c_j^{**} (y,t) \\
					(1+{\ve_j \over r_j} y_1)^2 (\psi_j^0  - r_j \alpha_j  |\log \ve_j |  + \psi_j^* )
			&	= 
				\Gamma_{0j}(y) - (4-\alpha_j  r_j ) \log\varepsilon_j - \log 8 + K (P_j;P_j)\\
    &
				+b_j^{*} (y,t) +b_j^{**} (y,t)
			\end{aligned}
   \end{equation}
where
\begin{equation}\label{similar}
\begin{aligned}
|\log \ve |^{1\over 2}	&|\pp_t c_j^{*}(y,t)| + (1+ |y|) |  \,  \nn_y c_j^{*} (y,t)|  + |c_j^{*} (y,t)| \ \le  \  C \ve^2 \, (1+ |y|^2) \, \log (1+ |y|) \\
	& (1+ |y|) |  \,   \nn_y c_j^{**} (y,t)|  + |c_j^{**} (y,t)| \ \le  \  C \ve^{2+\bar \sigma} \, (1+ |y|^2) \, 
\end{aligned}
\end{equation}
for $y= {x-P_j \over \ve_j}$, $|y|< |\log \ve |^{-3}$, for some fixed $\bar \sigma >0$. Similar estimates hold true for $b_j^*$ and $b_j^{**}$. We refer to \eqref{barp00n}.
Besides, from \eqref{d} we get that
	$$
	\ve_j^2 r_j W^* = f \left( (1+{\ve_j \over r_j} y_1)^2 (\psi_j^0  - r_j \alpha_j  |\log \ve_j | +\psi_j^*  ) \right) (1+ O(\ve^2))\sim  f_0 \left( \Gamma_{0j}(y,t) + b_j^{*} +b_j^{**}\right) , \quad {\mbox {with}} \quad f_0 (s) = e^s.
	$$
	Let
 $$
B_0 (\phi ) = 	\mathcal B_j^0  \cdot \nabla \phi , \quad 
		\mathcal B_j^0 := -\ve_j \pp_t \ve_j (1+{\ve_j \over r_j} y_1)     y  -{\ve_j^2 \over r_j} y_1 \pp_t P_j. 
 $$
 With abuse of notation, we can write
 $$
 (1+{\ve_j \over r_j} y_1 \chi )^2 (\psi_j^0  - r_j \alpha_j  |\log \ve_j | + \psi_j^*)       + |\log \ve | \, \eta_{4\ve} \, \mathcal B_j^0 +\eta_{4\ve} \, {\mathcal R}_j (y,t;P)
 $$
 to be equal to
 $$
 \Gamma_{0j}+ \eta_{4\ve } \, b_j^{*} +\eta_{4\ve} b_j^{**}
 $$
 for some new $b_j^*$, $b_j^{**}$ satisfying \eqref{similar}. Let us also introduce
	\begin{equation}\label{e}
	\begin{aligned}
	b_j (\psi^{out} , {\bf a} )&=   (1+{\ve_j \over r_j} y_1 \chi )^2 \,  \eta_{4\ve} \, ( \hat \psi_j  +   r_j \psi^{out} )  \\
		\ve_j^2 r_j W^* &= f_0 \left( \Gamma_{0j} + b_j^{*} \right) + U^* 
		, \quad f_0(s) = e^s.
	\end{aligned}
	\end{equation}
	The inner problem \eqref{Ej}-\eqref{inner} are the restriction to $B(0,3R)$ of the following equations
	\begin{equation}\label{Ejn}
 \begin{aligned}
		E_j  [\phi_j , \psi_j \, \psi^{out} , {\bf a} ] (y,t) &:= |\log \ve |  \, \ve_j^2 \,  (1 +{\ve_j \over r_j } y_1 \chi ) \, \pp_t  \phi_j     \\
		& + \nabla^\perp \left( \Gamma_{0j} + \eta_{4\ve } b_j^{*} +\eta_{4\ve } b_j^{**}  +  b_j ( \psi^{out}, {\bf a} ) \right)\cdot \nabla   \phi_j  \\
		&+ \nabla^\perp ( (1 +{\ve_j \over r_j } y_1 \chi )^2   \psi_j) \cdot \nabla \left( f_0\left( 	\Gamma_{0j} +  b_j^{*} \right) + \eta_{4\ve}  U^* \right) \\
		&+	 \nabla^\perp \left(  \eta_{4\ve} (1+{\ve_j y_1 \over r_1} \chi )^2 r_j \psi^{out}  \right)   \nabla \left( f_0\left( 	\Gamma_{0j} +  b_j^{*}  \right) + \eta_{4\ve}   U^* \right)   \\
&	+ {\mathcal E}_j  [\phi_j^* , \psi_j^* \, \psi^{*,out} , {\bf a} ] (y,t) 	\quad \inn \R^2 \times [0,T],
\\
	-\div \left(  \mathcal P   \nabla \psi_j \right)& = \mathcal P \, \phi_j,  \quad \inn \quad \R^2 \times [0,T] 
	\end{aligned}
 \end{equation}
	where $ {\mathcal E}_j  [\phi_j^* , \psi_j^* \, \psi^{*,out} , P]$ is defined and estimated in Proposition \ref{Approximation}.

	\subsection{Decomposition of $\phi_j$ and $\psi_j$}
	For a function $h :\R^2 \to \R$, consider the problem
	\begin{equation}\label{an}
		- \div \left( {\mathcal P} \nabla \psi \right) = h  \quad \inn \quad \R^2 \times [0,T].
	\end{equation}
	A necessary condition for solvability of \eqref{an} is that
	\begin{equation}\label{an1}
		\int_{\R^2} h (y,t) \, dy =0, \quad \forall t \in [0,T).
	\end{equation}
	We will make use of the following lemma
	
	\begin{lemma}\label{ii} Assume $h:\R^2 \times [0,T] \to \R$ satisfies \eqref{an1} and
		\begin{equation}\label{an2}
			\int_{\R^2} U^{-1} (y) h^2(y,t) \, dy < \infty, \quad \forall t \in [0,T], \quad {\mbox {where}} \quad U(y) = {8 \over (1+ |y|^2)^2} .
		\end{equation}
		Then there exists a unique solution $\psi$  to \eqref{an} such that, for all $t \in [0,T]$,  $\psi (\cdot , t) \in C^{0, \sigma } (\R^2)$ and
		$$
		|\psi (y,t) | \lesssim { \| U^{-{1\over 2} } h \|_{L^2 (\R^2) }\over 1+ |y|^{1-\sigma}}
		$$ 
		for some $\sigma \in (0,1)$, and 
		$$
		\| U^{-{1\over 2} +{1\over p}} \nabla \psi \|_{L^p (\R^2)} \lesssim \| U^{-{1\over 2} } h \|_{L^2 (\R^2) }
		$$
		for any $p>2$.
		If $\| U^{{1\over q}-1} h \|_{L^q (\R^2) } <\infty$ for some $q>2$, then 
		$$
		|\psi (y,t)| + (1+ |y|) |\nabla \psi (y,t) | +
		(1+ |y|)^{1+ \sigma } [\nabla \psi ]_\sigma \lesssim {\| U^{{1\over q}-1} h \|_{L^q (\R^2) }  \over 1+ |y|},
		$$ 
		where we denote
		$$
		[\nn \psi ]_\sigma (y) = \sup_{y_1,y_2\in B_1(y)} \frac {|\nn \psi (y_1) -\nn \psi (y_2) |}  {|y_1-y_2|^\sigma} 
		$$
		and		$\sigma \in (0,1)$.
	\end{lemma}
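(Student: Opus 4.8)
Because $\chi_j(y_1)=\eta_5(|y_1/R|)$ is supported in $\{|y_1|\le 10R\}$ with $R=1/(\ve|\log\ve|^5)$, on this set $\frac{\ve_j}{r_j}|y_1|\lesssim|\log\ve|^{-5}$, so the weight ${\mathcal P}$ in \eqref{defP} is bounded above and below by fixed positive constants, is locally constant and equal to $1+O(|\log\ve|^{-5})$ outside the strip $\{|y_1|\lesssim R\}$, and satisfies $\|{\mathcal P}-1\|_{L^\infty(\R^2)}=O(|\log\ve|^{-5})$ together with $\|\nabla{\mathcal P}\|_{L^\infty(\R^2)}=O(\ve)$. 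Thus $L:=-\div({\mathcal P}\nabla\cdot)$ is, uniformly in $t$, a small perturbation of $-\Delta$ whose leading coefficient differs from a constant only in the strip. The strategy is: (i) build $\psi$ as a fixed point around the two–dimensional Newtonian potential, using the mean–zero hypothesis \eqref{an1}; (ii) read off the pointwise decay from the potential representation; (iii) obtain the weighted $L^p$–bound on $\nabla\psi$ from the Stein--Weiss inequality; (iv) upgrade to weighted $C^{1,\sigma}$–bounds under the stronger hypothesis by rescaled Schauder estimates; (v) deduce uniqueness from the maximum principle.

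For (i): for a mean–zero $g$ write $\Phi[g](y)=\frac1{2\pi}\int_{\R^2}\log\frac{|y|}{|y-z|}\,g(z)\,dz$, the decaying Newtonian potential, so that $-\Delta\Phi[g]=g$. One solves $-\Delta\psi=h+\div\big((1-{\mathcal P})\nabla\psi\big)$ by looking for a fixed point of $\psi\mapsto\Phi[h]+{\mathcal T}[\psi]$, where ${\mathcal T}[\psi](y)=\frac1{2\pi}\int_{\R^2}\frac{y-z}{|y-z|^2}\cdot(1-{\mathcal P}(z))\nabla\psi(z)\,dz$ (the integration by parts being legitimate since $\nabla\psi$ decays and $(1-{\mathcal P})\nabla\psi$ is integrable). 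Because $\|1-{\mathcal P}\|_{L^\infty}$ is small, ${\mathcal T}$ is a contraction on the Banach space $X$ of functions with $\|\psi\|_X:=\sup_{\R^2}(1+|y|)^{1-\sigma}|\psi|+\|U^{-\frac12+\frac1p}\nabla\psi\|_{L^p(\R^2)}<\infty$, for fixed $\sigma\in(0,1)$ and $p>2$, and it maps $\Phi[h]$ into $X$; the unique fixed point is the desired solution of $L\psi=h$. (Equivalently one solves the Dirichlet problems $L\psi_n=h$ in $B_n$ by Lax--Milgram and passes to the limit; here the mean–zero condition is essential, since in $\R^2$ no Hardy–type inequality controls $\int U\psi_n^2$ by $\int|\nabla\psi_n|^2$ uniformly in $n$.) The $C^{0,\sigma}$–regularity is then a local elliptic estimate for the divergence–form equation with locally bounded data.

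For (ii): using $\int h=0$ one has $\Phi[h](y)=\frac1{2\pi}\int\log\frac{|y|}{|y-z|}h(z)\,dz$; splitting into $|z|\le|y|/2$ (where $|\log\frac{|y|}{|y-z|}|\lesssim|z|/|y|$) and $|z|>|y|/2$, and bounding $\int(1+|z|)^{-\sigma}|z|\,|h(z)|\,dz\lesssim\|U^{-\frac12}h\|_{L^2}$ by Cauchy--Schwarz (the necessary logarithmic loss being absorbed into the power $1-\sigma$), one gets $|\psi(y)|\lesssim\|U^{-\frac12}h\|_{L^2}(1+|y|)^{-(1-\sigma)}$, with the ${\mathcal T}[\psi]$–term of the same or better order. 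For (iii): up to the small perturbation, $\nabla\psi=\nabla\Phi[h]=-\frac1{2\pi}\int\frac{y-z}{|y-z|^2}h(z)\,dz$ is a Riesz potential of order one of $h$, and the claimed $\|U^{-\frac12+\frac1p}\nabla\psi\|_{L^p}\lesssim\|U^{-\frac12}h\|_{L^2}$ is precisely a Stein--Weiss (weighted Hardy--Littlewood--Sobolev) estimate, reading $U^{\pm}$ as $\langle y\rangle$–powers active only at infinity, the exponent arithmetic being fixed by scaling. For (iv): under $\|U^{\frac1q-1}h\|_{L^q}<\infty$ with $q>2$, the same Hölder computation as in (ii) gives $\int|z|\,|h(z)|\,dz<\infty$, hence $|\psi(y)|\lesssim(1+|y|)^{-1}\|U^{\frac1q-1}h\|_{L^q}$; rescaling $L\psi=h$ on the dyadic annuli $\{|y|\sim 2^m\}$ and applying interior $C^{1,\sigma}$–estimates together with the decay of $h$ then yields $(1+|y|)|\nabla\psi|+(1+|y|)^{1+\sigma}[\nabla\psi]_\sigma\lesssim(1+|y|)^{-1}\|U^{\frac1q-1}h\|_{L^q}$.

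For (v): if $\psi_1,\psi_2$ both solve $L\psi_i=h$ and vanish at infinity, then $w=\psi_1-\psi_2$ satisfies $-\div({\mathcal P}\nabla w)=0$ in $\R^2$ with $w(y)\to0$, so $w\equiv0$ by the maximum principle. The main obstacle is steps (ii)--(iii): in two dimensions the Newtonian potential grows logarithmically, so both the decay rate and the weighted $L^p$–gradient bound are genuinely borderline; they hinge on the mean–value condition \eqref{an1} and on absorbing the unavoidable logarithm into the small parameter $\sigma$ (respectively into the strict inequality $q>2$). Keeping all constants uniform in $\ve$ despite the growing ($\sim R$) width of the strip on which ${\mathcal P}\neq1$ is the remaining point that has to be watched.
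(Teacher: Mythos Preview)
Your approach is genuinely different from the paper's. The paper pulls the equation back to $S^2$ via stereographic projection $\pi$: writing $\tilde h=2U^{-1}h$, one has $d\pi(z)=\tfrac{U}{2}\,dy$ and $\nabla_{\R^2}=\tfrac{U^{1/2}}{\sqrt2}\nabla_{S^2}$, so \eqref{an} becomes $-\div_{S^2}(\mathcal P\nabla_{S^2}\psi)=\tilde h$ on $S^2$, condition \eqref{an1} becomes $\int_{S^2}\tilde h=0$, and \eqref{an2} becomes $\tilde h\in L^2(S^2)$. Then existence, uniqueness (mean zero), $H^2(S^2)$-regularity and hence $C^{0,\sigma}$ and $W^{1,p}$ for all $p$ are immediate; pulling back, the weights $U^{-1/2}$, $U^{-1/2+1/p}$, $U^{1/q-1}$ are \emph{exactly} the Jacobian factors, which is why all three weighted estimates drop out with no computation. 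Choosing the additive constant so that $\psi$ vanishes at the south pole gives the decay.

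Your route via a Newtonian-potential fixed point and direct kernel estimates is sound in outline, and steps (i), (ii), (iv), (v) are fine. The weak point is step (iii): the inequality $\|U^{-1/2+1/p}\nabla\Phi[h]\|_{L^p}\lesssim\|U^{-1/2}h\|_{L^2}$ is \emph{not} a Stein--Weiss case. Translating to homogeneous weights at infinity, you are asking for $\||y|^{2-4/p}I_1[|y|^{-2}g]\|_{L^p(\R^2)}\lesssim\|g\|_{L^2(\R^2)}$; in the Stein--Weiss framework this needs the source weight exponent to satisfy $a<n/p'=1$, but here $a=2$. (Equivalently, $I_1:L^2(\R^2)\to L^\infty$ is the critical failure, and the chosen weights do not move you off that endpoint in the way Stein--Weiss requires.) The estimate is nonetheless true, and the one-line proof is precisely the paper's: $\|\nabla_{S^2}\psi\|_{L^p(S^2)}\lesssim\|\tilde h\|_{L^2(S^2)}$ by $H^2(S^2)\hookrightarrow W^{1,p}(S^2)$, then undo the projection. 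If you insist on staying in $\R^2$, you can instead get $\nabla\psi\in L^2$ from the energy identity combined with your pointwise bound (ii), and then upgrade to weighted $L^p$ via local $W^{2,2}$-regularity and summation over dyadic annuli --- but this is exactly redoing the Sobolev embedding on $S^2$ by hand. In short: your plan works once (iii) is replaced by the sphere argument (or its $\R^2$ unfolding); the stereographic trick is what makes the whole lemma a two-paragraph affair.
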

	
	\begin{proof} It is convenient to  pull equation \equ{an}  into the sphere $S^2$ by means of  the stereographic projection
		$$
		\pi \, : \,  S^2 -\{(0,0,1)\}: \to \R^2
		$$
		\be \label{stereo}
		\pi(z) = (    \frac{z_1}{1-z_3},\frac{z_2}{1-z_3}), \quad z \in S^2 -\{(0,0,1)\},
		\ee
		whose inverse is given by $\pi^{-1} : \R^2 \to S^2 -\{(0,0,1)\}$
		$$
		\pi^{-1} (y) =   (\frac{2y_1}{1+ |y|^2}, \frac{2y_2}{1+ |y|^2},\frac{|y|^2-1 }{1+|y|^2}), \quad y\in \R^2.
		$$
		For a function  $h(y):\R^2\to \R$ we denote by $h(z)$ the function $h(\pi(z))$ defined on $S^2$. Then the differential of volume is given by $d \pi (z) = {4 \over (1+ |y|^2)^2} \, dy = {U(y) \over 2} \, dy$, where the function $U$ is the one in \eqref{defU}. Thus  we get
		\begin{align*}
			\nabla_{\R^2} h(y) = {U^{1\over 2} \over \sqrt{2}} (\nabla_{S^2} h ) (z), \quad 	\Delta_{\R^2} h(y) = {U\over 2}   (\Delta_{S^2}  h ) (z) ,  \quad  y = \pi(z).
		\end{align*}
		and
		$$
		\int_{\R^2}h(y)\, dy \  =\ 2 \int_{S^2}  h(z)\,  U^{-1}(z) \, d\pi (z)  .
		$$ 
		For a vector-valued function  $F(y):\R^2\to \R^2$ we have
		$$
		{\div }_{S^2} F  (z) = \sqrt{2} U^{-1} {\div}_{\R^2} (U^{1\over 2} F) (y).   
		$$
		Let us denote
		$$
		\ttt h(y,t) = \frac {2\, h(y,t)}{U(y)}
		$$
		Then Equation \equ{an}  gets transformed into
		\be\label{poisson1}
		-{\div}_{S^2} (\mathcal P \nabla_{S^2} \psi )  = \ttt h \inn S^2  .
		\ee
		From \eqref{defP} we get that $\mathcal P (y_1,t) = 1+ O(\ve R)$, for all $t \in [0,T]$, $y_1 \in \R$.  Assumptions \eqref{an1} and \eqref{an2} become
		$$
		\int_{S^2} \ttt h^2 (z) \, d\pi(z) \ =\ 2 \int_{\R^2}  h^2(y) U(y)^{-1}\, dy  <\infty , \quad \int_{S^2} \ttt h (z) \, d\pi (z) \ =\ 0.
		$$
		The latter condition in the above formula  implies the existence of a unique solution of \equ{poisson1} with mean value zero. This solution is in $H^2(S^2)$, hence it is H\"older continuous of any order. After adding a proper constant, we choose the solution which vanishes at the sounth pole  $S=(0,0,1)$. Pulling back this function to a $\psi(y)$ defined in $\R^2$ we see that
		it satisfies equation \equ{an} and it is the only solution that vanishes as $|y|\to \infty$.  This and the H\"older condition yields that $\psi$  satisfies
		\be\label{cota1psi}
		|\psi(y)| \ \le \  \frac {C}{1+ |y|^{1-\sigma}}\,  \| h\, U^{-\frac 12} \|_{L^2(\R^2)}
		\ee
		for an arbitrarily small $\sigma>0$. Moreover, for all $p>2$ we have an $L^p(S^2)$-gradient estimate for $\ttt \psi(z)$ of the form
		$$
		\|\nn_{S^2} \ttt\psi \|_{L^p(S^2)} \ \le \ C  \| \ttt \phi \|_{L^2(S^2)}
		$$
		which yields for $\psi$ in \equ{an}
		\be\label{gradp}
		\| U^{-\frac 12 + \frac 1{p}} \nn \psi \|_{L^p(\R^2)} \ \le \ C  \| h\, U^{-\frac 12} \|_{L^2(\R^2)}.
		\ee
		If  in addition we have that $\ttt h\in L^q(S^2)$ for some $q>2$, then a solution of
		\equ{poisson1} satisfies
		$$
		\|\nn_{S^2} \ttt\psi \|_{C^{0,\sigma} (S^2)} \ \le \ C  \| \ttt h \|_{L^q(S^2)}.
		$$
		for some $0<\sigma<1$.
		This estimate translates for $\psi $  into
	$$  |\psi(y)| \, + \,  (1+|y|) \, |\nn \psi(y) | + (1+|y|)^{1+\sigma} [\nn \psi ]_\sigma (y)    \ \le \frac C{1+|y|}  \| U^{\frac 1q -1} h \|_{L^q(\R^2)}. $$
		The proof is completed.
	\end{proof}
 
\medskip

We shall make a  decomposition of the functions $\phi_j$, $\psi_j$ introduced in \eqref{f11}. 

\medskip
	Let
	\begin{align*}
		\mathcal Z_{20}(y) = \Gamma_0(y),\quad
		\mathcal Z_{23}(y)  =  2\Gamma _0(y) + \nn_y \Gamma_0 (y)\cdot y  ,  
	\end{align*}
	where $\Gamma_0$ is given in \eqref{defU}.
	We  write $\phi_j(y,t)$ in the form
	\begin{equation}\label{desco1}
		\begin{aligned}
\mathcal P 		\phi_j(y,t) &=  \mathcal P \hat \phi_j(y,t) + \sum_{l=0,3}  \beta_{jl}(t)  \mathcal P \mathcal Z_{1l}(y,t), \quad {\mbox {where}} \\
		{\mathcal Z}_{1\ell} &= -{1\over {\mathcal P} }\, \div ({\mathcal P} \nabla \mathcal Z_{2\ell} ) ,\quad \ell =0,3,
		\end{aligned} 
	\end{equation}
	for some smooth parameter functions $\beta_{j \ell }$ that we will determine later on. 
	A direct inspection gives
	\begin{align*}
		{\mathcal Z}_{10}& = U(y) -{3\ve_j \over (r_j + \ve_j y_1 \chi )} \,  ( \chi + y_1 \chi' ) \, \pp_{y_1} \mathcal Z_{20} (y)  ,\quad \\
		{\mathcal Z}_{13}  &=  2 U(y) + \nn_y  U(y) \cdot y  -{3\ve_j \over (r_j + \ve_j y_1 \chi )} \, ( \chi + y_1 \chi' ) \, \pp_{y_1} \mathcal Z_{23} (y),
	\end{align*}
where $\chi$ is defined in \eqref{defP}.
	On $\hat \phi_j$
	we impose the following orthogonality conditions
	\begin{equation}
		\int_{\R^2} \mathcal P \hat \phi_j {\bf Z} _\ell \, dy \, = \, 0, \quad \ell=0,1,2,3 \foral t\in [0,T] ,
		\label{orto1}\end{equation}
	with
	\begin{equation}\label{bernieb}
		\begin{aligned}
			{\bf Z}_0(y) &= 1, \quad  p^2  \, {\bf Z}_3(y,t) = G\left( \Gamma_{0j} \right)   + b_3(y,t)  \\
	 \,  \,  p^{2}{\bf Z}_1(y,t) &= y_1 (1+ b_1 (y_1,t))  \, {\bf 1} _{B_{8R}}(y)  , \quad
		 \,	p^2 \, {\bf Z}_2(y,t) = \left( y_2 + \ve y_1 y_2 \right) {\bf 1} _{B_{8R}}(y)  , \quad
		\end{aligned}
	\end{equation}
	where  $p(y_1, t)$ is defined in \eqref{defP},  ${\bf 1} _{B_{8R}}(y) =1$ for $y\in B(0,8 R)$, $=0$ otherwise. The function $\Gamma_{0j}$ is given in \eqref{mm} and $G$ is such that
	$$
	G(\Gamma_0 (y) ) = \nabla \Gamma_0 \cdot y + 2 = {1-|y|^2 \over 1+ |y|^2}.
	$$
	Since $G' (\Gamma_0 (y)) \sim |y|^{-2}$ as $|y| \to \infty$, one has
	$$
	G(\Gamma_{0j} ) = G(\Gamma_0 ) (1+ O(\ve R) ).
	$$
	Besides, the functions 
 $b_3 , b_1 $ will be explicitly defined in \eqref{defb3}; for the moment we think at it as $b_3= O(\ve^2)$ and $b_1 = o(1)$, as $\ve \to 0$. 

 \medskip{}{}
  For $\phi_j$ as in \eqref{desco1}, we write
\begin{equation}\label{nota1}
\hat \phi = \left( \hat \phi_1 , \ldots , \hat \phi_k \right), \quad \beta = \left( \beta_1 , \ldots , \beta_k \right), \quad \beta_j = \left( \beta_{j0}, \beta_{j3} \right).
\end{equation}
	We assume that
	\begin{equation}\label{inte}
		\int_{\R^2} U^{-1} (y) ( \mathcal P \hat \phi_j )^2 (y,t) \, dy <\infty, \quad \forall t \in [0,T].
	\end{equation}
	Let $\hat \psi_j (y,t)$ be the solution of
	\begin{equation}\label{psi1h}
		-\div (\mathcal P \nabla \hat \psi_j ) = \mathcal P \hat \phi_j, \quad \inn \quad \R^2,
	\end{equation}
	predicted by Lemma \ref{ii},
	and define
	\begin{equation}\label{desco2}
		\psi_j(y,t) =  \hat \psi_j (y,t)  + \sum_{l=0,3}  \beta_{jl}(t)  \mathcal Z_{2l}(y) .
	\end{equation}
	Then $\psi_j$ 
	satisfies the second condition in \eqref{inner}, namely 
	$$
	-\Delta_{5,j} \psi_j = \phi_j  \quad (y,t) \in B(0, 3R  ) \times [0,T)
	.	$$
	Recall from \eqref{Ej} that
	\begin{equation}
		\label{defR}
		R= {1 \over \ve |\log \ve|^5 }.
	\end{equation}

	The decomposition for $\phi_j$ in \eqref{desco1} is motivated by the validity of a key estimate for a quadratic form.

	\begin{lemma}\label{passl}
		There exists a number $\gamma>0$ such that for any  sufficiently small $\ve$  and all $\phi$ satisfying conditions $\equ{orto1}$-$\equ{inte}$, the following holds:
		let $g$ be given by
		\[
		g = \left( f_0' (\Gamma_{0j} ) \right)^{-1} \left(   \mathcal P \phi  - \mathcal P f_0' (\Gamma_{0j} ) \, p^2 \,  \psi \right), 
		\]
		where $\psi$ solves $ - \div (\mathcal P \nabla \psi ) = \mathcal P \phi$. Then 
		we have
		\begin{equation}\label{pass}
			\int_{\R^2} \mathcal P \phi \, g \geq \frac{\gamma}{|\log R|}
			\int_{\R^2} (\mathcal P \phi)^2 U^{-1},
		\end{equation}
		where $R$ is given by \eqref{defR}. We recall the definition of $\mathcal P (y_1, t) $ and $p(y_1, t)$ in \eqref{defP}, $\Gamma_{0j}$ as in \eqref{mm} and that $f_0 (s) = e^s$.
	\end{lemma}

	\begin{proof}
	The proof is divided into two steps: we first prove \eqref{pass} with the test function $g$ replaced by $g_0$, with
	$$
	g_0 = U^{-1} \left( \mathcal P \phi - U \psi \right).
	$$
	where $U$ is defined in \eqref{ks}. Then we prove \eqref{pass}.
	
	\medskip{}{}
		Let
		$ \ttt \phi = 2\, U^{-1} \mathcal P \phi $. Using the stereographic projection $\pi$ introduced in  \eqref{stereo}, we get
		$$
		\int_{\R^2} \mathcal P \phi g_0 = 2\int_{\R^2} (\mathcal P \phi )^2 U^{-1} - \int_{\R^2} \mathcal P \phi \psi = \int_{S^2} \tilde \phi^2 - \int_{S^2} \tilde \phi \psi.
		$$ 
		Consider the orthonormal basis in $L^2 (S^2)$ of spherical harmonics $(e_\ell)_\ell $, where $ -\Delta_{S^2} e_\ell = \la_\ell e_\ell$.
		Here $ \la_0=0$ and $ e_0$ is constant, while  $ \la_1=\la_2=\la_3 = 2$, with $ e_\ell (z) =z_\ell$, for $\ell=1,2,3$. We decompose $\tilde \phi$ as
		$$
		\ttt \phi =   \sum_{\ell=0}^\infty  \ttt\phi_\ell e_\ell (z) = \sum_{\ell=0}^3 \tilde  \phi_\ell e_\ell (z)  +\ttt\phi^\perp, \quad \tilde \phi_\ell= \int_{S^2} \phi \, e_\ell \, d\pi (z).
		$$
		Since $\int_{\R^2} \mathcal P \phi = \int_{S^2} \tilde \phi =0$, we get $\tilde \phi_0=0$. For $\ell \geq 1$, let $\psi_\ell$ be defined by
		$$
		-{\div}_{S^2} (\mathcal P \nabla_{S^2} \psi_\ell )= e_\ell .
		$$
	From Lemma \ref{ii} we get that $\psi_\ell \in H^2 (S^2)$ and $\| \psi_\ell \|_{H^2 (S^2)} \leq C$ for some $C>0$.	Since $-{\div}_{S^2} (\mathcal P \nabla_{S^2} \psi )= \tilde \phi$, we have that $\psi = \sum_{\ell=1}^\infty \tilde \phi_\ell \psi_\ell.$
		The equation satisfied by $\psi_\ell$ becomes $-\div_{\R^2} (\mathcal P \nabla \psi_\ell ) = U e_\ell$ in $\R^2$.

		It is convenient to write
		$
		\psi_\ell =  (- \Delta_{S^2} )^{-1} e_\ell + \bar \psi_\ell = {2 \over \la_\ell } e_\ell + \bar \psi_\ell.
		$
		Then
		$$
		-{\div}_{S^2} ( \nabla_{S^2} \bar \psi_\ell )= h_\ell, \quad h_\ell = { \nabla_{S^2} \mathcal P \nabla_{S^2} \psi_\ell \over \mathcal P} + {(1-\mathcal P) e_\ell \over \mathcal P}.
		$$
		A direct computation gives $\int_{S^2} h_\ell =0$. Besides,
		\begin{align*}
			\int_{S^2} h_\ell^2 & \leq \int_{S^2 } |{\nabla_{S^2} \mathcal P \over \mathcal P} \nabla_{S^2} \psi_\ell |^2 + \int_{S^2 } |{(1-\mathcal P) \over \mathcal P} e_\ell|^2 \leq c  |\log \ve |^{-6}\left(  \| \psi_\ell \|_{H^2 (S^2)}^2 +1  \right) 
		\end{align*}
		for some $c>0$.
		We argue as in Lemma \ref{ii} to get  $\psi_\ell \in H^2 (S^2) $.
		Thus
		\begin{align} 	\label{bernie}  \int_{\R^2} \mathcal P\phi \, g_0
			&	=    \sum_{\ell=4}^\infty \left ( 1- \frac 2{\la_\ell} \right )\ttt\phi_\ell^2  + O(\ve R)  \| \tilde \phi \|^2_{L^2 (S^2)} \ \ge \ \\ & c_1\|\ttt\phi^\perp \| _{L^2(S^2)}^2  + O(\ve R) \| \tilde \phi \|^2_{L^2 (S^2)} \nonumber
		\end{align}
		for some uniform $c_1>0$. We also have, for $j=1,2$
		$$
		0 = \int_{B (0,5R)} \mathcal P \phi {\bf Z}_j  =  c_2   \ttt\phi_j  +     O( \|\ttt \phi^\perp\|_{L^2(S^2)}) \, |\log R|^{\frac 12 }
		$$
		for some uniform $c_2>0$, as $\ve \to 0$.  On the other hand, we have
		$$
		0 = \int_{\R^2} \mathcal P \phi {\bf Z}_3   =  \ttt\phi_3   +    O(\ve R)  \|\ttt\phi \| _{L^2(S^2)}.
		$$
		From the above relations we get that for some $c>0$ independent of $\ve$,
		\begin{align*}
			\|\ttt \phi^\perp\|_{L^2(S^2)}^2 \, & \ge  \,  \|\ttt \phi\|^2_{L^2(S^2)}  - c \sum_{\ell = 1}^3 |\ttt \phi_\ell|^2 ,  \\
			& \ge\,  (1- O(\ve R) ) \,\|\ttt \phi\|_{L^2(S^2)}^2 \, - \, c |\log R|\, \|\ttt \phi^\perp\|_{L^2(S^2)}^2.
		\end{align*}
		From here we get
		$$
		(1+ c |\log R|) \, \|\ttt \phi^\perp\|_{L^2(S^2)}^2 \, \geq  (1- o(1) ) \,\|\ttt \phi\|_{L^2(S^2)}^2
		$$
		with $o(1) \to 0 $ as $\ve \to 0$,
		which combines with \equ{bernie} to get
		$$
		\int_{\R^2} \mathcal P \phi \, g_0  \ge \frac{\gamma}{  |\log R|} \int_{S^2} \ttt \phi^2
		$$
		for some uniform $\gamma>0$.
		
		\medskip{}{}
		We next estimate $\int_{\R^2} \mathcal P g $ in terms of $\int_{\R^2} \mathcal P g_0 $. To this purpose, we write
		\begin{align*}
		    g&= g_0 + \left( \left( f_0' (\Gamma_{0j} ) \right)^{-1} - U^{-1} \right) \left( \mathcal P \phi - U  \psi \right) + \left( f_0' (\Gamma_{0j} ) \right)^{-1} \left( U - \mathcal P f_0' (\Gamma_{0j} ) p^2 \right) \psi 
		\end{align*}
		Recalling the definition of $\Gamma_{0j}$ in \eqref{mm}, we get
		$$
f_0' (\Gamma_{0j} ) = U(y) \left(1+ O(|\log \ve |^{-2} ) \right);
		$$
		hence we get
		$$
		\int_{\R^2} \mathcal P \phi \,  \, \left( \left( f_0' (\Gamma_{0j} ) \right)^{-1} - U^{-1} \right) \left( \mathcal P \phi - U  \psi \right) = O(|\log \ve |^{-2} ) 	\int_{\R^2} \mathcal P \phi \, g_0 .
		$$
		Besides
		\begin{align*} \left| \int_{\R^2} \mathcal P \phi \left( f_0' (\Gamma_{0j} ) \right)^{-1} \left( U - \mathcal Pf_0' (\Gamma_{0j} ) p^2 \right) \psi  \right|& \lesssim |\log \ve |^{-2} \int_{\R^2} |\mathcal P \phi \psi |
		\lesssim  |\log \ve |^{-2} \| U^{-{1\over 2}} \phi \|_{L^2 (\R^2)}^2  .
		\end{align*}
		Thus we get 
		\begin{align*}
		    	\int_{\R^2} \mathcal P \phi \, g  \ge (1+ O(|\log \ve |^{-2} ) ) 	\int_{\R^2} \mathcal P \phi \, g_0  - O(|\log \ve |^{-2} ) \int_{\R^2} U^{-1} (\mathcal \phi)^2 \ge {\tilde \gamma \over |\log R |} \int_{\R^2} U^{-1} (\mathcal \phi)^2 
		\end{align*}
		for some new $\tilde \gamma $ which is uniformly positive as $\ve \to 0$. This concludes the proof.
		\end{proof}

	We will make use of this result to establish a-priori bounds for solutions to a projected version of the inner problem \eqref{inner}. In Section \ref{sec9} we will first establish a-priori bounds in weighted $L^2$-spaces, which will be used to establish the weighted $L^\infty$ a-priori bounds. Before entering Section \ref{sec9},  we give a sketch of the proof to solve in $(\phi^{in}, \psi^{in}, \phi^{out}, \psi^{out} )$ the whole inner-outer gluing system  \eqref{inner}-\eqref{out}-\eqref{boundary}.
	
	\subsection{Strategy for the rest of the proof} Let
	$$
	b_j (\beta_j , \psi^{out} , {\bf a} )=   (1+{\ve_j \over r_j} y_1 \chi )^2 \,  \eta_{4\ve} \, ( \hat \psi_j + \sum_{l=0,3}  \beta_{jl}(t) \mathcal Z_{1l} +   r_j \psi^{out} ) 
	$$
	where $\eta_{4\ve} $ is defined in \eqref{ccuts} and 
	 introduce the following operators, depending on a homotopy parameter $\la \in [0,1]$
	\begin{align}\label{Ejla}
		E_{j, \la}  & [\hat \phi_j , \beta_j \, \psi^{out} , {\bf a} ] (y,t) := |\log \ve |  \, \ve_j^2 \,  (1 +{\ve_j \over r_j } y_1 \chi ) \, \pp_t \hat \phi_j     \\
		& + \nabla^\perp \left( \Gamma_{0j} + \la \eta_{4 \ve} b_j^{*} + \la \eta_{4 \ve} b_j^{**} + \la b_j (\beta_j, \psi^{out}, {\bf a} ) \right)\cdot \nabla  \hat \phi_j \nonumber \\
		&+ \nabla^\perp ( (1 +{\ve_j \over r_j } y_1 \chi )^2 \hat  \psi_j) \cdot \nabla \left( f_0\left( 	\Gamma_0 + \la b_j^{*}  \right) + \eta_{4\ve}   U^* \right)\nonumber \\
		&+	 \nabla^\perp\left[\ve_j \left(  |\log \ve|  \pp_t {\bf a }_j +   D_x \nn_x\vp_j ( {\bf P}_j ; {\bf P} ) [{\bf a} ] + \la \eta_{4\ve} (1+{\ve_j y_1 \over r_1} )^2 r_j \psi^{out}  \right)\cdot y   \right] \nabla U \nonumber  \\
		& +\la 	\tilde {\mathcal E}_j  (\beta_j, \psi^{out}, {\bf a} ) + |\log \ve | \ve_j^2 (1 +{\ve_j \over r_j } y_1 \chi ) \sum_{\ell = 0,3} \pp_t (\beta_{j\ell}  \mathcal Z_{1\ell} )\quad \inn \R^2 \times [0,T]\nonumber 
	\end{align}
where $f_0 (s) = e^s$,
	\begin{align*}
		\tilde {\mathcal E}_j &=  |\log \ve | B_0 (\sum_{\ell=0,3} \beta_{j\ell} \mathcal Z_{1 \ell} )  + \nabla^\perp \left( \Gamma_{0j} + \eta_{4\ve}  b_j^* + \eta_{4 \ve} b_j^{**} + b_j   \right)\cdot \nabla  (\sum_{\ell=0,3} \beta_{j\ell} \mathcal Z_{1 \ell} )\\
		&+ \nabla^\perp \left((1+{\ve_j y_1 \over r_1} \chi )^2 (r_j \psi^{out} ) \right) \nabla  (   \eta_{4\ve} U^* ) \nonumber \\
			&+	 \nabla^\perp\left[\left( \ve_j  |\log \ve|  \dot {\bf a }_j + \ve_j  \nn_x\vp_j ( P_j ; P ) - \nn_x\vp_j ( {\bf P
		}; {\bf P} )-  D_x \nn_x\vp_j ( {\bf P}_j ; {\bf P}) [{\bf a} ] \right) \cdot y   \right] \nabla U \\
		&	+ {\mathcal E}_j  [\phi_j^* , \psi_j^* \, \psi^{*,out} , {\bf a} ] (y,t)
	\end{align*}
	where $ {\mathcal E}_j  [\phi_j^* , \psi_j^* \, \psi^{*,out} , P]$ is defined 
	and estimated in Proposition \ref{Approximation}.
	
	If  $\la =1$ and we restrict the problem to $B(0,3R)$, we get the operator $E_j^{in}$ defined in \eqref{Ej}, or equivalently \eqref{Ejn}.

	\medskip Recalling the notation introduced in \eqref{nota1}, we define
	\begin{align*}
		E_{1,\la}^{out}( \phi^{out}, \psi^{out} , \hat \phi ,  \beta,{\bf a} )
		& \, = \,  |\log \ve | \, r \,  \pp_t \phi^{out}   \, \\
		& + \, \nn^\perp  \big (r^2 (\Psi^* -r_0^{-1} |\log \ve | +\la \sum_{j=1}^k { \bar \eta_{j2}\over r_j} \psi_j ({x-P_j \over \ve_j }) + \psi^{out} ) )  \cdot \nn\phi^{out}  \\ &\
		+  \ttt \EE_{1}^{out}( \hat \phi ,\beta,     \psi^{out},{\bf a} )
		\quad (x,t) \in \Sigma \times [0,T) 
	\end{align*}
	where
	\be\label{EE1}
	\begin{aligned}
		& \ttt \EE_{\la }^{out}( \hat \phi ,\beta,     \psi^{out},{\bf a} ) \, :=\, \\
		& \sum_{j=1}^k \left[ r \, |\log \ve |  \,  \pp_t \bar \eta_{j1} + \nabla_x^\perp ( r^2 (\Psi^*  + \sum_{j=1}^k  { \bar \eta_{j2}\over r_j} \psi_j ({x-P_j \over \ve_j }) + \psi^{out}-r_0^{-1} |\log \ve |)) \nabla \bar \eta_{1j} \right] {\phi_j \over \ve_j^2 r_j}   \\
		&+ \left[ \sum_{j=1}^k (\bar \eta_{2j} - \bar \eta_{1j}) \nabla_x^\perp (r^2 ({\psi_j \over r_j} +\psi^{out}) ) + {r^2 \psi_j \over r_j} \nabla_x^\perp \bar \eta_{2j} \right] \nabla_x W^*   \\
		&+ (1-\sum_{j=1}^k \bar \eta_{2j} ) \nabla^\perp (r^2 \psi^{out} ) \cdot \nabla W^*   
		+ (1-\sum_{j=1}^k \bar \eta_{j1} ) S_1 (W^*,\Psi^*)=0 \quad (x,t) \in \Sigma \times [0,T), 
	\end{aligned}\ee
	We also define
	\begin{align*}
		&  E_{1,\la}^{out}(  \psi^{out}, \phi^{out}, \hat \phi, \beta , {\bf a} )\ = \
		\Delta_5 \psi^{out} + \la \phi^{out} + \la \sum_{j=1}^k (\bar \eta_{j1} -\bar \eta_{j2} ) {\phi_j \over r_j \ve_j^2} \\
		&+\la \sum_{j=1}^k ( {\psi_j \over r_j} \Delta_5 \bar \eta_{j2}  + 2 \nabla_x \bar \eta_{j2} \nabla_x {\psi_j \over r_j}  ), \quad (x,t) \in \Sigma \times [0,T)
	\end{align*}
	
	The key observation is that for
	$\phi_j$, $\psi_j$ given by \equ{desco1}-\equ{desco2} we have the identities, when $\la =1$
	\begin{align*}
		{E }_{j,1} ( \hat \phi, \beta , \phi^{out}, \psi^{out}, {\bf a} )   \, = &\, E_j^{in}( \phi_j ,  \psi_j  , \psi^{out} ; \beta, {\bf a} )\inn B (0,3R)\times [0,T] , \\
		E_{1}^{out}( \phi^{out}, \psi^{out}, \hat \phi, \beta , {\bf a} ) \, = &\, E^{out}(  \phi^{out}, \psi^{out}, \phi^{in} , \psi^{in}  ;\beta, {\bf a} ) \inn \Sigma \times [0,T) ,
		\\
		E_{1,1}^{out}( \psi^{out}, \phi^{out},  \hat \phi, \beta , {\bf a} ) \, = &\, E_1^{out}(  \psi^{out}, \psi^{in}  ,  \phi^{out} ; \beta, {\bf a}  )   \inn \Sigma \times [0,T) ,
	\end{align*}
	with boundary and decay conditions
	$$
	{\pp \over \pp r } \psi^{out} (x,t) = 0 , \quad (x,t) \in \pp \Sigma \times [0,T], \quad |\psi^{out} (x,t)| \to 0 , \quad \ass |x| \to \infty. 
	$$
	Here $E_j^{in}$, $E^{out}$ and $E_1^{out}$ are defined respectively in \eqref{Ej}, \eqref{Eoutn} and \eqref{Eout1}. In other words, solving the {\em inner-outer gluing} system \eqref{inner}-\eqref{out} coupled with the boundary and decay conditions \eqref{boundary} amounts to make the three quantities above equal to zero (keeping the boundary and decay conditions).
	We will do this by a continuation argument that involves finding uniform a-priori estimates for the corresponding equations along the deformation parameter $\la$  imposing in addition initial condition $0$ for all the parameter functions.
	
	\medskip
	We use the following strategy. We consider the functions $\beta_j, \psi^{out},{\bf a} $ as given
	and require that $\hat \phi_j$ satisfies an initial value problem of the form

	\be
		{E }_{j,\la}  ( \hat \phi_j, \beta_j , \phi^{out}, \psi^{out},{\bf a} )  =   \sum_{l=0}^3 c_{lj}(t) z_{1l}(y)
		\inn \R^2\times [0,T] , \quad
		\hat \phi_j(y,0 )  = 0\inn \R^2
	\label{equinner} \ee
	where
	\begin{equation}\label{defsmallz}
		\begin{aligned}
			z_{10}(y) = U_0(y),& \quad
			z_{11}(y)  = \pp_{y_1} U_0(y),\\
			z_{12}(y) = \pp_{y_2} U_0(y), &\quad
			z_{13}(y)  = 2U_0(y) + \nn_y U_0(y)\cdot y .
		\end{aligned}
	\end{equation}
	for some explicit functions $c_{lj}(t)$. We prove that   $c_{lj}(t)$ are linearly dependent on  $\hat \phi_j$ and  can be  computed after integrating the equation against ${\bf Z}_\ell$ in space variable. See \eqref{bernieb} for the definition of ${\bf Z}_\ell$.

	\medskip
	Solving $E_{\la,j} \equiv 0$
	is equivalent to 
	solving the initial value problems
	\be\label{equc} \begin{aligned}   c_{lj}[ \hat  \phi_j ,  \beta_j, \phi^{out},  \psi^{out},{\bf a} ,\la](t)& =0 \foral t\in [0,T], \, \ell =0,1,2,3\\
		{\bf a}_j(0)&=  \beta_j(0) =0  \end{aligned} \ee
	for all  $j=1, \ldots , k$.
	We require \be\label{equout1}  E_{\la}^{out}(\phi^{out},   \psi^{out}, \hat  \phi_j ,  \beta_j, {\bf a}) =0 \inn  \Sigma\times [0,T],\quad
		\phi^{out}(\cdot ,0) = 0 \inn \Sigma 
	 \ee
	and
	\be\label{equout2} \begin{aligned} E_{1,\la}^{out}(  \psi^{out},  \phi^{out},   \hat  \phi_j ,  \beta_j, {\bf a} ) &=0 \inn \ \ \Sigma \times [0,T],\\
		{\pp \over \pp r}\psi^{out} = 0 \onn \pp\Sigma\times [0,T], &\quad |\psi^{out} (x,t)| \to 0 , \quad \ass |x| \to \infty.
	\end{aligned} \ee
	We recall the form of $\phi_j$ and $\psi_j$, as in \equ{desco1}-\equ{desco2}
	\begin{align*}
		\phi_j(y,t) \, = & \, \hat \phi_j(y,t) + \sum_{l=0,3}  \beta_{jl}(t)  Z_{1l}(y,t),\\
		\psi_j(y,t)\, = & \, \hat \psi_j(y,t) + \sum_{l=0,3}  \beta_{jl}(t) \mathcal Z_{2l}(y).
	\end{align*}

	We can write the system of equations \equ{equinner}, \equ{equc}, \equ{equout1}, \equ{equout2} in the form of a fixed point problem for the variable
	$$
	\vec {p} = (\hat \phi, \beta,  \phi^{out},\psi^{out}, {\bf a}) .
	$$
	The fix point problem has the form
	\be\label{equp}
	\vec { p}  = {\mathcal F} ( \vec { p} , \la ), \quad \vec { p} \in \OO\, .   
	\ee
	Here $\OO$ designates a bounded open set in an appropriate Banach space with $\vec p= \vec 0\in \OO$ and
	${\mathcal F}( \cdot  , \la )$  is a homotopy of nonlinear compact operators on $\bar \OO$
	with ${\mathcal F} ( \cdot  , 0)$ linear.

	\medskip
	We shall prove that a suitable choice of a small $\OO$ yields that for all $\la\in [0,1]$
	no solution of \equ{equp} with  $\vec p\in \pp \OO $   exists.
	Existence  of a solution of \equ{equp} for $\la=1$ thus follows from standard degree theory. But this precisely corresponds to a solution of the original problem. The definition of the norm and the set $\OO$ will yield the desired properties of the solution of Euler equation thus obtained.

	\medskip
	In order to find the desired a priori estimates we need several preliminary considerations which we make in the next section.

	\section{Some a-priori estimates} \label{sec9}

	Let us consider functions  $\phi (y,t)$, $y \in \R^2$, $t\in [0,T]$ that satisfy
	$$
	\| \mathcal P \phi\, U^{-\frac{1}{2} } \|_{L^2(\R^2)}<\infty, \quad {\mbox {where}} \quad \mathcal P (y_1 , t) = p^3 (y_1, t) , \quad p(y_1, t) = (1 +{\ve_j \over r_j } y_1 \chi ).
	$$
	We recall that the function $U$ is defined in \eqref{ks},  $\mathcal P$, $p$ were introduced in \eqref{defP} and $\chi$ is in \eqref{ccuts}.
	We also assume  the orthogonality conditions on $\mathcal P \phi$
	\be
	\begin{aligned}
		\int_{\R^2} \mathcal P \phi(y,t){\bf Z}_\ell(y)\, dy \, = \, 0, \quad \ell=0,1,2,3, \quad \forall t \in [0,T]
		\quad
	\end{aligned}
	\label{orto40}\ee
	where the function ${\bf Z}_\ell$ , $\ell = 0,1,2,3$ are defined in \eqref{bernieb}.
	Let  $ \psi (y,t)$ be the solution, predicted by Lemma \ref{ii} of
	\begin{equation}\label{psi1}
		-\div (\mathcal P \nabla  \psi ) = \mathcal P  \phi, \quad \inn \quad \R^2.
	\end{equation}

	We let $f_0(v)= e^v $. This section is devoted to establish a series of a-priori estimates for solutions to 
	a linear transport equation of the form
	\be\label{innerL}
	\begin{aligned}
		|\log \ve | \,  \ve_j^2 \, & p  \,  \pp_t  \phi   
		+  \nabla_y^\perp \left( \Gamma_{0j} + a_* +a \right)\cdot  \nabla_y \left( \phi  - f_0'( \Gamma_{0j} +a_*)\,  p^2 \, \psi \right)
	 + \, E(y,t)\, = \, 0  \inn \R^2 \times (0,T),\\
		&\qquad\quad\phi(\cdot,0)\,  =\, 0 \inn \R^2
	\end{aligned}
	\ee
 These estimates will be used to treat \eqref{equinner}.

	On the functions
	$a_*(y,t)$, and $a(y,t)$  that appear in \eqref{innerL} we assume
	\begin{equation}\label{nu0}
		a_*(y,t), \, a (y,t) \, =0 \quad \hbox{for } |y|\ge 8R, \quad
		\Delta_y (a + a_*) \in L^\infty (\R^2\times (0,T))
	\end{equation}
	and
	for some numbers $C>0$, $\nu >0$,
	\begin{align}
	|\log \ve |^{1\over 2}	|\pp_t a_*(y,t)| + (1+ |y|) |  \,  \nn_y a_* (y,t)|  + |a_* (y,t)| \ \le & \  C \ve^2 \, (1+ |y|^2) \, \log (1+ |y|) \nonumber\\
		|\nn_y a(y,t)|   \ \le & \   \ve^{2+\nu}.\label{nu}\end{align}
	These assumptions are consistent with the description of Problem \eqref{equinner}, in the version contained in \eqref{Ejla}: we will take
	\begin{equation}\label{aastar}
	a_*= \la \eta_{4 \ve} \, b_j^*, \quad a= \la \left( \eta_{4\ve } b_j^{**} + b_j \right),
	\end{equation}
 where $b_j^*$, $b_j^{**}$ satisfy bounds similar to \eqref{similar} and $b_j$ is defined as in \eqref{e}.
 
 \medskip{}{}

	\subsection{An $L^2$-weighted a priori estimate}

	Our first result is an $L^2$-weighted a-priori estimate on a solution to \eqref{innerL}.
	We have the following

	\begin{lemma}\label{lin}
		There exists a constant $C>0$ such that for any
		$a$, $a_*$ satisfying $\equ{nu0}$-$\equ{nu}$,  $R$ given by \eqref{defR}, all sufficiently small $\ve$
		and any solution $\phi$ of $\equ{innerL}$-$\equ{orto40}$ with \be\label{aaa}\sup_{t\in [0,T]} \|U^{-\frac 12 }\, \mathcal P  \phi(\cdot , t)\|_{L^2(\R^2)} \ < \ +\infty\ee
		we have
		\be \label{pass1}
		\sup_{t\in [0,T]} \|U^{-\frac 12 } \, \mathcal P  \phi(\cdot , t)\|_{L^2(\R^2)} \ \le \ C\,\ve^{-2} \, |\log \ve |^{-{1\over 2}} \, \sup_{t \in [0,T]} \, \| E (\cdot,t)\,  U^{-1/2} \|_{L^2 (\R^2) }.
		\ee

	\end{lemma}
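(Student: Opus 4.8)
The plan is to prove \eqref{pass1} by a weighted energy estimate built around the coercive quadratic form of Lemma \ref{passl}. Given $\phi$ solving \eqref{innerL}--\eqref{orto40} with $\sup_t\|U^{-1/2}\phi(\cdot,t)\|_{L^2}<\infty$, let $\psi$ solve \eqref{psi1}, set $g=2U^{-1}\mathcal P\phi-\psi$ and
$$ Q(t)=\int_{\R^2}\mathcal P\,\phi(\cdot,t)\,g(\cdot,t)\,dy . $$
Since $\mathcal P=1+O(\ve R)$ and $|\log R|=|\log\ve|(1+o(1))$, Lemma \ref{passl} gives the one-sided coercivity $Q(t)\gtrsim|\log\ve|^{-1}\|U^{-1/2}\phi(\cdot,t)\|_{L^2}^2$, while the estimates of Lemma \ref{ii} for $\psi,\nabla\psi$ (in particular $\|U^{1/2}\psi\|_{L^2}+\|U^{1/2}\nabla\psi\|_{L^2}\lesssim\|U^{-1/2}\phi\|_{L^2}$, the latter by interpolating the weighted $L^p$-gradient bound with $p>2$) give the reverse bound $Q(t)\lesssim\|U^{-1/2}\phi(\cdot,t)\|_{L^2}^2$. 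First I would test \eqref{innerL} against $\mathcal P g$ and integrate over $\R^2$; using the self-adjointness of $-\div(\mathcal P\nabla\cdot)$ to symmetrize the $\partial_t$ and $\psi$ terms, and the negligibly small size of $\partial_t\mathcal P$ and of the weight $(1+\tfrac{\ve_j}{r_j}y_1\chi_j)-1=O(|\log\ve|^{-5})$, the time term produces $\tfrac12|\log\ve|\ve_j^2\,\tfrac{d}{dt}Q(t)$ up to harmless contributions.

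The core of the argument is the transport term $\int_{\R^2}\big(\nabla^\perp(\Gamma_0+a_*+a)\cdot\nabla w\big)\,\mathcal P g\,dy$, with $w=\phi-f'(\Gamma_0+a_*)(1+\tfrac{\ve_j}{r_j}y_1\chi_j)^2\psi$. For its principal part, the transport of $w_0=\phi-f'(\Gamma_0)\psi=\phi-U\psi$ along $\nabla^\perp\Gamma_0$, I would exploit the conservation structure of the linearized Euler flow around the Liouville profile $U=e^{\Gamma_0}$ that dictates the choice of $g$ in Lemma \ref{passl}: because $\Gamma_0$ and $U$ are radial one has $\nabla^\perp\Gamma_0\cdot\nabla U=\nabla^\perp\Gamma_0\cdot\nabla U^{-1}=0$ and $U\nabla^\perp\Gamma_0=\nabla^\perp U$ is divergence free, so repeated integration by parts shows this principal part is, up to errors, a perfect $\tfrac12|\log\ve|\ve_j^2\,\tfrac{d}{dt}$ of a second quadratic invariant, which combined with the time term leaves only error contributions. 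The remaining, perturbative pieces — coming from $\nabla^\perp(a_*+a)$ in the advecting field and from replacing $f'(\Gamma_0)$ by $f'(\Gamma_0+a_*)(1+\tfrac{\ve_j}{r_j}y_1\chi_j)^2$ — are estimated with \eqref{nu0}--\eqref{nu}, after integrating by parts so that derivatives fall only on $\Gamma_0,U,a_*,a$: one bounds them by Cauchy--Schwarz pairings of $U^{-1/2}\phi$ against weighted norms of $\psi,\nabla\psi$, using the supports $\{|y|\le 8R\}$ of $a_*,a$ and the bounds $|\nabla_y a_*|\lesssim\ve\log(1+|y|)$, $|\nabla_y a|\lesssim\ve^{2+\nu}$, together with the near-cancellation of the $a_*$-part (it is, to leading order, the velocity of a perturbed steady state, so that only genuinely $O(\ve^2)$-terms survive). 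Finally the forcing obeys $|\int E\,\mathcal P g\,dy|\lesssim\|U^{-1/2}E\|_{L^2}\,\|U^{-1/2}\phi\|_{L^2}$ by Cauchy--Schwarz and $|g|\lesssim U^{-1}|\phi|+|\psi|$, $\|U^{1/2}\psi\|_{L^2}\lesssim\|U^{-1/2}\phi\|_{L^2}$.

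Assembling these contributions yields a differential identity of the schematic shape $|\log\ve|\ve_j^2\,\tfrac{d}{dt}Q(t)\le C\ve^2\,\|U^{-1/2}\phi(\cdot,t)\|_{L^2}^2+C\,\|U^{-1/2}E(\cdot,t)\|_{L^2}\,\|U^{-1/2}\phi(\cdot,t)\|_{L^2}$. Since $\phi(\cdot,0)=0$ gives $Q(0)=0$, I would integrate in $t$ over $[0,T]$ and then use the coercivity $Q(t)\gtrsim|\log\ve|^{-1}\|U^{-1/2}\phi(t)\|_{L^2}^2$ once more: the transport-error contribution carries a factor $\tfrac{CT\ve^2}{|\log\ve|\ve^2}=\tfrac{CT}{|\log\ve|}$ times $\sup_{[0,T]}\|U^{-1/2}\phi\|_{L^2}^2$, which for $\ve$ small is dominated by the coercive lower bound and can be absorbed, leaving $\sup_{[0,T]}\|U^{-1/2}\phi\|_{L^2}\lesssim\ve^{-2}|\log\ve|^{-1/2}\sup_{[0,T]}\|U^{-1/2}E\|_{L^2}$, which is \eqref{pass1}. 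The hard part will be step two: isolating the exact cancellation that turns the principal transport term into a perfect time derivative of a coercively-controlled quadratic form (this is where the precise forms of $g$ and of the transported combination $w$ are essential), and then checking that every perturbative and forcing contribution carries enough powers of $\ve$ — via the support restrictions on $a_*,a$ and the elliptic estimates of Lemma \ref{ii} — so that the estimate closes with the stated loss rather than degenerating in $T$.
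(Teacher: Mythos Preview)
There is a genuine gap in the handling of the principal transport term. You test the equation against $\mathcal P g$ with $g=2U^{-1}\mathcal P\phi-\psi$ taken straight from Lemma \ref{passl}, and then assert that the principal piece
\[
\int_{\R^2}\big(\nabla^\perp\Gamma_0\cdot\nabla(\phi-U\psi)\big)\,\mathcal P g\,dy
\]
is, after integration by parts, ``a perfect $\tfrac12|\log\ve|\ve_j^2\,\tfrac{d}{dt}$ of a second quadratic invariant''. Spatial integration by parts cannot manufacture a time derivative, and in fact a direct computation (using that $\nabla^\perp\Gamma_0$ is divergence free and $\nabla^\perp\Gamma_0\cdot\nabla U=0$) gives
\[
\int_{\R^2}\nabla^\perp\Gamma_0\cdot\nabla(\phi-U\psi)\,(2U^{-1}\phi-\psi)\,dy
\;=\;-\int_{\R^2}\phi\,\nabla^\perp\Gamma_0\cdot\nabla\psi\,dy,
\]
which in general is of size $\|U^{-1/2}\phi\|_{L^2}^2$ with no small prefactor and no sign (already for $\psi=f(\rho)\cos\theta+g(\rho)\sin\theta$ it equals $\pi\int\Gamma_0'(\rho)\big(L_1[f]g-L_1[g]f\big)\,d\rho\neq 0$). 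This term cannot be absorbed into the coercive lower bound from Lemma \ref{passl}, so the energy inequality does not close with your multiplier.

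The paper's proof uses a different multiplier, namely $g_1=U_1^{-1}w$ with
\[
w=\phi-f'(\Gamma_0+a_*)(1+\tfrac{\ve_j}{r_j}y_1\chi_j)^2\psi,\qquad
U_1=f'(\Gamma_0+a_*)\,\frac{(1+\tfrac{\ve_j}{r_j}y_1\chi_j)^3}{\mathcal P},
\]
so that the transported quantity is exactly $U_1g_1$. Then
\[
\int_{\R^2}g_1\,\nabla^\perp(\Gamma_0+a_*+a)\cdot\nabla(U_1g_1)\,dy
=\tfrac12\int_{\R^2}U_1g_1^2\,\frac{\nabla^\perp(\Gamma_0+a_*+a)\cdot\nabla U_1}{U_1}\,dy,
\]
and since $\log U_1=\Gamma_0+a_*+\log\big((1+\tfrac{\ve_j}{r_j}y_1\chi_j)^3/\mathcal P\big)$ one has $\nabla^\perp(\Gamma_0+a_*)\cdot\nabla\log U_1$ equal to the $\nabla^\perp$ of $\Gamma_0+a_*$ dotted into the gradient of the small correction $(1+\tfrac{\ve_j}{r_j}y_1\chi_j)^3/\mathcal P$ only, plus the $\nabla^\perp a$ piece. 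Both are genuinely small (of order $\ve^{2+\nu'}$), so the transport term is a small error rather than a conserved quantity. The time term produces $\tfrac12|\log\ve|\ve_j^2\,\tfrac{d}{dt}\int(1+\tfrac{\ve_j}{r_j}y_1\chi_j)\phi g_1$, and Lemma \ref{passl} applied to $f(t)^2=\int(1+\tfrac{\ve_j}{r_j}y_1\chi_j)\phi g_1$ (which agrees with $\int\mathcal P\phi g$ up to a harmless factor) then yields a clean Gronwall inequality for $f$. The point is that the correct multiplier is $U_1^{-1}$ times the transported combination $w$, not the $g$ from Lemma \ref{passl}; the two differ by a factor of $2$ in the $U^{-1}\phi$ part, and that factor is exactly what spoils the cancellation in your computation.
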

	
	\begin{proof} Let us assume that
		$$ \sup_{t \in [0,T]} \, \| E (\cdot,t)\,  U^{-1/2} \|_{L^2 (\R^2) } \, <\, +\infty $$
		and define the functions
		\[
		U_1  = f_0'(\Gamma_{0j} + a_*)  , \quad  U_1  g_1 =
		\phi  - f_0'(\Gamma_{0j} + a_*) \, p^2 \,  \psi  ,
		\]
		where $f_0(s) = e^s$
  and
  $$
  P (y_1 , t) = p^3 (y_1, t) , \quad p(y_1, t) = (1 +{\ve_j \over r_j } y_1 \chi ).$$
		We multiply 
		equation \eqref{innerL} against $g_1$ and integrate in $\R^2$. One term is
		\begin{align*}
		    \int_{\R^2} p \, \pp_t  \phi \, g_1 \, dy &= \int_{\R^2} p \pp_t  \phi  {\phi \over U_1} \, dy
		    -\int_{\R^2}  \pp_t  \phi \, \mathcal P \psi  dy\\
		    &= {1 \over 2} \pp_t \int_{\R^2} p U_1^{-1}  \phi^2   \, dy -{1\over 2} \int_{\R^2} \pp_t \left( p U_1^{-1} \right) \phi^2 \\
		    &-\int_{\R^2}  \pp_t \left( \mathcal P \phi \,  \right)  \psi  dy + \int_{\R^2}   \phi \,  \pp_t \mathcal P \psi  dy.
		\end{align*}
		Since $-\div (\mathcal P \nabla \psi ) = \mathcal P \phi$,
		\begin{align*}
		    -\int_{\R^2}  \pp_t \left( \mathcal P \phi \,  \right)  \psi  dy &= \int_{\R^2}  \pp_t \left( \div (\mathcal P \nabla \psi) \, \right)  \psi  dy =
		    \int_{\R^2}   \div (\pp_t \mathcal P \nabla \psi) \,  \psi  dy + \int_{\R^2}  \div (\mathcal P \nabla \pp_t \psi) \,   \psi  dy 
		\end{align*}
		On the other hand, using the symmetry of the form $ \int_{\R^2} \div (\mathcal P \nabla \psi_1 ) \psi_2  dy $ we have that
		\begin{align*}
		    \pp_t \int_{\R^2} \div (\mathcal P \nabla \psi ) \psi \, dy &= \int_{\R^2} \div (\pp_t \mathcal P \nabla \psi ) \psi \, dy + \int_{\R^2} \div (\mathcal P \nabla  \pp_t\psi ) \psi \, dy + \int_{\R^2} \div (\mathcal P \nabla \psi ) \pp_t \psi \, dy\\
		    &= \int_{\R^2} \div (\pp_t \mathcal P \nabla \psi ) \psi \, dy + 2\int_{\R^2} \div (\mathcal P \nabla  \pp_t\psi ) \psi \, dy
		\end{align*}
		from which we get that
		\begin{align*}
		    -\int_{\R^2}  \pp_t \left( \mathcal P \phi  \right)  \psi  dy &={1\over 2} \pp_t \int_{\R^2} \div (\mathcal P \nabla \psi ) \psi \, dy +{1\over 2} \int_{\R^2} \div (\pp_t \mathcal P \nabla \psi ) \psi \, dy.
		\end{align*}
		Thus we conclude that
			\begin{align*}
		    \int_{\R^2} p \, \pp_t  \phi \, g_1 \, dy &= {1 \over 2} \pp_t  \int_{\R^2} p  \phi g_1 \, dy   
		     -{1\over 2} \int_{\R^2} \pp_t \left( p\, U_1^{-1} \right) \phi^2 \\
		    & + \int_{\R^2}   \phi \,  \pp_t \mathcal P \psi  dy  +{1\over 2} \int_{\R^2} \div (\pp_t \mathcal P \nabla \psi ) \psi \, dy.
		\end{align*}
		Moreover
		\begin{align*}
			\int_{\R^2} &   \nabla_y^\perp( \Gamma_{0j} + a_* +a)\cdot  \nabla_y ( U_1 g_1  ) g_1 = 	\int_{\R^2}  U_1^{-1}  \nabla_y^\perp( \Gamma_{0j} + a_* +a)\cdot  \nabla_y ( { U_1^2 g_1^2 \over 2}   )\, dy \\
			&=-\int_{\R^2} \nabla \cdot  \left(  U_1^{-1}  \nabla_y^\perp( \Gamma_{0j} + a_* +a) \right) { U_1^2 g_1^2 \over 2} \, dy 
			= \int_{\R^2}  \nabla_y^\perp( \Gamma_{0j} + a_* +a) {\nabla U_1 \over U_1}  { U_1 g_1^2 \over 2} \, dy,
		\end{align*}
		and we conclude that
		\begin{equation}\label{ae}
			\begin{aligned}
				&{|\log \ve | \ve_j^2\over 2} \pp_t  \int_{\R^2} (1 +{\ve_j \over r_j } y_1 \chi )  \phi g_1 \, dy   
		     -{|\log \ve | \ve_j^2\over 2} \int_{\R^2} \pp_t \left( (1 +{\ve_j \over r_j } y_1 \chi ) U_1^{-1} \right) \phi^2 \\
		    & + |\log \ve | \ve_j^2 \int_{\R^2}   \phi \,  \pp_t \mathcal P \psi  dy
		    +{|\log \ve | \ve_j^2 \over 2} \int_{\R^2} \div (\pp_t \mathcal P \nabla \psi ) \psi \, dy\\
			 &+  \int_{\R^2}  \nabla_y^\perp( \Gamma_{0j} + a_* +a) {\nabla U_1 \over U_1}  { U_1 g_1^2 \over 2} \, dy + \int_{\R^2} E  g_1 \, dy =0.
			\end{aligned}
		\end{equation}
		Next, we estimate the last four terms in the above expression.
		Using estimates on $U_1$, the bounds on $\psi$ as in Lemma \ref{ii} and the explicit definition of $\mathcal P$ in \eqref{defP}, we get
		\begin{align*}
			&\left| \int_{\R^2}  \phi^2  \pp_t (  U_1^{-1} p ) \, dy \right| \lesssim |\log \ve |^{-{9\over 2} } \int_{\R^2} \phi^2 U^{-1}  \\
			&	\left|	\int_{\R^2} \phi \psi \pp_t \mathcal P  \, dy \right|\lesssim |\log \ve |^{-{9\over 2}} (\int_{\R^2} \phi^2 U^{-1})^{1\over 2} 
			(\int_{\R^2} \psi^2 U )^{1\over 2} \lesssim |\log \ve |^{-{9\over 2}} \int_{\R^2} \phi^2 U^{-1}\\
			&\left| \int_{\R^2} \div (\pp_t \mathcal P \nabla \psi ) \psi \, dy \right| \lesssim |\log \ve |^{-{9\over 2}} \int_{\R^2} \phi^2 U^{-1},
		\end{align*}
		To estimate $ \int_{\R^2}  \nabla_y^\perp( \Gamma_{0j} + a_* +a) {\nabla U_1 \over U_1}  { U_1 g_1^2 \over 2} \, dy$, we observe that
		\begin{align*}
			\int_{\R^2} & \nabla_y^\perp( \Gamma_{0j} + a_* +a) {\nabla U_1 \over U_1}  { U_1 g_1^2 \over 2} \, dy =  \int_{\R^2}  \nabla_y^\perp a {\nabla U_1 \over U_1}  { U_1 g_1^2 \over 2} \, dy .
		\end{align*}
		From \eqref{nu} we get
		$$
		\left| \int_{\R^2}  \nabla_y^\perp a {\nabla U_1 \over U_1}  { U_1 g_1^2 \over 2} \, dy \right| \lesssim \ve^{2+ \nu}  \int_{\R^2}    { U_1 g_1^2 \over 2} \, dy.
		$$
		 Finally 
		$$
		\left| \int_{\R^2} E g_1 \, dy \right| \leq \| E U^{-{1\over 2}} \|_{L^2(\R^2)} \, \| U^{1\over 2} g_1 \|_{L^2(\R^2)}.
		$$
		Since
		$
		\mathcal P (y_1, t) = 1+ O(\ve R)
		$
		uniformly as $\ve \to 0$ for $y_1 \in \R$, $t \in [0,T]$,
		from Lemma \ref{passl} and \eqref{pass} we  obtain
		$$
		\int_{\R^2} U_1 g_1^2 \leq C \int_{\R^2} \phi^2 U^{-1} \leq C |\log \ve | \int_{\R^2}  p\,   \phi g_1 \, dy,
		$$
		for some $C>0$. Define
		$$
		f (t) = \left( \int_{\R^2}  p\,   \phi g_1 \, dy \right)^{1\over 2} .
		$$
		We conclude that
		\begin{align*}
			&\ve_j^2 |\log \ve | \left| \int_{\R^2}  \phi^2  \pp_t (  U_1^{-1}  \, p ) \, dy \right| \lesssim \ve_j^2 |\log \ve | |\log \ve |^{-{1\over 2} }  f^2 (t),\\
			& \ve_j^2 |\log \ve |	\left|	\int_{\R^2} \phi \psi \pp_t \mathcal P  \, dy \right| \lesssim \ve_j^2 |\log \ve ||\log \ve |^{-{3\over 2}} f^2 (t) ,\\
			&\ve_j^2 |\log \ve | \left| \int_{\R^2} \div (\pp_t \mathcal P \nabla \psi ) \psi \, dy \right| \lesssim  \ve_j^2 |\log \ve ||\log \ve |^{-{3\over 2}} f^2 (t) ,\\
			&	\left| \int_{\R^2} E g_1 \, dy \right| \lesssim \| E U^{-{1\over 2}} \|_{L^2(\R^2)} f(t).
		\end{align*}
		Inserting these estimates in \eqref{ae}, we get
		$$
		\ve_j^2 |\log \ve |{d \over dt}  f^2 (t) \leq \ve_j^2 |\log \ve | A_\ve f^2 (t) + C \| E U^{-{1\over 2}} \|_{L^2(\R^2)} f(t),
		$$
		with $A_\ve \to 0$ as $\ve \to 0$. Then we find
		\[
		\varepsilon_j^2 |\log \ve |\frac{d f}{d t } (t)
		\, \leq\,  \ve_j^2 |\log \ve | A_\ve f (t) +C  \| E (\cdot,t)\,  U^{-1/2} \|_{L^2}.
		\]
		Performing the change of variable $		\varepsilon_j^2 |\log \ve | {d \over dt} = {d \over d\tau }$, Gronwall's inequality  yields
		\[
		f(t) \,\leq \,  C\,\ve^{-2} |\log \ve |^{-1}  \,
		\sup_{0\le t \leq T}  \| E (\cdot,t)\,  U^{-1/2} \|_{L^2}.
		\]
		This inequality and  Lemma \ref{passl} yield \eqref{pass1}.
	\end{proof}

	\subsection{An $L^\infty$-weighted a priori estimate}
	
	\medskip
	We consider now a class of functions $E(y,t)$ such that  for a number $0<\beta<1$ we have
	\be\label{Enu}
	\|E\|_{3+\beta}  :=   \sup_{(y,t)\in \R^2\times [0,T]} | (1+|y|)^{3+\beta} E(y,t) \| < + \infty .
	\ee
	We observe that there exists $C_* >0$ such that
	$$
	\sup_{t \in [0,T]} \, \| E (\cdot,t)\,  U^{-1/2} \|_{L^2 (\R^2) } \ \le \  C_* \,\|E\|_{3+\beta} .
	$$
	Hence Lemma \ref{lin} is applicable.

	\begin{lemma}\label{lin1} Under the assumptions of Lemma \ref{lin}, there exists a small $\sigma >0$ such that, for some $0<\alpha<1$ and  all small $\ve$, we have that 
		
		\be\label{pass2}
		\begin{aligned}
			& |\psi(y,t)| \, +\, (1+|y|) |\nn_y \psi (y,t)|  + (1+|y|)^{1+\alpha}  [\nn \psi(\cdot,t) ]_\alpha (y) \\ \, & \le \, \frac {\ve^{-2-\sigma} }{1+|y|}   \, \|E\|_{3+\beta} \foral (y,t) \in \R^2\times[0,T].
		\end{aligned}
		\ee
		where $\psi(y,t)$ is given by $\equ{psi1}$ where $\phi(y,t)$ is a solution of $\equ{innerL}$-$\equ{orto40}$ satisfying  $\equ{aaa}$.
	\end{lemma}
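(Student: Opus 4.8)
The plan is to bootstrap from the weighted $L^2$ bound of Lemma~\ref{lin}, using the elliptic estimates of Lemma~\ref{ii} and the transport estimates of \S\ref{secProofTransportInner}, and exploiting the freedom to take $q$ arbitrarily close to the endpoint $2$ in order to absorb logarithmic and subpolynomial losses into the small power $\ve^{-\sigma}$. \emph{Step 1 (from weighted $L^2$ to a preliminary pointwise bound).} Since $|E(y,t)|\le \|E\|_{3+\beta}(1+|y|)^{-3-\beta}$ gives $\sup_t\|E(\cdot,t)\,U^{-1/2}\|_{L^2(\R^2)}\le C_*\|E\|_{3+\beta}$, Lemma~\ref{lin} yields $\sup_t\|U^{-1/2}\phi(\cdot,t)\|_{L^2}\le C\ve^{-2}|\log\ve|^{-1/2}\|E\|_{3+\beta}$. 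Because $\mathcal P=1+O(\ve R)$ the same holds for $\mathcal P\phi$, so the first part of Lemma~\ref{ii} applied to \eqref{psi1} with $h=\mathcal P\phi$ gives, for any $p>2$ and any fixed small $\sigma_0>0$,
$$
|\psi(y,t)|\ \le\ \frac{C\ve^{-2}|\log\ve|^{-1/2}}{1+|y|^{1-\sigma_0}}\,\|E\|_{3+\beta},\qquad
\|U^{-\frac12+\frac1p}\nn_y\psi(\cdot,t)\|_{L^p}\ \le\ C\ve^{-2}|\log\ve|^{-1/2}\|E\|_{3+\beta}.
$$

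\emph{Step 2 (a crude pointwise bound for $\phi$ from the transport equation).} Moving the $\psi$--part of the flux to the right, \eqref{innerL} reads
$$
|\log\ve|\,\ve_j^2\big(1+\tfrac{\ve_j}{r_j}y_1\chi_j\big)\pp_t\phi+\nn_y^\perp(\Gamma_0+a_*+a)\cdot\nn_y\phi=\tilde E,\quad
\tilde E:=-E+\nn_y^\perp(\Gamma_0+a_*+a)\cdot\nn_y\big(f'(\Gamma_0+a_*)(1+\tfrac{\ve_j}{r_j}y_1\chi_j)^2\psi\big).
$$
The structural cancellation $\nn^\perp\Gamma_0\cdot\nn U=0$ (both $\Gamma_0$ and $U=e^{\Gamma_0}$ are radial), the bound $|\nn^\perp\Gamma_0\cdot\nn U_1|\lesssim|\log\ve|^{-4}(1+|y|)^{-6}$ for the correction $U_1$ introduced in the proof of Lemma~\ref{lin}, the pointwise sizes $|\nn^\perp\Gamma_0|\lesssim(1+|y|)^{-1}$, $U\lesssim(1+|y|)^{-4}$, $|\nn U|\lesssim(1+|y|)^{-5}$, and the bounds \eqref{nu0}--\eqref{nu} on $a_*,a$, show that the feedback term in $\tilde E$ decays strictly faster than $(1+|y|)^{-3-\beta}$; combining this with Step~1 one obtains $|\tilde E(y,t)|\le C\ve^{-C_1}(1+|y|)^{-3-\beta}\|E\|_{3+\beta}$ for a fixed $C_1$. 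Remark~\ref{urca} (which extends Lemma~\ref{transport1-new} to precisely this drift), used with $p=\infty$, then gives the crude but finite control $|\phi(y,t)|\le C\ve^{-C_1-2}(1+|y|)^{-3-\beta}\|E\|_{3+\beta}$.

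\emph{Step 3 (interpolation and conclusion).} We interpolate the sharp weighted $L^2$ bound of Step~1 against the crude pointwise bound of Step~2. Writing $w_q=U^{1/q-1}\sim(1+|y|)^{4-4/q}$, which tends to $U^{-1/2}\sim(1+|y|)^2$ as $q\downarrow2$, two applications of H\"older — one splitting $\|w_q\phi\|_{L^q}$ into a weighted $L^2$ and a weighted $L^\infty$ factor, one (with a dyadic cut and optimization of the cut radius) controlling $\|w_q\phi\|_{L^2}$ by $\|U^{-1/2}\phi\|_{L^2}$ and the pointwise bound — give $\sup_t\|U^{1/q-1}\phi(\cdot,t)\|_{L^q}\le C\ve^{-2-\sigma}\|E\|_{3+\beta}$ once $q>2$ is taken close enough to $2$; here one also uses $q<2/(1-\beta)$ so that $\|U^{1/q-1}E\|_{L^q}\lesssim\|E\|_{3+\beta}$ in Step~2. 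Since $\|U^{1/q-1}\mathcal P\phi\|_{L^q}\lesssim\|U^{1/q-1}\phi\|_{L^q}$, the second part of Lemma~\ref{ii} applied to \eqref{psi1} produces exactly the claimed bound
$$
|\psi(y,t)|+(1+|y|)|\nn_y\psi(y,t)|+(1+|y|)^{1+\alpha}[\nn\psi(\cdot,t)]_\alpha(y)\ \le\ \frac{\ve^{-2-\sigma}}{1+|y|}\,\|E\|_{3+\beta}
$$
for some $\alpha\in(0,1)$, uniformly in $t\in[0,T]$.

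\emph{Main obstacle.} The delicate point is Steps~2--3: a naive bootstrap of the transport equation loses a full factor $\ve^{-2}$ at each turn, since in the natural weighted $L^q$ norm the $\psi$--feedback in $\tilde E$ is of the same order as the sharp $L^2$ bound itself. The loss is recovered only by never measuring $\phi$ in a norm much stronger than the weighted $L^2$ one: one contents oneself with a very crude pointwise bound (any fixed power of $\ve^{-1}$ is acceptable) and then trades the interpolation exponent $1-2/q$, together with all powers of $|\log\ve|$, for an arbitrarily small $\ve^{-\sigma}$ by pushing $q$ to the endpoint $2$ — which is admissible precisely because the decay $(1+|y|)^{-3-\beta}$ of $E$ leaves the room $q<2/(1-\beta)$. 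One must also verify that the orthogonality conditions \eqref{orto40} are maintained throughout, so that Lemmas~\ref{lin} and~\ref{ii}, and hence the coercivity of Lemma~\ref{passl} underlying Lemma~\ref{lin}, remain applicable.
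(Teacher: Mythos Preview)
Your overall strategy — rewrite \eqref{innerL} as a pure transport equation for $\phi$ with source $\tilde E$, control $\tilde E$ through the elliptic estimates of Lemma~\ref{ii}, apply the transport bound of Lemma~\ref{transport1-new}/Remark~\ref{urca}, and then interpolate against the sharp weighted $L^2$ bound of Lemma~\ref{lin} — is exactly the paper's approach. The difference, and the gap in your argument, lies in Step~2.

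To apply the transport estimate with $p=\infty$ you need a \emph{pointwise} bound on $\tilde E$, and in particular on $U_1\,\nn^\perp(\Gamma_0+a_*+a)\cdot\nn\psi\sim(1+|y|)^{-5}|\nn\psi|$. But from Step~1 (i.e.\ from the first part of Lemma~\ref{ii} with only $\|U^{-1/2}\phi\|_{L^2}$ in hand) you have a pointwise bound on $\psi$ and the weighted $L^p$ bound $\|U^{-\frac12+\frac1p}\nn\psi\|_{L^p}\lesssim\|U^{-1/2}\phi\|_{L^2}$, \emph{not} a pointwise bound on $\nn\psi$. Pointwise control of $\nn\psi$ comes only from the \emph{second} part of Lemma~\ref{ii}, which already requires an $L^q$ bound on $\phi$ for some $q>2$ — the very output of the argument. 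So Step~2 as written is circular.

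The paper sidesteps this by never going through $L^\infty$: it applies the transport estimate directly in the weighted $L^p$ norm (Lemma~\ref{transport1-new} holds for every $1\le p\le\infty$). The key computation is that for $2<p<2/(1-\beta)$,
\[
\Big\|U^{\frac1p-1}\frac{|\nn\psi|}{(1+|y|)^5}\Big\|_{L^p}\ \lesssim\ \|U^{-\frac12+\frac1p}\nn\psi\|_{L^p}\ \lesssim\ \|U^{-1/2}\phi\|_{L^2},
\]
using precisely the $L^p$ gradient estimate \eqref{gradp}. Hence $\|U^{1/p-1}\tilde E\|_{L^p}\lesssim\|E\|_{3+\beta}+\|U^{-1/2}\phi\|_{L^2}$, the transport bound gives $\|U^{1/p-1}\phi\|_{L^p}\lesssim\ve^{-4}|\log\ve|^{-3/2}\|E\|_{3+\beta}$, and H\"older interpolation between this and the $\ve^{-2}$ weighted $L^2$ bound (taking $q=2(1-\lambda)+\lambda p$ with $\lambda$ small) produces $\|U^{1/q-1}\phi\|_{L^q}\lesssim\ve^{-2-\sigma}\|E\|_{3+\beta}$. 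Then the second part of Lemma~\ref{ii} yields \eqref{pass2}. Replacing your Step~2 by this weighted $L^p$ transport step closes the argument.
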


	\begin{proof}
		
		From \eqref{innerL} we get that $\phi(y,t)$ satisfies the transport equation
		\be\label{innerL1}
		\begin{aligned}
			\varepsilon_j^2 |\log \ve | \, p \, \partial_t \phi + & \nabla_y^\perp( \Gamma_{0j} + a_* +a)\cdot  \nabla_y  \phi + \, \ttt E(y,t)\, = \, 0  \inn \R^2 \times (0,T),\\
			&\quad\quad\ \phi(\cdot,0)\,  =\, 0 \inn \R^2
		\end{aligned}
		\ee
		where
		$$
		\ttt E(y,t) \,= \,  E(y,t)
		- \nabla_y^\perp \left( \Gamma_{0j} + a_* +a \right)\cdot  \nabla_y \left( f_0'( \Gamma_{0j} +a_*) p^2 \psi \right).
		$$
  The result of Lemma \ref{transport1-new} is still valid for a transport equation of the form \eqref{innerL1}.
		Let us fix a number $p$ with $2<p< \frac 2{1-\beta}$. Then we have that
		$$
		\sup_{t\in [0,T]}  \| U^{\frac 1p -1} E (\cdot, t)\|_{L^p(\R^2)} \ \le \ C \,\|E\|_{3+\beta}.
		$$
		Since $\phi$ solves equation \eqref{innerL1} and Lemma \ref{transport1-new}  apply to yield
		$$
			\| U^{{1\over p}-1} \phi \|_{L^p (\R^2) } \leq C \ve^{-2} |\log \ve |^{-1} \| U^{{1\over p}-1} \ttt E \|_{L^p (\R^2) }.
		$$
		Let us estimate this weighted $L^p$ norm for the second term in $\ttt E$. We have
		$$
		\big |\nabla_y^\perp( \Gamma_{0j} + a_* +a)\cdot  \nabla_y ( f_0'( \Gamma_{0j} +a_*) p^2\psi )\big |\ \le \  C\, \big [ \frac 1{1+ |y|^5} |\nn \psi| + \frac 1{1+ |y|^6}|\psi|\, \big ].
		$$
		Since
		$$
		\left \|\, U^{{1\over p}-1}  \frac {|\nn \psi|}{1+ |y|^5} \, \right \|_{L^p (\R^2) }\, \leq\, C \left \| U^{-{1\over 2} + {1\over p} } |\nabla \psi | \, \right \|_{L^p (\R^2) } \leq\,  C \,
		\left \| U^{-{1\over 2}} \phi  \right \|_{L^2 (\R^2) },
		$$
		where the last inequality follows from  \eqref{gradp}.
		From \eqref{cota1psi} we get
		$$
		\left \| U^{{1\over p}-1}  \frac {| \psi|}{1+ |y|^6} \, \right \|_{L^p (\R^2) } \,\leq\, C \left
		\| U^{-{1\over 2}} \phi  \right \|_{L^2 (\R^2) }.
		$$
		Combining the above estimates, we conclude
		$$
		\sup_{t\in [0,T]}  \| U^{\frac 1p -1} \ttt E (\cdot, t)\|_{L^p(\R^2)} \ \le C \left( \| E \|_{3+\beta} +
		\sup_{t\in [0,T]} \| U^{-{1\over 2}} \phi | \|_{L^2 (\R^2) } \right).
		$$
		We now apply estimate \eqref{pass1} and we conclude that
		$$
			\sup_{t\in [0,T]} \| U^{{1\over p}-1} \phi(\cdot, t) \|_{L^p (\R^2 )} \leq C \ve^{-4} \,  |\log \ve |^{- {3\over 2}} \,\sup_{t\in [0,T]}  \| U^{-{1\over 2}} E(\cdot, t)  \|_{L^2 (\R^2) } .
$$
		Next we use an interpolation argument to control $	\| U^{{1\over q}-1} \phi \|_{L^q (\R^2)}$ for some
		$q \in (2,p)$. Write $q$ in the form
		in the form   $$q= 2 (1-\la ) + \la p , \quad \la \in (0,1).$$
		Using H\"older's inequality we check that
		\begin{equation*}\begin{split}
				\| U^{{1\over q}-1} \phi \|_{L^q (\R^2)}^q &= \int_{\R^2} U^{1-q} |\phi |^q \, dy 
				\leq \left(\int_{\R^2}  U^{-1} |\phi|^2 \, dy \right)^{(1-\la)} \, \left( \int_{\R^2}  U^{1-p} |\phi|^p \, dy \right)^\la \\
				&= \| U^{-{1\over 2}} \phi \|_{L^2 (\R^2) }^{2(1-\la)} \, \| U^{{1\over p}-1} \phi \|_{L^p (\R^2)}^{p \la},
			\end{split}
		\end{equation*}
		We thus get that
		\begin{align*}
			\| U^{{1\over q}-1} \phi \|_{L^q (\R^2)} &\leq C \,  \ve^{-2 - 2 {p \lambda \over q}} \, |\log \ve |^{-{1-\lambda \over q} - {3p\lambda \over 2q} } \, \| E \|_{3+\beta}.
		\end{align*}
		Inserting this information in the above estimate we get
		\begin{align*}
			\| U^{{1\over q}-1} \phi \|_{L^q (\R^2)} 
			&\leq C \, \ve^{-2-\sigma} \, \| E \|_{3+\beta},
		\end{align*}
		for some $\sigma >0$.
		Estimate \eqref{pass2} follows from Lemma \ref{ii}. The number $\sigma >0$ can be taken arbitrarily small (asking $\lambda$ to be close to $0$), in particular satisfying $\sigma < \beta$.
		The proof is concluded.
	\end{proof}
	
	As a consequence of the above result we can also get an $L^\infty$-weighted estimate for $\phi$.
	\begin{corollary} \label{co71} Under the assumptions of Lemma \ref{lin}, we also have the estimate
		\be \label{pass3}
		|\phi(y,t)| \ \le \  C\, \Big[  \frac {\ve^{-2-\sigma}}  {1+|y|^{3+\beta}}  +  \frac {\ve^{-4- \sigma}}  {1+|y|^{7}}   \Big ]\,  \|E\|_{3+\beta}.
		\ee
	\end{corollary}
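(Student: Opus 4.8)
The plan is to read \eqref{pass3} directly off the transport representation for $\phi$, combined with the pointwise decay of $\psi$ just obtained in Lemma~\ref{lin1}. Recall from the proof of Lemma~\ref{lin1} that $\phi$ solves the linear transport equation \eqref{innerL1},
\[
\varepsilon_j^2 |\log \ve | \Bigl(1 +{\ve_j \over r_j } y_1 \eta_j \Bigr)\phi_t +  \nabla_y^\perp( \Gamma_0 + a_* +a)\cdot  \nabla_y  \phi + \ttt E(y,t) = 0, \quad \phi(\cdot,0)=0,
\]
with $\ttt E = E - \nabla_y^\perp( \Gamma_0 + a_* +a)\cdot  \nabla_y \bigl( f'( \Gamma_0 +a_*) (1+ {\ve_j \over r_j} y_1 \eta_j )^2 \psi \bigr)$, where $\psi$ is the fixed function given by \eqref{psi1}. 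This is an equation of the type covered by Lemma~\ref{transport1-new} together with Remark~\ref{urca}, whose estimate holds for all $1\le p\le \infty$; I would use it at the endpoint $p=\infty$, in the weighted form $\sup_t\|(1+|\cdot|)^m\phi(\cdot,t)\|_{L^\infty}\le C\ve^{-2}|\log\ve|^{-1}\sup_t\|(1+|\cdot|)^m\ttt E(\cdot,t)\|_{L^\infty}$ for any $m\in\R$.

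The second step is to split $\ttt E$ into its two pieces, which decay at different rates, and estimate each. For the forcing term, by hypothesis \eqref{Enu} one has $|E(y,t)|\le \|E\|_{3+\beta}(1+|y|)^{-(3+\beta)}$. For the correction term I would invoke the pointwise bound already recorded in the proof of Lemma~\ref{lin1},
\[
\Bigl|\nabla_y^\perp( \Gamma_0 + a_* +a)\cdot  \nabla_y \bigl( f'( \Gamma_0 +a_*) (1+ {\ve_j \over r_j} y_1 \eta_j )^2\psi \bigr)\Bigr|\ \le \  C\Bigl[ \tfrac 1{1+ |y|^5} |\nn \psi| + \tfrac 1{1+ |y|^6}|\psi|\Bigr],
\]
and combine it with the pointwise bounds $|\psi(y,t)|\le \ve^{-2-\sigma}(1+|y|)^{-1}\|E\|_{3+\beta}$ and $(1+|y|)|\nn\psi(y,t)|\le \ve^{-2-\sigma}(1+|y|)^{-1}\|E\|_{3+\beta}$ furnished by \eqref{pass2}. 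This yields the bound $C\ve^{-2-\sigma}(1+|y|)^{-7}\|E\|_{3+\beta}$ for the correction term, so $\ttt E$ is controlled by $\|E\|_{3+\beta}(1+|y|)^{-(3+\beta)}$ plus $C\ve^{-2-\sigma}\|E\|_{3+\beta}(1+|y|)^{-7}$.

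Finally, since \eqref{innerL1} is linear in $\phi$ once $\psi$ is frozen, and its solution is unique by the method of characteristics, $\phi$ is the sum of the solutions whose right-hand sides are these two pieces. Applying the $L^\infty$ transport estimate to the first piece with weight $(1+|y|)^{3+\beta}$ gives a contribution bounded by $C\ve^{-2}|\log\ve|^{-1}(1+|y|)^{-(3+\beta)}\|E\|_{3+\beta}$, and to the second piece with weight $(1+|y|)^{7}$ a contribution bounded by $C\ve^{-4-\sigma}|\log\ve|^{-1}(1+|y|)^{-7}\|E\|_{3+\beta}$. Absorbing the harmless factor $|\log\ve|^{-1}$ (and keeping $\sigma<\beta$) then produces exactly \eqref{pass3}. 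This argument is essentially a repetition, at the endpoint $p=\infty$, of the opening moves of the proof of Lemma~\ref{lin1}, so there is no genuine difficulty; the only points needing a word of care are that Lemma~\ref{transport1-new}/Remark~\ref{urca} is valid for $p=\infty$ — which is immediate, since its proof uses only the two-sided bounds \eqref{bounds1} on the characteristics and the Duhamel formula \eqref{phi}, neither of which sees $p$ — and that the rapid decay of $U=f'(\Gamma_0)$ and of $\nn U$ (the factors $(1+|y|)^{-5}$ and $(1+|y|)^{-6}$ above) more than compensates the slow decay $(1+|y|)^{-1}$, $(1+|y|)^{-2}$ of $\psi$ and $\nn\psi$, so that the $\psi$-dependent part of $\ttt E$ genuinely decays like $(1+|y|)^{-7}$ and not slower.
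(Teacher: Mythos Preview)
Your proposal is correct and follows essentially the same route as the paper: bound $\ttt E$ by the sum of the $(1+|y|)^{-(3+\beta)}$ piece from $E$ and the $\ve^{-2-\sigma}(1+|y|)^{-7}$ piece coming from the $\psi$-bounds of Lemma~\ref{lin1}, then apply the transport estimate of Lemma~\ref{transport1-new} (via Remark~\ref{urca}) at $p=\infty$. The paper states this in two lines, while you spell out the splitting by linearity and the use of two different weights, but the argument is the same.
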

	\begin{proof}
		Recall that $\phi(y,t)$ solves \equ{innerL1}. From Lemma \ref{lin1}, we get
		$$
		|\ttt E (y,t) | \le C \Big[  \frac   1{1+|y|^{3+\beta}}  +  \frac {\ve^{-2-\sigma}}  {1+|y|^{7}}   \Big ]\,  \|E\|_{3+\beta}.
		$$
		Estimate \eqref{pass3} then follows as a direct application of Lemma \ref{transport1-new} for $p=+\infty$.
	\end{proof}

	\medskip
	
	We next revisit the estimates for $\psi$ given in Lemma \eqref{ii} and in Lemma \ref{lin1}, in view of Corollary \ref{co71}. 
	This will be useful in the sequel.
	
	\begin{lemma}\label{iii}
	Assume the validity of the assumptions of Lemma \ref{lin}, and let $\psi(y,t)$ be given by $\equ{psi1}$ where $\phi(y,t)$ is a solution of $\equ{innerL}$-$\equ{orto40}$ satisfying  $\equ{aaa}$. Then 
		$$
		\psi = \psi_1 + \psi_2 
		$$
		where, for $y=r \, e^{i\theta}$,
		\begin{equation}\label{es111}
		\begin{aligned}
		\psi_1 (y,t) &= A_1 (r, t) \cos \theta + A_2 (r,t) \sin \theta, \\
	(1+ |y|) |\pp_r A_j (r,t) |&+ 	|A_j (r,t)| \lesssim {1 \over 1+ |y|^{1 +\sigma}} \| U^{{1\over q}-1} \phi \|_{L^q (\R^2)} + \, 
			{R^{{2\over q} -1} \over 1+ |y|} \,  \| U^{{1\over q}-1} \phi \|_{L^q (\R^2)}, \quad j=1,2
		\end{aligned}
		\end{equation}
		and
		\begin{equation}\label{es1}
		\begin{aligned}
				(1+ |y|) |\nabla_y \psi_2 (y,t)|+ |\psi_2 (y,t) |& \lesssim  \Big[  \frac {\ve^{-2-\sigma}}  {1+|y|^{1+\beta}}  +  \frac {\ve^{-4- \sigma}}  {1+|y|^5  } \Big ]\,  \|E\|_{3+\beta},
			\end{aligned}
		\end{equation}
		for some $\sigma >0$. 
	\end{lemma}
	
	\begin{proof} 
		Under the orthogonality conditions \eqref{orto40} on $\mathcal P \phi$, the result in Lemma \ref{ii} gives the existence and uniqueness of a solution $\psi$ to \eqref{psi1} such that $|\psi (y,t) | \to 0$ as $|y| \to \infty$, for all $t \in [0,T]$.
	
	\medskip
	Decompose $h:= \mathcal P \phi$ in Fourier series
	$$
	h(r,\theta,t) = H_1 + H_2 , \quad  , \quad H_1= \sum_{n=\pm 1 } h_n (r,t) e^{in\theta} , \quad H_2=  \sum_{n\not= 0, \pm1 } h_n (r,t) e^{in\theta} .
	$$
	We let $\psi_1$ to be the solution to  $-\Delta \psi_1 = H_1$ with decay to $0$ at infinity. It has the form  in \eqref{es111}. Next we show the validity of the bounds in \eqref{es111}.

Using the orthogonality conditions \eqref{orto1} for $\ell=0,1,2$, we write
		\begin{align*}
			\psi_1 (y,t)  &={1\over 2\pi} \int_{\R^2} \left(  \log {1\over |y-z|} - \log {1\over |y|} - {y \cdot z \over |y|^2 }\right) \, H_1(z,t) \, dz \\
			& - {1\over 2\pi} \int_{\R^2 \setminus B(0,5R)}  {y \cdot z \over |y|^2 }\,  H_1(z,t) \, dz.
		\end{align*}
	The second integral can be easily estimated
	\begin{align*}
		\left| \int_{\R^2 \setminus B(0,5R)}  {y \cdot z \over |y|^2 }\,H_1 (z,t)  \, dz \right| & \leq {C \over 1+ |y|} \| U^{{1\over q}-1} \phi \|_{L^q (\R^2)}  \left( \int_{\rho >5R} {\rho^{q\over q-1} \over 1+ \rho^{{4q - 4 \over q-1}} }\rho \, d\rho \right)^{{q-1\over q}}\\&\leq 
		 R^{{2\over q} -1}\, 
		 {C \over 1+ |y|} \,  \| U^{{1\over q}-1} \phi \|_{L^q (\R^2)}  
	\end{align*}
		To estimate the first integral, we split the region of integration in $|z|< {|y| \over 2} $ and its complement.
		For $|z|< {|y| \over 2} $, we Taylor expand the quantity inside the bracket and get
		\begin{align*}
			\Bigl| \int_{|z|< {|y| \over 2}} \left(  \log {1\over |y-z|} - \log {1\over |y|} - {y \cdot z \over |y|^2 }\right) \,& \mathcal P (z,t) \phi (z,t) \, dz \Bigl|\leq {C \over 1+ |y|^2} \int_{|z|< {|y| \over 2}}
			|z|^2 \phi \, dz \\
			&\leq  {C \over 1+ |y|^2} \| U^{{1\over q}-1} \phi \|_{L^q (\R^2)}  \left( \int_{|z|< {|y| \over 2}} {\rho^{2q\over q-1} \over 1+ \rho^{{4q - 4 \over q-1}} }\rho \, d\rho \right)^{{q-1\over q}}\\
			&\leq {C \over 1+ |y|^{2-{2\over q}}} \| U^{{1\over q}-1} \phi \|_{L^q (\R^2)}.
		\end{align*}
		In the complementary region, we get
		\begin{align*}
			\Bigl| \int_{|z|> {|y| \over 2}} \left(  \log {1\over |y-z|} - \log {1\over |y|} - {y \cdot z \over |y|^2 }\right) \,& \mathcal P (z,t) \phi (z,t) \, dz \Bigl|\leq
				\Bigl| \int_{|z|> {|y| \over 2}} \left(  \log {1\over |y-z|}  - {y \cdot z \over |y|^2 }\right) \, \mathcal P (z,t) \phi (z,t) \, dz \Bigl|\\
			&\leq {C \over 1+ |y|^{2-{2\over q}}} \, \log (1+ |y| )  \| U^{{1\over q}-1} \phi \|_{L^q (\R^2)}.
		\end{align*}
	Thus we conclude that
	$$
	|\psi_1 (y,t)| \lesssim {1 \over 1+ |y|^{2-{2\over q} -\sigma'}} \| U^{{1\over q}-1} \phi \|_{L^q (\R^2)} + \, 
	{R^{{2\over q} -1} \over 1+ |y|} \,  \| U^{{1\over q}-1} \phi \|_{L^q (\R^2)},
	$$
		for any $\sigma' >0$. Proceeding in a similar way, we get the estimate for $\nabla \psi_1$ and the validity of \eqref{es111}.

	Consider the function
	$$
	H= H_2 + (\mathcal P -1 ) H_1 + \nabla \mathcal P \nabla \psi_1,
	$$
	and let $\psi_2$ solve $-\div (\mathcal P \nabla \psi_2 ) = H.$
		A direct computation gives
		\begin{align*}
			\int_{\R^2} & \left[ (\mathcal P -1) H_1 + \nabla \mathcal P \cdot \nabla \psi_1 \right] dy =- \int_{\R^2} H_1 +\int_{\R^2}  \mathcal P H_1 - \int_{\R^2}  \mathcal P \Delta \psi_1 =0 .
		\end{align*}
	A direct inspection gives that $H$ has no mode $1$ in its Fourier decomposition and
	$$
	 |H| \lesssim \Big[  \frac {\ve^{-2-\sigma}}  {1+|y|^{3+\beta}}  +  \frac {\ve^{-4- \sigma}}  {1+|y|^{7}}   \Big ]\,  \|E\|_{3+\beta}.
	$$
	Define $\bar \psi = (-\Delta )^{-1} \Big(  \frac {\ve^{-2-\sigma}}  {1+|y|^{3+\beta}}  +  \frac {\ve^{-4- \sigma}}  {1+|y|^{7}}   \Big )$  given by the Newtonnian potential in $\R^2$:
	$$
	\bar \psi (y,t) = {1\over 2\pi} \int_{\R^2} \log {1\over |y-z|} \, \Big(  \frac {\ve^{-2-\sigma}}  {1+|z|^{3+\beta}}  +  \frac {\ve^{-4- \sigma}}  {1+|z|^7  } \Big ) \, dz.
	$$
	One can show that
	$$
	(1+ |y| ) |\nabla \bar \psi |+ |\bar \psi|  \,  \lesssim \Big[  \frac {\ve^{-2-\sigma}}  {1+|y|^{1+\beta}}  +  \frac {\ve^{-4- \sigma}}  {1+|y|^{5}}   \Big ]\,  \|E\|_{3+\beta}.
	$$
	Then  $\hat \psi =: M \, \|E\|_{3+\beta} \, \bar \psi$ satisfies
	$$
	\div (\mathcal P \nabla \hat \psi ) + H \leq 0,
	$$
	provided the constant $M>0$ is taken large enough. This gives the bound \eqref{es1} on $\psi_2$.
	\end{proof}

	\medskip
	\subsection{Estimates for a projected problem}
	Here we consider the ``projected version'' of Problem  \equ{innerL},
	\be\label{innerL2}
	\begin{aligned}
		\varepsilon_j^2 |\log \ve | \, p \,  \pp_t \phi &+  \nabla_y^\perp \left( \Gamma_{0j} + a_* +a \right)\cdot  \nabla_y \left( \phi  - f_0'( \Gamma_{0j} +a_*) p^2  \psi \right)\, 
		 + \, E(y,t) = \, \sum_{l=0}^3 c_{l}(t) z_{1l}(y)  \inn \R^2 \times (0,T)\\
		&\qquad\quad\phi(\cdot,0)\,  =\, 0 \inn \R^2
	\end{aligned}
	\ee
	under the same assumptions on $a$ and $a_*$ as in \equ{innerL}, and 
		\be
	\|E\|_{3+\beta}\ <\ +\infty, \quad  
	\label{EEnu}\ee
	for some $0<\beta<1$ (see \equ{Enu} for the definition of this norm). We recall that
	$$
	p(y_1,t) = \left(1 +{\ve_j y_1 \over r_j }  \chi \right) \quad {\mbox {and}} \quad \mathcal P (y_1, t) =  \left(1 +{\ve_j y_1 \over r_j }  \chi \right)^3,
	$$
	as defined in \eqref{defP}.
	Besides   $\psi$ is given by \equ{psi1} and $\phi$ satisfies the orthogonality conditions \equ{orto40}. The functions $z_{1\ell}$ are given in \eqref{defsmallz}, we recall them here
	\begin{align*}
		z_{10}(y) = U_0(y),& \quad
		z_{11}(y)  = \pp_{y_1} U_0(y),\quad
		z_{12}(y) = \pp_{y_2} U_0(y), \quad
		z_{13}(y)  = 2U_0(y) + \nn_y U_0(y)\cdot y .
	\end{align*}
We next compute 	 $c_\ell (t)$ in \eqref{innerL2}. 
In accordance with \eqref{mm1} we write
 \begin{align*}
p^2 (\psi_j^0  - r_j \alpha_j  |\log \ve_j |  + \psi_j^* )  = \Gamma_{0j}(y) - (4-\alpha_j  r_j ) \log\varepsilon_j - \log 8 + K (P_j;P_j)
				+\eta_{4\ve} b_j^{*} 
 \end{align*}
 and we also write
  \begin{align*}
p^2 (\psi_j^0  - r_j \alpha_j  |\log \ve_j |  + \psi_j^* )  = p^2 \left( \psi_{0j} + g_*\right) .
 \end{align*}
  where $\psi_{0j}$ is defined in \eqref{mm} up to constant, and $g_*$ is
  $$
  g_*= - r_j \alpha_j  |\log \ve_j |  +g_{**}
  $$
with $g_{**}$  
 $$
|\log \ve |^{1\over 2}	|\pp_t g_{**}(y,t)| + (1+ |y|) |  \,  \nn_y g_{**} (y,t)|  + |g_{**} (y,t)| \ \le  \  C \ve^2 \, (1+ |y|^2) \, \log (1+ |y|).
$$
 Taking $a_*= \eta_{4\ve} b_j^*$ (see \eqref{aastar} with $\lambda=1$)
we get
\begin{equation}\label{newe}
\Gamma_{0j} + a_*  - (4-\alpha_j  r_j ) \log\varepsilon_j - \log 8 + K (P_j;P_j)= p^2 ( \psi_{0j} + g_*) .
\end{equation}
Setting again $f_0(s) = e^s$, we have
$$
f_0 \left( \Gamma_{0j} + a_*\right) = \tilde f \left( p^2 (\psi_{0j} + g_*)\right) := 8e^{- K(P_j;P_j) } \ve_j^{4-\alpha_j r_j} e^{p^2 (\psi_{0j} + g_*)}
$$
Let
\begin{equation}\label{defBL}
	\begin{aligned}
	{\bf B} (y,t) &:=    \div \left( \mathcal P \nabla (\psi_{0j} + g_* ) \right) +  \mathcal P \tilde f \left( p^2 \,  (\psi_{0j} + g_*) \right) \\
	&=  \div \left( \mathcal P \nabla (\psi_{0j} + g_* ) \right) +  \mathcal P \, f_0 \left( \Gamma_{0j} + a_*\right), \quad {\mbox {and}} \\
	{\bf L} (\psi ) &:= \div \left( \mathcal P \nabla \psi \right) +  \mathcal P \, f_0' \left( \Gamma_{0j} + a_*\right) \, p^2 \psi.
	\end{aligned}
	\end{equation}
	The construction in Proposition \ref{Approximation} gives that
	for some $\nu>0$, for all $(y,t)\in \R^2\times [0,T]$
	$$
	|{\bf B} (y,t)|    \le\  {\ve^{2+ \nu} \over (1+|y|)^{1+\nu}}. 
	$$
	We also have, after differentiating first with respect to $y_2$, then with respect to $y_1$,
\begin{equation}	\label{uuu}
	\begin{aligned}
&	|{\bf L} (\pp_2 (\psi_{0j} +  g_* ) ) | =	\left|  \div (\mathcal P \nabla \pp_2 (\psi_{0j} +  g_* ) ) +\mathcal P  f_0'(\Gamma_{0j} + a_*) p^2 \pp_2 (\psi_{0j}+  g_* ) \right|  \le\  {\ve^{2+ \nu} \over (1+|y|)^{2+\nu}}   \\
	&	 \Delta_{5,j}  \pp_1  (\psi_{0j} + g_*)     + f'_0 (\Gamma_{0j} + a_*) \,  \,    \pp_1 (p^2 (\psi_{0j} + g_*)) -{3 \ve^2 \over p^2} \pp_1  (\psi_{0j} + g_*) = R_1 , \quad {\mbox {with}}\\
	&\left| R_1 (y,t) \right| \le\  {\ve^{2+\nu} \over (1+|y|)^{2+\nu}}
	\end{aligned}
 \end{equation}
Besides, we choose the functions $b_3 $ in the definition of ${\bf Z}_3$ in \eqref{bernieb}, and $b_1$ in ${\bf Z}_1$:	we choose
	\begin{equation}\label{defb3}
	b_3(y,t)\, := \,  G'(\Gamma_{0j})\, a_*(y,t), \quad p^2 {\bf Z}_1 = \left(\int_0^{y_1} ( 1 +{\ve_j \over  r_j} s )^3 \, ds  \right)
	\end{equation}
	where $G$ has been introduced in \eqref{bernieb}. 
We will also assume conditions \eqref{nu} on $a_*$ and $a$.

	\medskip
Under these further assumptions, we prove the following 
	
	\begin{prop}\label{prop71}
For all sufficiently small $\ve>0$ and any functions $E(y,t)$ and $\phi(y,t)$ that satisfy  $\equ{orto40}$, $\equ{innerL2}$ and  $\equ{EEnu}$, we have that the numbers $c_\ell(t)$ define linear functionals of $E$ which satisfy the estimate
		$$
		\gamma _\ell c_\ell(t) =  \int_{\R^2}  E(\cdot ,t){\bf Z_\ell}\, dy \, +\, o(1) \|E\|_{3+\beta}   , \quad {\mbox {with}} \quad o(1) \to 0, \quad \ass \ve \to 0.
		$$
		Besides $\phi$ and $\psi$ satisfy the estimates $\equ{pass1}$, $\equ{pass2}$, $\equ{pass3}$.
	\end{prop}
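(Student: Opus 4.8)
The plan is to determine the multipliers $c_\ell(t)$ by testing \eqref{innerL2} against the four functions ${\bf Z}_m$, $m=0,1,2,3$, from \eqref{bbernie}, to solve the resulting almost-diagonal $4\times4$ linear system, and then to read off \eqref{pass1}--\eqref{pass3} from Lemma~\ref{lin}, Lemma~\ref{lin1} and Corollary~\ref{co71} applied to the \emph{un}-projected problem \eqref{innerL} with right-hand side $\tilde E:=E-\sum_{\ell}c_\ell z_{1\ell}$.

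Multiplying \eqref{innerL2} by ${\bf Z}_m(y,t)$ and integrating over $\R^2$ (all integrals converging because of the truncation $\chi_{B_{5R}}$ in ${\bf Z}_1,{\bf Z}_2$, the boundedness of ${\bf Z}_0,{\bf Z}_3$, the $(1+|y|)^{-4}$ decay of $z_{1\ell}$, and the a-priori control of $\phi,\psi$) gives $\sum_{\ell=0}^{3}M_{m\ell}(t)c_\ell(t)=I_m(t)$ with $M_{m\ell}(t)=\int_{\R^2}z_{1\ell}{\bf Z}_m\,dy$. Using $\int_{\R^2}U\,dy=8\pi$, $\int_{\R^2}U\,\tfrac{1-|y|^2}{1+|y|^2}\,dy=0$, the identity $2U+\nabla U\cdot y=2U\tfrac{1-|y|^2}{1+|y|^2}$, $\int_{\R^2}\partial_{y_i}U\,y_i\chi_{B_{5R}}\,dy=-8\pi+O(R^{-2})$, the radial symmetry of $z_{10},z_{13}$ and oddness in $y_1$ or $y_2$, one shows $M_{m\ell}=\gamma_m\delta_{m\ell}+o(1)$ with $\gamma_0=8\pi$, $\gamma_1=\gamma_2=-8\pi$, $\gamma_3=\tfrac{16\pi}{3}$, the off-diagonal $o(1)$ coming only from $\partial_{y_1}\chi_{B_{5R}}$ and from $b_3=O\!\big(\ve|\log\ve|(1+|y|)^{-1}\big)$. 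Hence $M(t)$ is invertible for small $\ve$ with $M^{-1}=\operatorname{diag}(\gamma_m^{-1})+o(1)$, and it remains only to prove that $I_m(t)-\int_{\R^2}E(\cdot,t){\bf Z}_m\,dy=o(1)\|E\|_{3+\beta}$ uniformly in $t$.

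The crucial point is that for $\phi$ satisfying \eqref{orto40} and \eqref{innerL2} the tested transport/time-derivative operator is small. After integrating by parts, $\int_{\R^2}\nabla^\perp(\Gamma_0)\cdot\nabla(\phi-f'(\Gamma_0)\psi){\bf Z}_m\,dy$ equals, via $-\div(\mathcal P\nabla\psi)=\mathcal P\phi$ and \eqref{gadd}, $\int_{\R^2}L_0[\psi]{\bf Z}_m\,dy=-\int_{\R^2}\tfrac{4}{1+\rho^2}\big[(\Delta+f'(\Gamma_0))\psi\big]\partial_\theta{\bf Z}_m\,dy$; since $\tfrac{4}{1+\rho^2}\partial_\theta{\bf Z}_m^{(0)}\in\{0,\partial_{y_1}\Gamma_0,\partial_{y_2}\Gamma_0\}$ lies in $\ker(\Delta+f'(\Gamma_0))$ for each $m$ (this is precisely why ${\bf Z}_0=1$, ${\bf Z}_{1,2}=y_{1,2}$, ${\bf Z}_3=\tfrac{1-|y|^2}{1+|y|^2}$ were chosen), a second integration by parts and self-adjointness make this term vanish identically. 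The surviving contributions come from (i) the truncation $\chi_{B_{5R}}$, which confines the defect to $\{5R\le|y|\le10R\}$ where the decay of $\phi,\psi$ supplies an extra negative power of $R$; (ii) the replacement of $\Gamma_0$ by $\Gamma_0+a_*+a$, of $f'(\Gamma_0)$ by $f'(\Gamma_0+a_*)(1+\tfrac{\ve_j}{r_j}y_1\chi_j)^2$, and of $\Delta$ by $\mathcal P^{-1}\div(\mathcal P\nabla\cdot)$, each giving a factor $O(\ve^{+})$ controlled by \eqref{nu0}, \eqref{nu}, \eqref{uuu}, \eqref{defP}, the choice $b_3=G'(\Gamma_0)a_*$ of \eqref{defb3} together with \eqref{morea*} being tailored so that $\tfrac{4}{1+\rho^2}\partial_\theta{\bf Z}_3$ still lies in the kernel of the perturbed operator at leading order, killing the dangerous $a_*$-contribution; (iii) the time-derivative term $\ve_j^2|\log\ve|\int(1+\tfrac{\ve_j}{r_j}y_1\chi_j)\partial_t\phi\,{\bf Z}_m$, handled by differentiating \eqref{orto40} in $t$, writing $1+\tfrac{\ve_j}{r_j}y_1\chi_j=\mathcal P q_j$ with $q_j=1+O(\ve_j R)$, and using $|\partial_t\mathcal P|+|\partial_t{\bf Z}_m|+|\log\ve|^{1/2}|\partial_tq_j|=O(\ve|\log\ve|^{-1/2})$, so that the prefactor $\ve_j^2|\log\ve|$ wins. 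This is run as a bootstrap: Lemma~\ref{lin} applied to $\tilde E$ and the almost-diagonal system first give $\sum_\ell|c_\ell(t)|\le C\|E\|_{3+\beta}$ and $\|U^{-1/2}\phi(\cdot,t)\|_{L^2}\le C\ve^{-2}\|E\|_{3+\beta}$; feeding these back makes the defects in (i)--(iii) genuinely $o(1)\|E\|_{3+\beta}$, whence $\gamma_\ell c_\ell(t)=\int_{\R^2}E(\cdot,t){\bf Z}_\ell\,dy+o(1)\|E\|_{3+\beta}$. Finally, since $\|z_{1\ell}\|_{3+\beta}\lesssim1$ one has $\|\tilde E\|_{3+\beta}\le C\|E\|_{3+\beta}$ and $\sup_t\|\tilde EU^{-1/2}\|_{L^2}\le C_*\|E\|_{3+\beta}$, so \eqref{pass1}, \eqref{pass2}, \eqref{pass3} follow from Lemma~\ref{lin}, Lemma~\ref{lin1}, Corollary~\ref{co71} with the same $\sigma>0$, and one takes $\beta'=\beta-\sigma>0$.

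I expect the main obstacle to be exactly the verification that the corrections in (i)--(iii) are $o(1)\|E\|_{3+\beta}$ \emph{uniformly in} $t\in[0,T]$: the a-priori bound on $\phi$ from Corollary~\ref{co71} contains a piece of size $\ve^{-4-\sigma}$ near the origin, so the smallness cannot come from brute-force absolute-value estimates of $\int_{\R^2}\nabla^\perp(\Gamma_0+a_*+a)\cdot\nabla(\phi-f'\psi){\bf Z}_m\,dy$; it must be extracted from the exact kernel identity $\tfrac{4}{1+\rho^2}\partial_\theta{\bf Z}_m^{(0)}\in\ker(\Delta+f'(\Gamma_0))$ combined with self-adjointness, so that this dangerous part drops out identically, the residues being confined to the cut-off annuli or carrying an explicit power of $\ve$ through \eqref{nu}, \eqref{uuu}, \eqref{morea*}. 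Making this cancellation rigorous, while simultaneously checking that $b_3=G'(\Gamma_0)a_*$ is the correct first-order compensation for $a_*$, is the heart of the argument.
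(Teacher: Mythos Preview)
Your strategy is essentially the paper's: test the projected equation against the four ${\bf Z}_m$, show the resulting $4\times 4$ system is almost diagonal, and close via a bootstrap using the a-priori estimates of Lemmas~\ref{lin}, \ref{lin1} and Corollary~\ref{co71} applied to $\tilde E=E-\sum_\ell c_\ell z_{1\ell}$. Your identification of the key cancellation for $\ell=1,2$ --- namely that $\tfrac{4}{1+\rho^2}\partial_\theta{\bf Z}_m^{(0)}\in\ker(\Delta+f'(\Gamma_0))$ --- is exactly what the paper exploits, though the paper phrases it through the quantity ${\bf B}$ in \eqref{uuu} after one more integration by parts against $g=\partial_2(\Gamma_0+a_*)$.

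Two technical points where the paper's packaging differs from yours. First, the paper tests against $p_j{\bf Z}_\ell$ with $p_j=\mathcal P/(1+\tfrac{\ve_j}{r_j}y_1\chi_j)$ rather than ${\bf Z}_\ell$ alone. The effect is that the time-derivative term becomes exactly $\ve_j^2|\log\ve|\int_{\R^2}\mathcal P\,\partial_t\phi\,{\bf Z}_\ell$, which after differentiating \eqref{orto40} in $t$ leaves only $-\int(\partial_t\mathcal P)\phi{\bf Z}_\ell-\int\mathcal P\phi\,\partial_t{\bf Z}_\ell$, both directly controllable. Your route (iii), writing $1+\tfrac{\ve_j}{r_j}y_1\chi_j=\mathcal P q_j$, leaves the residual $\ve_j^2|\log\ve|\,\partial_t\!\int\mathcal P(q_j-1)\phi{\bf Z}_m$, which still contains $\partial_t\phi$; to bound it you would have to reinsert the equation for $\partial_t\phi$ and integrate by parts once more, which works but is exactly the detour the paper's weight avoids. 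Second, for $\ell=1,2$ the boundary contributions on $\partial B_{5R}$ are not $o(1)$ with the decay from Lemma~\ref{ii} or Lemma~\ref{lin1} alone; the paper invokes Lemma~\ref{iii}, whose improved estimate \eqref{es1} (coming precisely from the orthogonality conditions $\int\mathcal P\phi\,y_i\chi_{B_{5R}}=0$) supplies the extra factor $(1+|y|)^{-\sigma}+\ve R$ needed to make those boundary terms small. You do list \eqref{uuu} among your tools but should also flag Lemma~\ref{iii} for the cut-off annulus contributions in (i).
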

	
	\begin{proof} 
		We define
		$$
		\tilde E(y,t) = E(y,t) - \, \sum_{l=0}^3 c_{l}(t) z_{1l}(y) ,
		 $$
		 so that $\| \tilde E \|_{3+\beta} <\infty$.
		Lemma \ref{lin} gives that
		$$
		\| U^{-{1\over 2}} \phi \|_{L^2 (\R^2)} \lesssim \ve^{-2} |\log \ve |^{-{1\over 2}} \mathcal M,
		$$
		where
			$$
		\mathcal M\, :=\,
		\|E\|_{3+\beta} +  \sum_{\ell =0}^3 \| c_\ell \|_{L^\infty (0,T)} \,.
		$$
		Arguing as in the proof of Lemma \ref{lin1} we have that for any $\sigma >0$ small, we can find $q>2$ such that  $\phi$ solution to \eqref{innerL2} satisfies
		$$
		\| U^{{1\over q}-1} \phi \|_{L^q (\R^2)} \lesssim \ve^{-2 - \sigma} \mathcal M.
		$$
		Thanks to Corollary \ref{co71} we also have the validity of the pointwise estimate
		$$
		|\phi(y,t)| \ \le \  C\, \Big[  \frac {\ve^{-2-\sigma}}  {1+|y|^{3+\beta}}  +  \frac {\ve^{-4-\sigma}}  {1+|y|^{7}}   \Big ]\, \mathcal M .
		$$
		Recalling the relations \equ{orto40}
		$$
		\int_{\R^2} \mathcal P \phi \, {\bf Z}_\ell = 0, \quad \ell =0,1,2,3,
		$$
		where ${\bf Z}_\ell$ are defined in \eqref{bernieb}, we  multiply \eqref{innerL2} against $p^2 {\bf Z}_\ell$ and integrate on $\R^2$ to get
		$$
		\begin{aligned}
			\gamma_l c_l(t)\, &= -\ve_j^2 |\log \ve | \int_{\R^2} \phi \, \pp_t (\mathcal P {\bf Z}_\ell ) \, dy   \,- \sum_{m \not= \ell } c_m (t) \int_{\R^2} p^2 z_{1m }  {\bf Z}_\ell \, dy \,  + \int_{\R^2}   E(\cdot ,t) \, p^2 {\bf Z}_\ell\, dy  \nonumber  \\
			&+ \int_{\R^2}
			\nabla_y^\perp \left( \Gamma_{0j} + a_* + a \right)\cdot  \nabla_y \left( \phi - f_0'(\Gamma_{0j} + a_*) \, p^2 \,  \psi \right)\, p^2 {\bf Z}_\ell \, dy. \nonumber \\  & \qquad  
			\nonumber \end{aligned}$$
		where
		$
		\gamma_\ell\, =\, \int_{\R^2} p^2\, z_{1\ell} {\bf Z}_\ell \, dy, \quad l=0,1,2,3.\\
		$
		A direct computation gives 
		$$
		\gamma_\ell = (1+ O(\ve R) ) \int_{\R^2} \, z_{1\ell} {\bf Z}_\ell \, dy,
		$$
		as $\ve \to 0$, for all $\ell$.
		Besides, for $m \not= \ell$,
	$
		     \int_{\R^2} p^2 z_{1m }  {\bf Z}_\ell \, dy = O(\ve R).
	$

Take $\ell =3$ and recall that
		 $p^2 \,  {\bf Z}_3 = G*\Gamma_{0j})+ b_3$.
		  It is straighforward to see that
		\begin{align*}
		\ve_j^2 |\log \ve | \int_{\R^2} \phi \, \pp_t (\mathcal P {\bf Z}_3 ) \, dy   &= O(\ve^3 |\log \ve |^{1\over 2} )   \|\phi(\cdot, t) U^{-\frac 12} \|_{L^2(\R^2)}.
		\end{align*}
		Integrating by parts we get
		\begin{align*}
			\int_{\R^2}
			\nabla_y^\perp \left( \Gamma_{0j} + a_* + a \right)\cdot  \nabla_y \left( \phi -f_0'(\Gamma_{0j} + a_*) \, p^2 \,  \psi \right)(\cdot, t) \, p^2 {\bf Z}_3 \, dy \, .
			\\
			= \ - \int _{\R^2}  \left( \phi  - f_0'(\Gamma_{0j} + a_*) \, p^2 \,  \psi \right) \nn_y (p^2 {\bf Z}_3 ) \cdot \nabla_y^\perp( \Gamma_{0j} + a_* + a).
		\end{align*}
Recalling that $p^2 \, {\bf Z}_3 = G(\Gamma_{0j}  (y) ) +  G'(\Gamma_{0j})\, a_*(y,t) $, we write
	      $$p^2 {\bf Z}_3(y,t) =  G(\Gamma_{0j} + a_*\, )  + \tilde {\bf Z}_3(y,t) ,  $$
and we get	
		\begin{align*}
			\nn_y (p^2  {\bf Z}_3\cdot \nabla_y^\perp( \Gamma_{0j} + a_* + a) )& =
			\nn_y {\bf Z}_3\cdot \nabla_y^\perp a  +  \nn_y \tilde  {\bf Z}_3\cdot \nabla_y^\perp( \Gamma_{0j} + a_* )
		\end{align*}
		and, 	using  \eqref{defb3}, 
		\begin{align*}
			\int _{\R^2} & \left( \phi  - f_0'(\Gamma_{0j} + a_*) \, p^2 \, \psi \right)  \,   \, \nn_y (p^2 {\bf Z}_3 ) \cdot \nabla_y^\perp( \Gamma_{0j} + a_* + a)  
			=  O( \ve^2) \|\phi(\cdot , t) U^{-\frac 12} \|_{L^2(\R^2)}  .
		\end{align*}
		We conclude that
		\begin{align*}
		\gamma _3c_3( t)   &=   \int_{\R^2} E  (\cdot, t) \, {\bf Z}_3 \, dy \,  +  O( \ve^{2}) \|\phi(\cdot , t) U^{-\frac 12} \|_{L^2(\R^2)} + O(\ve R) \sum_{\ell=0}^2  |c_\ell (t)|	\\
		&= \int_{\R^2} E  (\cdot, t) \, {\bf Z}_3 \, dy \,  + o(1) \mathcal M +   O(\ve R) \sum_{\ell=0}^2  |c_\ell (t)|, \quad o(1) \to 0, \quad \ass \ve \to 0.
		\end{align*}
		Take now $\ell =0$. We have
			\begin{align*}
		\ve_j^2 |\log \ve | \int_{\R^2} \phi \, \pp_t (\mathcal P {\bf Z}_0 ) \, dy   =\ve_j^2 |\log \ve | \int_{\R^2} \phi \, \pp_t (\mathcal P  ) \,  {\bf Z}_0 \, dy = O(\ve^3 |\log \ve |^{1\over 2} )   \|\phi(\cdot, t) U^{-\frac 12} \|_{L^2(\R^2)}
		\end{align*}
		and
		\begin{align*}
			&\int_{\R^2}  
			\nabla_y^\perp\left( \Gamma_{0j} + a_* + a \right)\cdot  \nabla_y \left( \phi- f_0'(\Gamma_{0j} + a_*) \, p^2 \,  \psi \right)\,  p^2 \, {\bf Z}_0\, dy \\
			& = \ -  \int_{\R^2}  \left( \phi   - f_0'(\Gamma_{0j} + a_*)  \, p^2 \,  \psi \right) \,  \nabla_y^\perp(\Gamma_{0j} +  a_* + a)\cdot  \nabla_y p^2 
			 \\
			&= \ - 2{\ve_j \over r_j}  \int_{\R^2}  \left( \phi   - f_0'(\Gamma_{0j} + a_*)  \, p^2 \,  \psi \right) \,  (1+ {\ve_j \over r_j} y_1 \chi ) \, (\chi + y_1 \chi') \pp_2 \Gamma_{0}  +
			 O( \ve^{2}) \|\phi(\cdot , t) U^{-\frac 12} \|_{L^2(\R^2)} 
		\end{align*}
		as
		$$
		\nabla p^2 = 2 {\ve_j \over r_j} (1+ {\ve_j \over r_j} \, y_1 \, \chi ) \, (\chi + y_1 \chi' ) {\bf e}_1
		$$
		with $\chi $ given in \eqref{defP}. We recall now that
		$$
		-\phi = \Delta \psi + {3 \ve_j \over (r_j + \ve_j y_1 \chi )  } \pp_1 \psi.
		$$
		Integrating by parts we get
		\begin{align*}
		-\int_{\R^2} \phi &(1+ {\ve_j \over r_j} y_1 \chi ) \, (\chi + y_1 \chi') \pp_2 \Gamma_{0}= \int_{\R^2} \Delta (\pp_2 \Gamma_{0} ) (1+ {\ve_j \over r_j} y_1 \chi ) \, (\chi + y_1 \chi') \psi \\
		&+ \int_{\R^2} \Delta \left(  (1+ {\ve_j \over r_j} y_1 \chi ) \, (\chi + y_1 \chi')  \right) \, \pp_2 \Gamma_{0}  \psi
		+{3 \ve_j \over r_j} \int_{\R^2}    \, (\chi + y_1 \chi') \pp_2 \Gamma_{0} \pp_1 \psi\\
		&= \int_{\R^2} \Delta (\pp_2 \Gamma_{0} )  \psi + O(\ve ) \|\phi(\cdot , t) U^{-\frac 12} \|_{L^2(\R^2)} .
		\end{align*}
		Since $\Delta (\pp_2 \Gamma_{0} ) + f'_0 (\Gamma_{0} ) \pp_2 \Gamma_{0} = 0$, we get
		\begin{align*}
		    \int_{\R^2}  \left( \phi   - f_0'(\Gamma_{0j} + a_*)  \, p^2 \,  \psi \right) \,  (1+ {\ve_j \over r_j} y_1 \chi ) \, (\chi + y_1 \chi') \pp_2 \Gamma_0  =
			 O( \ve) \|\phi(\cdot , t) U^{-\frac 12} \|_{L^2(\R^2)} 
		\end{align*}
	and we conclude that
		$$
		\gamma _0c_0( t)   =   \int_{\R^2} E  (\cdot, t) \, {\bf Z}_3 \, dy \,   + o(1) \mathcal M   +   O(\ve R) \sum_{\ell\not= 0 }  |c_\ell (t)|, \quad o(1) \to 0, \quad \ass \ve \to 0. 
		$$
	Consider now $l=1$. Recalling that $\Gamma_{0j} + a_* = p^2 (\psi_{0j} + g_*)$ in \eqref{newe} and that  $\nabla \left(  p^2 \, {\bf Z}_1 \right) =p^3 \,  {\bf 1}_{B_{8R}} \, {\bf e}_1 $ we get
		\begin{align*}
			&\int_{\R^2}  
			\nabla_y^\perp( \Gamma_{0j} + a_* + a)\cdot  \nabla_y \left( \phi - f_0'(\Gamma_{0j} + a_*)  \, p^2 \,  \psi \right) \,  p^2 \, {\bf Z}_1\, dy \\
			& =- \   \int_{{B_{8R}}}  \left( \phi    - f'_0(\Gamma_{0j}  + a_*) \, p^2 \,  \psi \right) \,  \nabla_y^\perp(\Gamma_{0j} +  a_* + a)\cdot \,   p^3  \,  {\bf e}_1\, dy   \\
			& + \,  \int_{\pp B_{8R}}  \left( \phi - f_0'(\Gamma_{0j} )  \psi \right) \,  \nabla_y^\perp \Gamma_{0j} \cdot \nu(y)  \, p^2  \, {\bf Z}_1\, d\sigma  \\
			& = \   \int_{{B_{8R}}}  \left( \phi    - f_0'(\Gamma_{0j} + a_*) \, p^2 \,  \psi \right) \mathcal P \pp_2 (\Gamma_{0j} +  a_* )      \, dy    +  \,  O(\ve^{2}) \|\phi(\cdot, t) U^{-\frac 12} \|_{L^2(\R^2)}.
		\end{align*}
		Since $-{1\over \mathcal P} \div (\mathcal P \nabla \psi ) = \phi$, we have, for an arbitrary $h$
		\begin{align*}
			\int_{{B_{8R}}} \mathcal P \phi  h &= -\int_{{B_{8R}}} \div (\mathcal P \nabla \psi ) h =  -\int_{{B_{8R}}} {1\over \mathcal P} \div (\mathcal P \nabla h ) \mathcal P \psi 
			+ \int_{{\pp B_{8R}}} \mathcal P ( \nabla \psi \cdot \nu h - \psi \nabla h \cdot \nu).
		\end{align*}
		We now take $h = \pp_2 (\Gamma_{0j} +  a_* )\, $. Using the estimates in Lemma \ref{iii}, \eqref{es111} and \eqref{es1}, we obtain 
		$$
		\int_{{\pp B_{8R}}} \mathcal P ( \nabla \psi \cdot \nu h - \psi \nabla h \cdot \nu) = o(1) \, \mathcal M.
		$$
	Moreover we get
	$$
			\int_{{B_{8R}}} \left( \phi    - f_0'(\Gamma_{0j} + a_*) \, p^2 \,   \psi \right) \,  \mathcal P \, \pp_2 (\Gamma_{0j} +  
			a_* )   \, dy 
			=  -  \int_{{B_{8R}}} {\bf L} \left( \pp_2 (\Gamma_{0j} +  a_* ) \right) \, \psi \, dy   + o(1) \mathcal M   ,
		$$
		where ${\bf L}$ is defined in \eqref{defBL}.
		From \eqref{newe} we observe that
		$\Gamma_{0j} +  a_*$ and $p^2 (\psi_{0j} + g_*)$ differ by a constant term, so
		$\pp_2 (\Gamma_{0j} +  a_* ) = \pp_2 \left( p^2 (\psi_{0j} + g_*) \right)$. Hence we conclude that
			$$
			\int_{{B_{8R}}}  \left( \phi    - f_0'(\Gamma_{0j} + a_*) \, p^2 \,   \psi \right) \,  \mathcal P \, \pp_2 (\Gamma_{0j} +  
			a_* )   \, dy 
			= 
			 O(\ve^{2+ \nu } ) \|U^{-\frac 12} \phi\|_{L^2(\R^2)} +  O(\ve^{2}) \|\phi(\cdot, t) U^{-\frac 12} \|_{L^2(\R^2)}+ o(1)  \mathcal M
		$$
		thanks to \eqref{uuu}. Besides
			\begin{align*}
		\ve_j^2 |\log \ve | \int_{\R^2} \phi \, \pp_t (\mathcal P {\bf Z}_1 ) \, dy    = O(\ve^3 |\log \ve |^{1\over 2} )   \|\phi(\cdot, t) U^{-\frac 12} \|_{L^2(\R^2)},
		\end{align*}
		so that
		$$
		\begin{aligned}
			\gamma_1 c_1(t)\, &=    \int_{\R^2}   E(\cdot ,t) \, p^2 {\bf Z}_1\, dy   + o(1) \mathcal M   +   O(\ve R) \sum_{\ell\not= 1 }  |c_\ell (t)|, \quad o(1) \to 0, \quad \ass \ve \to 0.
			\nonumber \end{aligned}$$
			The last case to consider is $l=2$. Recall that $ p^2 \, {\bf Z}_2 = \,  \left( y_2 + \ve y_1 y_2 \right) \, {\bf 1}_{B_{8R}}$, so 
					\begin{align*}
			&\int_{\R^2}  
			\nabla_y^\perp( \Gamma_{0j} + a_* + a)\cdot  \nabla_y \left( \phi - f_0'( \Gamma_{0j}  + a_* )  \, p^2 \,  \psi \right) \,  p^2 \, {\bf Z}_2\, dy \\
			& =- \   \int_{{B_{8R}}}  \left( \phi    - f_0'(\Gamma_{0j} + a_*) \, p^2\,   \psi \right) \,  \nabla_y^\perp( \Gamma_{0j}  +  a_* + a)\cdot \, \nabla (p^2 {\bf Z}_2 ) \, dy   \\
			& + \,  \int_{\pp B_{8R}}  \left( \phi - f_0'(\Gamma_{0j} )  \psi \right) \,  \nabla_y^\perp \Gamma_{0j} \cdot \nu(y)  \, p^2  {\bf Z}_2\, d\sigma  \\
			& =\  - \int_{{B_{8R}}}  \left( \phi    - f'_0 (\Gamma_{0j} + a_*) \, p^2 \,   \psi \right) \cdot \nabla_y^\perp( \Gamma_{0j}  +  a_* )\cdot \, \nabla (p^2 {\bf Z}_2 ) \, dy    +  \,  O(\ve^{2}) \|\phi(\cdot, t) U^{-\frac 12} \|_{L^2(\R^2)}\\
			&=   - \int_{{B_{8R}}}  \left(- \Delta_{5,j} \psi     - f'_0 (\Gamma_{0j} + a_*) \, p^2 \,   \psi \right) \,  \nabla_y^\perp( \Gamma_{0j}  +  a_* )\cdot \, \nabla (p^2 {\bf Z}_2 ) \, dy    +  \,  O(\ve^{2}) \|\phi(\cdot, t) U^{-\frac 12} \|_{L^2(\R^2)}.
		\end{align*}
		Use the fact that $\Gamma_{0j} + a_* = p^2 (\psi_{0j} + g_*)$ in \eqref{newe} to compute
		\begin{align*}
	h_0&:=	\nabla_y^\perp( \Gamma_{0j}  +  a_* )\cdot \, \nabla (p^2 {\bf Z}_2 ) = h_1 + h_2 \\
	h_1&= \pp_1 (p^2 (\psi_{0j} + g_*)) \\
	h_2&= \ve \, \left( y_1 \pp_1 (p^2 (\psi_{0j} + g_*)) - y_2 \pp_2 (p^2 (\psi_{0j} + g_*)) \right)
		\end{align*}
		Integrating by parts and using estimates \eqref{iii}, \eqref{es111} and \eqref{es1} we get
		\begin{align*}
			\int_{{B_{8R}}} & \left(- \Delta_{5,j} \psi     - f'_0 (\Gamma_{0j} + a_*) \, p^2 \,   \psi \right) \,  \nabla_y^\perp( \Gamma_{0j}  +  a_* )\cdot \, \nabla (p^2 {\bf Z}_2 ) \, dy \\
		      &=  \int_{{B_{8R}}}  \left(- \Delta_{5,j} h_0 + {6 \ve \over p} \pp_1 h_0 - {3 \ve^2 \over h^2} h_0      - f'_0 (\Gamma_{0j} + a_*) \, p^2 \,   h_0  \right) \,  \psi  \, dy \\
		      &= \int_{{B_{8R}}}  \left(- \Delta_{5,j} h_0 + 6 \ve  \pp_1 h_0       - f'_0 (\Gamma_{0j} + a_*) \, p^2 \,   h_0  \right) \,  \psi  \, dy +  \,  O(\ve^{2}) \|\phi(\cdot, t) U^{-\frac 12} \|_{L^2(\R^2)}.
		\end{align*}
		We have
		\begin{align*}
		\int_{{B_{8R}}}  &\left(- \Delta_{5,j} h_1 + 6 \ve  \pp_1 h_1       - f'_0 (\Gamma_{0j} + a_*) \, p^2 \,   h_1  \right) \,  \psi  \, dy \\
		&= \int_{{B_{8R}}}  p^2 \left(- \Delta_{5,j}  \pp_1  (\psi_{0j} + g_*)     - f'_0 (\Gamma_{0j} + a_*) \,  \,    \pp_1 (p^2 (\psi_{0j} + g_*))  \right) \,  \psi  \, dy \\
		&+ \int_{{B_{8R}}}  p^2 \left(4 \ve \pp_1^2  (\psi_{0j} + g_*)  - 2 \ve \Delta  (\psi_{0j} + g_*)  \right) \,  \psi  \, dy +  \,  O(\ve^{2}) \|\phi(\cdot, t) U^{-\frac 12} \|_{L^2(\R^2)} \\
		&= \int_{{B_{8R}}}  p^2 \left(- \Delta_{5,j}  \pp_1  (\psi_{0j} + g_*)     - f'_0 (\Gamma_{0j} + a_*) \,  \,    \pp_1 (p^2 (\psi_{0j} + g_*))  \right) \,  \psi  \, dy \\
		&+ \int_{{B_{8R}}}   \left(2 \ve \pp_1^2  \Gamma_0  - 2 \ve
		\pp_2^2  \Gamma_0   \right) \,  \psi  \, dy +  \,  O(\ve^{2}) \|\phi(\cdot, t) U^{-\frac 12} \|_{L^2(\R^2)}
		\end{align*}
	and
		\begin{align*}
		\int_{{B_{8R}}}  &\left(- \Delta_{5,j} h_2 + 6 \ve  \pp_1 h_2       - f'_0 (\Gamma_{0j} + a_*) \, p^2 \,   h_2  \right) \,  \psi  \, dy \\
		&= \ve \int_{{B_{8R}}}   \left(- \Delta (y_1 \pp_1  \Gamma_0 )  - U \,  \, y_1 \pp_1  \Gamma_0    +
		\Delta (y_2 \pp_2  \Gamma_0 )  - U \,  \, y_2 \pp_2  \Gamma_0\right) \,  \psi  \, dy +  \,  O(\ve^{2}) \|\phi(\cdot, t) U^{-\frac 12} \|_{L^2(\R^2)} \\
		&=\ve \int_{{B_{8R}}}   \left(- 2
		\pp_1^2  \Gamma_0 + 2 \pp_2^2  \Gamma_0 \right) \,  \psi  \, dy
		 +  \,  O(\ve^{2}) \|\phi(\cdot, t) U^{-\frac 12} \|_{L^2(\R^2)}.
		\end{align*}
		In the latter computation we use that
		$$
		\Delta (y_j \pp_j  \Gamma_0 )  +U \,  \, y_j \pp_j  \Gamma_0  = y_j \left( \Delta (\pp_j  \Gamma_0 )  +U \,  \,  \pp_j  \Gamma_0 \right) + 2 \pp_j^2 \Gamma_0 =  2 \pp_j^2 \Gamma_0, \quad j=1,2.
		$$
		We conclude that
			\begin{align*}
				 \int_{{B_{8R}}} & \left(- \Delta_{5,j} \psi     - f'_0 (\Gamma_{0j} + a_*) \, p^2 \,   \psi \right) \,  \nabla_y^\perp( \Gamma_{0j}  +  a_* )\cdot \, \nabla (p^2 {\bf Z}_2 ) \, dy = 
				\int_{{B_{8R}}}  \left(- \Delta_{5,j} \psi     - f'_0 (\Gamma_{0j} + a_*) \, p^2 \,   \psi \right) \,  h_0 \, dy \\
		     &= 
		      \int_{{B_{8R}}}  p^2 \left(- \Delta_{5,j}  \pp_1  (\psi_{0j} + g_*)     - f'_0 (\Gamma_{0j} + a_*) \,  \,    \pp_1 (p^2 (\psi_{0j} + g_*))  \right) \,  \psi  \, dy \\
		& +  \,  O(\ve^{2}) \|\phi(\cdot, t) U^{-\frac 12} \|_{L^2(\R^2)}.
		\end{align*}
We now use the second estimate in \eqref{uuu} to conclude that
		$$
		\int_{{B_{8R}}}  \left(- \Delta_{5,j} \psi     - f'_0 (\Gamma_{0j} + a_*) \, p^2 \,   \psi \right) \,  \nabla_y^\perp( \Gamma_{0j}  +  a_* )\cdot \, \nabla (p^2 {\bf Z}_2 ) \, dy =  O(\ve^{2}) \|\phi(\cdot, t) U^{-\frac 12} \|_{L^2(\R^2)},
		$$
		and hence
		$$
		\int_{\R^2}  
			\nabla_y^\perp( \Gamma_{0j} + a_* + a)\cdot  \nabla_y \left( \phi - f_0'( \Gamma_{0j}  + a_* )  \, p^2 \,  \psi \right) \,  p^2 \, {\bf Z}_2\, dy  =  O(\ve^{2}) \|\phi(\cdot, t) U^{-\frac 12} \|_{L^2(\R^2)}.
			$$
		Thus we get
		$$
		\begin{aligned}
			\gamma_2 c_2(t)\, &=    \int_{\R^2}   E(\cdot ,t) \, p^2 {\bf Z}_2\, dy   + o(1) \mathcal M   +   O(\ve R) \sum_{\ell\not= 1 }  |c_\ell (t)|, \quad o(1) \to 0, \quad \ass \ve \to 0.
			\nonumber \end{aligned}$$
		Combining the above estimates, we obtain the expected result.
	\end{proof}

 We want to apply Proposition \ref{prop71} 
	to  obtain a priori estimates for the projected non-linear problem  \eqref{equinner}. 	We take 
	$$
	a_*= \la \eta_{4 \ve} \, b_j^*, \quad a= \la \left( \eta_{4\ve } b_j^{**} + b_j \right) 
	$$
	in the inner operator defined in \eqref{Ejla}, that we write in the form
	$$
	\begin{aligned}
		\varepsilon_j^2 & |\log \ve | \, p\, \pp_t \phi +  \nabla_y^\perp( \Gamma_0 + a_* +a)\cdot  \nabla_y ( \phi  - f'( \Gamma_0 +a_*) p^2 \psi )+ \,  \Theta_{j,\la} \\ &
		 +\nabla^\perp ( (1 +{\ve_j \over r_j } y_1 \chi )^2 \hat  \psi_j) \cdot \nabla \left(  \la \, \eta_{4\ve}   U^* \right)
		= \, \sum_{l=0}^3 c_{l}(t) \mathcal  Z_{1l}(y)  \inn \R^2 \times (0,T)\\
		&\qquad\quad\phi(\cdot,0)\,  =\, 0 \inn \R^2
	\end{aligned}
	$$
	where
	\begin{align}
 \label{rrrr}
	    \Theta_{j,\la}&=	 \nabla^\perp\left[\ve_j \left(  |\log \ve|  \dot {\bf a }_j +   D_x \nn_x\vp_j (  {\bf P}_j ; {\bf P} ) [{\bf a} ] + \la \eta_{4\ve} (1+{\ve_j y_1 \over r_1} )^2 r_j \psi^{out}  \right)\cdot y   \right] \nabla U \nonumber  \\
		& +\la 	\tilde {\mathcal E}_j  (\beta_j, \psi^{out}, {\bf a} ) + |\log \ve | \ve_j^2 (1 +{\ve_j \over r_j } y_1 \chi ) \sum_{\ell = 0,3} \pp_t (\beta_{j\ell}  \mathcal Z_{1\ell} )\quad \inn \R^2 \times [0,T]
	\end{align}
	Besides we have
	 a priori bounds of the form
	\be \label{roger1}
	\gamma _\ell c_{l}(t) =  \int_{\R^2}   \Theta_{j,\la} (\cdot ,t){\bf Z_\ell}\, dy \, +\, o(1)  \| \Theta_{j,\la} \|_{3+\beta}   .
	\ee
	for some $o(1) \to 0$ as $\ve \to 0$ readily follow, of course provided that $a(y,t)$ satisfies the required smallness assumptions. This is guaranteed by the choice of spaces for the parameter functions $\beta_j, \psi^{out}, {\bf a} $. This is what we shall specify in the next section.

	\medskip

	\section{Fixed point formulation and conclusion of the proof}\label{sec10}
	
	In this section we set up the system \equ{equinner}, \equ{equc}, \equ{equout1}, \equ{equout2} as a fixed point problem in the form \equ{equp} in an appropriate Banach space for the parameter functions
	$$
	\vec {p} = (\hat \phi, \beta,  \phi^{out},\psi^{out}, {\bf a}) .
	$$

	\medskip
	\subsection{The space for the parameter functions.}
	We begin by defining an appropriate norm for the functions
	$\hat \phi_j(y,t)$, $\phi^{out} (x,t)$, $\psi^{out} (x,t)$, $\beta (t) $ and ${\bf a} (t)$.

	\medskip
	Let us fix a  small number  $0< \beta<1$.
	For and arbitrary functions $\phi(y,t)$  we define the inner norm
	\begin{align*}
		\| \phi\|_{i}\, :=  &\,  \sup_{t\in [0,T]} \| \mathcal{P}\, \phi(\cdot,t) U^{-\frac 12} \|_{L^2(\R^2)}\\
		&+   \sup_{(y,t)\in \R^2\times [0,T]} |\, (1+|y|)^{3+\beta}\min \{ 1, \ve^{2+ \frac \beta 4 } (1+|y|)^{4-\beta} \} \phi(y,t)   \, |
	\end{align*}
	For the outer functions $\psi^{out}$, $\phi^{out}$ we consider the following norms
	for functions $\phi(x,t)$, $\psi(x,t)$ defined in $\Sigma\times [0,T]$.
	\begin{align*}
		\| \phi\|_{o1}\, := &\,   \| (1+ |x|)^{2+ \nu} \phi \|_ {L^\infty (\Sigma\times [0,T])} ,  \\
		\|\psi \|_{o2} \, :=&\,  \|  (1+ |x|)^{\nu} \psi \|_ {L^\infty (\Sigma\times [0,T])} + \| (1+ |x|)^{1+ \nu} \nn_x \psi \|_ {L^\infty (\Sigma\times [0,T])}
		\, .\end{align*}
	We consider the space $X$ of all continuous functions
	$\vec { p}  = (\hat \phi, \beta , \phi^{out},\psi^{out} , {\bf a} ) $
	such that
	$$\nn_y\hat \psi(y,t),\quad \nn_x \psi^{out}(x,t),\quad {d \over dt} \beta (t), \quad {d \over dt}  {\bf a}(t)  $$ exist and are continuous and such that
	$$
	\|\vec p \,\| _X \,:=\,     \| \phi^{out}\|_{o1}+  \|\psi^{out}\|_{o2} + \sum_{j=1}^k \big( \| \hat \phi_j\|_{i} + \|\beta_j\|_{C^1[0,T]} + \|{\bf a} \|_{C^1[0,T]}\big)    \, <\, +\infty .
	$$
	We define the set $\OO$ as a ``deformed ball'' centered at $\vec p = \vec 0$. We fix $\nu =1$ in the definition of $\|\phi^{out}\|_{o1}-\|\psi^{out}\|_{o2}$ and let $\OO $ be the set of all functions
	$\vec { p}  = (\bphi, \alpha, \ttt\xi, \phi^{out},\psi^{out})\in X $ such that
	\be\label{OO} \left\{ \begin{aligned}
		& \sum_{j=1}^k  \| \hat \phi_j\|_{i}
		\ < \ \ve^{3- 3\sigma_*} ,\\
		&\sum_{j=1}^k \|\beta_j\|_{C^1[0,T]}  \, < \, \ve^{3- 3\sigma_*} ,\quad  \sum_{j=1}^k \|{\bf a} \|_{C^1[0,T]}  \, < \, \ve^{4-3\sigma_*}  ,\\
		&\|\phi^{out}\|_{o1}  \ < \ \ve^{4- 3\sigma_*} ,\quad   \|\psi^{out}\|_{o2}  \ < \ \ve^{4- 3\sigma_* }.
	\end{aligned}
	\right.
	\ee
	The number $\sigma_*$ is introduced in Proposition \ref{Approximation}.

	\subsection{Fixed point formulation}
	Let us express System  \equ{equinner}, \equ{equc}, \equ{equout1}, \equ{equout2} in the fixed point form \equ{equp}
	for a suitable operator $ {\mathcal F} (\cdot , \la )$, in a region of
	the form $\equ{OO}$.

	\medskip
	We start with \equ{equinner}.
	For a given function $a(y,t)$   let us consider the transport operator
	$$
	\TT_j (a)[\phi] :=  |\log \ve |  \, \ve_j^2 \,  p \, \pp_t \phi + \nn_y^\perp ( \Gamma_{0j} + a) \cdot \nn_y \phi ,
	$$
	and for a bounded function $E(y,t)$ the linear equation
	$$
		\TT_j (a)[\phi] + E = 0 \inn \R^2\times [0,T]\quad
		\phi(\cdot,0) = 0 \inn \R^2  .
	$$
The result of Lemma \ref{transport1-new} is still valid for a transport equation of the form 
		\begin{align*}
			|\log \ve | \ve_j^2 &p\, \pp_t  \phi  
			+  \nabla_y^\perp( \Gamma_{0j} + a)\cdot  \nabla_y  \phi 
			+ \, E(y,t)\, = \, 0  \inn \R^2 \times (0,T),\\
			&\qquad\quad\phi(\cdot,0)\,  =\, 0 \inn \R^2
		\end{align*}
		Here the function $\chi (y_1) $ is defined in \eqref{ccuts} and the function
		 $a(y,t)$ satisfy
		$$
	 a (y,t) \, =0 \quad \hbox{for } |y|\ge 8R, \quad
		\Delta_y (a ) \in L^\infty (\R^2\times (0,T))
		$$
		and
		for some numbers $C>0$, $\nu >0$,
		\begin{align*}
			(1+ |y|)^{-1} |\log \ve |^{1\over 2}	|\pp_t a (y,t)| + |  \,  \nn_y a (y,t)|  \ \le & \  C \ve^2 (1+ |y|)  \, \log (1+ |y|) \nonumber .\end{align*}

\medskip{}{}
	We call
	$
	\phi =  \mathcal \TT^{-1}_j (a) [ E] $
	the unique solution of this problem, through the representation formula \equ{phi}, which defines a linear operator of $E$.
	Let us write the operator $E_{j,\la}$ in \equ{Ejla} in the form
	\begin{align*}
		E_{j,\la}  (  \hat \phi_j  , \beta_j,  \psi^{out};{\bf a} )  & =  \TT_j (\la \eta_{4\ve} b_j^* ({\bf a} ) + \la \eta_{4\ve} b_j^{**} ({\bf a} ) + \,\la {b}_{j}( \hat \psi_j , \beta_j, \psi^{out},{\bf a}  \,) )[\hat \phi_j]  \\
		& +\nabla^\perp ( p^2\hat  \psi_j) \cdot \nabla \left(  \la \, \eta_{4\ve}  U^* \right)+
		 \Theta_{j \la}  ( \hat \psi_j , \beta_j, \psi^{out};{\bf a} ).
	\end{align*}
	and we reformulate equations \equ{equinner} as
	\be\label{fixed1} 
	\begin{aligned}
		\hat \phi_j &=
		\mathcal F_{\la}^{in} (\hat \phi_j , \beta_j , \psi^{out},{\bf a}  ),\\
		&{\mbox {where}} \\
		\mathcal F_{\la}^{in} (\hat \phi_j , \beta_j , \psi^{out},{\bf a}  )&= 
		\TT_j^{-1}  (\la \eta_{4\ve} b_j^* ({\bf a} ) + \la \eta_{4\ve} b_j^{**} ({\bf a} ) + \,\la {b}_{j}( \hat \psi_j , \beta_j, \psi^{out},{\bf a}  \,) ) \circ \\
		&\left[  \nabla^\perp ( p^2 \hat  \psi_j) \cdot \nabla \left(  \la \, \eta_{4\ve}   U^* \right)+
		 \Theta_{j \la}  ( \hat \psi_j , \beta_j, \psi^{out};{\bf a} ) - \sum_{l=0}^3 c_{lj}  z_{1l} \right]
	\end{aligned}
	\ee

	
	We reformulate the outer equations \equ{equout1}-\equ{equout2} in  a similar way.
	For a given function $e(x,t)$ with $r\Delta_x e , \, \nabla e \in L^\infty (\Sigma\times [0,T])$  
	let us consider the transport operator
	$$
	\TT_o (e)[\phi] :=  |\log \ve | \, r \, \pp_t \phi +   \nn_y^\perp ( r^2 (\Psi_* -r_0^{-1} |\log \ve |  + e) )  \cdot \nn_x \phi ,
	$$
	and for a bounded function $E(x,t)$, the linear equation
	$$
		\TT_o (e)[\phi] + E = 0 \inn \Sigma \times [0,T], \quad 
		\phi(\cdot,0) = 0 \inn \Sigma  .
	$$
	We call
	$
	\phi =  \TT_o^{-1} (e) [ E] $
	the unique solution of this problem, through the representation formula \equ{phi3}.
	We write \equ{equout1}-\equ{equout2} in the form
	\be\label{fixed2}  
	\begin{aligned}
		\phi^{out} = & \mathcal F_{1\la}^{out}  (\hat \phi ,  \beta, \psi^{out},{\bf a}  )\\
		\psi^{out} = & \mathcal F_{2\la}^{out}   (\hat \phi ,  \beta, \psi^{out},{\bf a}  )
	\end{aligned}
	\ee
	where
	$$
	\mathcal F_{1\la}^{out}   (\hat \phi ,  \beta, \psi^{out},{\bf a}  ) :=
	\TT_o^{-1}(\la \sum_{j=1}^k { \bar \eta_{j2}\over r_j} \psi_j ({x-P_j \over \ve_j }) + \la \psi^{out}  ) \left[ \la \ttt \EE_{1}^{out}( \hat \phi ,  \beta,     \psi^{out};{\bf a} ) \right]
	$$
	with $ \ttt \EE_{1}^{out}$ given by \equ{EE1}
	and
	\begin{align*}
		\mathcal F_{2\la}^{out}   (\hat \phi ,  \beta, \psi^{out},{\bf a}  ):= {\mathcal T}^{-1} &\Biggl[ \la\,  \phi^{out} 	+  \la \sum_{j=1}^k (\bar \eta_{j1} -\bar \eta_{j2} ) {\phi_j \over r_j \ve_j^2}  
		+\la \sum_{j=1}^k ( {\psi_j \over r_j} \Delta_5 \bar \eta_{j2}  + 2 \nabla_x \bar \eta_{j2} \nabla_x {\psi_j \over r_j}  )\Biggl]
	\end{align*}
	where $\psi = {\mathcal T}^{-1} h$ is the unique solution of Problem \eqref{ext0}
	$$
	\Delta_5  \psi + h =0, \quad  {\mbox {in }} \Sigma , \quad {\partial  \psi \over \partial r}  = 0 \quad {\mbox {on}} \, \,  \pp \Sigma , \quad \psi (x ) \to 0, \, \,  \ass |x|\to \infty,
	$$
	for a smooth function $h$ satisfying \eqref{decayh}.
	Recall that $\Delta_5 = {\pp^2 \over \pp r^2} + {3\over r} {\pp \over \pp r} + {\pp^2 \over \pp z^2}$, for $x= (r,z) \in \Sigma$.

	Using \eqref{roger1}, we write equations
	\equ{equc} $c_{\ell j } = 0$ as 
	\be\label{pico71} 
	\begin{aligned}
		|\log \ve | \dot {\bf a}_j  & =  -  D_x \nn_x\vp_j (  {\bf P}_j ; {\bf P} ) [{\bf a} ] + \la \ve_j^{-1}\mathcal G_j( \hat \phi , \beta, \psi^{out};{\bf a} ),\\
		|\log \ve | \dot  {\beta}_j    &= \la \ve_j^{-2}\mathcal H_j
		( \hat \phi , \beta, \psi^{out};{\bf a} ),  \quad j=1,\ldots, k, \\
		{\bf a} (0) &=0,  \quad  \beta(0)=0 ,
	\end{aligned}
	\ee
	where 
	$$\beta_j = (\alpha_{0j}, \alpha_{3j}),\quad  {\bf a}_j= (a_{j1},a_{j2}),
	\quad
	\beta = (\beta_1,\ldots,\beta_k), \quad {\bf a}  = ({\bf a}_1,\ldots,{\bf a}_k)$$
	and, for
	$$ \mathcal G_j = (\mathcal G_{1j}, \mathcal G_{2j}) ,\quad \mathcal H_j = (\mathcal H_{0j}, \mathcal H_{3j}) ,
	$$
	\begin{align*}
		\mathcal G_{\ell j}    ( \hat \phi , \beta , \psi^{out},{\bf a} )(t)\ := &\  \gamma_\ell \la \int_{\R^2} \big[ \,  \nabla^\perp [\eta_{4\ve} p^2 r_j \psi^{out}  \cdot y ]  \nabla U   +	\tilde {\mathcal E}_j  (\beta_j, \psi^{out}, {\bf a} )  \, \big ] {\bf Z}_\ell\, dy\\
		&{\mbox {for }} \ell =1,2, \\
		\mathcal H_{m j} ( \hat \phi , \beta , \psi^{out},{\bf a} )(t)\ := & \gamma_m \la \int_{\R^2} \big[ \,  \nabla^\perp [\eta_{4\ve} p^2 r_j \psi^{out}  \cdot y ]  \nabla U   +	\tilde {\mathcal E}_j  (\beta_j, \psi^{out}, {\bf a} )  \, \big ] {\bf Z}_m\, dy\\
		&+  \gamma_m \int_{\R^2}  p \sum_{i = 0,3}  \beta_{ji }  \pp_t \mathcal Z_{1i} ) {\bf Z}_m \\
		&{\mbox {for }} m=0,3.
	\end{align*}
	Recall that $p= 1+{\ve_j \over r_j } \, y_1 \, \chi$.
	Equations \equ{pico71} can be written in fixed point form as
	\be\label{sisalfa} 
	\begin{aligned}
		{\bf a} (t)  &=   \mathcal F_{1\la} ( \hat \phi , \beta,  \psi^{out},{\bf a}  )  = |\log \ve |^{-1} \int_0^t \big( B (s) [{\bf a} ]  + \la \ve_j^{-1}\mathcal G_j( \hat \phi , \beta, \psi^{out};{\bf a} ) \big)\, ds \\
		\beta(t)& = \mathcal F_{0\la} ( \hat \phi , \beta,  \psi^{out},{\bf a}) = |\log \ve |^{-1}  \int_0^t  \la \ve_j^{-2}\mathcal H_j
		( \hat \phi , \beta, \psi^{out};{\bf a} ) \, ds
	\end{aligned}
	\ee
	where
	$
	\big( B (t) [{\bf a} ] \big)_j =  - D_x \nn_x\vp_j ( {\bf P}_j ; {\bf P} ) [{\bf a} ].$
	
	\medskip
	System  \equ{equinner}, \equ{equc}, \equ{equout1}, \equ{equout2} can be written as a fixed point problem  \equ{fixed1}-\equ{fixed2}-\equ{sisalfa}, which we write as
	\begin{align*}
		\hat \phi = &
		\mathcal F_{\la}^{in} (\hat \phi , \beta_j , \psi^{out},{\bf a}  ), \quad 
		\phi^{out} =  \tilde {\mathcal F}_{1\la}^{out}  (\hat \phi ,  \beta, \psi^{out},{\bf a}  ), \quad 
		\psi^{out} =  \tilde {\mathcal F}_{2\la}^{out}   (\hat \phi ,  \beta, \psi^{out},{\bf a}  )\\
		{\bf a} (t)  =  & \tilde {\mathcal F}_{1\la} ( \hat \phi , \beta,  \psi^{out},{\bf a}  ), \quad 
		\beta(t) = \tilde {\mathcal F}_{0\la} ( \hat \phi , \beta,  \psi^{out},{\bf a})
	\end{align*}
	where
	\begin{align*}
		\tilde {\mathcal F}_{1\la}^{out}  (\hat \phi ,  \beta, \psi^{out},{\bf a}  ) &= F_{1\la}^{out}  (\mathcal F_{\la}^{in} (\hat \phi , \beta_j , \psi^{out},{\bf a}  ) ,  \beta, \psi^{out},{\bf a}  )\\
		\tilde {\mathcal F}_{2\la}^{out}   (\hat \phi ,  \beta, \psi^{out},{\bf a}  )&	=  {\mathcal F}_{2\la}^{out}   (\mathcal F_{\la}^{in} (\hat \phi , \beta_j , \psi^{out},{\bf a}  ),  \beta,\tilde {\mathcal F}_{1\la}^{out}  (\hat \phi ,  \beta, \psi^{out},{\bf a}  ) ,{\bf a}  )\\
		\tilde {\mathcal F}_{1\la} ( \hat \phi , \beta,  \psi^{out},{\bf a}  )& =  {\mathcal F}_{1\la} (\mathcal F_{\la}^{in} (\hat \phi , \beta_j , \psi^{out},{\bf a}  ),  \beta,\tilde {\mathcal F}_{1\la}^{out}  (\hat \phi ,  \beta, \psi^{out},{\bf a}  ) ,{\bf a}  )\\
		\tilde {\mathcal F}_{0\la} ( \hat \phi , \beta,  \psi^{out},{\bf a}) &=  {\mathcal F}_{0\la} (\mathcal F_{\la}^{in} (\hat \phi , \beta_j , \psi^{out},{\bf a}  ),  \beta,\tilde {\mathcal F}_{1\la}^{out}  (\hat \phi ,  \beta, \psi^{out},{\bf a}  ) ,{\bf a}  ).
	\end{align*}

	In a more compact form, the above system can be expressed as 
	\be\label{ecc}
	\vec p \ =\ \tilde {\mathcal F_\la} (\vec p) , \quad \vec p\in\bar\OO
	\ee
	where $\OO$ is defined in \eqref{OO} and 
	\be \label{opera}
	\left\{
	\begin{aligned}
		\tilde {\mathcal F_\la} (\vec p) \, := &\,   ( {\mathcal F}^{in}_\la(\vec p), \tilde {\mathcal F}_{0\la}(\vec p), \tilde {\mathcal F}_{1\la}(\vec p) , \tilde {\mathcal F}^{out}_{1\la}(\vec p), \tilde {\mathcal F}^{out}_{2\la}(\vec p)
		), \\
		\vec p\, = &\, (\hat \phi,\beta ,  \phi^{out}, \psi^{out}, {\bf a} ) .
	\end{aligned}  \right.
	\ee

	\begin{lemma}
		The operator $\tilde {\mathcal F} : \OO\times [0,1] \to X $ given by
		$\tilde {\mathcal F} ( \cdot , \la )=   \tilde {\mathcal F}_\la$  in $\equ{opera}$
		is compact.
	\end{lemma}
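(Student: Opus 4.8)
The plan is to prove compactness of $\tilde{\mathcal F}(\cdot,\lambda):\bar\OO\times[0,1]\to X$ by establishing two facts: (i) $\tilde{\mathcal F}$ maps bounded sets into sets that are bounded in a \emph{higher-regularity} norm (one that compactly embeds into $X$ on the relevant domains), and (ii) $\tilde{\mathcal F}$ is continuous in $(\vec p,\lambda)$. Since $X$ controls $\hat\phi_j$ in a weighted $L^2\cap L^\infty$ norm, $\phi^{out},\psi^{out}$ in weighted $L^\infty$ norms with a gradient bound on $\psi^{out}$, and $\beta_j,{\bf a}_j$ in $C^1[0,T]$, the key is to gain, for each component, either spatial decay/H\"older regularity beyond what the $X$-norm requires, or equicontinuity in $t$, and then invoke Arzel\`a--Ascoli together with the weighted-space version of the Fr\'echet--Kolmogorov criterion.

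First I would treat the inner components $\hat\phi_j=\mathcal F^{in}_\lambda(\vec p)$. By construction $\hat\phi_j=\TT_j^{-1}[\,\cdot\,]$ applied to the right-hand side of \eqref{fixed1}, and the a priori estimates of Section~\ref{sec9} — Lemma~\ref{lin}, Lemma~\ref{lin1}, Corollary~\ref{co71} and Proposition~\ref{prop71} — give that this right-hand side lies in the space with $\|\cdot\|_{3+\beta}<\infty$ and that $\hat\phi_j$ obeys the pointwise bounds in Corollary~\ref{co71}; moreover the associated $\hat\psi_j$ satisfies the H\"older gradient bound of Lemma~\ref{iii}. Crucially, differentiating the transport equation \eqref{innerL1} in $t$ (using the uniform-in-$t$ differentiability of the coefficients $\RR_j^1,\mathcal B_j$ from \eqref{papa-new} and Remark~\ref{urca}) one obtains a uniform bound on $\pp_t\hat\phi_j$ in a slightly weaker weighted norm; the modulus-of-continuity Lemma~\ref{modc2} then yields equicontinuity of $\hat\phi_j(\cdot,t)$ in both $y$ and $t$ on every ball $|y|\le 4\ve^{-1}|\log\ve|^{-\zeta}$. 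This equicontinuity plus the uniform weighted decay (which controls the tails) gives precompactness in the inner norm by a weighted Arzel\`a--Ascoli argument. The analogous treatment for $\phi^{out}=\tilde{\mathcal F}^{out}_{1\lambda}(\vec p)$ uses Lemma~\ref{int1} for the decay bound \eqref{poto1} and Lemma~\ref{modc1} for the equicontinuity of the characteristics and hence of $\phi^{out}$; for $\psi^{out}=\tilde{\mathcal F}^{out}_{2\lambda}(\vec p)={\mathcal T}^{-1}[\cdot]$, elliptic regularity via Lemma~\ref{ll1} upgrades the $L^\infty$ right-hand side to a $C^{1,\sigma}$ bound with the stated decay, which compactly embeds into the $o2$-norm on $\Sigma$. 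Finally $\beta_j,{\bf a}_j$ are obtained by integrating \eqref{sisalfa}; the integrands are bounded uniformly, so $\beta_j,{\bf a}_j$ are uniformly bounded in $C^1$ with a uniformly continuous derivative, hence precompact in $C^1[0,T]$ by Arzel\`a--Ascoli.

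For continuity, I would show that each of the constituent solution operators ($\TT_j^{-1}$, $\TT_o^{-1}$, ${\mathcal T}^{-1}$, and the ODE integrations) depends continuously on the data and on $\lambda$, using the linearity of the transport and elliptic solution maps in their forcing terms together with the continuous (in fact smooth, polynomial) dependence of the coefficients $b_j^*,b_j,\Theta_j,\tilde{\mathcal E}_j,\varphi_j$ on $(\vec p,\lambda)$; the representation formulas \eqref{phi}, \eqref{phi3} make the continuity in $\lambda$ of the characteristics transparent, and the a priori estimates provide the uniform bounds needed to pass to limits. Composing continuous maps with the above precompactness yields that $\tilde{\mathcal F}$ is compact.

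The main obstacle I anticipate is the inner component: establishing genuine equicontinuity of $\hat\phi_j$ (not merely of $\hat\psi_j$, which is smoother) uniformly on the large expanding ball $|y|\lesssim\ve^{-1}|\log\ve|^{-\zeta}$, since $\hat\phi_j$ solves a transport equation with no smoothing in $y$. This is where the time-differentiated equation and Remark~\ref{urca}/Lemma~\ref{modc2} do the essential work — one trades a $t$-derivative bound for an effective modulus of continuity in $y$ along characteristics — and one must check carefully that the $\ve$-dependent constants there (of the form $(\ve|\log\ve|)^{-|\log\ve|^{-1}}=O(1)$) do not degenerate as $\ve\to0$, which they do not. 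The rest is routine once the weighted Arzel\`a--Ascoli mechanism is set up.
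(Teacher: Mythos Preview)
Your proposal is essentially the same as the paper's: verify compactness componentwise by invoking Lemma~\ref{modc2} for the inner transport, Lemma~\ref{modc1} for the outer transport, elliptic regularity (Lemma~\ref{ll1}) for $\psi^{out}$, and Arzel\`a--Ascoli on the ODE integrals, using in each case that the image has strictly better decay or regularity than the $X$-norm requires (the paper phrases this as $\|\phi_n\|_4\le C_\ve$ compactly embedding into the $\|\cdot\|_{3+\beta}$ space). Two small corrections: first, the time-differentiation step you propose for $\hat\phi_j$ is unnecessary --- Lemma~\ref{modc2} already delivers equicontinuity in \emph{both} $y$ and $t$ directly from a uniform $y$-modulus of continuity for the forcing $E$ (which here comes from the H\"older gradient bounds on $\hat\psi_j,\psi^{out}$), so there is no need to bound $\pp_t\hat\phi_j$ separately; second, your closing concern about $\ve$-dependent constants degenerating is misplaced, since compactness is established at \emph{fixed} $\ve$ (the paper's proof uses $C_\ve$ throughout), and only the subsequent a~priori estimates showing no solutions on $\pp\OO$ require tracking powers of $\ve$.
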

	
	\begin{proof}
		We check that each of the five operators defining $\ttt {\mathcal F }_\la(\vec p)$ is compact in $\OO $ (uniformly in $\la$).
		This operator $\mathcal F^{in}_\la(\vec p)$ is defined through 
		$ g = \mathcal T^{-1}_j (b)[h] $ which has the property in Lemma \ref{modc2}. This gives that a uniform bound in $\Delta_y b $ and a control of the modulus of continuity in $y$ of $h(y,t)$ uniformly in $t$ yields a uniform control of the modulus of  continuity of $g$  in both variables $(y,t)$.
		We see in \equ{fixed1} that  for a certain $C_\ve>0$ we have
		$$ \|\Delta_y (b^*_j + b_j) \|_ {L^\infty(\R^2\times [0,T])} \le  C_\ve  \foral \vec p\in \bar\OO. $$
		and it vanishes outside a compact set. Moreover, we have a uniform H\"older
		control in space variables on the corresponding arguments $h$ for $\vec p\in \bar\OO$ as it follows from the H\"older estimates for the gradients of $\hat \psi_j$ and $\psi^{out}$  inherited from the uniform bounds holding for
		$\ttt\psi$ and $\phi^{out}$ in the definition of $\OO$ (see the argument in the proof of \equ{pass2}). Also, the numbers
		$c_{lj}(t)$ have a uniform bound, thanks to \equ{roger1}.
		Uniform Lipschitz bounds hold for the remaining errors, as it follows in particular from   the control of the terms involving
		$\nn_y \phi_j^*$.  Lemma \ref{modc2} then implies that
		$\ttt{\mathcal F}^{in}_\la(\bar\OO)$ is a set of continuous functions
		$g:\R^2\times [0,T]\to \R^N$ whose restrictions to any compact set defines a uniformly bounded, equicontinuous set. Hence, any sequence $\phi_n \in \ttt{\mathcal F}^{in}_\la(\OO)$
		has a subsequence $\phi_{n'}$ which is uniformly convergent on each compact set.
		Finally, we observe that $\|\phi_n\|_{4}\le C_\ve$ since the argument of
		the transport operator has this property. This implies that $\phi_{n'}$ is actually convergent in the space of continuous functions with finite
		$\| \cdot \|_{3+\beta} $-norm, since $0<\beta <1$. Hence
		$\ttt{\mathcal F}^{in}_\la(\OO)$ is precompact in this space.
		The compactness of the operator $\tilde {\mathcal F}_{1\la}^{out}$ into $C(\bar\Omega\times [0,T])$ follows directly from Arzela-Ascoli's theorem, again from the corresponding control for the transport equation and the
		uniform controls on space and time variables valid for the operator $\tilde {\mathcal F}_{1\la}^{in}$. From here the compactness for $\tilde {\mathcal F}_{2\la}^{out}$ follows in similar manner. Finally, the compactness of the operators
		$
		\tilde {\mathcal F}_{0\la}(\vec p), \tilde {\mathcal F}_{1\la}(\vec p)
		$
		into $C^1([0,T])$ follows again from the equicontinuity in $t$ inherited for the
		different terms involved in their definition. The proof is concluded.
	\end{proof}

	\subsection{Conclusion of the proof of Theorem \ref{teo}}
	The original problem has been so far reduced  to finding a solution of the fixed point problem
	\equ{ecc} for $\la= 1$. To do this, we will prove that for all $\la\in [0,1]$ this equation  has no solution $\vec p\in \pp\OO$, at least whenever $\ve$ is chosen sufficiently small.  Let us assume that $\vec p\in \bar\OO$ satisfies
	\equ{ecc} for some $\la$. We claim that actually $\vec p\in \OO$.
 Take $0<\beta <{\sigma_* \over 2} $, with $\sigma_*$ given in Proposition \ref{Approximation}. From \equ{rrrr} we get
	$$
	\| \Theta_j( \beta, \psi^{out}, {\bf a} ) \|_{3+\beta }  \le \ve^{5- \frac 32 \sigma_*}
	$$
	Corollary \ref{co71} and Lemma \ref{lin}  then yield, by definition of the inner norm,
	$$
	\|\hat \phi\|_i \le \ve^{3-2\sigma_* } \ll \ve^{3-3\sigma_*},
	$$
	the latter number being that involved in the definition of $\OO$ in \equ{OO}. We recall here that $\sigma$ appearing in Corollary \ref{co71} and Lemma \ref{lin} can be made as small as wished.
	Let us consider the outer equations.
	Examining expression \equ{EE1} that determines the size of $\phi^{out}$,
	we see that its magnitude does not exceed the order $O(\ve^{4-{\sigma_* \over 2} })$. Here we have used the remote size of $\hat \phi$ implicit in the norm
	$\|\hat \phi_j\|_i$. Indeed using the size induced in $\hat \psi$, we find
	that
	$$
	\| \phi^{out}\|_{o1}  +   \| \psi^{out}\|_{o2} \le \ve^{4-2\sigma_*} \ll \ve^{4-3\sigma_*}.
	$$
	Finally from the size of $\Theta_j$ we readily see that
	$$\|{\bf a}_j\|_{C^1[0,T]} +  \ve\|\beta_j\|_{C^1[0,T]} \le \ve^{4-2\sigma_* } \ll \ve^{4-3\sigma_*}.
	$$
	As a conclusion, we get that $\vec p\in \OO$ and the claim has been proven.
	
	\medskip
	Standard degree theory applies then to yield that the degree
	$\deg ( I -\ttt{ \mathcal F}(\cdot, \la), \OO, 0)$ is well-defined and it is constant in $\la\in [0,1]$. Since $\ttt {\mathcal F}(\cdot, 0)$ is a linear compact operator, this constant is actually non-zero.
	Existence of a solution in $\OO$ for $\la=1$ then follows. The proof is concluded. \qed

	\bigskip\noindent
	{\bf Acknowledgements:}
	J.~D\'avila has been supported  by  a Royal Society  Wolfson Fellowship, UK and  Fondecyt grant 1170224, Chile.
	M.~del Pino has been supported by a Royal Society Research Professorship, UK.
	M. Musso has been supported by EPSRC research Grant EP/T008458/1. The  research  of J.~Wei is partially supported by NSERC of Canada.

\end{document}